\numberwithin{equation}{section}
\numberwithin{figure}{section}
\theoremstyle{plain}
\newtheorem{thm}{\protect\theoremname}
\theoremstyle{definition}
\newtheorem{defn}[thm]{\protect\definitionname}
\theoremstyle{remark}
\newtheorem*{rem*}{\protect\remarkname}
\theoremstyle{plain}
\newtheorem{fact}[thm]{\protect\factname}
\theoremstyle{remark}
\newtheorem{rem}[thm]{\protect\remarkname}
\theoremstyle{plain}
\newtheorem{lem}[thm]{\protect\lemmaname}
\newlist{casenv}{enumerate}{4}
\setlist[casenv]{leftmargin=*,align=left,widest={iiii}}
\setlist[casenv,1]{label={{\itshape\ \casename} \arabic*.},ref=\arabic*}
\setlist[casenv,2]{label={{\itshape\ \casename} \roman*.},ref=\roman*}
\setlist[casenv,3]{label={{\itshape\ \casename\ \alph*.}},ref=\alph*}
\setlist[casenv,4]{label={{\itshape\ \casename} \arabic*.},ref=\arabic*}
\theoremstyle{plain}
\theoremstyle{plain}
\newtheorem{prop}[thm]{\protect\propositionname}
\theoremstyle{plain}
\newtheorem*{cor*}{\protect\corollaryname}
\theoremstyle{plain}
\newtheorem*{lem*}{\protect\lemmaname}
\newtheorem{maintheorem}{Theorem}
\def\N{\mathbb N}
\def\Z{\mathbb Z}
\def\R{\mathbb R}
\def\T{\mathbb T}
\def\C{\mathbb C}
\def\h{\mathit h}
\def\d{\delta}
\newcommand{\rr}{\color{red}}
\newcommand{\Meng}[2]{\left\{#1\mathrel{}\middle|\mathrel{}#2\right\}}
\newcommand{\Ordered}[2]{\left\langle#1\mathrel{}\middle|\mathrel{}#2\right\rangle}
\newcommand{\abs}[1]{\left\lvert#1\right\rvert}
\newcommand{\trees}{\mathcal{T}\kern-.5mm rees}
\newcommand{\MPT}{\mathcal{X}}
\newcommand{\freq}{\operatorname{freq}}
\newcommand{\sh}{\operatorname{sh}}
\providecommand{\lemmaname}{Lemma}
\providecommand{\propositionname}{Proposition}
\providecommand{\theoremname}{Theorem}
\providecommand{\casename}{Case}
\providecommand{\corollaryname}{Corollary}
\providecommand{\definitionname}{Definition}
\providecommand{\factname}{Fact}
\providecommand{\lemmaname}{Lemma}
\providecommand{\propositionname}{Proposition}
\providecommand{\remarkname}{Remark}
\providecommand{\theoremname}{Theorem}
\begin{document}
	\title{Anti-classification results for weakly mixing diffeomorphisms}
	\author{Philipp Kunde} 
	\thanks{Faculty of Mathematics and Computer Science, Jagiellonian University in Krakow, ul. Lojasiewicza 6, 30-348 Kraków, Poland. This research is part of the project No. 2021/43/P/ST1/02885 co-funded by the National Science Centre
		and the European Union's Horizon 2020 research and innovation programme under the Marie Sklodowska-Curie
		grant agreement no. 945339.}
	\begin{abstract}
		We extend anti-classification results in ergodic theory to the collection of weakly mixing systems by proving that the isomorphism relation as well as the Kakutani equivalence relation of weakly mixing invertible
		measure-preserving transformations are not Borel sets. This shows
		in a precise way that classification of weakly mixing systems up to isomorphism or Kakutani equivalence is
		impossible in terms of computable invariants, even with a very inclusive understanding
		of ``computability''. We even obtain these anti-classification results for weakly mixing
		area-preserving smooth diffeomorphisms on compact surfaces admitting
		a non-trivial circle action as well as real-analytic diffeomorphisms
		on the $2$-torus.
	\end{abstract}
	
	\maketitle
	
	\insert\footins{\footnotesize - \\
		\textit{2020 Mathematics Subject classification:} Primary: 37A35; Secondary: 37A05, 37A20, 37C40, 03E15\\
		\textit{Key words: } Isomorphism, Kakutani equivalence, anti-classification, weak mixing, complete analytic, smooth ergodic theory}
	
	\section{\label{sec:Introduction}Introduction}
	One of the oldest and most influential problems in ergodic theory is the classification of systems up to appropriate equivalence relations. Dating back to the foundational paper \cite{Ne} by J.~von Neumann the \emph{isomorphism problem} asks to classify measure-preserving transformations (MPT's) up to isomorphism. By an MPT we mean a measure-preserving automorphism of a standard non-atomic probability space and we let $\MPT$ denote the set of all MPT's of a fixed standard non-atomic probability space $(\Omega,\mathcal{M},\mu)$. We endow $\MPT$ with the weak topology (see Section~\ref{subsec:erg}). This topology is compatible with a complete separable metric and hence makes $\MPT$ into a Polish space. We also recall that two automorphisms $S,T\in\MPT$ are \emph{isomorphic} (written $S \cong T$) if there exists $\varphi\in\MPT$ such that $S\circ\varphi$ and $\varphi\circ T$ agree $\mu$-almost everywhere. 	
	The isomorphism problem has been a guiding light for directions of research within ergodic theory, and has been solved only for some special classes of transformations. Two great successes are the classification of ergodic MPT's with pure point spectrum by the spectrum of the associated Koopman operator \cite{HN42} and the classification of Bernoulli shifts by their measure-theoretic entropy \cite{Or70}. Many properties of transformations like mixing of various types or finite rank have been characterized and studied in connection with the isomorphism problem but the general problem remained intractable. 
	
	Starting in the late 1990's, so-called \textit{anti-classification results} have been established. We refer to the survey article \cite{Fsurvey} by M.~Foreman for an overview of complexity results of structure and classification of dynamical systems. These results rigorously demonstrated that von Neumann's isomorphism problem is impossible: In their landmark paper \cite{FRW} M.~Foreman, D.~Rudolph and B.~Weiss showed that the measure-isomorphism relation for ergodic MPT's is not a Borel set. Informally speaking, this result says that determining isomorphism between ergodic transformations is inaccessible to countable methods that use countable amounts of information. This result leads to several new questions.
	
	On the one hand, one can consider other equivalence relations instead of isomorphism. Since all ergodic MPT's are orbit equivalent according to Dye's Theorem, Kakutani equivalence is the best known and most natural equivalence relation for which the classification problem can be considered. Two ergodic automorphisms $T$ and $S$ of $(\Omega,\mathcal{M},\mu)$ are said to be \emph{Kakutani equivalent} (written $T \sim S$) if there exist positive measure subsets $A$ and $B$ of $\Omega$ such that the first return maps\footnote{For an automorphism $T$ of $(\Omega,\mathcal{M},\mu)$ and $A \in \mathcal{M}$ with $\mu(A)>0$, let $n_A(x)= \inf \Meng{i\in \mathbb{Z}^+}{T^i(x) \in A}$ be the \emph{first return time} to $A$. Then we define the \emph{first return map} (also called \emph{induced map}) $T_A : A \to A$ by $T^{n_A(x)}(x)$. Furthermore, the normalized induced measure on $A$ is defined by $\mu_A(E) = \frac{\mu(A\cap E)}{\mu(A)}$ for $E \in \mathcal{M}$.} $(T_A,\mu_A)$ and $(S_B,\mu_B)$ are isomorphic. This equivalence relation was introduced by S.~Kakutani in \cite{Kak}. In that work he also proved that two ergodic flows are isomorphic up to a time change if and only if they have Kakutani equivalent transformations as cross-sections. Since the Kakutani equivalence relation is weaker than isomorphism, one might expect classification to be simpler. However, in joint work with M.~Gerber we showed that the Kakutani equivalence relation for ergodic MPT's is not a Borel set \cite{GK3}. In fact, our result also holds for any equivalence relation between Kakutani equivalence and isomorphism.
	 
	On the other hand, the classification problem (with respect to isomorphism or Kakutani equivalence) can be restricted to the class of smooth ergodic diffeomorphisms of a compact manifold $M$ that preserve a smooth measure. In a recent series of papers \cite{FW1,FW2,FW3}, Foreman and Weiss extended the aforementioned anti-classification result from \cite{FRW} to the $C^{\infty}$ category by proving that the measure-isomorphism relation among pairs of volume-preserving ergodic $C^{\infty}$-diffeomorphisms  on compact surfaces admitting a non-trivial circle action is not a Borel set with respect to the $C^{\infty}$-topology. In joint work with S.~Banerjee we are even able to show this anti-classification result for real-analytic diffeomorphisms of the $2$-torus \cite{BK2}. Using the realization techniques from these papers, the anti-classification results for Kakutani equivalence can be obtained both in the smooth and real-analytic setting \cite[Theorems~25 and~26]{GK3}.
	
	The aforementioned anti-classification results emphasize the significance of restricting the classification problem to specific classes of dynamical systems (like in the successful classification of Bernoulli shifts via entropy). We want to investigate the classification problem for classes of transformations that are more random than ergodic systems but less random than Bernoulli shifts, such as weakly mixing transformations, mixing transformations, and $K$-automorphisms. 
	
	In the current paper we obtain anti-classification results for weakly mixing transformations (addressing Problem~4 in~\cite{Fsurvey}). We recall that $(\Omega,\mathcal{M},\mu, T)$ is said to be
	weakly mixing if there is no nonconstant function $h\in L^2(\Omega,\mu)$ such that $h(Tx)=\lambda \cdot h(x)$ for some $\lambda \in \C$. Equivalently, $(\Omega,\mathcal{M},\mu, T)$ is weakly mixing iff for every pair $A,B \in \mathcal{M}$ we have
	$
	\lim_{n\to \infty} \frac{1}{n}\sum^{n-1}_{k=1}\abs{\mu\left(T^{-k}(A)\cap B\right) - \mu(A)\mu(B)} =0.
	$
	The collection $\mathcal{WM}$ of weakly mixing transformations is a dense $G_{\delta}$ subset of $\MPT$ endowed with the weak topology \cite{Ha}. Hence, the topology induced on $\mathcal{WM}$ is Polish as well. By genericity of $\mathcal{WM}$ and the turbulence result from \cite{FW0} we know that there are no complete algebraic invariants for the isomorphism relation on $\mathcal{WM}$ (we refer to \cite{FW0} or \cite[section 5.5]{Fsurvey} for details on the concept of turbulence developed by G.~Hjorth to show that an equivalence relation is not reducible to an $S_{\infty}$-action). It is an open problem if the isomorphism relation on ergodic diffeomorphisms is turbulent (see \cite[Problem 1]{FW3}).
	
	In this paper we obtain stronger anti-classification results for weakly mixing transformations as well as diffeomorphisms. To state our result precisely we let
	\begin{align*}
		\mathcal{R}_{\text{iso}} \coloneqq \Meng{(S,T) \in \MPT \times \MPT}{S \cong T}, \ \ 
		\mathcal{R}_{\text{Kak}} \coloneqq \Meng{(S,T) \in \MPT \times \MPT}{S \sim T}.
	\end{align*}
    Then we show the unclassifiability of weakly mixing transformations with respect to isomorphism, Kakutani equivalence, and any equivalence relation
    between them.
    \begin{maintheorem}
    	\label{thm:mpt}Let $\mathcal{R}$ be any equivalence relation on $\MPT$ satisfying $\mathcal{R}_{\text{iso}}\subseteq \mathcal{R} \subseteq \mathcal{R}_{\text{Kak}}$. Then the collection
    	\[
    	\left\{ (S,T):S\text{ and }T\text{ are weakly mixing and $\mathcal{R}$-equivalent}\right\} \subset \MPT\times\MPT
    	\]
    	is a complete analytic set. In particular, it is not Borel.
    \end{maintheorem}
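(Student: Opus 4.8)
The plan is to produce a Borel reduction to the set in question from a standard complete analytic set. Let $\mathrm{IF}\subseteq\trees$ denote the collection of ill-founded trees on $\mathbb{N}$, i.e. trees with an infinite branch; this is a complete analytic subset of the Polish space $\trees$ of all trees on $\mathbb{N}$. I would construct a Borel assignment $\mathcal{T}\mapsto\mathbb{T}_{\mathcal{T}}$ from $\trees$ into $\MPT$ such that: (i) every $\mathbb{T}_{\mathcal{T}}$ is weakly mixing; (ii) if $\mathcal{T}$ is ill-founded then $\mathbb{T}_{\mathcal{T}}\cong\mathbb{T}_{\mathcal{T}}^{-1}$; and (iii) if $\mathcal{T}$ is well-founded then $\mathbb{T}_{\mathcal{T}}\not\sim\mathbb{T}_{\mathcal{T}}^{-1}$. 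Since weak mixing passes to inverses, $\mathcal{T}\mapsto(\mathbb{T}_{\mathcal{T}},\mathbb{T}_{\mathcal{T}}^{-1})$ is then a Borel map into the weakly mixing pairs. For any $\mathcal{R}$ with $\mathcal{R}_{\mathrm{iso}}\subseteq\mathcal{R}\subseteq\mathcal{R}_{\mathrm{Kak}}$: if $\mathcal{T}\in\mathrm{IF}$ then (ii) gives $(\mathbb{T}_{\mathcal{T}},\mathbb{T}_{\mathcal{T}}^{-1})\in\mathcal{R}_{\mathrm{iso}}\subseteq\mathcal{R}$, while if $\mathcal{T}\notin\mathrm{IF}$ then (iii) gives $(\mathbb{T}_{\mathcal{T}},\mathbb{T}_{\mathcal{T}}^{-1})\notin\mathcal{R}_{\mathrm{Kak}}\supseteq\mathcal{R}$. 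Hence the preimage of $E_{\mathcal{R}}:=\{(S,T):S,T\text{ weakly mixing and }S\mathrel{\mathcal{R}}T\}$ under this map is exactly $\mathrm{IF}$, so $E_{\mathcal{R}}$ is analytic-hard and, in particular, not Borel; together with the fact that $\mathcal{R}_{\mathrm{iso}}$ and $\mathcal{R}_{\mathrm{Kak}}$ are analytic (each an existential quantification over a Polish parameter of a Borel condition) and $E_{\mathcal{R}_{\mathrm{iso}}}\subseteq E_{\mathcal{R}}\subseteq E_{\mathcal{R}_{\mathrm{Kak}}}$, this yields that $E_{\mathcal{R}}$ is complete analytic.

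The assignment $\mathbb{T}_{\mathcal{T}}$ is built on the symbolic machinery of \cite{FRW,FW1,FW2,GK3}. The odometer-based systems of \cite{FRW} carry the base odometer as a factor and hence can never be weakly mixing, so instead I would work with \emph{circular systems} in the sense of Foreman--Weiss, the symbolic analogue of the Anosov--Katok construction, whose coefficient sequences $(k_n,l_n)$ are at one's disposal. Into such a system one encodes $\mathcal{T}$ exactly as in \cite{FRW,GK3}: the words built at stage $n$ record the nodes of $\mathcal{T}$ of length at most $n$ together with ``timing'' data that is symmetric under word reversal precisely along branches of $\mathcal{T}$. One then argues as there. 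If $\mathcal{T}$ has an infinite branch, the reversal symmetry along it is promoted to a genuine measure-isomorphism $\mathbb{T}_{\mathcal{T}}\cong\mathbb{T}_{\mathcal{T}}^{-1}$. If $\mathcal{T}$ is well-founded, the specification-type combinatorial argument of \cite{FRW} — adapted to circular systems and to Kakutani equivalence as in \cite{GK3} — shows that any Kakutani equivalence between $\mathbb{T}_{\mathcal{T}}$ and $\mathbb{T}_{\mathcal{T}}^{-1}$ would force a matching of the hierarchical word structures that manufactures an infinite branch of $\mathcal{T}$, a contradiction.

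The genuinely new point is to carry this out with every $\mathbb{T}_{\mathcal{T}}$ weakly mixing. For this I would exploit the freedom in the circular coefficients: taking $k_n\to\infty$ fast enough and the spacer proportions $l_n/k_n$ suitably — and, if necessary, inserting Chacon-type extra spacers into the concatenation rule — one arranges the approximating rotation numbers $p_n/q_n$ to converge to an irrational $\alpha$ in such a way that no $L^2$-eigenfunction can survive; this is precisely the mechanism by which the Anosov--Katok method yields weakly mixing examples, and the Foreman--Weiss analysis of circular systems already isolates sufficient combinatorial conditions on $(k_n,l_n)$ for weak mixing. One must check that these conditions are compatible with, and leave undisturbed, the tree-encoding and timing constraints underlying (ii) and (iii); since weak mixing amounts to a fast-growth/enough-randomness requirement on parameters that are otherwise unconstrained, this should be arrangeable uniformly and Borel-measurably in $\mathcal{T}$. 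Finally, applying the Foreman--Weiss realization functor of \cite{FW1,FW2,FW3} (respectively the Banerjee--Kunde real-analytic functor of \cite{BK2}) transports the whole reduction into the weakly mixing area-preserving $C^\infty$ diffeomorphisms of a surface with a nontrivial circle action (respectively real-analytic diffeomorphisms of $\mathbb{T}^2$), since these functors preserve weak mixing as well as the isomorphism and Kakutani-equivalence relations.

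I expect the main obstacle to be the interaction between the weak-mixing mechanism and the rigidity in the well-founded case. The proof that $\mathbb{T}_{\mathcal{T}}\not\sim\mathbb{T}_{\mathcal{T}}^{-1}$ for well-founded $\mathcal{T}$ hinges on quantitative control of approximate matchings of orbit segments, forcing them to respect the tree hierarchy; the additional spacers and faster parameter growth demanded by weak mixing introduce extra slack into exactly those matchings, so one must re-verify that the governing estimates still close up. A secondary, more bookkeeping-level difficulty is to keep the assignment $\mathcal{T}\mapsto\mathbb{T}_{\mathcal{T}}$ visibly Borel while all parameter choices are made to depend on $\mathcal{T}$, and to confirm that weak mixing holds for every $\mathcal{T}$ rather than merely generically.
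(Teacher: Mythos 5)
Your overall reduction scheme is exactly the one the paper uses: build $\mathcal{T}\mapsto \mathbb{T}_{\mathcal{T}}$ with every $\mathbb{T}_{\mathcal{T}}$ weakly mixing, isomorphic to its inverse when $\mathcal{T}$ is ill-founded and not Kakutani equivalent to its inverse when $\mathcal{T}$ is well-founded, and then compose with $T\mapsto (T,T^{-1})$ to reduce the ill-founded trees. That part is fine. But the core of your construction contains a step that provably fails, and it is precisely the point where the paper has to introduce something new.

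You propose to encode the tree into a \emph{circular system} in the sense of Foreman--Weiss and to obtain weak mixing by ``exploiting the freedom in the circular coefficients $(k_n,l_n)$'' -- fast growth, spacer proportions, Chacon-type spacers. This cannot work: a circular system is by construction an extension of the circle rotation $R_\alpha$ with $\alpha=\lim p_n/q_n$ (equivalently, the untwisted AbC transformation it represents has a factor isomorphic to a circle rotation, \cite[Proposition 8.1]{Ka03}). Hence a circular system has a nontrivial Kronecker factor and is \emph{never} weakly mixing, for any choice of coefficients; there are no ``sufficient combinatorial conditions on $(k_n,l_n)$ for weak mixing'' in the Foreman--Weiss framework. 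Inserting Chacon-type spacers into the concatenation rule would destroy the circular structure and with it the entire smooth realization machinery, which is tied to the specific form of the circular operator. The missing idea -- and the paper's main new contribution -- is a \emph{twisted} version of the AbC method: the conjugation map is factored as $h_{n+1}=h_{n+1,2}\circ h_{n+1,1}$, where $h_{n+1,1}$ translates vertical strips by varying multiples of $1/q_n$ \emph{between} fundamental domains, killing the rotation factor and producing the equidistribution needed for weak mixing (Propositions \ref{prop:WM} and \ref{prop:WM2}). This forces a new symbolic calculus: the twisting operator $\mathcal{C}^{\text{twist}}_n$ of Definition \ref{def:twist}, with its $\psi_n(i)$-dependent spacer blocks, replaces the circular operator, and one must re-establish unique readability, the reverse-word identity (Lemma \ref{lem:ReverseTwist}), the propagation of the equivalence relations and group actions, and the smooth/real-analytic realization for these twisted systems. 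None of this is obtainable by tuning parameters inside the circular framework.

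A second, smaller gap: weak mixing also requires a quantitative uniformity of pairs of $n$-word classes in consecutive segments of each $(n+1)$-word (requirement \ref{item:R4}), which the word construction of \cite{GK3} does not automatically provide; the paper has to modify the substitution step (the parameter $D$ and the two interleaving sequences $\psi,\phi$ in Section \ref{subsec:Substitution}, and part (3) of Proposition \ref{prop:PropertiesCollection}) to secure it. You correctly flag that one must check compatibility of the weak-mixing mechanism with the $\overline{f}$-estimates in the well-founded case, but you underestimate the problem: it is not that the parameters are ``otherwise unconstrained,'' it is that the combinatorics of the words themselves must be altered, and then one must re-verify that the altered words still satisfy the separation estimates of \cite[Proposition 43]{GK3} and the coding arguments excluding Kakutani equivalence.
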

	
	For instance, our result holds for even equivalence and $\alpha$-equivalence. It is also worth to mention that our weakly mixing systems have measure-theoretic entropy zero.
	In a recent paper~\cite{GK4} with M.~Gerber we obtain analogous anti-classification results for the even more restricted class of $K$-automorphisms (that clearly have positive measure-theoretic entropy). Therein we even obtain anti-classification results for $K$-automorphisms that are smooth diffeomorphisms on the five-dimensional torus. The methods of the current paper allow us to obtain anti-classification results for weakly mixing diffeomorphisms on some two-dimensional manifolds. Since the $C^{\infty}$-topology refines the weak topology, the weakly mixing diffeomorphisms
	are still a $G_{\delta}$-set in the Polish space $\text{Diff}^{\,\infty}_{\,\lambda}(M)$ of measure-preserving diffeomorphisms.
	
		\begin{maintheorem}
		\label{thm:smooth}Let $M$ be the disk, annulus or torus with Lebesgue
		measure $\lambda$. Furthermore, let $\mathcal{R}$ be any equivalence relation on $\text{Diff}^{\,\infty}_{\,\lambda}(M)$ satisfying $$\mathcal{R}_{\text{iso}}\cap \left(\text{Diff}^{\,\infty}_{\,\lambda}(M) \times \text{Diff}^{\,\infty}_{\,\lambda}(M)\right) \subseteq \mathcal{R} \subseteq \mathcal{R}_{\text{Kak}}\cap \left(\text{Diff}^{\,\infty}_{\,\lambda}(M) \times \text{Diff}^{\,\infty}_{\,\lambda}(M)\right).$$ Then the collection
		\[
		\left\{ (S,T):S\text{ and }T\text{ are weakly mixing diffeomorphisms and $\mathcal{R}$-equivalent}\right\} 
		\]
		in $\text{Diff}^{\,\infty}_{\,\lambda}(M)\times\text{Diff}^{\,\infty}_{\,\lambda}(M)$
		is a complete analytic set and, hence, not a Borel set with respect to the $C^{\infty}$ topology.
	\end{maintheorem}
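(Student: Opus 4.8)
The plan is to deduce Theorem~\ref{thm:smooth} from Theorem~\ref{thm:mpt} by transporting the Borel reduction underlying the latter through a smooth realization functor, in the spirit of \cite{FW2,FW3,GK3,BK2}. Concretely, the proof of Theorem~\ref{thm:mpt} will furnish, for a complete analytic set $A$ of trees inside a Polish space, a Borel map $x\mapsto(S_x,T_x)\in\mathcal{WM}\times\mathcal{WM}$ with the dichotomy
\[
x\in A\ \Longrightarrow\ S_x\cong T_x,\qquad\quad x\notin A\ \Longrightarrow\ S_x\not\sim T_x ,
\]
and --- this is what makes a smooth version possible --- with $S_x$ and $T_x$ produced by an explicit cutting-and-stacking (symbolic) procedure lying in a prescribed class $\mathcal{C}$ of ``sufficiently controlled'' constructions. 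Writing $\D\coloneqq\text{Diff}^{\,\infty}_{\,\lambda}(M)$, the part of the argument special to Theorem~\ref{thm:smooth} is then a realization statement: there is a map $\mathcal{F}\colon\mathcal{C}\to\D$ which (i) depends continuously, hence Borel-measurably, on the parameters describing an element of $\mathcal{C}$; (ii) sends each symbolic construction to a diffeomorphism that is measure-theoretically isomorphic to it, so that weak mixing, the isomorphism class and --- in the strengthened form needed here, cf.\ \cite[Theorems 25 and 26]{GK3} --- the Kakutani equivalence class are all preserved; and (iii) applies uniformly to $M$ being the disk, the annulus or the $2$-torus $\T^2$, in each case via a non-trivial circle action (rotation in one coordinate on $\T^2$, with the customary care near the boundary for the disk and the annulus).

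Granting such an $\mathcal{F}$, the reduction for Theorem~\ref{thm:smooth} is immediate. Define the Borel map $\widetilde f(x)\coloneqq\bigl(\mathcal{F}(S_x),\mathcal{F}(T_x)\bigr)\in\D\times\D$. By (ii), both coordinates are weakly mixing diffeomorphisms for every $x$. If $x\in A$, then $\mathcal{F}(S_x)\cong\mathcal{F}(T_x)$ by (ii) together with the first implication above, and hence $\widetilde f(x)\in\mathcal{R}$ because $\mathcal{R}_{\text{iso}}\cap(\D\times\D)\subseteq\mathcal{R}$. If $x\notin A$, then $\mathcal{F}(S_x)\not\sim\mathcal{F}(T_x)$ by (ii) and the second implication, and hence $\widetilde f(x)\notin\mathcal{R}$ because $\mathcal{R}\subseteq\mathcal{R}_{\text{Kak}}\cap(\D\times\D)$. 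Thus $x\in A$ if and only if $\widetilde f(x)$ belongs to the set of Theorem~\ref{thm:smooth}; that set is therefore hard for the class of analytic sets, and being analytic by the same reasoning as for Theorem~\ref{thm:mpt} it is complete analytic. In particular it is not a Borel subset of $\D\times\D$ in the $C^{\infty}$ topology, since a Borel set cannot be hard for the analytic sets.

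The main obstacle --- and the point at which this paper must go beyond the existing toolkit --- is the construction of $\mathcal{F}$ with properties (i)--(iii) holding simultaneously. The smooth (and real-analytic) realizations available so far proceed through the circular systems of Foreman--Weiss, and every diffeomorphism produced in that way carries an irrational rotation as a measure-theoretic factor; such a diffeomorphism is never weakly mixing, so the circular-systems functor cannot be applied directly. One therefore has to enlarge or modify the class of realizable symbolic systems --- for instance by adapting the untwisted approximation-by-conjugation construction so that the successive conjugated rotations are ``smeared out'' and no rotation factor survives, as happens for the classical weakly mixing Anosov--Katok examples --- while retaining exactly the combinatorial features (prescribed cut and spacer parameters, growth rates, uniformity of the generating words) on which the tree-coding behind Theorem~\ref{thm:mpt} relies. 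One then has to verify that the $\bar d$- and $\bar f$-estimates producing the isomorphism/Kakutani-equivalence dichotomy survive this modification, and that the approximating conjugacies converge in $\D$ with enough uniformity in the parameter $x$ for $\mathcal{F}$ to be Borel. A secondary, essentially bookkeeping task is to check that the construction proving Theorem~\ref{thm:mpt} can be carried out inside the admissible class $\mathcal{C}$, i.e.\ with all free parameters chosen in the ranges required by $\mathcal{F}$.
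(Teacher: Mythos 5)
Your proposal follows essentially the same route as the paper: a Borel (in the paper, continuous) reduction of a complete analytic set to pairs of weakly mixing diffeomorphisms realizing the dichotomy ``isomorphic'' versus ``not Kakutani equivalent'', combined with the sandwich $\mathcal{R}_{\text{iso}}\subseteq\mathcal{R}\subseteq\mathcal{R}_{\text{Kak}}$; the paper packages this as Theorem~\ref{thm:criterion}, applies the map $i(T)=(T,T^{-1})$ to the image of $\Phi$, and derives Theorem~\ref{thm:mpt} from Theorem~\ref{thm:smooth} rather than the reverse. You also correctly identify that the entire substance lies in replacing the circular-systems realization (whose outputs have rotation factors) by a twisted AbC realization that preserves the $\overline{f}$-dichotomy, which is exactly what the body of the paper supplies via the twisting operator and Theorem~\ref{theo:TwistRealSmooth}.
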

	
	Since $\mathcal{R}$-equivalence for measure-preserving diffeomorphisms is reducible to $\mathcal{R}$-equivalence on $\MPT$ (see Definition~\ref{defn:reduction} for the notion of a reduction), Theorem~\ref{thm:mpt} immediately follows from Theorem~\ref{thm:smooth}.
	
Theorem~\ref{thm:smooth} is related to another major question in ergodic theory dating back to the pioneering paper~\cite{Ne}: The \textit{smooth realization problem} asks whether there are smooth versions of the objects and concepts in abstract ergodic theory and whether every ergodic measure-preserving transformation has a smooth model. Here, a smooth model of an MPT $(\Omega,\mu, T)$ is a smooth diffeomorphism $f$ of a compact manifold $M$ preserving a measure $\lambda$ equivalent to the volume element such that the MPT $(M,\lambda,f)$ is isomorphic to the MPT $(\Omega,\mu, T)$. The only known general restriction is due to Kushnirenko who proved that such a diffeomorphism must have finite entropy. There are restrictions in low dimension: Any circle diffeomorphism with invariant smooth measure is conjugate to a rotation and any weakly mixing surface diffeomorphism of positive measure-theoretic entropy is Bernoulli by Pesin theory \cite{Pe1}. Thus, weakly mixing surface diffeomorphism of positive measure-theoretic entropy are classifiable by entropy. 

Apart from Kushnirenko's result excluding smooth models of infinite entropy MPT's, there is a lack of general results on the smooth realization problem. One of the most powerful tools of constructing smooth volume-preserving diffeomorphisms of entropy zero with prescribed ergodic or topological properties is the so-called \emph{approximation by conjugation} method (also known as the \emph{AbC} method or \emph{Anosov-Katok} method) developed by D.~Anosov and A.~Katok in a highly influential paper \cite{AK70}. We refer to the survey articles \cite{FK} and \cite{Ksurvey} for expositions of the AbC method and its wide range of applications in dynamics. In particular, it provided the first examples of weakly mixing $C^{\infty}$ diffeomorphisms on the disk $\mathbb{D}$ in \cite[section~5]{AK70}. The AbC method is also used to construct weakly mixing diffeomorphisms preserving additional properties like a measurable Riemannian metric \cite{GK} or a prescribed Liouville rotation number \cite{FS}. 

Furthermore, the AbC method plays a key role in transfering the anti-classification results for ergodic MPT's in \cite{FRW} and \cite{GK3} to the smooth setting. In \cite{FW1}, Foreman and Weiss found a class of symbolic systems (the so-called \emph{circular systems}) that are realizable as smooth diffeomorphisms using the \emph{untwisted} version of the AbC method (i.\,e. the conjugation map in the AbC construction maps its fundamental domain into itself). Then they showed in \cite{FW2} that there is a functor between the class of MPT's with an odometer factor and the class of circular systems that preserves factor and isomorphism structure. This functor allows them in \cite{FW3} to transform the odometer-based systems from \cite{FRW} to circular systems which are then realized as ergodic diffeomorphisms using the untwisted AbC method. Since untwisted AbC transformations cannot be weakly mixing \cite[Proposition 8.1]{Ka03}, we design a specific twisted version of the AbC method that allows us to produce weakly mixing systems with a manageable symbolic representation. This takes on a project proposed in \cite{FW1} to find symbolic representations for other versions of the AbC method. In our case, the associated \emph{twisted symbolic systems} will serve as counterpart of the circular systems in the Foreman-Weiss' series of papers. After some small modificatios to the constructions in \cite{GK3} we transform those systems to the twisted symbolic systems that we can realize as weakly mixing diffeomorphisms. This allows us to deduce Theorem~\ref{thm:smooth}. We refer to Section~\ref{subsec:outline} for a more detailed outline of the proof.
	
Beyond $C^{\infty}$, the next natural question is the setting of real-analytic diffeomorphisms. Our weakly mixing AbC constructions can also be realized as real-analytic diffeomorphisms on $\T^2$ using the concept of  \emph{block-slide type of maps} introduced in \cite{Ba17}. This allows us to obtain our anti-classification results in the real-analytic category (in fact, our diffeomorphisms are holomorphic on a band around $\T^2$ of prescribed width $\rho>0$ in imaginary directions and we refer to Subsection~\ref{subsec:DiffeomorphismSpaces} for the definition of the space $\text{Diff}_{\rho}^{\,\omega}(\mathbb{T}^{2},\lambda)$).
\begin{maintheorem}
	\label{thm:analytic}Let $\rho>0$ and $\lambda$ be the Lebesgue
	measure on $\mathbb{T}^{2}$. Furthermore, let $\mathcal{R}$ be any equivalence relation on $\text{Diff}_{\rho}^{\,\omega}(\mathbb{T}^{2},\lambda)$ satisfying $$\mathcal{R}_{\text{iso}}\cap \left(\text{Diff}_{\rho}^{\,\omega}(\mathbb{T}^{2},\lambda) \times \text{Diff}_{\rho}^{\,\omega}(\mathbb{T}^{2},\lambda)\right) \subseteq \mathcal{R} \subseteq \mathcal{R}_{\text{Kak}}\cap \left(\text{Diff}_{\rho}^{\,\omega}(\mathbb{T}^{2},\lambda) \times \text{Diff}_{\rho}^{\,\omega}(\mathbb{T}^{2},\lambda)\right).$$ Then the collection
	\[
	\left\{ (S,T):S\text{ and }T\text{ are weakly mixing diffeomorphisms and $\mathcal{R}$-equivalent}\right\} 
	\]
	in $\text{Diff}_{\rho}^{\,\omega}(\mathbb{T}^{2},\lambda)\times\text{Diff}_{\rho}^{\,\omega}(\mathbb{T}^{2},\lambda)$
	is a complete analytic set and, hence, not a Borel set with respect
	to the $\text{Diff}_{\rho}^{\,\omega}$ topology.
\end{maintheorem}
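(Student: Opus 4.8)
The plan is to produce a continuous reduction of the complete analytic set of ill-founded trees in $\trees$ to the collection in the statement, by realizing the twisted symbolic systems of the paper as real-analytic, Lebesgue-measure-preserving diffeomorphisms of $\mathbb{T}^{2}$; this runs parallel to the proof of Theorem~\ref{thm:smooth}, the one genuinely new ingredient being the real-analytic realization. The abstract input — which is exactly the content proving Theorem~\ref{thm:mpt} — is a continuous assignment $\mathcal{T}\mapsto\mathbb{K}_{\mathcal{T}}$ from $\trees$ to $\mathcal{WM}\subseteq\MPT$, each $\mathbb{K}_{\mathcal{T}}$ being (isomorphic to) a twisted symbolic system, such that $\mathbb{K}_{\mathcal{T}}\cong\mathbb{K}_{\mathcal{T}}^{-1}$ whenever $\mathcal{T}$ is ill-founded and $\mathbb{K}_{\mathcal{T}}\not\sim\mathbb{K}_{\mathcal{T}}^{-1}$ whenever $\mathcal{T}$ is well-founded.

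The heart of the matter is to upgrade this to the real-analytic category on $\mathbb{T}^{2}$. I would run the twisted AbC scheme with conjugating maps $h_{n}$ obtained as Lebesgue-measure-preserving real-analytic approximations of block-slide type maps in the sense of \cite{Ba17}, the combinatorial data at stage $n$ being dictated by the $n$-th stage of the twisted symbolic construction associated with $\mathcal{T}$. Three things then have to be checked: (i) the real-analytic norms of the partial products $H_{n}=h_{1}\circ\dots\circ h_{n}$ on a fixed complex neighbourhood of $\mathbb{T}^{2}$ of width $\rho$ can be controlled so that the AbC sequence $f_{n}=H_{n}\circ R_{\alpha_{n+1}}\circ H_{n}^{-1}$ converges to some $f_{\mathcal{T}}\in\text{Diff}_{\rho}^{\,\omega}(\mathbb{T}^{2},\lambda)$; (ii) $(\mathbb{T}^{2},\lambda,f_{\mathcal{T}})$ is measure-theoretically isomorphic to $\mathbb{K}_{\mathcal{T}}$ and is weakly mixing — this is where the \emph{twist} must be genuinely present, recall that untwisted AbC diffeomorphisms are never weakly mixing by \cite[Proposition~8.1]{Ka03}; and (iii) all of the above is uniform in $\mathcal{T}$, so that $\mathcal{T}\mapsto f_{\mathcal{T}}$ is continuous into $\text{Diff}_{\rho}^{\,\omega}(\mathbb{T}^{2},\lambda)$. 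Point (iii) should reduce to uniform estimates, since $\mathcal{T}$ enters the construction only through bounded, explicitly prescribed combinatorial choices, exactly as in the $C^{\infty}$ construction behind Theorem~\ref{thm:smooth}.

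With this in hand the reduction is immediate. The same scheme with $R_{-\alpha_{n+1}}$ in place of $R_{\alpha_{n+1}}$, using the same conjugacies $H_{n}$, produces $f_{\mathcal{T}}^{-1}\in\text{Diff}_{\rho}^{\,\omega}(\mathbb{T}^{2},\lambda)$, so $\mathcal{T}\mapsto(f_{\mathcal{T}},f_{\mathcal{T}}^{-1})$ is a continuous map of $\trees$ into $\text{Diff}_{\rho}^{\,\omega}(\mathbb{T}^{2},\lambda)\times\text{Diff}_{\rho}^{\,\omega}(\mathbb{T}^{2},\lambda)$ landing in pairs of weakly mixing diffeomorphisms. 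If $\mathcal{T}$ is ill-founded then $f_{\mathcal{T}}\cong\mathbb{K}_{\mathcal{T}}\cong\mathbb{K}_{\mathcal{T}}^{-1}\cong f_{\mathcal{T}}^{-1}$, hence $f_{\mathcal{T}}\,\mathcal{R}\,f_{\mathcal{T}}^{-1}$ because $\mathcal{R}_{\text{iso}}\subseteq\mathcal{R}$; if $\mathcal{T}$ is well-founded then $f_{\mathcal{T}}\not\sim f_{\mathcal{T}}^{-1}$ (Kakutani equivalence being isomorphism-invariant and $\mathbb{K}_{\mathcal{T}}\not\sim\mathbb{K}_{\mathcal{T}}^{-1}$), so $f_{\mathcal{T}}$ and $f_{\mathcal{T}}^{-1}$ are not $\mathcal{R}$-equivalent because $\mathcal{R}\subseteq\mathcal{R}_{\text{Kak}}$. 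Thus $(f_{\mathcal{T}},f_{\mathcal{T}}^{-1})$ belongs to $\{(S,T):S,T\text{ weakly mixing diffeomorphisms and }\mathcal{R}\text{-equivalent}\}$ exactly when $\mathcal{T}$ is ill-founded, so $\mathcal{T}\mapsto(f_{\mathcal{T}},f_{\mathcal{T}}^{-1})$ continuously reduces the complete analytic set of ill-founded trees to that collection; as the collection is sandwiched between the analytic sets cut out by isomorphism and by Kakutani equivalence of pairs of weakly mixing diffeomorphisms, it is analytic, hence complete analytic, hence not Borel.

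The step I expect to be the real obstacle is the real-analytic realization — concretely, adapting the block-slide approximation of \cite{Ba17} so that the specific twist needed to force weak mixing is compatible with keeping the real-analytic norms of the $H_{n}$ under control on the prescribed width $\rho$, while keeping every estimate uniform over the tree parameter so that the resulting diffeomorphism depends continuously on $\mathcal{T}$.
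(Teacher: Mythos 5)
Your proposal is essentially the paper's own proof: Theorem~\ref{thm:analytic} is obtained by rerunning the argument for Theorem~\ref{thm:smooth} with the real-analytic realization of twisted systems via block-slide type maps (Theorem~\ref{theo:TwistRealAnalytic} and Lemma~\ref{lem:closeAnalytic}) substituted for the smooth realization, yielding a continuous map $\mathcal{T}\mapsto f_{\mathcal{T}}\in\text{Diff}_{\rho}^{\,\omega}(\mathbb{T}^{2},\lambda)$ and then the reduction $\mathcal{T}\mapsto(f_{\mathcal{T}},f_{\mathcal{T}}^{-1})$ exactly as you describe. Two cosmetic remarks: $f_{\mathcal{T}}^{-1}$ needs no separate construction, since membership in $\text{Diff}_{\rho}^{\,\omega}(\mathbb{T}^{2},\lambda)$ and continuity of $T\mapsto T^{-1}$ are built into the definition of the metric $d_{\rho}$; and your closing claim that being sandwiched between two analytic sets makes the collection analytic is not a valid inference (the paper likewise leaves analyticity of the collection for arbitrary $\mathcal{R}$ implicit), but this does not affect the reduction or the conclusion that the set is not Borel.
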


It is still an open problem whether the anti-classification results for $K$-systems from \cite{GK4} also hold in the real-analytic category. We emphasize that all real-analytic constructions in this article are done on the torus. It is a challenging problem to extend them to other real-analytic manifolds.
	
	\subsection{Strategy of proof}
Following the strategy from \cite{FRW} we want to reduce the complete analytic set of ill-founded trees to the collection of $\mathcal{R}$-equivalent weakly mixing transformations. To explain this further, we introduce some terminology and basic facts from Descriptive Set Theory (see \cite{Kechris} or \cite[section 2]{FRW}). The main tool is the idea of a
reduction.	
	\begin{defn}\label{defn:reduction}
		Let $X$ and $Y$ be Polish spaces and $A\subseteq X$, $B\subseteq Y$.
		A function $f:X\to Y$ \emph{reduces} $A$ to $B$ if and only if
		for all $x\in X$: $x\in A$ if and only if $f(x)\in B$. 
		Such a function $f$ is called a Borel (respectively, continuous) reduction
		if $f$ is a Borel (respectively, continuous) function.
	\end{defn}
	
	We note that if $f$ is a Borel reduction of $A$ to $B$ and $A$ is not Borel,
	then $B$ is not Borel.
	
	\begin{defn}
		If $X$ is a Polish space and $A\subseteq X$, then $A$ is \emph{analytic}
		if and only if it is the continuous image of a Borel subset of a Polish
		space. An analytic subset $A$ of a Polish space $X$ is called \emph{complete analytic} if every analytic set can be continuously reduced to $A$.
\end{defn}
	
	Since there are analytic sets that are not Borel, a complete analytic set is not Borel. The collection of ill-founded trees is an example of a complete analytic set.	
	Here, a \emph{tree} is a set $\mathcal{T}\subseteq\mathbb{N}^{<\mathbb{N}}$
	such that if $\tau=\left(\tau_{1},\dots,\tau_{n}\right)\in\mathcal{T}$
	and $\sigma=\left(\tau_{1},\dots,\tau_{m}\right)$ with $m\leq n$
	is an initial segment of $\tau$, then $\sigma\in\mathcal{T}$.	
		An \emph{infinite branch} through $\mathcal{T}$ is a function $f:\mathbb{N}\to\mathbb{N}$ such that for all $n\in\mathbb{N}$ we have $\left(f(0),\dots,f(n-1)\right)\in\mathcal{T}$. If a tree has an infinite branch, it is called \emph{ill-founded}.
		If it does not have an infinite branch, it is called \emph{well-founded}.
	
	In Section~\ref{subsec:trees} we describe a topology on the collection of trees. We will see that the space $\mathcal{T}\kern-.5mm rees$ of trees containing arbitrarily long finite sequences is a Polish space.
	As mentioned before, we have the classical fact that the collection of ill-founded trees is a complete
	analytic subset of $\mathcal{T}\kern-.5mm rees$ \cite[section~27]{Kechris}.
	
	To prove Theorem \ref{thm:smooth} we actually show the following stronger
	result.
	\begin{thm}
		\label{thm:criterion}Let $M$ be the disk, annulus or torus with Lebesgue
		measure $\lambda$. There is a continuous one-to-one map 
		\[
		\Phi:\mathcal{T}\kern-.5mm rees\to \text{Diff}^{\,\infty}_{\,\lambda}(M)
		\]
		such that for every $\mathcal{T}\in\mathcal{T}\kern-.5mm rees$ the diffeomorphism $T=\Phi(\mathcal{T})$ is weakly mixing and we have:
		\begin{enumerate}
			\item If $\mathcal{T}$ has an infinite branch, then $T$ and $T^{-1}$
			are isomorphic.
			\item If $T$ and $T^{-1}$ are Kakutani equivalent, then $\mathcal{T}$
			has an infinite branch.
		\end{enumerate}
	\end{thm}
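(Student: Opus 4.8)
The plan is to build $\Phi$ by realizing a suitable class of combinatorially presented systems, the \emph{twisted symbolic systems} announced in the introduction, as smooth diffeomorphisms, while encoding trees into the combinatorial data of these systems following \cite{GK3}. Concretely, I would first produce a continuous injection $\mathcal{T}\mapsto\mathbb{K}(\mathcal{T})$ from $\trees$ into the twisted symbolic systems, and then a continuous injection $\mathcal{T}\mapsto T=\Phi(\mathcal{T})\in\text{Diff}^{\,\infty}_{\,\lambda}(M)$ for which $(M,\lambda,T)$ is measure-theoretically isomorphic to $\mathbb{K}(\mathcal{T})$. Since isomorphism and Kakutani equivalence are invariants of the abstract measure-preserving system, statements (1) and (2) about $T$ and $T^{-1}$ follow once the analogous statements are proved for $\mathbb{K}(\mathcal{T})$ and $\mathbb{K}(\mathcal{T})^{-1}$.

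First I would set up the twisted symbolic systems. Following the construction of the circular systems in \cite{FW1,FW2}, one defines inductively in $n$ a growing alphabet together with a set of $n$-words assembled from the $(n-1)$-words by a circular-type concatenation, but now with a nontrivial cyclic \emph{twist} inserted at each stage; this is the feature that, on the diffeomorphism side, forces the conjugating maps of the AbC construction to move their fundamental domains and hence makes weak mixing possible, whereas untwisted AbC transformations can never be weakly mixing by \cite[Proposition 8.1]{Ka03}. I would then record the basic structural facts: the resulting subshift is uniquely ergodic; with the twist parameters growing fast enough it is weakly mixing, which one verifies by a Katok-style estimate showing that the $\bar{f}$-distances between the $n$-words and their shifts preclude any nonconstant $L^{2}$-eigenfunction; and there is a functor, parallel to the Foreman-Weiss functor of \cite{FW2}, from the odometer-based systems of \cite{GK3} to the twisted symbolic systems which preserves factor, isomorphism, and Kakutani-equivalence structure. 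Into this machine one feeds the tree: at stage $n$ the construction reads the nodes of $\mathcal{T}$ of length at most $n$ and uses them to prescribe which substitutions and twists are performed, exactly as in \cite{GK3} up to the small modifications announced in the introduction, arranged so that $\mathcal{T}\mapsto\mathbb{K}(\mathcal{T})$ is continuous for the topology on $\trees$ from Section \ref{subsec:trees} and injective.

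The coding is designed so that the dynamics along an infinite branch becomes symmetric under time reversal. If $\mathcal{T}$ has an infinite branch $b$, then $b$ selects, coherently across all levels, an identification of each $n$-word with its reverse; these identifications assemble into a measure-theoretic isomorphism $\mathbb{K}(\mathcal{T})\cong\mathbb{K}(\mathcal{T})^{-1}$, which transports through the realization to $\Phi(\mathcal{T})\cong\Phi(\mathcal{T})^{-1}$ and gives (1). For (2) I would argue the contrapositive: assume $\mathcal{T}$ is well-founded and, towards a contradiction, that $\mathbb{K}(\mathcal{T})\sim\mathbb{K}(\mathcal{T})^{-1}$. Passing to the induced maps on the positive-measure sets witnessing the Kakutani equivalence and invoking the analysis of \cite{FRW,GK3}, according to which any Kakutani equivalence between a system of this kind and its inverse must, at sufficiently deep levels, nearly respect the canonical word structure in the $\bar{f}$-metric, one extracts for every $n$ a node of $\mathcal{T}$ of length $n$ that is an initial segment of the node produced at level $n+1$; these nodes determine an infinite branch, contradicting well-foundedness.

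Finally I would carry out the realization. The twisted symbolic systems are produced by a twisted version of the AbC method on $M$; on $\T^{2}$ the smooth conjugations are replaced by block-slide type maps in the sense of \cite{Ba17} to keep the construction real-analytic, and all AbC data, namely the partial rotation numbers, the twist, and the combinatorial data of the conjugating diffeomorphisms, are chosen to depend continuously and injectively on $\mathcal{T}$. Standard AbC convergence estimates then yield a well-defined $\Phi(\mathcal{T})$, the usual isomorphism between an AbC transformation and its symbolic model identifies $(M,\lambda,\Phi(\mathcal{T}))$ with $\mathbb{K}(\mathcal{T})$, and the twist guarantees weak mixing. I expect the main obstacle to be the tension inherent in the choice of twist: it must be strong enough to destroy every eigenfunction, yet mild enough that the twisted words still admit the sharp $\bar{f}$-estimates underlying both the tree coding and the Kakutani-equivalence obstruction of \cite{GK3}, and that the functor above continues to preserve Kakutani equivalence. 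Verifying that the anti-classification obstruction of \cite{FRW} survives the passage from circular systems to the twisted symbolic systems is, I expect, the technical heart of the argument.
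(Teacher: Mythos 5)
Your architecture is the paper's: twisted symbolic systems as the counterpart of circular systems, the tree coded into the word combinatorics following \cite{GK3}, realization by a twisted AbC method (block-slide maps for the analytic case), part (1) from a coherent family of word-reversal identifications selected by an infinite branch, and part (2) from the $\overline{f}$/Feldman-pattern obstruction of \cite{GK3} applied to the twisted words. Two of your intermediate steps, however, are not what the paper does and would not go through as stated. First, the weak-mixing mechanism: you propose to preclude eigenfunctions via $\overline{f}$-estimates between $n$-words and their shifts, but $\overline{f}$-separation of words does not rule out eigenfunctions (the untwisted odometer-based systems of \cite{GK3} satisfy exactly such estimates and are not weakly mixing). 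The paper instead verifies the classical Anosov--Katok criterion: it designs the conjugation $h_{n+1}=h_{n+1,2}\circ h_{n+1,1}$ so that $h_{n+1}\circ R^{m_n}_{\alpha_{n+1}}\circ h_{n+1}^{-1}$ spreads each atom of $\xi_{q_n,s_n}$ almost uniformly over all atoms (Lemma \ref{lem:horidistri} and Proposition \ref{prop:WM2}), and this forces a quantitative uniformity requirement \ref{item:R4} on adjacent $n$-words that must be threaded back into the substitution step of \cite{GK3} (the extra parameter $D$ and condition \ref{item:B3}). Identifying and propagating that combinatorial requirement is one of the two technical cores of the proof, and your plan does not contain it.

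Second, you invoke a functor from odometer-based to twisted systems ``which preserves factor, isomorphism, and Kakutani-equivalence structure,'' parallel to \cite{FW2}. The paper does not construct such a functor and explicitly leaves the global structure theory of twisted systems open; it only defines the bijections $\kappa_n$ between odometer-based and twisted words and propagates the equivalence relations and group actions equivariantly. The isomorphism in (1) is then built directly from odd-parity elements of $G_\infty(\mathcal{T})$ acting by the twisted skew-diagonal action (Lemmas \ref{lem:siso} and \ref{lem:iso}), and the non-equivalence in (2) is obtained by redoing the $\overline{f}$-estimates of \cite{GK3} directly on the twisted words, using that each $1$-subsection has the form $b^{q_n-i}w^{l_n-1}e^{i}$ so the spacers can be ignored. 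A genuine Kakutani-equivalence-preserving functor would be a much stronger (and unproven) statement than what is needed; replacing it by the direct word-level argument is how the paper closes this step.
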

We now show how Theorem \ref{thm:smooth} follows from Theorem \ref{thm:criterion}.
\begin{proof}[Proof of Theorem \ref{thm:smooth}]
	Since the equivalence relation $\mathcal{R}$ is finer than or equal to isomorphism and coarser than or equal to Kakutani equivalence, Theorem~\ref{thm:criterion} yields the existence of a continuous one-to-one map $\Phi:\mathcal{T}\kern-.5mm rees\to\text{Diff}^{\,\infty}_{\,\lambda}(M)$
	such that for $\mathcal{T}\in\mathcal{T}\kern-.5mm rees$ and weakly mixing $T=\Phi(\mathcal{T})$: $\mathcal{T}$ has an infinite branch if and only if $T$ and $T^{-1}$
	are $\mathcal{R}$-equivalent. Using the terminology from above, this says that $\Phi$ is a continuous reduction from the complete analytic set of ill-founded trees to the set 
	\[
	\left\{ T\in\text{Diff}^{\,\infty}_{\,\lambda}(M):T\text{ is weakly mixing and $\mathcal{R}$-equivalent to }T^{-1}\right\}. 
	\]
	We reduce this set to
	\[
	\left\{ (S,T)\in\text{Diff}^{\,\infty}_{\,\lambda}(M)\times\text{Diff}^{\,\infty}_{\,\lambda}(M):S\text{ and }T\text{ are weakly mixing and $\mathcal{R}$-equivalent}\right\} 
	\]
	by applying the continuous map $i(T)=\left(T,T^{-1}\right)$ from
	$\text{Diff}^{\,\infty}_{\,\lambda}(M)$ to $\text{Diff}^{\,\infty}_{\,\lambda}(M)\times\text{Diff}^{\,\infty}_{\,\lambda}(M)$.
\end{proof}

The proof of Theorem~\ref{thm:analytic} in Section~\ref{subsec:Non-Equiv} will follow the same strategy as in the smooth setting. 
	
	\subsection{Outline of the paper}\label{subsec:outline}

In their proof of the anti-classification result for ergodic measure-preserving transformations in \cite{FRW}, Foreman, Rudolph, and Weiss construct a continuous function $F$ from the space $\trees$ to the invertible measure-preserving transformations assigning to each tree $\mathcal{T}$ an ergodic transformation $T=F(\mathcal{T})$ of zero-entropy such that $T \cong T^{-1}$ just in case $\mathcal{T}$ has an infinite branch. The assembly of such a transformation $T$ can be viewed as cutting\&stacking construction or as the construction of a symbolic system. When taking the second viewpoint, then the symbolic system is built with a strongly uniform
and uniquely readable construction sequence $\left(\mathtt{W}_{n}\left(\mathcal{T}\right)\right)_{n\in\mathbb{N}}$ which implies its ergodicity (see Section~\ref{subsec:SymSpace} for terminology). Related to the structure of the tree, equivalence relations $\mathcal{Q}_{s}^{n}(\mathcal{T})$ on the collections $\mathtt{W}_{n}(\mathcal{T})$ of $n$-words and group actions on the equivalence classes in $\mathtt{W}_{n}(\mathcal{T})/\mathcal{Q}_{s}^{n}(\mathcal{T})$
are specified in \cite{FRW}. Then $(n+1)$-words are built by substituting finer equivalence classes of $n$-words into coarser classes using a probabilistic substitution lemma. The constructed transformations have an odometer as a non-trivial Kronecker factor and, hence, are \textit{Odometer-based Systems} in up-to-date terminology. Then a complete analysis of joinings over the odometer base is used in \cite{FRW} to find possible isomorphisms between transformations and their inverses. Since weakly mixing transformations have trivial Kronecker factors, we cannot use this method of joinings in our proof. Instead, we use a finite coding argument as in the proof of anti-classification results for Kakutani equivalence in \cite{GK3}. (As announced in \cite{FRW06}, the coding approach and $\overline{d}$-estimates could also be used to exclude an isomorphism between $T$ and $T^{-1}$ in case that the tree has no infinite branch.) The words in \cite{GK3} are built using a deterministic procedure by substituting so-called \emph{Feldman patterns} of finer classes into Feldman patterns of coarser classes. We refer to Section~\ref{subsec:Feldman} for a description of Feldman patterns and their properties. In particular, different Feldman patterns cannot be matched well in $\overline{f}$ even after a finite coding. We review further important properties of the construction from \cite{GK3} in Section~\ref{subsec:spec}. The resulting transformations are odometer-based systems and are not weakly mixing.

One also meets the \emph{odometer obstacle} when looking for smooth versions of the aforementioned anti-classification results, because it is a persistent open problem to find a smooth realization of transformations with an odometer-factor (see \cite[Problem 7.10]{FK}). Foreman and Weiss circumvent that obstacle by showing that the collection of odometer-based systems has the same global structure with respect to joinings as another collection of transformations, the so-called \emph{circular systems} that are extensions of particular circle rotations. For this purpose, they show in \cite{FW2} that there is a functor $\mathcal{F}$ between these classes that takes specific types of isomorphisms between odometer-based systems to isomorphisms between circular systems.
The definition of these circular systems is inspired by a symbolic representation for circle rotations by certain Liouville rotation numbers found in \cite{FW1}. Then Foreman and Weiss use the AbC method to show that these circular systems can be realized as area-preserving ergodic $C^{\infty}$-diffeomorphisms on torus or disk or  annulus (under some assumptions on the circular coefficients). 
In the AbC method one constructs diffeomorphisms as limits of conjugates $T_n = H_n \circ R_{\alpha_{n}} \circ H^{-1}_n$ with $\alpha_{n+1} = \frac{p_{n+1}}{q_{n+1}}=\alpha_n + \frac{1}{k_n \cdot l_n \cdot q^2_n} \in \mathbb{Q}$ and $H_n = H_{n-1} \circ h_n$, where the $h_n$'s are measure-preserving diffeomorphisms satisfying $R_{\frac{1}{q_n}} \circ h_{n+1} = h_{n+1} \circ R_{\frac{1}{q_n}}$. In each step the conjugation map $h_{n+1}$ and the parameter $k_n$ are chosen such that the diffeomorphism $T_{n+1}$ imitates the desired property with a certain precision. In a final step of the construction, the parameter $l_n$ is chosen large enough to guarantee closeness of $T_{n+1}$ to $T_{n}$ in the $C^{\infty}$-topology, and this way the convergence of the sequence $\left(T_n\right)_{n \in \mathbb{N}}$ to a limit diffeomorphism is provided. The resulting realization map $R$ of circular systems allows us to extend the aforementioned anti-classification results from \cite{FRW} and \cite{GK3} for measure-preserving transformations to the setting of smooth area-preserving diffeomorphisms via the reduction $R\circ \mathcal{F} \circ F$.

The AbC constructions in the series of papers \cite{FW1,FW2,FW3} by Foreman--Weiss and in \cite{GK3} were \emph{untwisted}, that is, the conjugation map $h_{n+1}$ maps the fundamental domain $[0,1/q_n]\times [0,1]$ into itself. By \cite[Proposition~8.1]{Ka03} an untwisted AbC transformation has a factor isomorphic to a circle rotation. Hence, an untwisted AbC transformation cannot be weakly mixing. Accordingly, we have to design a specific twisted version of the AbC method in Sections~\ref{subsec:abstract}--\ref{subsec:constr} to produce weakly mixing transformations. 

On the one hand, our AbC constructions are complicated enough to produce weak mixing behaviour. On the other hand, they are sufficiently manageable to still allow a simple symbolic representation. This symbolic representation is described in Section~\ref{subsec:symbolic}. It motivates the introduction of so-called \emph{twisted symbolic systems}. We present their definition and basic properties in Section~\ref{sec:twist}. The concept of twisted systems is our counterpart of the circular systems in \cite{FW1,FW2,FW3}. It is an interesting task to explore properties of twisted systems in parallel to the analysis of circular systems which culminated in a global structure theory in \cite{FW2}.

In Section \ref{subsec:smooth}, we obtain realizations of our weakly mixing abstract AbC transformations as $C^{\infty}$ diffeomorphisms on $\mathbb{T}^2$, $\mathbb{D}$, and $\mathbb{A}=\mathbb{S}^1 \times [0,1]$: If the parameter sequence $(l_n)_{n \in \N}$ growths sufficiently fast, then there is an area-preserving $C^{\infty}$ diffeomorphism measure-theoretically isomorphic to a given twisted symbolic system. 

In order to prove our Theorem \ref{thm:criterion} we undertake some small modifications to the inductive construction of words in \cite{GK3} such that the resulting odometer-based construction sequence allows the creation of a weakly mixing system via our twisting operator. We present the general substitution step in Section~\ref{subsec:Substitution} with emphasis on the small modifications. In Section~\ref{subsec:Construction} we execute the inductive construction process of the odometer-based construction sequence and the associated twisted construction sequence. In this way we build the continuous reduction $\Phi:\mathcal{T}\kern-.5mm rees\to \text{Diff}^{\,\infty}_{\,\lambda}(M)$. Finally, we verify in Section~\ref{sec:Proof} that $\Phi$ satisfies the properties stated in Theorem~\ref{thm:criterion}.

	\section{\label{sec:Preliminaries}Preliminaries}
	We review some terminology from ergodic theory and symbolic dynamics. In Sections~\ref{subsec:trees} and \ref{subsec:DiffeomorphismSpaces} we introduce spaces of trees and real-analytic diffeomorphisms of $\T^2$, respectively. 
	
	\subsection{Some basics in Ergodic Theory}\label{subsec:erg}
	We fix a \emph{standard measure space} $(\Omega,\mathcal{M},\mu)$, that is, a separable non-atomic probability space.  
	We let $\mathcal{X}$ denote the group of MPT's on $(\Omega,\mathcal{M},\mu)$
	endowed with the weak topology, where two MPT's are identified if
	they are equal on sets of full measure. Recall that $T_{n}\to T$ in the weak topology if and only
	if $\mu(T_{n}(A)\triangle T(A))\to0$ for every $A\in\mathcal{M}$.
	In order to give a concrete description of the weak topology we also recall that a (finite) \emph{partition} $\mathcal{P}$ of $(\Omega,\mathcal{M},\mu)$ is a collection $\mathcal{P}=\left\{ c_{\sigma}\right\} _{\sigma\in\Sigma}$
	of subsets $c_{\sigma}\in\mathcal{M}$ with $\mu(c_{\sigma}\cap c_{\sigma'})=0$
	for all $\sigma\neq\sigma'$ and $\mu\left(\bigcup_{\sigma\in\Sigma}c_{\sigma}\right)=1$,
	where $\Sigma$ is a finite set of indices. Each $c_{\sigma}$
	is called an \emph{atom} of the partition $\mathcal{P}$. 
	For two partitions $\mathcal{P}$ and $\mathcal{Q}$ we define the join of $\mathcal{P}$ and $\mathcal{Q}$ to be the partition $\mathcal{P}\vee \mathcal{Q}=\Meng{ c\cap d}{c\in \mathcal{P},d\in \mathcal{P}}$,
	and for a sequence of partitions $\{\mathcal{P}_{n}\}_{n=1}^{\infty}$ we let
	$\vee_{n=1}^{\infty}\mathcal{P}_{n}$ be the smallest $\sigma$-algebra containing $\cup_{n=1}^{\infty}\mathcal{P}_{n}$.
	We say that a sequence of partitions $\{\mathcal{P}_{n}\}_{n=1}^{\infty}$ is a \emph{generating sequence} if 
	$\vee_{n=1}^{\infty}\mathcal{P}_{n} = \mathcal{M}$. A sequence of partitions $\{\mathcal{P}_n\}_{n\in \N}$ is called \emph{decreasing} if $\mathcal{P}_{n+1}$ is a refinement of $\mathcal{P}_n$ for any $n\in \N$.
	We also have a standard
	notion of a \emph{distance between two ordered partitions}: If $\mathcal{P}=\{c_{i}\}_{i=1}^{N}$
	and $\mathcal{Q}=\{d_{i}\}_{i=1}^{N}$ are two ordered partitions with the same number of atoms, then
	we define $D_{\mu}(\mathcal{P},\mathcal{Q})=\sum_{i=1}^{N}\mu(c_{i}\triangle d_{i})$,
	where $\triangle$ denotes the symmetric difference.  
	Now we are ready to give a concrete description of the weak topology as follows: For $T \in \MPT$, a finite partition $\mathcal{P}$, and $\varepsilon >0$ we define 
	\[
	N(T,\mathcal{P},\varepsilon)=\Meng{S\in \MPT}{\sum_{A\in \mathcal{P}}\mu(TA\triangle SA)< \varepsilon}.
	\]
	If $\{\mathcal{P}_n\}_{n\in \mathbb{N}}$ is generating, then
	$
	\Meng{N(T,\mathcal{P}_n,\varepsilon)}{T\in \MPT, \, n \in \mathbb{N},\, \varepsilon>0}
$
	generates the weak topology on $\MPT$.
	
	The following criterion proves useful to check convergence in the weak topology.
	\begin{fact}[\cite{FW1}, Lemma 5]\label{fact:ConvWT}
		Let $\{T_n\}_{n\in \N}$ be a sequence of MPTs and $\{\mathcal{P}_n\}_{n\in \mathbb{N}}$ be a generating sequence of partitions. Then the following statements
		are equivalent:
		\begin{enumerate}
			\item The sequence $\{T_n\}_{n\in \N}$ converges to a MPT in the weak topology.
			\item For all $\epsilon > 0$, $p\in \N$ there is $N\in \N$ such that for all $n,m > N$ we have
			\[
			\sum_{A \in \mathcal{P}_p, i=\pm1}\mu\left(T^i_m(A)\triangle T^i_n(A)\right)<\epsilon.
			\]
		\end{enumerate}
	\end{fact}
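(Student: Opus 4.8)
The plan is to prove the cycle $(1)\Rightarrow(2)\Rightarrow(3)\Rightarrow(1)$. I will use repeatedly that $d(A,B):=\mu(A\triangle B)$ is a pseudometric on $\mathcal{M}$, that every $S\in\MPT$ (in particular every $T_n$ and every $T_n^{-1}$) acts on $\mathcal{M}$ modulo null sets as a $d$-isometry commuting with finite Boolean operations, and the resulting estimates $d\!\left(\bigcup_{j}SA_{j},\bigcup_{j}TA_{j}\right)\le\sum_{j}d(SA_{j},TA_{j})$ and $d\!\left(\bigcap_{j}SA_{j},\bigcap_{j}TA_{j}\right)\le\sum_{j}d(SA_{j},TA_{j})$ for finite families. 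I will also use that inversion is weakly continuous: if $d(T_nB,TB)\to0$ for all $B$, then writing $B=T^{-1}A$ and using that $T_n^{-1}$ is a $d$-isometry gives $d(T_n^{-1}A,T^{-1}A)=d\!\left(T_n^{-1}(TB),T_n^{-1}(T_nB)\right)=d(TB,T_nB)\to0$; thus $T_n\to T$ weakly implies $T_n^{i}\to T^{i}$ weakly for $i=\pm1$.

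For $(1)\Rightarrow(2)$: if $T_n\to T$ weakly then $d(T_n^{i}A,T^{i}A)\to0$ for every $A$ and $i=\pm1$ by the above, so $d(T_m^{i}A,T_n^{i}A)\le d(T_m^{i}A,T^{i}A)+d(T^{i}A,T_n^{i}A)\to0$. For $(2)\Rightarrow(3)$: given $\eps>0$ and $p\in\N$, apply $(2)$ to each of the $|\mathcal{P}_p|$ atoms of $\mathcal{P}_p$ with tolerance $\eps/(2|\mathcal{P}_p|)$ and take $N$ to be the largest resulting threshold; summing over atoms and $i=\pm1$ gives $(3)$.

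The content is in $(3)\Rightarrow(1)$. Let $\mathcal{A}_0$ be the countable Boolean algebra generated by $\bigcup_p\mathcal{P}_p$; since $\{\mathcal{P}_p\}$ is generating, $\mathcal{A}_0$ is $d$-dense in $\mathcal{M}$ (modulo null sets). Any $B\in\mathcal{A}_0$ is a finite union of sets $A_1\cap\cdots\cap A_k$ with $A_\ell\in\mathcal{P}_{p_\ell}$, so the union/intersection estimates together with $(3)$ for $\mathcal{P}_{p_1},\dots,\mathcal{P}_{p_k}$ give: for each $B\in\mathcal{A}_0$, each $\eps>0$ and $i=\pm1$ there is $N$ with $d(T_m^{i}B,T_n^{i}B)<\eps$ for all $m,n>N$. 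Approximating an arbitrary $A\in\mathcal{M}$ by $B\in\mathcal{A}_0$ with $d(A,B)<\eps/4$ and using $d(T_n^{i}A,T_n^{i}B)=d(A,B)$, we conclude that $\{T_n^{i}A\}_n$ is $d$-Cauchy for every $A\in\mathcal{M}$. Since $(\mathcal{M},d)$ is complete, we may set $\bar{T}A:=\lim_nT_nA$ and $\bar{S}A:=\lim_nT_n^{-1}A$. Passing finite Boolean identities and measure-preservation to the limit, $\bar{T}$ is a $\mu$-preserving $\sigma$-complete Boolean homomorphism; moreover $d\!\left(T_n(\bar{S}A),A\right)=d\!\left(T_n(\bar{S}A),T_n(T_n^{-1}A)\right)=d(\bar{S}A,T_n^{-1}A)\to0$ while $T_n(\bar{S}A)\to\bar{T}(\bar{S}A)$, so $\bar{T}\circ\bar{S}=\mathrm{id}$ and, symmetrically, $\bar{S}\circ\bar{T}=\mathrm{id}$. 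Thus $\bar{T}$ is an automorphism of the measure algebra, hence induced by some $T\in\MPT$; then $d(T_nA,TA)=d(T_nA,\bar{T}A)\to0$ for all $A$, i.e.\ $T_n\to T$ weakly, which is $(1)$.

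The only input beyond symmetric-difference bookkeeping is the realization of the limiting measure-algebra automorphism by a point transformation, equivalently the fact that the weak topology on $\MPT$ is induced by a complete metric, which can simply be quoted. The step that I expect to need the most care, and the only one genuinely using that $\{\mathcal{P}_n\}$ is a generating sequence rather than an arbitrary sequence of partitions, is the passage in $(3)\Rightarrow(1)$ from the atom-by-atom bound of $(3)$ to $d$-Cauchyness on all of $\mathcal{M}$, which relies on the density of $\mathcal{A}_0$ together with the union/intersection inequalities.
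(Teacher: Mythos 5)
Your proof is correct. The paper itself does not prove this statement --- it is quoted as a Fact from [FW1, Lemma 5] --- and your argument is the standard one for it: the cycle $(1)\Rightarrow(2)\Rightarrow(3)\Rightarrow(1)$, with the content concentrated in $(3)\Rightarrow(1)$, where density of the Boolean algebra generated by $\bigcup_p\mathcal{P}_p$ upgrades the atomwise Cauchy condition to $d$-Cauchyness of $\{T_n^{\pm1}A\}$ for every measurable $A$, completeness of the measure algebra produces limit endomorphisms $\bar{T},\bar{S}$, the identity $\bar{T}\circ\bar{S}=\bar{S}\circ\bar{T}=\mathrm{id}$ shows $\bar{T}$ is an automorphism, and von Neumann's point-realization theorem for standard spaces yields the limiting invertible MPT. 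The two external inputs you flag (completeness of the symmetric-difference metric and point realization of measure-algebra automorphisms) are exactly the ones that must be quoted, and the rest of the bookkeeping is accurate.
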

	
	We will also use the following fact to construct an isomorphism between limits of sequences of MPT's.	
	\begin{fact}[\cite{FW1}, Lemma 30] \label{fact:isomorphism}
		Fix a sequence $\{\varepsilon_n\}_{n\in \N}$ such that $\sum_{n=1}^\infty\varepsilon_n<\infty$. Let $(\Omega,\mathcal{M},\mu)$ and $(\Omega',\mathcal{M}',\mu')$ be two standard measure spaces and $\{T_n\}_{n\in \N}$ and $\{T_n'\}_{n\in \N}$ be MPT's of $\Omega$ and $\Omega'$ converging weakly to $T$ and $T'$, respectively. Suppose $\{\mathcal{P}_n\}_{n\in \N}$ is a decreasing sequence of partitions and $\{K_n\}_{n\in \N}$ is a sequence of measure-preserving transformations such that 
		\begin{enumerate}
			\item $K_n:\Omega\to\Omega'$ is an isomorphism between $T_n$ and $T_n'$,
			\item $\{\mathcal{P}_n\}_{n\in \N}$ and $\{K_n(\mathcal{P}_n)\}_{n\in \N}$ are generating sequences of partitions for $\Omega$ and $\Omega'$,
			\item $D_{\mu}(K_{n+1}(\mathcal{P}_n),K_n(\mathcal{P}_n))<\varepsilon_n$. 
		\end{enumerate}  
		Then the sequence $\{K_n\}_{n\in \N}$ converges in the weak topology to a measure-theoretic isomorphism between $T$ and $T'$.
	\end{fact}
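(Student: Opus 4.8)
The plan is to extract a limit of the maps $K_n$ as a map of measure algebras, to check that it is a measure-preserving isomorphism conjugating $T$ to $T'$, and to verify that $K_n$ converges to it; the delicate point will be the surjectivity of the limit, which is exactly what hypothesis (2) for $\{K_n(\mathcal P_n)\}$ is there to force. For the construction, I would first observe that since $\{\mathcal P_n\}$ is decreasing, $\mathcal P_n$ refines $\mathcal P_p$ for $p\le n$, and symmetric differences can only shrink under such coarsening, giving the monotonicity $D_\mu(K_{n+1}(\mathcal P_p),K_n(\mathcal P_p))\le D_\mu(K_{n+1}(\mathcal P_n),K_n(\mathcal P_n))<\varepsilon_n$. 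Hence, for every $A$ in the nested algebra $\mathcal A_0:=\bigcup_n\sigma(\mathcal P_n)$ (which is dense in $\mathcal M$ by (2)), the sequence $(K_n(A))_n$ is $\mu'$-Cauchy, because $\mu'(K_{n+1}(A)\triangle K_n(A))<\varepsilon_n$ once $A\in\sigma(\mathcal P_n)$ and $\sum_n\varepsilon_n<\infty$. Defining $K(A):=\lim_n K_n(A)$ on $\mathcal A_0$ and using that each $K_n$ is a measure-preserving Boolean homomorphism, all these properties pass to the limit, so $K$ extends uniquely to a measure-preserving $\sigma$-homomorphism $K:\mathcal M\to\mathcal M'$. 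Summing the same bound yields the uniform estimate $\mu'(K_n(A)\triangle K(A))\le\sum_{j\ge n}\varepsilon_j$ for $A\in\sigma(\mathcal P_n)$, hence $D_\mu(K_n(\mathcal P_n),K(\mathcal P_n))\le\sum_{j\ge n}\varepsilon_j\to 0$, and a $3\varepsilon$-argument gives $\mu'(K_n(A)\triangle K(A))\to 0$ for every $A\in\mathcal M$.

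Next I would check that $K$ conjugates $T$ to $T'$. For fixed $A\in\mathcal M$, using hypothesis (1) as $K_nT_n=T_n'K_n$ together with $T_n\to T$, $T_n'\to T'$ weakly and $K_n\to K$, one estimates
\[
\mu'\bigl(K(TA)\,\triangle\,T'(KA)\bigr)\le\mu'\bigl(K(TA)\triangle K_n(T_nA)\bigr)+\mu'\bigl(K_n(T_nA)\triangle T_n'(K_nA)\bigr)+\mu'\bigl(T_n'(K_nA)\triangle T'(KA)\bigr),
\]
where the middle term is $0$, the first term is controlled by $\mu(TA\triangle T_nA)$ and $\mu'(K_n(T_nA)\triangle K(T_nA))$, and the third by $\mu'(K_nA\triangle KA)$ and $\mu'(T_n'(KA)\triangle T'(KA))$ — all tending to $0$. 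Thus $K(TA)=T'(KA)$, so $K$ is a homomorphism of the measure-preserving system $(\Omega,\mu,T)$ onto a sub-$\sigma$-algebra of $(\Omega',\mu',T')$, and it remains only to see that this image is all of $\mathcal M'$.

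The surjectivity of $K$ is the crux. The image $K(\mathcal M)$ is a closed $\sigma$-subalgebra of $\mathcal M'$ containing every $K(\mathcal P_n)$, and by the estimate above the generating partitions $K_n(\mathcal P_n)$ are, as ordered partitions, asymptotically equal to the partitions $K(\mathcal P_n)$ lying inside $K(\mathcal M)$; moreover (3) forces the $K_n(\mathcal P_n)$ to be asymptotically refining, in the sense that every element of $\sigma(K_n(\mathcal P_n))$ is within $\sum_{j\ge n}\varepsilon_j$ of $\sigma(K_m(\mathcal P_m))$ for all $m\ge n$. The technical tool I would isolate is the Lipschitz bound for joins,
\[
D_\mu\Bigl(\bigvee_{i\in S}\mathcal Q_i,\ \bigvee_{i\in S}\mathcal R_i\Bigr)\le\sum_{i\in S}D_\mu(\mathcal Q_i,\mathcal R_i)\qquad(S\text{ finite, joins indexed by tuples}),
\]
proved by the telescoping inclusion $\bigcap_i Q_i\,\triangle\,\bigcap_i R_i\subseteq\bigcup_i\bigl[(Q_i\triangle R_i)\cap R_1\cap\cdots\cap R_{i-1}\cap Q_{i+1}\cap\cdots\cap Q_k\bigr]$, whose cross terms collapse against partitions of the whole space, thereby avoiding the naive multiplicative blow-up in the number of atoms. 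Applying this with $\mathcal Q_i=K_{n_i}(\mathcal P_{n_i})$ and $\mathcal R_i=K(\mathcal P_{n_i})$, and noting $\bigvee_i K(\mathcal P_{n_i})=K(\mathcal P_{\max_i n_i})\subseteq K(\mathcal M)$, every element of the algebra generated by $\bigcup_n K_n(\mathcal P_n)$ — a dense subalgebra of $\mathcal M'$ by (2) — is approximated arbitrarily well by elements of the closed set $K(\mathcal M)$, so $K(\mathcal M)=\mathcal M'$. Once this is known, $K$ is a measure-theoretic isomorphism, $K_n\to K$ weakly, and (since $\mu(K_n^{-1}(K(c))\triangle c)=\mu'(K(c)\triangle K_n(c))\le\sum_{j\ge n}\varepsilon_j$ on a generating family and the $K_n^{-1}$ are measure-algebra isometries) also $K_n^{-1}\to K^{-1}$, completing the proof.

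The main obstacle is precisely this last step. Asymptotic equality of the ordered partitions $K_n(\mathcal P_n)$ and $K(\mathcal P_n)$ is cheap, but turning it into equality of the generated $\sigma$-algebras requires one to arrange that the dense approximants are built only from high-index partitions $K_n(\mathcal P_n)$ — exploiting the asymptotic-refinement property coming from (3) — so that the accumulated error $\sum_i D_\mu(K_{n_i}(\mathcal P_{n_i}),K(\mathcal P_{n_i}))$ is genuinely small rather than merely finite; getting this bookkeeping right, hand in hand with the Lipschitz-join inequality, is the heart of the argument.
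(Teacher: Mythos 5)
The paper does not prove this statement; it is quoted verbatim from \cite{FW1} (Lemma 30), so there is no internal argument to compare against. Judged on its own terms, your construction of the limit $K$ is correct and complete: the monotonicity of $D_\mu$ under coarsening, the Cauchy estimate $\mu'(K_{n+1}(A)\triangle K_n(A))<\varepsilon_n$ for $A\in\sigma(\mathcal P_n)$, the extension of $K$ to a measure-preserving $\sigma$-homomorphism, the uniform bound $\mu'(K_n(A)\triangle K(A))\le\sum_{j\ge n}\varepsilon_j$, the intertwining $K\circ T=T'\circ K$, and the deduction of $K_n^{-1}\to K^{-1}$ \emph{once surjectivity is known} are all fine, as is the Lipschitz bound for joins.

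The gap is exactly where you place it, but your proposed repair does not close it. Writing $\delta_m=\sum_{j\ge m}\varepsilon_j$, your argument shows that every element of the algebra generated by $\bigcup_n K_n(\mathcal P_n)$ lies within $\sum_{i}\delta_{n_i}$ of $K(\mathcal M)$, where $n_1<\dots<n_k$ are the indices used; hence $K(\mathcal M)$ is $\bigl(\sum_n\delta_n\bigr)$-dense in $\mathcal M'$. This is strictly weaker than $K(\mathcal M)=\mathcal M'$: a proper closed subalgebra can be $\epsilon$-dense for small $\epsilon>0$ (and $\sum_n\delta_n=\sum_j j\varepsilon_j$ need not even be finite under the stated hypothesis $\sum_j\varepsilon_j<\infty$). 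The suggested fix --- ``arrange that the dense approximants are built only from high-index partitions, exploiting the asymptotic-refinement property'' --- is not available: the asymptotic refinement coming from (3) only transports a set of $\sigma(K_m(\mathcal P_m))$ into $\sigma(K_M(\mathcal P_M))$, $M\ge m$, up to the \emph{fixed} error $\delta_m$, which does not tend to $0$ as $M\to\infty$. Consequently one cannot conclude that $\bigvee_{M\ge N}\sigma(K_M(\mathcal P_M))=\mathcal M'$ for large $N$, and condition (2) only guarantees \emph{some} finite index set for each approximant, with no control on its smallest element or its cardinality. So the decisive step --- that hypothesis (2) for $\{K_n(\mathcal P_n)\}$ forces $K$ onto --- remains unproved in your write-up; be aware that naive ``separation of almost every pair of points'' arguments also fail here (a refining sequence of partitions with atom measures tending to $0$, such as vertical strips in a square, separates $\mu'\times\mu'$-almost every pair without generating), so this step genuinely requires a further idea, e.g.\ a Borel--Cantelli control of the exceptional sets $\bigcup_{c}\bigl(K_n(c)\triangle K(c)\bigr)$ under a quantitatively stronger summability of the $\varepsilon_n$.
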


Finally, we also recall some relevant facts on the concept of periodic processes. We refer to \cite{Ka03} for a detailed exposition. 

\begin{defn}
	A \emph{periodic process} is a pair $(\tau,\mathcal{P})$ where $\mathcal{P}$ is a partition of $(\Omega,\mathcal{M},\mu)$ and $\tau$ is a permutation of $\mathcal{P}$ such that the lengths of all cycles are equal\footnote{This is not a necessary requirement for the definition, but it suffices for our applications.} and the atoms in each cycle have the same measure.
\end{defn}
We refer to these cycles as \emph{towers} and their length is called height of the tower. We also choose an atom from each tower arbitrarily and call it the base of the tower. In particular, if $t_1, \ldots,t_s$ are the towers (of height $h$) of this periodic process with $B_1,\ldots,B_s$ as their respective bases, then any tower $t_i$ can be explicitly written as $B_i,\tau(B_i),\ldots,\tau^{h-1}(B_i)$. We refer to $\tau^k(B_i)$ as the $k$-th level of the tower $t_i$. Furthermore, we call $\tau^{h-1}(B_i)$ the top level.

	\subsection{Symbolic systems} 	\label{subsec:SymSpace}
	An \emph{alphabet} is a countable or finite collection of symbols. 
	In the following, let $\Sigma$ be a finite alphabet endowed with
	the discrete topology. Then $\Sigma^{\mathbb{Z}}$ with the product
	topology is a separable, totally disconnected and compact space. A
	usual base of the product topology is given by the collection of cylinder
	sets of the form $\left\langle u\right\rangle _{k}=\left\{ f\in\Sigma^{\mathbb{Z}}:f\upharpoonright[k,k+n)=u\right\} $
	for some $k\in\mathbb{Z}$ and finite sequence $u=\sigma_{0}\dots\sigma_{n-1}\in\Sigma^{n}$.
	For $k=0$ we abbreviate this by $\left\langle u\right\rangle $. 
	
	The shift map $sh:\Sigma^{\mathbb{Z}}\to\Sigma^{\mathbb{Z}}$, defined by
	$
	sh((x_n)_{n=-\infty}^{\infty})=(x_{n+1})_{n=-\infty}^{\infty}
	$, 
	is a homeomorphism. If $\mu$ is a shift-invariant Borel measure,
	then the measure-preserving dynamical system $\left(\Sigma^{\mathbb{Z}},\mathcal{B},\mu,sh\right)$
	is called a \emph{symbolic system}. The closed support of $\mu$ is
	a shift-invariant subset of $\Sigma^{\mathbb{Z}}$ called a \emph{shift space} or \emph{subshift}. 	
	The subshifts that we use are described by specifying a collection of words that
	constitute a clopen basis for the support of an invariant measure. A
	word $w$ over $\Sigma$ is a finite sequence of elements of $\Sigma$,
	and we denote its length by $|w|$.
	
	\begin{defn}
		\label{def:ConstrSeq} A sequence of collections of words $\left(W_{n}\right)_{n\in\mathbb{N}}$ satisfying the following
		properties is called a \emph{construction sequence}:
		\begin{enumerate}
			\item for every $n\in\mathbb{N}$ all words in $W_{n}$ have the same length
			$h_{n}$,
			\item each $w\in W_{n}$ occurs at least once as a subword of each $w^{\prime}\in W_{n+1}$,
			\item there is a summable sequence $\left(\varepsilon_{n}\right)_{n\in\mathbb{N}}$
			of positive numbers such that for every $n\in\mathbb{N}$, every word
			$w\in W_{n+1}$ can be uniquely parsed into segments $u_{0}w_{1}u_{1}w_{1}\dots w_{l}u_{l+1}$
			such that each $w_{i}\in W_{n}$, each $u_{i}$ (called spacer or
			boundary) is a word over $\Sigma$ of finite length and for this parsing
			\[
			\frac{\sum_{i=0}^{l+1}|u_{i}|}{h_{n+1}}<\varepsilon_{n+1}.
			\]
		\end{enumerate}
	\end{defn}
	
	We will often call words in $W_{n}$\emph{ $n$-words} or \emph{$n$-blocks},
	while a general concatenation of symbols from $\Sigma$ is called
	a \emph{string}. We also associate a subshift with a construction
	sequence: Let $\mathbb{K}$ be the collection of $x\in\Sigma^{\mathbb{Z}}$
	such that every finite contiguous substring of $x$ occurs inside
	some $w\in W_{n}$. Then $\mathbb{K}$ is a closed shift-invariant
	subset of $\Sigma^{\mathbb{Z}}$ that is compact since $\Sigma$ is finite. 
	
	In order to be able to unambiguously parse elements of $\mathbb{K}$
	we will use construction sequences consisting of uniquely readable
	words.
	\begin{defn}
		Let $\Sigma$ be an alphabet and $W$ be a collection of finite words
		over $\Sigma$. 
		Then $W$ is \emph{uniquely readable} if and only if whenever $u,v,w\in W$
		and $uv=pws$ with $p$ and $s$ strings of symbols from $\Sigma$,
		then either $p$ or $s$ is the empty word.
	\end{defn}
	
	Moreover, our construction sequence $\left(W_{n}\right)_{n\in\mathbb{N}}$ will be \emph{strongly uniform}, i.e., for each $n\in\mathbb{N}$ there is a constant $c>0$ such that for all words $w^{\prime}\in W_{n+1}$ and $w\in W_{n}$ we have $r(w,w')=c$, where $r(w,w')$ is the number of occurrences of $w$ in $w'$. 

\begin{rem*}
	A particular type of subshifts are the ones that have odometer
	systems as their timing mechanism to parse typical elements: Let $\left(k_{n}\right)_{n\in\mathbb{N}}$ be a sequence of natural
	numbers $k_{n}\geq2$ and $\left(W_{n}\right)_{n\in\mathbb{N}}$ be a uniquely readable
	construction sequence with $W_{0}=\Sigma$ and $W_{n+1}\subseteq\left(W_{n}\right)^{k_{n}}$
	for every $n\in\mathbb{N}$. The associated subshift is
	called an \emph{odometer-based system}.
\end{rem*}
	
	We introduce the following natural
	set $S$ which will be of measure one for measures that we consider.
	\begin{defn}\label{defn:SetS}
		Suppose that $\left(W_{n}\right)_{n\in\mathbb{N}}$ is a construction
		sequence for a subshift $\mathbb{K}$ with each $W_{n}$ uniquely
		readable. Let $S$ be the collection of $x\in\mathbb{K}$ such that
		there are sequences of natural numbers $\left(a_{n}\right)_{n\in\mathbb{N}}$,
		$\left(b_{n}\right)_{n\in\mathbb{N}}$ going to infinity such that
		for all $m\in\mathbb{N}$ there is $n\in\mathbb{N}$ such that $x\upharpoonright[-a_{m},b_{m})\in W_{n}$.
	\end{defn}
	
	We note that $S$ is a dense shift-invariant $\mathcal{G}_{\delta}$
	subset of $\mathbb{K}$ and we recall the following properties from
	\cite[Lemma 11]{FW1} and \cite[Lemma 12]{FW2}.
	\begin{fact}
		\label{fact:MeasureConstrSeq}Fix a construction sequence $\left(W_{n}\right)_{n\in\mathbb{N}}$
		for a subshift $\mathbb{K}$ over a finite alphabet $\Sigma$.
		Then:
		\begin{enumerate}
			\item $\mathbb{K}$ is the smallest shift-invariant closed subset of $\Sigma^{\mathbb{Z}}$
			such that for all $n\in\mathbb{N}$ and $w\in W_{n}$, $\mathbb{K}$
			has non-empty intersection with the basic open interval $\left\langle w\right\rangle \subset\Sigma^{\mathbb{Z}}$.
			\item Suppose that $\left(W_{n}\right)_{n\in\mathbb{N}}$ is a uniform construction
			sequence. Then there is a unique non-atomic shift-invariant measure
			$\nu$ on $\mathbb{K}$ concentrating on $S$ and $\nu$ is ergodic.
			\item If $\nu$ is a shift-invariant measure on $\mathbb{K}$ concentrating
			on $S$, then for $\nu$-almost every $s\in S$ there is $N\in\mathbb{N}$
			such that for all $n>N$ there are $a_{n}\leq0<b_{n}$ such that $s\upharpoonright[a_{n},b_{n})\in W_{n}$.
		\end{enumerate}
	\end{fact}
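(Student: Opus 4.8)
The plan is to establish the three items in order, leaning on the structure of the construction sequence and on unique readability. For item (1), I would argue by a double inclusion. Any shift-invariant closed set meeting every $\langle w\rangle$ for $w\in W_n$ must contain, by closure under the shift and taking limits, every bi-infinite sequence all of whose finite subwords appear in some $w\in W_n$; that is exactly $\mathbb{K}$ by definition, so $\mathbb{K}$ is contained in any such set. Conversely, $\mathbb{K}$ itself is shift-invariant and closed (as already noted in the excerpt), and property (2) of a construction sequence — each $w\in W_n$ occurs inside each $w'\in W_{n+1}$, together with property (2) iterated upward and the fact that words can be extended on both sides — shows $\mathbb{K}\cap\langle w\rangle\neq\emptyset$ for every $n$ and every $w\in W_n$. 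Hence $\mathbb{K}$ is the smallest such set.

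For item (2), the natural route is to build the measure as a weak-$*$ limit of normalized counting measures on long words, using uniformity to control frequencies. Concretely, for each $n$ and each $w'\in W_{n+1}$, the uniformity hypothesis $\bigl|\tfrac{r(w,w')}{h_{n+1}/h_n} - d_n(w)\bigr| < \tfrac{\varepsilon_{n+1}}{h_n}$ says that the empirical frequency of every $n$-word $w$ inside any $(n+1)$-word is essentially $d_n(w)\cdot h_n/h_{n+1}$, independent of which $w'$ we chose. Pushing this down through all levels, the frequency of a fixed $n$-block inside a very long $m$-block ($m\gg n$) converges, as $m\to\infty$, to a limit independent of the chosen blocks; the summability of $(\varepsilon_n)$ makes the telescoping errors converge. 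This yields a shift-invariant Borel measure $\nu$ on $\mathbb{K}$; unique readability guarantees that the parsing of a $\nu$-typical point into $n$-blocks is unambiguous, so the cylinder probabilities are well-defined and consistent, giving uniqueness. Non-atomicity follows because the measure of any single cylinder $\langle w\rangle$ with $|w|=h_n$ is at most (roughly) $h_n/h_{n+1}$ times a bounded factor, which tends to $0$. Ergodicity follows from a standard argument: a shift-invariant set can be approximated by unions of $n$-cylinders, and the near-independence of block occurrences at scale $n+1$ forces its measure to be $0$ or $1$ — this is precisely where strong uniformity (or at least uniformity) of the construction sequence is used, via a mixing-type estimate along the subsequence $(h_n)$. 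That $\nu$ concentrates on $S$: for $\nu$-a.e.\ $x$, the ergodic theorem gives that $x$ is ``generic,'' so for each $m$ the block $x\upharpoonright[-a_m,b_m)$ coincides with an honest $n$-word for suitable $a_m,b_m\to\infty$ and suitable $n$, which is the defining property of $S$.

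For item (3), suppose $\nu$ is any shift-invariant measure concentrating on $S$. By definition of $S$, for every $s\in S$ and every $m$ there are $a_m,b_m\to\infty$ and some $n$ with $s\upharpoonright[-a_m,b_m)\in W_n$. The content to extract is the quantitative ``for all $n>N$'' version for $\nu$-a.e.\ $s$: one shows that the set of $s$ for which $s\upharpoonright[a_n,b_n)\in W_n$ fails for infinitely many $n$ has measure zero. This is a Borel--Cantelli-type argument combined with shift-invariance: the ``bad at level $n$'' event has small measure because unique readability forces a genuine $n$-word to sit around position $0$ with asymptotically full probability (the spacers occupy a fraction $<\varepsilon_n$, summable), and then $\sum_n\varepsilon_n<\infty$ closes the argument. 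I would simply cite \cite[Lemma 11]{FW1} and \cite[Lemma 12]{FW2} for the precise statements, since the excerpt already points there.

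The main obstacle is item (2): getting the existence, uniqueness, ergodicity, and non-atomicity of $\nu$ all at once from uniformity alone. The delicate point is that ``uniform'' (frequencies nearly constant across $(n+1)$-words) is weaker than ``strongly uniform'' (frequencies exactly constant), so one must carry the $\varepsilon_{n+1}/h_n$ error terms through an infinite telescoping product and verify that the cumulative error is summable — this is where the hypothesis $\sum_n\varepsilon_n<\infty$ is essential and where a careless estimate would break. Everything else (items (1) and (3)) is soft topology and a Borel--Cantelli argument. Since this Fact is quoted verbatim from \cite{FW1,FW2}, the honest write-up is to state it with a reference rather than reprove it; the sketch above indicates what that reference contains.
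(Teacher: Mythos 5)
The paper gives no proof of this Fact at all --- it is quoted directly from \cite[Lemma 11]{FW1} and \cite[Lemma 12]{FW2} --- so your closing recommendation to state it with a reference rather than reprove it is exactly what the paper does. Your sketch of what those references contain is broadly accurate; the one soft spot is ergodicity in item (2), which in the sources follows most cleanly from uniqueness of the invariant measure concentrating on $S$ (extremality) rather than from a mixing-type estimate, but since the Fact is cited this does not affect the paper.
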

	
	Since our subshifts will be built from a uniquely readable uniform construction sequence,
	they will automatically be ergodic and we will identify $\mathbb{K}$ with the symbolic system $\left(\Sigma^{\mathbb{Z}},\mathcal{B},\nu,sh\right)$. To each symbolic system we will
	also consider its inverse $\mathbb{K}^{-1}$ which stands for $\left(\mathbb{K},sh^{-1}\right)$.
	Since it will often be convenient to have the shifts going in the
	same direction, we also introduce another convention.
	\begin{defn}
		If $w$ is a finite or infinite string, we write $rev(w)$ for the
		reverse string of $w$. In particular, if $x$ is in $\mathbb{K}$
		we define $rev(x)$ by setting $rev(x)(k)=x(-k)$. Then for $A\subseteq\mathbb{K}$
		we define $rev(A)=\left\{ rev(x):x\in A\right\} .$ 		
		If we explicitly view a finite word $w$ positioned at a location
		interval $[a,b)$, then we take $rev(w)$ to be positioned at the same interval
		$[a,b)$ and we set $rev(w)(k)=w(a+b-(k+1))$. For a collection $W$
		of words $rev(W)$ is the collection of reverses of words in $W$.
	\end{defn}
	
	Then we introduce the symbolic system $\left(rev(\mathbb{K}),sh\right)$
	as the one built from the construction sequence $\left(rev(W_{n})\right)_{n\in\mathbb{N}}$.
	Clearly, the map sending $x$ to $rev(x)$ is a canonical isomorphism
	between $\left(\mathbb{K},sh^{-1}\right)$ and $\left(rev(\mathbb{K}),sh\right)$.
	We often abbreviate the symbolic system $\left(rev(\mathbb{K}),sh\right)$
	as $rev(\mathbb{K})$. 
	
	\begin{defn} \label{dfn:freq}
		Let $\Sigma$ be an alphabet. For a word $w\in\Sigma^{k}$ and $x\in\Sigma$
		we write $r(x,w)$ for the number of times that $x$ occurs in $w$
		and $\freq(x,w)=\frac{r(x,w)}{k}$ for the frequency of occurrences
		of $x$ in $w$. Similarly, for $(w,w')\in\Sigma^{k}\times\Sigma^{k}$
		and $(x,y)\in\Sigma\times\Sigma$ we write $r(x,y,w,w')$ for the
		number of $i<k$ such that $x$ is the $i$-th member of $w$ and
		$y$ is the $i$-th member of $w'$. We also introduce $\freq(x,y,w,w')=\frac{r(x,y,w,w')}{k}$.
	\end{defn}
	
	\subsection{The $\overline{f}$ metric} \label{subsec:fbar}
	In the study of Kakutani equivalence Feldman \cite{Fe} introduced a notion of distance, now called $\overline{f}$, as a substitute for the Hamming distance $\overline{d}$ in Ornstein's isomorphism theory.
	\begin{defn}
		\label{def:fbar}A \emph{match} between two strings of symbols $a_{1}a_{2}\dots a_{n}$
		and $b_{1}b_{2}\dots b_{m}$ from a given alphabet $\Sigma,$ is a
		collection $\mathcal{M}$ of pairs of indices $(i_{s},j_{s})$, $s=1,\dots,r$
		such that $1\le i_{1}<i_{2}<\cdots<i_{r}\le n$, $1\le j_{1}<j_{2}<\cdots<j_{r}\le m$
		and $a_{i_{s}}=b_{j_{s}}$ for $s=1,2,\dots,r.$ Then 
		\begin{equation}
			\begin{array}{ll}
				\overline{f}(a_{1}a_{2}\dots a_{n},b_{1}b_{2}\dots b_{m})=\hfill\\
				{\displaystyle 1-\frac{2\sup\{|\mathcal{M}|:\mathcal{M}\text{\ is\ a\ match\ between\ }a_{1}a_{2}\cdots a_{n}\text{\ and\ }b_{1}b_{2}\cdots b_{m}\}}{n+m}.}
			\end{array}\label{eq:cl}
		\end{equation}
		We will refer to $\overline{f}(a_{1}a_{2}\cdots a_{n},b_{1}b_{2}\cdots b_{m})$
		as the ``$\overline{f}$-distance'' between $a_{1}a_{2}\cdots a_{n}$
		and $b_{1}b_{2}\cdots b_{m},$ even though $\overline{f}$ does not
		satisfy the triangle inequality unless the strings are all of the
		same length. A match $\mathcal{M}$ is called a\emph{ best possible
			match} if it realizes the supremum in the definition of $\overline{f}$.
	\end{defn}

\begin{rem*}
	Alternatively, one can view a match as an injective order-preserving
	function $\pi:\mathcal{D}(\pi)\subseteq\left\{ 1,\dots,n\right\} \to\mathcal{R}(\pi)\subseteq\left\{ 1,\dots,m\right\} $
	with $a_{i}=b_{\pi(i)}$ for every $i\in\mathcal{D}(\pi)$. Then $\overline{f}\left(a_{1}\dots a_{n},b_{1}\dots b_{m}\right)=1-\max\left\{ \frac{2|\mathcal{D}(\pi)|}{n+m}:\pi\text{ is a match}\right\} $.
\end{rem*}

We also state the following fact that can be proved easily
by considering the \emph{fit} $1-\bar{f}(a,b)$ between two strings
$a$ and $b$.
\begin{fact}[\cite{GK3}, Fact 10]
	\label{fact:omit_symbols}Suppose $a$ and $b$ are strings of symbols
	of length $n$ and $m,$ respectively, from an alphabet $\Sigma$.
	If $\tilde{a}$ and $\tilde{b}$ are strings of symbols obtained by
	deleting at most $\lfloor\gamma(n+m)\rfloor$ terms from $a$ and
	$b$ altogether, where $0<\gamma<1$, then 
	\begin{equation}
		\overline{f}(a,b)\ge\overline{f}(\tilde{a},\tilde{b})-2\gamma.\label{eq:omit_symbols}
	\end{equation}
\end{fact}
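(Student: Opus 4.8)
The plan is to rewrite the claim as an inequality between maximal match sizes and then control how a best match between $a$ and $b$ degrades when coordinates are deleted. Write $N=n+m$, and for strings $u,v$ let $r(u,v)$ denote the supremum of $|\mathcal{M}|$ over matches $\mathcal{M}$ between $u$ and $v$, so that by definition $\overline{f}(u,v)=1-\frac{2\,r(u,v)}{|u|+|v|}$. Suppose $p$ terms are deleted from $a$ and $q$ from $b$, and put $s\coloneqq p+q\le\lfloor\gamma N\rfloor\le\gamma N$; then $|\tilde a|+|\tilde b|=N-s$ and $N-s\ge1$ since $\gamma<1$. Unwinding the definition of $\overline{f}$, the asserted inequality \eqref{eq:omit_symbols} is equivalent to
\[
\frac{2\,r(\tilde a,\tilde b)}{N-s}\ \ge\ \frac{2\,r(a,b)}{N}-2\gamma .
\]
So the first move is simply this reformulation.

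The heart of the argument is the purely combinatorial estimate $r(\tilde a,\tilde b)\ge r(a,b)-s$. To prove it I would fix a best possible match $\mathcal{M}$ between $a$ and $b$ and discard from $\mathcal{M}$ every pair $(i,j)$ whose first coordinate $i$ is among the $p$ deleted positions of $a$ or whose second coordinate $j$ is among the $q$ deleted positions of $b$; this deletes at most $p+q=s$ pairs. Applying to the surviving coordinates the strictly increasing ``compression'' maps that identify the undeleted positions of $a$ (respectively $b$) with the positions of $\tilde a$ (respectively $\tilde b$), one turns the remainder of $\mathcal{M}$ into a genuine match between $\tilde a$ and $\tilde b$: strict monotonicity of the two index sequences and the equality of matched symbols are inherited verbatim. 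This match has at least $r(a,b)-s$ pairs, which is the estimate. I expect the bookkeeping here---verifying that the compression maps are order-preserving and that the two deletion sets contribute additively rather than being double-counted---to be the only mildly delicate point, and it is routine.

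To conclude, I would insert this estimate into the displayed inequality and split into two cases, abbreviating $r=r(a,b)$. If $r\le s$, then $\overline{f}(a,b)=1-\frac{2r}{N}\ge1-\frac{2s}{N}\ge1-2\gamma\ge\overline{f}(\tilde a,\tilde b)-2\gamma$, using $\overline{f}\le1$. If instead $r>s$, then $r(\tilde a,\tilde b)\ge r-s>0$, and since $s<r\le\min(n,m)$ we have $0<N-r\le N-s$, so $\frac{N-r}{N-s}\le1$. Consequently
\[
\frac{r-s}{N-s}-\frac{r}{N}\ =\ -\,\frac{s\,(N-r)}{N\,(N-s)}\ \ge\ -\,\frac{s}{N}\ \ge\ -\gamma ,
\]
whence $\frac{2\,r(\tilde a,\tilde b)}{N-s}\ge\frac{2(r-s)}{N-s}\ge\frac{2r}{N}-2\gamma$, which is exactly the inequality sought. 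No obstacle is anticipated beyond the bookkeeping flagged above.
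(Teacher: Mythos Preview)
Your proof is correct and follows exactly the approach the paper hints at: it only remarks that the fact ``can be proved easily by considering the \emph{fit} $1-\bar f(a,b)$,'' and your argument does precisely this by passing to maximal match sizes, establishing the combinatorial inequality $r(\tilde a,\tilde b)\ge r(a,b)-s$, and then carrying out the arithmetic. The case split and the algebraic identity $\frac{r-s}{N-s}-\frac{r}{N}=-\frac{s(N-r)}{N(N-s)}$ are handled correctly.
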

	
	\subsection{Feldman patterns} \label{subsec:Feldman}
	To construct the symbolic systems in \cite{GK3}, the $n$-words in the construction sequence are built using specific patterns of blocks. These patterns are called \emph{Feldman patterns} since they originate from Feldman's first example of an ergodic zero-entropy automorphism that is not loosely Bernoulli \cite{Fe}. In particular, different Feldman patterns cannot be matched well in $\overline{f}$ even after a finite coding.	
	Let $T,N,M\in\mathbb{Z}^{+}$. A $(T,N,M)$-Feldman pattern in building
	blocks $A_{1},\dots,A_{N}$ of equal length $L$ is one of the strings
	$B_{k}$, $k=1,\dots , M$, defined by
	\[
		B_{k}=  \left(A_{1}^{TN^{2k}}A_{2}^{TN^{2k}}\dots A_{N}^{TN^{2k}}\right)^{N^{2(M+1-k)}}.
	\]
	
	Thus $N$ denotes the number of building blocks, $M$ is the number
	of constructed patterns, and $TN^2$ gives the minimum number of consecutive occurrences
	of a building block. We also note that $B_{k}$ is built with $N^{2(M+1-k)}$ many
	so-called \emph{cycles}: Each cycle winds through all the $N$ building blocks.
	
	Moreover, we collect the following properties of $(T,N,M)$-Feldman patterns.
	
	\begin{lem}\label{lem:FP}
		Let $B_{k}$, $1\leq k\leq M$,
		be the $(T,N,M)$-Feldman patterns in the building blocks $A_{1},\dots,A_{N}$
		of equal length $L$.
		\begin{enumerate}
			\item Each building block $A_{i}$, $1\leq i\leq N$, occurs $TN^{2M+2}$ times in each pattern.
			\item Every block $B_{k}$, $1\leq k\leq M$, has total length $TN^{2M+3}L$.
			\item For all $i,j \in \{1,\dots , N\}$ and all $k,l \in \{1,\dots , M\}$, $k \neq l$, we have
			\[
			\frac{r(A_i^{TN^2},A_j^{TN^2},B_k,B_l)}{N^{2M+1}} = \frac{1}{N^2}.
			\]
		\end{enumerate}
	\end{lem}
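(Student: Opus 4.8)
The plan is to prove all three assertions of Lemma~\ref{lem:FP} by direct counting from the explicit definition of the Feldman patterns, since each $B_k$ is given by a closed formula as an iterated concatenation. Throughout, write $c_k = N^{2(M+1-k)}$ for the number of cycles in $B_k$ and note that within a single cycle of $B_k$ the building block $A_i$ appears in one run of length $TN^{2k}$ (for each $i$), so $A_i$ occurs $TN^{2k}$ times per cycle.

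\emph{Part (1).} Since $B_k$ consists of $c_k = N^{2(M+1-k)}$ cycles and each cycle contributes $TN^{2k}$ occurrences of $A_i$, the total number of occurrences of $A_i$ in $B_k$ is
\[
c_k \cdot TN^{2k} \;=\; N^{2(M+1-k)} \cdot T N^{2k} \;=\; T N^{2M+2},
\]
which is independent of $k$ and of $i$. This is the cleanest of the three and sets up the bookkeeping for the rest.

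\emph{Part (2).} Each $A_i$ has length $L$ and by Part~(1) occurs $TN^{2M+2}$ times in $B_k$; there are $N$ building blocks, so
\[
|B_k| \;=\; N \cdot T N^{2M+2} \cdot L \;=\; T N^{2M+3} L,
\]
again independent of $k$. One should also remark that the $B_k$ are genuinely built only from whole copies of the $A_i$ with no spacers, so the length count is exact.

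\emph{Part (3).} This is the one requiring actual thought, and I expect it to be the main obstacle. Fix $k \neq l$; without loss of generality $k < l$, so the runs in $B_k$ have length $TN^{2k}$ and those in $B_l$ have length $TN^{2l}$ with $l > k$. We are counting positions $p$ (aligned index) at which $B_k$ shows the start of a run $A_i^{TN^2}$ while $B_l$ simultaneously shows the start of a run $A_j^{TN^2}$ at that same position. The key structural observation is that since $B_k$ and $B_l$ have the same total length $TN^{2M+3}L$ (Part~(2)), the two strings are aligned over a common index set, and both decompose into blocks of $A$'s; a start-of-$A_i^{TN^2}$ position in $B_k$ is any position that is $\equiv 0 \pmod{TN^2 L}$ within some $A_i$-run and lies in the first $TN^{2k} - TN^2 + 1$... — more cleanly, I would instead count at the coarser granularity of ``which building block is read in $B_k$ versus in $B_l$'' at a uniformly random position of length $L$ (a ``$T N^2 L$-window'' argument): partition $\{0,1,\dots,N^{2M+1}-1\}$ (indexing the blocks $A_i^{TN^2}$ inside the pattern, of which there are $TN^{2M+3}L/(TN^2 L) = N^{2M+1}$) and show that along this index the sequence of building-block labels of $B_k$ cycles through $1,2,\dots,N$ with period $N$ (each label repeated $N^{2k-2}$ times consecutively), while that of $B_l$ cycles with each label repeated $N^{2l-2}$ times consecutively. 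Since $k \neq l$ one of the two periods $N^{2k-1}$, $N^{2l-1}$ properly divides the other (both divide $N^{2M+1}$), and one runs a standard equidistribution argument: over one full super-period the pair of labels $(i,j)$ takes each of the $N^2$ values equally often, namely $N^{2M+1}/N^2 = N^{2M-1}$ times, giving
\[
\frac{r\!\left(A_i^{TN^2},A_j^{TN^2},B_k,B_l\right)}{N^{2M+1}} \;=\; \frac{N^{2M-1}}{N^{2M+1}} \;=\; \frac{1}{N^2}.
\]
The delicate point to get right is that the coarser ``block-label'' count really does capture $r(A_i^{TN^2},A_j^{TN^2},B_k,B_l)$ exactly — i.e.\ that every occurrence of $A_i^{TN^2}$ as a contiguous subword of $B_k$ starts at a position that is a multiple of $TN^2 L$ (true because $B_k$ is a concatenation of $A^{TN^2}$-blocks and the $A_i$ are uniquely readable of common length $L$, so there are no ``stray'' occurrences straddling block boundaries), and likewise in $B_l$; and that the two block-decompositions are synchronized because $|B_k| = |B_l|$ and $TN^{2k}, TN^{2l}$ are both multiples of $TN^2$. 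Once that reduction is justified, Part~(3) is the stated equidistribution computation. I would close by noting this uniform-matching-of-pairs property is exactly what later prevents two distinct Feldman patterns from being $\overline{f}$-close, even after finite coding.
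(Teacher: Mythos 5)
Your proof is correct. The paper states Lemma \ref{lem:FP} without proof, so there is nothing to compare against; your direct counting is exactly the computation the authors are implicitly relying on, and all three counts check out (in particular, for $k<l$ each interval on which the $B_l$-label is constant has length $N^{2l-2}$, a multiple of the $B_k$-label period $N^{2k-1}$, which gives the equidistribution of pairs and the count $N^{2M+1}/N^{2}=N^{2M-1}$). One small remark: the ``delicate point'' you flag about stray occurrences of $A_i^{TN^2}$ straddling block boundaries does not actually arise, because by Definition \ref{dfn:freq} the quantity $r(A_i^{TN^2},A_j^{TN^2},B_k,B_l)$ counts matching positions in the canonical decomposition of $B_k$ and $B_l$ into $N^{2M+1}$ letters of the alphabet $\{A_1^{TN^2},\dots,A_N^{TN^2}\}$, which is precisely the positional count you carry out; also, the phrase ``cycles through $1,\dots,N$ with period $N$'' should read period $N^{2k-1}$, as you in fact use later.
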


	\subsection{The space of trees} \label{subsec:trees}
	To describe a topology on the collection of trees, let $\left\{ \sigma_{n}:n\in\mathbb{N}\right\} $
	be an enumeration of $\mathbb{N}^{<\mathbb{N}}$ with the property
	that every proper predecessor of $\sigma_{n}$ is some $\sigma_{m}$
	for $m<n$. Under this enumeration subsets $S\subseteq\mathbb{N}^{<\mathbb{N}}$
	can be identified with characteristic functions $\chi_{S}:\mathbb{N}\to\left\{ 0,1\right\} $.
	The collection of such $\chi_{S}$ can be viewed as the members of
	an infinite product space $\left\{ 0,1\right\} ^{\mathbb{N}^{<\mathbb{N}}}$
	homeomorphic to the Cantor space. Here, each function $a:\left\{ \sigma_{m}:m<n\right\} \to\left\{ 0,1\right\} $
	determines a basic open set 
	$
	\left\langle a\right\rangle =\left\{ \chi:\chi\upharpoonright\left\{ \sigma_{m}:m<n\right\} =a\right\} \subseteq\left\{ 0,1\right\} ^{\mathbb{N}^{<\mathbb{N}}}
	$
	and the collection of all such $\left\langle a\right\rangle $ forms
	a basis for the topology. In this topology the collection of trees is a closed (hence
	compact) subset of $\left\{ 0,1\right\} ^{\mathbb{N}^{<\mathbb{N}}}$. Moreover, the collection $\mathcal{T}\kern-.5mm rees$ of trees containing arbitrarily
	long finite sequences is a dense $\mathcal{G}_{\delta}$ subset. Hence, $\mathcal{T}\kern-.5mm rees$ is a Polish space. 	
	Since the topology on the space of trees was introduced via basic
	open sets giving us a finite amount of information about the trees in
	it, we can characterize continuous maps defined on $\mathcal{T}\kern-.5mm rees$
	as follows.
	\begin{fact}
		\label{fact:contTree}Let $Y$ be a topological space. Then a map
		$f:\mathcal{T}\kern-.5mm rees\to Y$ is continuous if and only if for all open
		sets $O\subseteq Y$ and all $\mathcal{T}\in\mathcal{T}\kern-.5mm rees$ with
		$f(\mathcal{T})\in O$ there is $M\in\mathbb{N}$ such that for all
		$\mathcal{T}^{\prime}\in\mathcal{T}\kern-.5mm rees$ we have:
		
		if $\mathcal{T}\cap\left\{ \sigma_{n}:n\leq M\right\} =\mathcal{T}^{\prime}\cap\left\{ \sigma_{n}:n\leq M\right\} $,
		then $f\left(\mathcal{T}^{\prime}\right)\in O$. 
	\end{fact}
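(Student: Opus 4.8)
The plan is to prove the two implications separately, using the explicit description of the basic open sets $\langle a\rangle$ in $\{0,1\}^{\mathbb{N}^{<\mathbb{N}}}$. Recall that a basic open neighborhood of a tree $\mathcal{T}$ (viewed via its characteristic function $\chi_{\mathcal{T}}$) is determined by fixing the values of $\chi_{\mathcal{T}}$ on a finite initial segment $\{\sigma_m : m < n\}$ of the enumeration; saying that $\mathcal{T}$ and $\mathcal{T}'$ agree on $\{\sigma_m : m \le M\}$ is exactly saying that $\chi_{\mathcal{T}}$ and $\chi_{\mathcal{T}'}$ lie in the same basic open set indexed by an initial segment of length $M+1$. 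The key structural point, which I would state up front, is that the sets $U_{M}(\mathcal{T}) := \{\mathcal{T}' \in \trees : \mathcal{T}' \cap \{\sigma_m : m \le M\} = \mathcal{T} \cap \{\sigma_m : m \le M\}\}$ form a neighborhood basis at $\mathcal{T}$ in $\trees$ as $M$ ranges over $\mathbb{N}$, since the relative topology on $\trees$ is generated by intersecting the $\langle a\rangle$'s with $\trees$.

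For the forward direction, suppose $f : \trees \to Y$ is continuous, let $O \subseteq Y$ be open, and let $\mathcal{T} \in \trees$ with $f(\mathcal{T}) \in O$. Then $f^{-1}(O)$ is an open subset of $\trees$ containing $\mathcal{T}$, so by the neighborhood-basis observation there is $M \in \mathbb{N}$ with $U_M(\mathcal{T}) \subseteq f^{-1}(O)$. This $M$ witnesses the condition: any $\mathcal{T}'$ agreeing with $\mathcal{T}$ on $\{\sigma_n : n \le M\}$ lies in $U_M(\mathcal{T})$, hence $f(\mathcal{T}') \in O$.

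For the reverse direction, assume the stated condition and let $O \subseteq Y$ be open; I want to show $f^{-1}(O)$ is open in $\trees$. Fix $\mathcal{T} \in f^{-1}(O)$, so $f(\mathcal{T}) \in O$. By hypothesis there is $M \in \mathbb{N}$ such that every $\mathcal{T}'$ with $\mathcal{T}' \cap \{\sigma_n : n \le M\} = \mathcal{T} \cap \{\sigma_n : n \le M\}$ satisfies $f(\mathcal{T}') \in O$; that is, $U_M(\mathcal{T}) \subseteq f^{-1}(O)$. Since $U_M(\mathcal{T})$ is (relatively) open in $\trees$ and contains $\mathcal{T}$, this exhibits $f^{-1}(O)$ as a neighborhood of each of its points, hence open. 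Therefore $f$ is continuous.

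There is essentially no obstacle here; the only thing requiring a line of care is the identification of $U_M(\mathcal{T})$ with a basic open set of the subspace topology — one must note that fixing $\chi_{\mathcal{T}}$ on $\{\sigma_m : m \le M\}$ is the same as the basic open set $\langle a\rangle$ for $a = \chi_{\mathcal{T}}\!\restriction\!\{\sigma_m : m \le M\}$, and that because $\trees \subseteq \{0,1\}^{\mathbb{N}^{<\mathbb{N}}}$ carries the subspace topology, these intersected with $\trees$ do form a basis at $\mathcal{T}$. Everything else is the standard "continuity is equivalent to: preimages of open sets contain a basic neighborhood of each point" argument, specialized to this basis.
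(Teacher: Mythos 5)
Your proof is correct and is exactly the standard argument the paper has in mind: the paper states this as a Fact without proof, treating it as an immediate consequence of the observation that the sets $U_M(\mathcal{T})$ (trees agreeing with $\mathcal{T}$ on $\{\sigma_m : m\le M\}$) are precisely the basic open neighborhoods of $\mathcal{T}$ in the subspace topology. Your identification of these sets with the restrictions $\langle a\rangle \cap \trees$ and the two-directional preimage argument fill in that omitted justification correctly.
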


	During our constructions the following maps will prove useful.
\begin{defn}
	\label{def:M-and-s}We define a continuous map $M:\mathcal{T}\kern-.5mm rees\to\mathbb{N}^{\mathbb{N}}$
	by setting $M\left(\mathcal{T}\right)(s)=n$ if and only if $n$ is
	the least number such that $\sigma_{n}\in\mathcal{T}$ and the length of $\sigma_n$ is $s$.
	Dually, we also define a map $s:\mathcal{T}\kern-.5mm rees\to\mathbb{N}^{\mathbb{N}}$
	by setting $s\left(\mathcal{T}\right)(n)$ to be the length of the
	longest sequence $\sigma_{m}\in\mathcal{T}$ with $m\leq n$. 
\end{defn}

\subsection{Real-analytic diffeomorphisms of the torus} \label{subsec:DiffeomorphismSpaces}
Following \cite[section 2.2]{BK2} we give a  description
of the spaces $\text{Diff}_{\rho}^{\,\omega}(\mathbb{T}^{2},\lambda)$. Here, $\lambda$ denotes the standard Lebesgue
measure on $\mathbb{T}^{2}\coloneqq\mathbb{R}^{2}/\mathbb{Z}^{2}$.
Any real-analytic diffeomorphism on $\mathbb{T}^{2}$ homotopic to
the identity admits a lift to a map from $\mathbb{R}^{2}$ to $\mathbb{R}^{2}$
which has the form 
\[
F(x_{1},x_{2})=(x_{1}+f_{1}(x_{1},x_{2}),x_{2}+f_{2}(x_{1},x_{2})),
\]
where $f_{i}:\mathbb{R}^{2}\to\mathbb{R}$ are $\mathbb{Z}^{2}$-periodic
real-analytic functions. Any real-analytic $\mathbb{Z}^{2}$-periodic
function on $\mathbb{R}^{2}$ can be extended as a holomorphic function
defined on some open complex neighborhood of $\mathbb{R}^{2}$ in
$\mathbb{C}^{2}$, where we identify $\mathbb{R}^{2}$ inside $\mathbb{C}^{2}$
via the natural embedding $(x_{1},x_{2})\mapsto(x_{1}+\mathrm{i}0,x_{2}+\mathrm{i}0)$.
For a fixed $\rho>0$ we define the neighborhood 
$
\Omega_{\rho}:=\{(z_{1},z_{2})\in\mathbb{C}^{2}:|\text{Im}(z_{1})|<\rho\text{ and }|\text{Im}(z_{2})|<\rho\},
$
that is, $\rho$ describes the width of a band in the imaginary directions. For a function $f$ defined on $\Omega_{\rho}$ we let 
\[
\|f\|_{\rho}:=\sup_{(z_{1},z_{2})\in\Omega_{\rho}}|f((z_{1},z_{2}))|.
\]
We define $C_{\rho}^{\omega}(\mathbb{T}^{2})$ to be the space of
all $\mathbb{Z}^{2}$-periodic real-analytic functions $f$ on $\mathbb{R}^{2}$
that extend to a holomorphic function on $\Omega_{\rho}$ and satisfy $\|f\|_{\rho}<\infty$.
Hereby, we define $\text{Diff}_{\rho}^{\,\omega}(\mathbb{T}^{2},\lambda)$
to be the set of all Lebesgue measure-preserving real-analytic diffeomorphisms
of $\mathbb{T}^{2}$ homotopic to the identity, whose lift $F$ to
$\mathbb{R}^{2}$ satisfies $f_{i}\in C_{\rho}^{\omega}(\mathbb{T}^{2})$,
and we also require that the lift $\tilde{F}(x)=(x_{1}+\tilde{f}_{1}(x),x_{2}+\tilde{f}_{2}(x))$
of its inverse to $\mathbb{R}^{2}$ satisfies $\tilde{f}_{i}\in C_{\rho}^{\omega}(\mathbb{T}^{2})$.
Then the metric in $\text{Diff}_{\rho}^{\,\omega}(\mathbb{T}^{2},\lambda)$
is defined by 
\begin{align*}
	d_{\rho}(f,g)=\max\{\tilde{d}_{\rho}(f,g),\tilde{d}_{\rho}(f^{-1},g^{-1})\},\text{ where }\tilde{d}_{\rho}(f,g)=\max_{i=1,2}\{\inf_{n\in\mathbb{Z}}\|f_{i}-g_{i}+n\|_{\rho}\}.
\end{align*}
We note that if $\{f_{n}\}_{n=1}^{\infty}\subset\text{Diff}_{\rho}^{\,\omega}(\mathbb{T}^{2},\lambda)$
is a Cauchy sequence in the $d_{\rho}$ metric, then $f_{n}$ converges
to some $f\in\text{Diff}_{\rho}^{\,\omega}(\mathbb{T}^{2},\lambda)$.
Thus, this space is Polish. 

\section{Twisted symbolic systems} \label{sec:twist}
In this subsection we introduce a specific type of symbolic system that will turn out to give symbolic representations of our weakly mixing AbC constructions in Section~\ref{subsec:symbolic}. These so-called \emph{twisted systems} are our counterpart of \emph{circular systems} used in \cite[section 4]{FW1} as representations of \emph{untwisted} AbC transformations. 

\subsection{Definition of twisted systems}
To state the definition let $\left(C_{n},l_{n}\right)_{n\in\mathbb{N}}$ be a sequence of pairs of positive
integers such that $\sum_{n\in\mathbb{N}}\frac{1}{l_{n}}<\infty$. We use them to inductively define sequences $(k_n)_{n\in \N}$, $(p_n)_{n\in \N}$, $(q_n)_{n\in \N}$ of positive integers as follows: We set $p_{0}=0$ and $q_{0}=1$. Then for each $n\in \N$ we define
\begin{align*}
	k_n = 2^{n+2} \cdot q_n \cdot C_n, \ \ \  q_{n+1} =k_{n}l_{n}q_{n}^{2}, \ \ \  p_{n+1} =p_{n}k_{n}l_{n}q_{n}+1.
\end{align*}
Obviously, $p_{n+1}$ and $q_{n+1}$ are relatively prime. 

\begin{rem*}
	In \cite{FW1} such a sequence $\left(k_{n},l_{n}\right)_{n\in\mathbb{N}}$ of pairs of positive
	integers is called a \emph{circular coefficient sequence}. Accordingly, we refer to $\left(C_{n},l_{n}\right)_{n\in\mathbb{N}}$ as a \emph{twisting coefficient sequence}
\end{rem*}

Furthermore we introduce numbers $j_i$ as follows: If $n=0$ we take $j_{0}=0$, and for $n>0$ we let $j_{i}\in\{0,\dots,q_{n}-1\}$ be such that
\begin{equation}\label{eq:jiTwist}
	j_{i}\equiv\left(p_{n}\right)^{-1}i\;\mod q_{n}.
\end{equation}
where the $\left(p_{n}\right)^{-1}$ is the multiplicative inverse of $p_n$ modulo $q_n$. We also note that
\begin{equation}\label{eq:ji-relTwist}
	q_n-j_i = j_{q_n-i}.
\end{equation}
Using these numbers $j_i$ we define
\begin{equation} \label{eq:psiTwist}
	\psi_n(i) = \begin{cases}
		0, & \text{ if  $0\leq i < 2^{n+1}q_n$, $i$ even,} \\
		j_{\frac{i+1}{2}\mod q_n}, & \text{ if  $0\leq i < 2^{n+1}q_n$, $i$ odd,} \\
		j_{\frac{i}{2}+1 \mod q_n}, & \text{ if  $2^{n+1}q_n \leq i < 2^{n+2}q_n$, $i$ even,} \\
		j_1, & \text{ if  $2^{n+1}q_n \leq i < 2^{n+2}q_n$, $i$ odd.} \\
	\end{cases}
\end{equation}

\begin{rem*}
	These numbers $\psi_n(i)$ enter the symbolic representation of our weakly mixing AbC transformations via the parameters $a_n(i)$ in the construction of conjugation map $h_{n+1,1}$ in Section~\ref{subsubsec:h1}, namely, $\psi_n(i)=j_{a_n(i)}$. We refer to Remark~\ref{rem:an} for a motivation of the choice of parameters $a_n(i)$. 
\end{rem*}

Let $\Sigma$ be a non-empty
finite alphabet and $b,e$ be two additional symbols.
\begin{defn}[Twisting operator] \label{def:twist}
	Let $w_0,\dots,w_{k_n-1}$ be words over $\Sigma \cup \{b,e\}$. Using the aforementioned notation we let\footnote{We use
		$\prod$ and powers for repeated concatenation of words.}
	\begin{equation}\label{eq:TwistOperator}
		\begin{split}
			&\mathcal{C}^{\text{twist}}_n(w_0,\dots , w_{k_n-1})= \\
			&\prod^{q_n-1}_{m=0}\quad \prod^{2^{n+2}q_n-1}_{i=0} \quad \prod^{C_n-1}_{c=0} b^{q_n-\psi_n(i)-j_m \mod q_n}\,(w_{iC_n+c})^{l_n-1}\,e^{\psi_n(i)+j_m \mod q_n}
		\end{split}
	\end{equation}
	be the \emph{twisting operator} at level $n$.
\end{defn}

\begin{rem*}
	Suppose that each $w_i$ has length $q_n$. Then the length of $\mathcal{C}^{\text{twist}}_n(w_0,\dots , w_{k_n-1})$ is $q_n2^{n+2}q_nC_nl_nq_n=k_nl_nq^2_n=q_{n+1}$.
\end{rem*}

\begin{rem*}
	Our twisting operator should be compared with the \emph{circular operator} $\mathcal{C}_n$ from \cite[section 4]{FW1} defined by
	\begin{equation}\label{eq:circular}
		\mathcal{C}_n(w_0,w_1,\dots , w_{k_n-1}) = \prod^{q_n-1}_{m=0} \prod^{k_n-1}_{c=0} b^{q_n-j_m}w^{l_n-1}_{c}e^{j_m}.
	\end{equation}
\end{rem*}

In the symbolic representation of our specific weakly mixing AbC constructions the twisting operator plays the role of the \emph{circular operator} for the symbolic representation of untwisted AbC transformations in \cite{FW1}. In parallel to the development of \emph{circular systems} in \cite[section 4]{FW1} we introduce so-called \emph{twisted systems}:
Given a twisting coefficient
sequence $\left(C_{n},l_{n}\right)_{n\in\mathbb{N}}$ we build collections
of words $\mathcal{W}_{n}^{\text{twist}}$ over the alphabet $\Sigma\cup\{b,e\}$
by induction as follows:
\begin{itemize}
	\item Set $\mathcal{W}_{0}^{\text{twist}}=\Sigma$.
	\item Having built $\mathcal{W}_{n}^{\text{twist}}$, we choose a set $P_{n+1}\subseteq\left(\mathcal{W}_{n}^{\text{twist}}\right)^{k_{n}}$
	of so-called \emph{prewords} and build $\mathcal{W}_{n+1}^{\text{twist}}$ by
	taking all words of the form 
	\begin{equation*}
		\mathcal{C}^{\text{twist}}_n(w_0,\dots , w_{k_n-1}) \ \text{ with } \ w_{0}\dots w_{k_{n}-1}\in P_{n+1}.
	\end{equation*}
\end{itemize}
\begin{defn}
	\label{def:twistedConstSeq} A construction sequence $\left(\mathcal{W}_{n}^{\text{twist}}\right)_{n\in\mathbb{N}}$
	will be called \emph{twisted} if it is built in this manner using
	the $\mathcal{C}^{\text{twist}}$-operators and a twisting coefficient sequence, and
	each $P_{n+1}$ is uniquely readable in the alphabet with the words
	from $\mathcal{W}_{n}^{\text{twist}}$ as letters. (This last property is called
	the \emph{strong readability assumption}.) 
\end{defn}

\begin{rem*}
	Similar to the proof of \cite[Lemma 45]{FW2} for circular
	construction sequences, one can show that each $\mathcal{W}_{n}^{\text{twist}}$ in a twisted
	construction sequence is uniquely readable even if the prewords are
	not uniquely readable. However, the definition of a twisted construction
	sequence requires this stronger readability assumption. 
\end{rem*}
\begin{defn}
	\label{def:TwistedShift}A symbolic system $\mathbb{K}$ built from
	a circular construction sequence is called a \emph{twisted system}.
	For emphasis we will often denote it by $\mathbb{K}^{\text{twist}}$. 
\end{defn}

Based on Fact \ref{fact:MeasureConstrSeq} we obtain a characterisation of the set $S\subset \mathbb{K}^{\text{twist}}$ from Definition~\ref{defn:SetS} and a strong unique ergodicity result analogous to \cite[Lemma 20]{FW1} for circular systems.

\begin{lem}
	Let $\mathbb{K}^{\text{twist}}$ be a twisted system and let $\nu$ be a shift-invariant measure on $\mathbb{K}^{\text{twist}}$.
	Then the following are equivalent:
	\begin{enumerate}
		\item $\nu$ has no atoms.
		\item $\nu$ concentrates on the collection of $s\in \mathbb{K}^{\text{twist}}$ such that $\Meng{i}{s(i)\notin \{b,e\}}$ is unbounded
		in both $\Z^-$ and $\Z^+$.
		\item $\nu$ concentrates on $S$
	\end{enumerate}
	If $\mathbb{K}^{\text{twist}}$ is a uniform twisted system, then there is a unique invariant measure
	concentrating on $S$.
\end{lem}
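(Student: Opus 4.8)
The statement is a direct analog of \cite[Lemma 20]{FW1} for circular systems, and the plan is to mirror that proof, with the only structural difference being that the $\mathcal{C}^{\text{twist}}$-operator inserts $b$- and $e$-spacers whose lengths depend on the extra index $i$ (through $\psi_n(i)$) and the cycle index $m$ (through $j_m$). First I would record the basic combinatorial features of $\mathcal{C}^{\text{twist}}_n$ that make it a construction sequence: every $(n+1)$-word parses canonically into copies of $n$-words $(w_{iC_n+c})^{l_n-1}$ separated by boundary blocks of $b$'s and $e$'s, and by the length computation in the remark after Definition~\ref{def:twist} the total length of spacers is at most $q_{n+1}/l_n \cdot (\text{const})$, which is summable in $n$ by the hypothesis $\sum_n 1/l_n<\infty$; hence condition (3) of Definition~\ref{def:ConstrSeq} holds and $\left(\mathcal{W}_n^{\text{twist}}\right)_{n}$ is a genuine construction sequence, so Fact~\ref{fact:MeasureConstrSeq} applies.

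For the equivalences, the implication (3)$\Rightarrow$(2) is immediate from Definition~\ref{defn:SetS}, since membership in $S$ forces arbitrarily long finite central segments lying inside $n$-words, and inside any $n$-word the non-$\{b,e\}$ coordinates are unbounded on both sides as one moves outward through the nested structure. For (2)$\Rightarrow$(1), I would argue that if $\nu$ has an atom at some point $x$, then by shift-invariance the entire orbit of $x$ carries equal mass, so the orbit must be finite, i.e. $x$ is periodic; but a periodic point whose set of non-$\{b,e\}$ coordinates is unbounded in both directions would have to contain a non-spacer symbol in its period, and then unique readability together with the growth $h_{n+1}/h_n=k_nl_n\to\infty$ of word lengths yields a contradiction (a periodic point cannot have every finite subword appear inside some $w\in \mathcal{W}_n^{\text{twist}}$ once the periods are bounded while word lengths grow). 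The substantive implication is (1)$\Rightarrow$(3): given a non-atomic $\nu$, one shows $\nu(S)=1$. Here I would invoke part (3) of Fact~\ref{fact:MeasureConstrSeq}: it suffices to show that $\nu$ concentrates on the set of $s$ such that for infinitely many $n$ there is an interval $[a_n,b_n)\ni 0$ with $s\upharpoonright[a_n,b_n)\in \mathcal{W}_n^{\text{twist}}$. The complement is the set of $s$ for which the ``current'' position relative to the $n$-word boundary either escapes to a boundary block or fails to be covered; by uniform-in-$n$ estimates on the relative density of the non-spacer part of an $(n+1)$-word, namely the fraction $\frac{(l_n-1)q_n}{(\text{average spacer length}) + (l_n-1)q_n} \to 1$, a Borel--Cantelli / ergodic-decomposition argument shows this complement has measure zero for any non-atomic invariant $\nu$. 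This is essentially the content of \cite[Lemma 11]{FW1} combined with \cite[Lemma 12]{FW2}, both quoted here as Fact~\ref{fact:MeasureConstrSeq}, and the only thing to check is that the twisted boundary lengths $q_n-\psi_n(i)-j_m \bmod q_n$ and $\psi_n(i)+j_m\bmod q_n$ are all bounded by $q_n$, so the total spacer length per cycle is $O(q_n)$ and the density estimates go through verbatim.

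For the final sentence, assume in addition that $\left(\mathcal{W}_n^{\text{twist}}\right)_n$ is uniform. By Fact~\ref{fact:MeasureConstrSeq}(2) there is a unique non-atomic shift-invariant measure $\nu$ concentrating on $S$, and it is ergodic. To upgrade this to \emph{strong} unique ergodicity of the action restricted to $S$ — i.e. that $\nu$ is the only invariant measure giving $S$ full measure — I would note that the argument above already shows every non-atomic invariant measure concentrates on $S$, and any invariant measure giving $S$ positive measure is non-atomic (an atom would again force periodicity, incompatible with $S$), hence concentrates on $S$; uniqueness among such measures is then exactly Fact~\ref{fact:MeasureConstrSeq}(2). \textbf{The main obstacle} I anticipate is the bookkeeping in (1)$\Rightarrow$(3): one must verify that the position of a $\nu$-typical point inside the hierarchical parsing does not drift into the $b/e$-padding with positive frequency, and this requires the quantitative claim that the proportion of an $(n+1)$-word occupied by honest $n$-words tends to $1$ uniformly — a point where the twisted operator's $i$- and $m$-dependent spacer lengths must be summed carefully, though the bound $\le q_n$ per spacer makes this routine once set up. I expect this to follow from the already-quoted Facts with only minor adaptation, so the proof should be short, essentially a citation of \cite[Lemmas 11--12]{FW1,FW2} with the remark that the spacer-length estimates are unchanged.
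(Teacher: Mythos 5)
The paper states this lemma without proof, presenting it as the direct analogue of \cite[Lemma 20]{FW1} obtained from Fact~\ref{fact:MeasureConstrSeq}, which is exactly the route you take: the only twisted-specific content is that each boundary block has length $<q_n$ and the total spacer proportion of an $(n+1)$-word is $1/l_n$ (summable), so the density and readability arguments carry over verbatim, and you check precisely this. One small caution on (3)$\Rightarrow$(2): it is not literally true that \emph{every} $s\in S$ has non-spacer coordinates unbounded in both directions (position $0$ could lie in the terminal $n$-subword at every level, with the trailing $e$-block degenerate), but the set of such points is $\nu$-null for any measure concentrating on $S$, which is all the implication requires.
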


There are only two ergodic invariant measures with atoms: the one concentrating
on the constant sequence $\ldots bbb\ldots $ and the one concentrating on $\ldots eee\ldots $.

\begin{defn}\label{defn:boundary}
	Suppose that $w=\mathcal{C}^{\text{twist}}_n(w_0,\dots , w_{k_n-1})$. Then $w$ consists of repetitions $w^{l_n-1}_i$ of words $w_i$ and some letters $b$ and $e$ that are not in the words $w_i$. The entries of $w$ in the words $w_i$ are called the \emph{interior} of $w$. The remainder of $w$ consists of blocks of the form $b^{q_n-\psi_n(i)-j_m \mod q_n}$ and $e^{\psi_n(i)+j_m \mod q_n}$. We call these entries of $w$ the \emph{boundary} of $w$.
\end{defn}

	The boundary of $w$ constitutes a small portion of $1/l_n$ of the word $w$. 

\begin{defn}
	If $s\in S$ or $s \in \mathcal{W}_m$ with $m\geq n$ we define $\partial_n(s) \subset \Z$ to be the collection of $i\in \Z$ such that $\sh^i(s)(0)$ is in the boundary portion of an $n$-subword of $s$. Furthermore, we introduce	$\partial_n \coloneqq \Meng{x \in \mathbb{K}^{\text{twist}}}{0 \in \partial_n(x)}$.
\end{defn}

\subsection{An explicit description of $rev(\mathbb{K}^{\text{twist}})$}
To describe an explicit construction sequence $\left\{ rev(\mathcal{W}_{n}^{\text{twist}})\right\} _{n\in\mathbb{N}}$
of $(\mathbb{K}^{\text{twist}})^{-1}\cong rev(\mathbb{K}^{\text{twist}})$ we introduce the operator
\begin{equation*}
	\begin{split}
		&\widetilde{\mathcal{C}}_{n}^{\text{twist}}\left(w_{0},w_{1},\dots,w_{k_{n}-1}\right)= \\
		&\prod^{q_n-1}_{m=0}\, \prod^{2^{n+2}q_n-1}_{i=0} \, \prod^{C_n-1}_{c=0} e^{q_n-\psi_n(i)-j_m \mod q_n}\,(w_{iC_n+c})^{l_n-1}\,b^{\psi_n(i)+j_m \mod q_n},
	\end{split}
\end{equation*} 
that is, the role of $b$ and $e$ in the twisting operator has been interchanged. Then we note the following connection between forward and reverse words.
\begin{lem} \label{lem:ReverseTwist}
	Let $w_0\ldots w_{k_n-1} \in P_{n+1}\subseteq\left(\mathcal{W}_{n}^{\text{twist}}\right)^{k_{n}}$. Then
	\begin{equation}\label{eq:ReverseTwist}
		rev\left(\mathcal{C}_{n}^{\text{twist}}\left(w_{0},w_{1},\dots,w_{k_{n}-1}\right) \right) = \widetilde{\mathcal{C}}_{n}^{\text{twist}}\left(rev(w_{k_{n}-1}),\dots , rev(w_1),rev(w_0)\right).
	\end{equation}
\end{lem}

\begin{proof}
	From the definition of the numbers $j_i$ in \eqref{eq:jiTwist} and the relation \eqref{eq:ji-relTwist} we obtain $j_{q_n-1-m}=q_n-j_{m+1}=q_n-j_{m}-j_1 \mod q_n$
	and    	
	\begin{equation*}
		\begin{split}
			&\psi_n(2^{n+2}q_n-1-i) = \\
			&\begin{cases}
				j_1 = j_1-\psi_n(i), & \text{ if  $0\leq i < 2^{n+1}q_n$, $i$ even,} \\
				j_{-\frac{i+1}{2}+1 \mod q_n} = q_n - j_{\frac{i+1}{2}}+j_1=q_n-\psi_n(i)+j_1, & \text{ if  $0\leq i < 2^{n+1}q_n$, $i$ odd,} \\
				j_{\frac{-i}{2}\mod q_n}  = q_n - j_{\frac{i}{2}+1\mod q_n}+j_1 = q_n -\psi_n(i)+j_1, & \text{ if  $2^{n+1}q_n \leq i < 2^{n+2}q_n$, $i$ even,} \\
				0=j_1-\psi_n(i), & \text{ if  $2^{n+1}q_n \leq i < 2^{n+2}q_n$, $i$ odd,} \\
			\end{cases}
		\end{split}
	\end{equation*}
	that is, 
	\begin{equation} \label{eq:psiTwistRev}
		\psi_n(2^{n+2}q_n-1-i) = j_1-\psi_n(i) \mod q_n.
	\end{equation}
	Using these identities we calculate \eqref{eq:ReverseTwist}.
\end{proof}

Hence, the collections
\[
rev(\mathcal{W}_{n+1}^{\text{twist}})=\Meng{\widetilde{\mathcal{C}}_{n}^{\text{twist}}\left(rev(w_{k_{n}-1}),\dots , rev(w_1),rev(w_0)\right)}{w_{0}w_{1}\dots w_{k_{n}-1}\in P_{n+1}} 
\]
constitute a construction sequence of $(\mathbb{K}^{\text{twist}})^{-1}$.

\subsection{Subscales for twisted words}
We end this section by introducing the following subscales for
a word $w\in\mathcal{W}_{n+1}^{\text{twist}}$ analogous to the terminology for circular words in \cite[Subsection 3.3]{FW2}. 
\begin{rem}\label{rem:Subsection}
	Let $w=\mathcal{C}^{\text{twist}}_n(w_0,\dots , w_{k_n-1})\in\mathcal{W}_{n+1}^{\text{twist}}$.
	\begin{itemize}
		\item Subscale $0$ is the scale of the individual powers of $w_{j}\in\mathcal{W}_{n}^{\text{twist}}$
		of the form $w_{j}^{l_n-1}$ and each such occurrence of a $w_{j}^{l_n-1}$
		is called a \emph{$0$-subsection}. 
		\item Subscale $1$ is the scale of each term of $\mathcal{C}^{\text{twist}}_n(w_0,\dots , w_{k_n-1})$
		that has the form $\left(b^{q_n-\psi_n(i)-j_m \mod q_n}\,(w_{iC_n+c})^{l_n-1}\,e^{\psi_n(i)+j_m \mod q_n}\right)$
		and these terms are called \emph{$1$-subsections}. 
		\item Subscale $2$ is the scale of each term of $\mathcal{C}^{\text{twist}}_n(w_0,\dots , w_{k_n-1})$
		that has the form $\prod^{2^{n+2}q_n-1}_{i=0} \prod^{C_n-1}_{c=0} b^{q_n-\psi_n(i)-j_m \mod q_n}\,(w_{iC_n+c})^{l_n-1}\,e^{\psi_n(i)+j_m \mod q_n}$
		and these terms are called \emph{$2$-subsections}. 
	\end{itemize}
\end{rem}

	\section{\label{sec:wm}Weakly mixing AbC constructions}
	We start by presenting the general scheme of the abstract Approximation by Conjugation method for the construction of measure-preserving transformations. In this framework we provide a criterion for weak mixing in Section~\ref{subsec:WMcrit}. We proceed by constructing specific twisted conjugation maps in Section~\ref{subsec:constr} so that the resulting AbC transformations satisfy our criterion for weak mixing. In Section~\ref{subsec:symbolic} we find symbolic representations for our specific constructions of weakly mixing AbC maps. In this symbolic representation we use the  \emph{twisting operator} introduced in Section~\ref{sec:twist}. Finally, we show that our specific weakly mixing AbC maps allow realization as smooth or even real-analytic diffeomorphisms. 
	
	\subsection{\label{subsec:abstract}Abstract AbC constructions}
	Our constructions can be viewed as taking place on $\mathbb{T}^2=\R^2 / \Z^2$, $\mathbb{D}$, or $\mathbb{A}=\mathbb{S}^1\times [0,1]$. We use $M$ as a proxy for these spaces equipped with Lebesgue measure $\lambda$ and circle actions $\{R_t\}_{t\in \mathbb{S}^1}$ defined by
	\[
	R_t(\theta , r) = (\theta + t, r).
	\]
	Furthermore, we introduce the following notation with $r,s\in \Z^+$:
	\begin{equation}
		\Delta^{i,j}_{r,s} \coloneqq \left[ \frac{i}{r}, \frac{i+1}{r}\right) \times \left[ \frac{j}{s}, \frac{j+1}{s} \right).
	\end{equation}
	We collect the above sets to form the following partition
	\begin{equation*}
		\xi_{r, s} \coloneqq \{\Delta_{r,s}^{i,j}: 0\leq i< r,\; 0\leq j< s\}.
	\end{equation*}

	Our transformations will be obtained as the limit of an inductive construction process of conjugates
	\begin{equation}\label{eq:AbC}
		T_n = H_n \circ R_{\alpha_n} \circ H^{-1}_n
	\end{equation}
with conjugation maps $H_n = H_{n-1}\circ h_n$ and $\alpha_n = \frac{p_n}{q_n} \in \mathbb{Q}$, where $p_n$ and $q_n$ are relatively prime. For a start, we choose some arbitrary $\alpha_0 \in \mathbb{Q}$ and set $H_0 = \operatorname{id}$. In step $n+1$ of the construction we build an additional conjugation map $h_{n+1}$ satisfying
\begin{equation}\label{eq:commute}
	h_{n+1} \circ R_{\alpha_{n}} = R_{\alpha_{n}} \circ h_{n+1}.
\end{equation}
In the measure-theoretic AbC construction this map $h_{n+1}$ will be a permutation of partition elements $\Delta^{i,s}_{k_nq_n,s_{n+1}} $
of the partition $\xi_{k_nq_n,s_{n+1}}$
with some $k_n,s_{n+1} \in \Z^+$, where we make the following requirement on the sequence $(s_n)_{n\in \N}$:
\begin{enumerate}[label={\bf(R\arabic*)}]
	\item\label{item:R1} $s_{n+1}$ is a multiple of $s_n$ and $s_n \to \infty$ as $n \to \infty$. 
\end{enumerate}

	Finally, we complete stage $n+1$ of the construction process by setting
\begin{equation}\label{eq:alpha}
	\alpha_{n+1} = \frac{p_{n+1}}{q_{n+1}} = \alpha_n + \frac{1}{k_nl_nq^2_n}
\end{equation}
for some sequence $(l_n)_{n\in \N}$ of positive integers satisfying
\begin{equation}\label{eq:lgeneral}
	\sum_{n\in \N} \frac{1}{l_n}<\infty.
\end{equation}
In case of smooth (or even real-analytic) AbC constructions in Section~\ref{subsec:smooth}, the numbers $l_n$ will have to grow sufficiently fast to allow convergence of the sequence $(T_n)_{n\in \N}$ to a limit diffeomorphism. In the so-called \emph{abstract AbC method} of this section we obtain a MPT as a limit of periodic processes.

\begin{lem}\label{lem:MPconv}
	Let $(T_n)_{n\in \N}$ be a sequence of MPT's constructed by the abstract AbC method with $(s_n)_{n\in \N}$ satisfying requirement~\ref{item:R1} and with any sequence $(l_n)_{n\in\N}$ satisfying \eqref{eq:lgeneral}. Then $(T_n)_{n\in \N}$ converges in the weak topology to a measure-preserving transformation $T$. Furthermore, the sequence of partitions
	\begin{equation}\label{eq:eta}
		\zeta_n \coloneqq H_n(\xi_{q_n,s_n})
	\end{equation}
	is decreasing and generating. We have for all $-q_{n+1}\leq t\leq q_{n+1}$ that
	\begin{equation}\label{eq:Tclose}
	d\left(\zeta_n , T^t, T^t_{n+1}\right) \coloneqq	\sum_{c\in \zeta_n} \lambda\left(T^t(c)\triangle T^t_{n+1}(c)\right) < \sum_{i=n+1}^{\infty}\frac{1}{l_{i}}
	\end{equation}
\end{lem}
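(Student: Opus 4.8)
The plan is to verify the hypotheses of the convergence criterion (Fact \ref{fact:ConvWT}) and the isomorphism-assembly lemma (Fact \ref{fact:isomorphism}) directly from the structure of the AbC construction. First I would establish that the partitions $\zeta_n = H_n(\xi_{q_n,s_n})$ are decreasing: since $\xi_{q_{n+1},s_{n+1}}$ refines $\xi_{q_n,s_n}$ (because $q_n \mid q_{n+1}$ and, by requirement \ref{item:R1}, $s_n \mid s_{n+1}$) and $H_{n+1} = H_n \circ h_{n+1}$ where $h_{n+1}$ is a permutation of the atoms of $\xi_{k_nq_n,s_{n+1}}$ — a partition that itself refines $\xi_{q_n,s_n}$ and is refined by $\xi_{q_{n+1},s_{n+1}}$ — we get $h_{n+1}(\xi_{q_{n+1},s_{n+1}})$ refines $h_{n+1}(\xi_{q_n,s_n}) = \xi_{q_n,s_n}$ (the last equality because $h_{n+1}$ merely permutes atoms of a coarser partition, hence fixes $\xi_{q_n,s_n}$ setwise). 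Applying $H_n$ gives $\zeta_{n+1} \succeq \zeta_n$. Generating follows because $q_n, s_n \to \infty$, so the mesh of $\xi_{q_n,s_n}$ in the $\theta$-direction and (by \ref{item:R1}) in the $r$-direction tends to zero, and the $H_n$ are measure-preserving homeomorphisms; standard arguments (as in the Anosov--Katok literature and \cite{FW1}) show the images still generate.

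The technical heart is the estimate \eqref{eq:Tclose}. Here I would use the telescoping identity $T^t_{n+1}(c) \triangle T^t_{m}(c)$ controlled by summing single-step discrepancies $d(\zeta_n, T^t_{j+1}, T^t_j)$ for $j \geq n$, so it suffices to bound $d(\zeta_n, T^t_{n+1}, T^t_n)$ by something like $1/l_{n+1}$ for $|t| \leq q_{n+1}$. The key computation: $T_{n+1} = H_{n+1} R_{\alpha_{n+1}} H_{n+1}^{-1}$ and $T_n = H_n R_{\alpha_n} H_n^{-1} = H_{n+1} R_{\alpha_n} H_{n+1}^{-1}$ using \eqref{eq:commute}, i.e. $h_{n+1} R_{\alpha_n} = R_{\alpha_n} h_{n+1}$. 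Therefore on $\zeta_n$-atoms the difference between $T_{n+1}^t$ and $T_n^t$ is conjugated (via $H_{n+1}$, which is measure-preserving) to the difference between $R^t_{\alpha_{n+1}}$ and $R^t_{\alpha_n}$ measured on $h_{n+1}^{-1}(\xi_{q_n,s_n}) = \xi_{q_n,s_n}$; by \eqref{eq:alpha}, $|\alpha_{n+1} - \alpha_n| = 1/(k_n l_n q_n^2)$, so $|t(\alpha_{n+1}-\alpha_n)| \leq q_{n+1}/(k_n l_n q_n^2) = 1/1 \cdot \ldots$ — actually $q_{n+1} = k_n l_n q_n^2$, so $|t(\alpha_{n+1}-\alpha_n)| \leq 1$, which is too crude. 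The correct observation is that $\xi_{q_n,s_n}$ has $\theta$-atoms of width $1/q_n$, and a rotation by $t(\alpha_{n+1}-\alpha_n)$ moves each atom's image off the $T_n^t$-image by at most $|t|/(k_n l_n q_n^2) \cdot q_n = |t|/(k_n l_n q_n) \leq q_{n+1}/(k_n l_n q_n) = 1/1$... so one must instead observe that for $|t|\le q_{n+1}$ the rotation $R_{t\alpha_{n+1}}$ and $R_{t\alpha_n}$ agree up to an error of at most $|t|\cdot|\alpha_{n+1}-\alpha_n| = |t|/(k_nl_nq_n^2)$ in the circle coordinate, and summing $\lambda$ of symmetric differences of the $q_ns_n$ atoms of width $1/q_n$ gives total measure $\le q_n s_n \cdot \tfrac{1}{s_n}\cdot |t|/(k_nl_nq_n^2)\cdot 2 \le 2q_{n+1}/(k_nl_nq_n) = 2/l_n$ — wait, one needs $\le 1/l_{n+1}$, so the bookkeeping with the precise constants (and possibly reindexing so the bound reads $\sum_{i>n} 1/l_i$) is exactly the routine-but-delicate part I would carry out carefully.

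Once \eqref{eq:Tclose} is in hand, weak convergence of $(T_n)$ follows from Fact \ref{fact:ConvWT}(3): given $\epsilon$ and $p$, pick $N$ with $\sum_{i>N} 1/l_i < \epsilon/2$; for $n,m > N$ (say $n < m$), the atoms of $\mathcal{P}_p := \zeta_p$ are unions of $\zeta_n$-atoms (since $\zeta$ is decreasing and $p \le n$), so $\sum_{A \in \zeta_p}\lambda(T_m^{\pm 1}(A)\triangle T_n^{\pm1}(A)) \le \sum_{c \in \zeta_n}\lambda(T_m^{\pm1}(c)\triangle T_n^{\pm1}(c)) < \sum_{i>n}1/l_i < \epsilon$, using \eqref{eq:Tclose} with $t = \pm 1$ and telescoping from $n$ to $m$. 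The main obstacle is purely the constant-tracking in \eqref{eq:Tclose}: ensuring the single-step bound is genuinely summable after multiplication by the $q_{n+1}$-many time steps, which forces the precise exponents in the definitions $k_n = 2^{n+2}q_nC_n$ and $q_{n+1} = k_nl_nq_n^2$ to be used, together with the crucial simplification afforded by \eqref{eq:commute} that replaces the conjugated dynamics by honest rotations acting on the fixed partition $\xi_{q_n,s_n}$. Everything else — decreasing, generating — is standard AbC bookkeeping that I would dispatch by citing the structure already set up and the analogous arguments in \cite{FW1,AK70}.
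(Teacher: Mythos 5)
Your overall strategy (reduce to Fact \ref{fact:ConvWT}, telescope over consecutive $T_j$'s, and use the commutation relation \eqref{eq:commute} to turn the comparison of conjugated maps into a comparison of honest rotations) is the same as the paper's. But the step you defer as ``routine-but-delicate bookkeeping'' is where the proof actually lives, and as you have set it up it fails. You reduce to bounding $d(\zeta_n, T^t_{n+1}, T^t_n)$ for all $|t|\le q_{n+1}$, and this quantity is \emph{not} small: $R_{\alpha_n}$ has period $q_n$ while $R_{\alpha_{n+1}}$ has period $q_{n+1}=k_nl_nq_n^2$, so for $t$ of order $q_{n+1}$ the discrepancy $t(\alpha_{n+1}-\alpha_n)=t/q_{n+1}$ is of order $1$, i.e.\ a macroscopic rotation, and the sum of symmetric differences over $\zeta_n$ is of order $1$ no matter how large $l_n$ is. Your own arithmetic signals this: $2q_{n+1}/(k_nl_nq_n)$ equals $2q_n$, not $2/l_n$, so the computation cannot be rescued by tracking constants. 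The statement \eqref{eq:Tclose} deliberately compares $T$ with $T_{n+1}$ (not $T_n$), precisely because the step $T_n\to T_{n+1}$ cannot be controlled on the time range $|t|\le q_{n+1}$.

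The missing idea is an index shift in both the telescope and the scale at which each step is measured. The paper telescopes $\sum_{i\ge n} d(\zeta_i, T^t_{i+2},T^t_{i+1})$, so the first comparison is $T_{n+1}$ versus $T_{n+2}$; and after pulling back by $h_{i+2}^{-1}\circ h_{i+1}^{-1}$ the relevant partition is $\xi_{k_{i+1}q_{i+1},s_{i+2}}$, whose atoms have width $1/(k_{i+1}q_{i+1})$, while the rotation discrepancy is $|t|/q_{i+2}=|t|/(k_{i+1}l_{i+1}q_{i+1}^2)$. The ratio is $|t|/(l_{i+1}q_{i+1})\le q_{n+1}/(l_{i+1}q_{i+1})\le 1/l_{i+1}$ for $i\ge n$, and summing gives exactly $\sum_{i\ge n+1}1/l_i$. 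In your version the atoms stay at width $1/q_n$ throughout, which loses the factor $k_jq_j/q_n$ that makes the series converge. A smaller point: your justification of ``decreasing'' asserts $h_{n+1}(\xi_{q_n,s_n})=\xi_{q_n,s_n}$, which is false ($h_{n+1,2}$ sends different $\xi_{k_nq_n,s_{n+1}}$-atoms of a single $\xi_{q_n,s_n}$-atom into different vertical levels); what is true and what you need is that $h_{n+1}$ permutes the atoms of the \emph{finer} partition $\xi_{q_{n+1},s_{n+1}}$, which therefore still refines $\xi_{q_n,s_n}$ after applying $h_{n+1}$.
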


\begin{proof}
	We recall that $h_n$ acts as a permutation of the atoms of $\xi_{k_{n-1}q_{n-1},s_n}$. Since $q_n=k_{n-1}l_{n-1}q^2_{n-1}$ by \eqref{eq:alpha}, $\xi_{q_n,s_n}$ refines $\xi_{k_{n-1}q_{n-1},s_n}$. Accordingly, we can view $h_n$ as permuting the atoms of $\xi_{q_n,s_n}$. In this sense, each $h_m$ is a permutation of $\xi_{q_n,s_n}$ for $m\leq n$. Hence, $\zeta_n = H_n(\xi_{q_n,s_n})$ is decreasing and generating.	
	Using $h_{i+1}\circ R_{\alpha_i}=R_{\alpha_i}\circ h_{i+1}$ we also note for $-q_{n+1} \leq t\leq q_{n+1}$ and $n$ sufficiently large that
	\begin{align*}
		& \sum_{i=n}^{\infty}\sum_{c\in \zeta_i} \lambda\left(T^t_{i+2}(c)\triangle T^t_{i+1}(c)\right) \\
		= & \sum_{i=n}^{\infty}\sum_{c\in \zeta_i} \lambda\left(H_{i+2}R^t_{\alpha_{i+2}}H^{-1}_{i+2}(c)\triangle H_{i+1} \circ h_{i+2} \circ R^t_{\alpha_{i+1}} \circ h^{-1}_{i+2} \circ H^{-1}_{i+1}(c)\right) \\
		= & \sum_{i=n}^{\infty}\sum_{\tilde{c}\in \xi_{q_i,s_i}} \lambda\left(R^t_{\alpha_{i+2}}\circ h^{-1}_{i+2}\circ h^{-1}_{i+1}(\tilde{c})\triangle  R^t_{\alpha_{i+1}} \circ h^{-1}_{i+2} \circ h^{-1}_{i+1}(\tilde{c})\right) \\
		= & \sum_{i=n}^{\infty}\sum_{d\in \xi_{k_{i+1}q_{i+1},s_{i+2}}} \lambda\left(R^t_{\alpha_{i+2}-\alpha_{i+1}}(d)\triangle d\right) 
		\leq \sum_{i=n}^{\infty}\frac{\abs{t}}{l_{i+1}q_{i+1}} \leq \sum_{i=n+1}^{\infty}\frac{1}{l_{i}} < \varepsilon
	\end{align*}
	by our assumption on $(l_n)_{n\in\N}$. In particular, this shows convergence of $(T_n)_{n\in \N}$ to a measure-preserving transformation $T$ in the weak topology by Fact~\ref{fact:ConvWT}. Moreover, we have shown by triangle inequality that
	\eqref{eq:Tclose} holds. 
\end{proof}

\begin{rem} \label{rem:AbCperPro}
	Since $R_{\alpha_n}$ gives a periodic process with partition $\xi_n \coloneqq \xi_{q_n,s_n}$, the map $T_n = H_n \circ R_{\alpha_{n}} \circ H^{-1}_n$ induces a periodic process with partition $\zeta_n = H_n(\xi_n)$, which we denote by $\tau_n$. When we want to view $\tau_n$ as a collection of towers, we take the bases of $\tau_n$ to be the sets $\Meng{H_n(\Delta^{0,s}_{q_n , s_n})}{0\leq s < s_n}$.
\end{rem}

\subsection{\label{subsec:WMcrit}Criterion for weak mixing}

We prove the following criterion for weak mixing in our setting of abstract AbC constructions. Our criterion bases upon the original construction of weakly mixing diffeomorphisms in \cite[section 5]{AK70}.

\begin{prop}[Criterion for weak mixing] \label{prop:WM}
  Let $(T_n)_{n\in \N}$ be a sequence of measure-preserving transformations constructed by the abstract AbC method with $(s_n)_{n\in \N}$ satisfying requirement~\ref{item:R1} and with any sequence $(l_n)_{n\in\N}$ such that $\sum_{n\in \N}\frac{1}{l_n}<\infty$. Furthermore, we suppose that there is an increasing sequence $(m_n)_{n\in \N}$ of positive integers $m_n \leq q_{n+1}$ such that for every $n \in \N$ we have
  \begin{equation} \label{eq:WMassum}
  	\begin{split}
  	&\abs{\lambda\left( h_{n+1}\circ R^{m_n}_{\alpha_{n+1}} \circ h^{-1}_{n+1}(\Delta^{i,t}_{q_n,s_{n}}) \cap \Delta^{j,u}_{q_n,s_{n}}\right)- \lambda(\Delta^{i,t}_{q_n,s_{n}})\cdot \lambda(\Delta^{j,u}_{q_n,s_{n}})} \\
  	< & \frac{1}{n}\cdot \lambda(\Delta^{i,t}_{q_n,s_{n}})\cdot \lambda(\Delta^{j,u}_{q_n,s_{n}})
  	\end{split}
  \end{equation}
  for all $0\leq i,j<q_n$ and $0\leq t,u< s_{n}$. Then $(T_n)_{n\in \N}$ converges in the weak topology to a weakly mixing transformation $T$. The integers $m_n$ will be called \emph{mixing times}.
\end{prop}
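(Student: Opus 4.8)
The plan is to deduce weak mixing of the limit $T$ from the hypothesis \eqref{eq:WMassum} by pushing the estimate forward through the conjugation maps and then taking the limit along the sequence $(m_n)_{n \in \N}$. First, I would fix a measurable set $A$ (or, by a standard approximation argument, it suffices to work with atoms of the generating partitions $\zeta_n = H_n(\xi_{q_n,s_n})$, and then extend to arbitrary measurable sets using that $\{\zeta_n\}$ generates). Writing $\Delta^{i,t} = \Delta^{i,t}_{q_n,s_n}$ for brevity, the key observation is that the left-hand side of \eqref{eq:WMassum} can be re-expressed after conjugating by $H_n$. Indeed, using $H_{n+1} = H_n \circ h_{n+1}$ and $T_{n+1} = H_{n+1} \circ R_{\alpha_{n+1}} \circ H_{n+1}^{-1}$ together with the commutation relation \eqref{eq:commute}, one has
\[
T_{n+1}^{m_n} = H_n \circ h_{n+1} \circ R^{m_n}_{\alpha_{n+1}} \circ h_{n+1}^{-1} \circ H_n^{-1},
\]
so that $\lambda\big(T_{n+1}^{m_n}(H_n \Delta^{i,t}) \cap H_n \Delta^{j,u}\big)$ equals exactly the measure appearing in \eqref{eq:WMassum} (since $H_n$ is measure-preserving). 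Thus \eqref{eq:WMassum} says precisely that $T_{n+1}^{m_n}$ makes the atoms of $\zeta_n$ ``almost independent'' up to relative error $1/n$.

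Second, I would replace $T_{n+1}^{m_n}$ by $T^{m_n}$. Here the constraint $m_n \leq q_{n+1}$ is essential: by \eqref{eq:Tclose} in Lemma~\ref{lem:MPconv} we have $\sum_{c \in \zeta_n} \lambda\big(T^t(c) \triangle T_{n+1}^t(c)\big) < \sum_{i \geq n+1} 1/l_i$ for all $|t| \leq q_{n+1}$, and in particular for $t = m_n$. Since $\sum_{n} 1/l_n < \infty$, the tail $\sum_{i \geq n+1} 1/l_i \to 0$, so replacing $T_{n+1}^{m_n}$ by $T^{m_n}$ in the intersection estimate introduces an error tending to $0$ with $n$. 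Combining this with the previous step, for atoms $c = H_n\Delta^{i,t}$, $d = H_n\Delta^{j,u}$ of $\zeta_n$ we get
\[
\big|\lambda(T^{m_n}(c) \cap d) - \lambda(c)\lambda(d)\big| < \tfrac{1}{n}\lambda(c)\lambda(d) + o(1)
\]
as $n \to \infty$. Summing over all pairs of atoms and using linearity, one upgrades this to: for any two sets $A, B$ each a union of $\zeta_N$-atoms, $\lambda(T^{m_n}(A) \cap B) \to \lambda(A)\lambda(B)$ (for $n$ large enough that $\zeta_n$ refines $\zeta_N$, which happens since $\zeta_n$ is decreasing). A density/approximation argument then gives the same for all measurable $A, B$.

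Third, I would invoke the standard criterion that an invertible measure-preserving $T$ is weakly mixing if and only if there is a sequence $m_n \to \infty$ with $\lambda(T^{m_n}A \cap B) \to \lambda(A)\lambda(B)$ for all measurable $A,B$ (equivalently, $T$ has no nonconstant $L^2$ eigenfunction: if $h \circ T = \gamma h$ then $\langle h \circ T^{m_n}, h \rangle = \gamma^{m_n}\|h\|_2^2$ would have to converge to $|\langle h,1\rangle|^2$, forcing $h$ constant after testing against indicator sets — or more directly, mixing along $m_n$ of indicator functions rules out eigenvalues). Since $(m_n)$ is increasing and $m_n \leq q_{n+1} \to \infty$, this applies. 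That $T$ is a genuine measure-preserving transformation (the weak limit exists) is already guaranteed by Lemma~\ref{lem:MPconv}.

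The main obstacle is the bookkeeping in the second step: one must be careful that the number of pairs of atoms of $\zeta_n$ grows (like $(q_n s_n)^2$), while the $L^1$-error $\sum_{c}\lambda(T^{m_n}c \triangle T_{n+1}^{m_n}c)$ from Lemma~\ref{lem:MPconv} is a \emph{single} small quantity, not a per-pair quantity. The point is that one does not sum the per-atom errors blindly; rather, one first reduces to sets $A, B$ that are unions of $\zeta_N$-atoms for a \emph{fixed} $N$ (so a fixed finite number of atoms), approximates $\lambda(T^{m_n}A \cap B)$ by $\lambda(T^{m_n}_{n+1}A \cap B)$ with total error at most $\sum_{i \geq n+1}1/l_i \to 0$ (this uses $\lambda(T^{m_n}A \triangle T^{m_n}_{n+1}A) \leq \sum_{c \in \zeta_n, c \subseteq A}\lambda(T^{m_n}c \triangle T^{m_n}_{n+1}c)$ and the hypothesis $m_n \le q_{n+1}$), and only then expands $A$ and $B$ into $\zeta_n$-atoms to apply \eqref{eq:WMassum}, where the relative error $1/n$ times $\sum_{c \subseteq A, d \subseteq B}\lambda(c)\lambda(d) = \lambda(A)\lambda(B) \le 1$ stays controlled. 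Keeping the quantifiers in the right order ($A,B$ fixed first, then $n \to \infty$) is the whole subtlety.
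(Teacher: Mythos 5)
Your proposal is correct and follows essentially the same route as the paper: conjugate by $H_n$ to recast hypothesis \eqref{eq:WMassum} as an almost-independence statement for $T_{n+1}^{m_n}$ on the atoms of $\zeta_n$, use \eqref{eq:Tclose} (with $m_n\leq q_{n+1}$) to replace $T_{n+1}^{m_n}$ by $T^{m_n}$ at negligible cost, and conclude via the subsequence-mixing criterion for weak mixing, which is exactly the partition form of \cite[Theorem 5.1]{AK70} that the paper invokes. Your bookkeeping worry in the last paragraph is handled even more directly in the paper's summed version: the replacement error telescopes over the second atom to a single quantity $\sum_{i\geq n+1}1/l_i$, and the per-pair errors $\frac{1}{n}\lambda(c_1)\lambda(c_2)$ sum to $\frac{1}{n}$.
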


\begin{proof}
	Since $\sum_{n\in \N}\frac{1}{l_n}<\infty$, Lemma \ref{lem:MPconv} implies the convergence of our sequence $(T_n)_{n\in \N}$ of AbC transformations to a measure-preserving transformation $T$. 
	
	By \cite[Theorem 5.1]{AK70} a measure-preserving transformation $T$ is weakly mixing if and only if there exists a sequence of finite partitions $\eta_n$ converging to the decomposition into points (that is, for every measurable set $A$ and for every $n\in \N$ there exists a set $A_n$, which is a union of elements of $\eta_n$, such that $\lim_{n\to \infty}\lambda(A\triangle A_n)=0$) and an increasing sequence of positive integers $m_n$ such that
	\begin{equation}\label{eq:WMcritAK}
		\lim_{n\to \infty} \sum_{c_1,c_2 \in \eta_n} \abs{\lambda\left(T^{m_n}c_1 \cap c_2\right) - \lambda(c_1) \lambda(c_2)} =0.
	\end{equation}
We take the partitions $\eta_n= H_n\left(\xi_{q_n,s_n}\right)$ as in \eqref{eq:eta}. By Lemma~\ref{lem:MPconv}, $(\eta_n)_{n\in \N}$ converges to the decomposition into points. Since $\sum_{n\in \N}\frac{1}{l_n} < \infty$ and $m_n \leq q_{n+1}$, Lemma~\ref{lem:MPconv} also implies that $d(\eta_n, T^{m_n},T^{m_n}_{n+1}) \to 0$ as $n \to \infty$. Hence, in order to check \eqref{eq:WMcritAK} it suffices to show that
\begin{equation}\label{eq:WMproof}
	\lim_{n\to \infty} \sum_{c_1,c_2 \in \eta_n} \abs{\lambda\left(T^{m_n}_{n+1}c_1 \cap c_2\right) - \lambda(c_1) \lambda(c_2)} =0.
\end{equation}
For $c_1= H_n(\Delta^{i,t}_{q_n,s_{n}}) \in \eta_n$ and $c_2=H_n(\Delta^{j,u}_{q_n,s_{n}})\in \eta_n$ we calculate
\begin{align*}
	&\abs{\lambda\left(T^{m_n}_{n+1}c_1 \cap c_2\right) - \lambda(c_1) \lambda(c_2)}\\
	=& \abs{\lambda\left( H_{n+1}\circ R^{m_n}_{\alpha_{n+1}} \circ H^{-1}_{n+1}\left(H_n(\Delta^{i,t}_{q_n,s_{n}})\right) \cap H_n(\Delta^{j,u}_{q_n,s_{n}}) \right) - \lambda(H_n(\Delta^{i,t}_{q_n,s_{n}}))\cdot \lambda(H_n(\Delta^{j,u}_{q_n,s_{n}}))} \\
	=& 	\abs{\lambda\left( h_{n+1}\circ R^{m_n}_{\alpha_{n+1}} \circ h^{-1}_{n+1}\left(\Delta^{i,t}_{q_n,s_{n}}\right) \cap \Delta^{j,u}_{q_n,s_{n}} \right) - \lambda(\Delta^{i,t}_{q_n,s_{n}})\cdot \lambda(\Delta^{j,u}_{q_n,s_{n}})} \\
	< & \frac{1}{n}\cdot \lambda(\Delta^{i,t}_{q_n,s_{n}})\cdot \lambda(\Delta^{j,u}_{q_n,s_{n}}),	
\end{align*}
where we used assumption \eqref{eq:WMassum} in the last step. Hence, equation \eqref{eq:WMproof} is satisfied and we conclude that $T$ is weakly mixing.
\end{proof}

\subsection{\label{subsec:constr}Construction of weakly mixing AbC transformations}
	In our construction of weakly mixing AbC transformations we will take
	\begin{equation}\label{eq:k}
		k_n = 2^{n+2}q_nC_n
	\end{equation}
with some $C_n \in \Z^+$ that is a multiple of $s^2_n$. We also define
\begin{equation}\label{eq:m}
	m_n \coloneqq C_nl_nq_n = \frac{q_{n+1}}{2^{n+2}q^2_n} 
\end{equation}
which we show to be mixing times in the proof of Proposition~\ref{prop:WM2}. By definition we have
\begin{equation}
	m_n \alpha_{n+1} \equiv \frac{1}{2^{n+2}q^2_n} \mod 1.
\end{equation}
	Furthermore, the conjugation map $h_{n+1}$ will be a composition 
	\begin{equation}\label{eq:compos}
		h_{n+1}=h_{n+1,2} \circ h_{n+1,1}
	\end{equation}
	of two measure-preserving and $1/q_n$-equivariant transformations $h_{n+1,1}$ and $h_{n+1,2}$. 
	
	Here, $h_{n+1,1}$ acts as varying horizontal translations by multiples of $1/q_n$ on vertical stripes of full length. In particular, some of these vertical stripes $\Delta^{i,0}_{k_nq_n,1}$ are mapped into different fundamental domains $\Delta^{j,0}_{q_n,1}$. We sometimes refer to $h_{n+1,1}$ as the \emph{twist map} in contrast to the untwisted AbC constructions in \cite{FW1} and \cite{FW3}. The definitions of $h_{n+1,1}$ and $m_n$ will ensure that $h_{n+1,1} \circ R^{m_n}_{\alpha_{n+1}}\circ h^{-1}_{n+1,1}$ distributes each fundamental domain $\Delta^{i,0}_{q_n,1}$ \emph{almost uniformly in the horizontal direction} over all the fundamental domains (see Lemma~\ref{lem:horidistri} and Figure~\ref{fig:fig1}). 
	
	In contrast, the map $h_{n+1,2}$ leaves the fundamental domains invariant. It will allow us to also obtain \emph{almost uniform distribution in the vertical direction} of rectangles $\Delta^{i,t}_{q_n,s_n}$ into rectangles $\Delta^{j,u}_{q_n,s_n}$ under $h_{n+1} \circ R^{m_n}_{\alpha_{n+1}}\circ h^{-1}_{n+1}$. Altogether, we will be able to verify assumption \eqref{eq:WMassum} from our criterion for weak mixing in Proposition~\ref{prop:WM}.
	
	\subsubsection{\label{subsubsec:h1}Construction of the conjugation map $h_{n+1,1}$}
	Each $0\leq i < k_nq_n$ can be written in a unique way as
	\begin{equation}\label{eq:idecomp}
	i=i_1\cdot k_n + i_2\cdot C_n+i_3
	\end{equation}
	with $0\leq i_1<q_n$, $0\leq i_2<2^{n+2}q_n$, and $0\leq i_3 < C_n$. Using this decomposition we have
	\begin{equation*} 
	\frac{i}{k_nq_n}=\frac{i_1}{q_n} + \frac{i_2}{2^{n+2}q^2_n}+\frac{i_3}{k_nq_n}.
	\end{equation*}
	In particular, we notice for our number $m_n$ from \eqref{eq:m} that
	\begin{equation} \label{eq:idecompCor}
	R^{m_n}_{\alpha_{n+1}}\left( \Delta^{i_1k_n+i_2C_n+i_3,0}_{k_nq_n,1}\right) = \begin{cases}
		\Delta^{i_1k_n+(i_2+1)C_n+i_3,0}_{k_nq_n,1}, & \text{ if } i_2<2^{n+2}q_n-1, \\
		\Delta^{((i_1+1)\mod q_n)\cdot k_n+i_3,0}_{k_nq_n,1}, & \text{ if } i_2=2^{n+2}q_n-1.
	\end{cases}
	\end{equation}	
	We use the decomposition from \eqref{eq:idecomp} to define the conjugation map $h_{n+1,1}$ by 
	\begin{equation*}
		h_{n+1,1}\left( \Delta^{i,0}_{k_nq_n,1}\right) = h_{n+1,1}\left( \Delta^{i_1k_n+i_2C_n+i_3,0}_{k_nq_n,1}\right) = \Delta^{\left((i_1+a_n(i_2))\mod q_n\right) \cdot k_n+i_2C_n+i_3,0}_{k_nq_n,1}
	\end{equation*}
with
\begin{equation} \label{eq:an}
	a_n(i_2) = \begin{cases}
		0, & \text{ if  $0\leq i_2 < 2^{n+1}q_n$, $i_2$ even,} \\
		\frac{i_2+1}{2}\mod q_n, & \text{ if  $0\leq i_2 < 2^{n+1}q_n$, $i_2$ odd,} \\
		\frac{i_2}{2}+1 \mod q_n, & \text{ if  $2^{n+1}q_n \leq i_2 < 2^{n+2}q_n$, $i_2$ even,} \\
		1, & \text{ if  $2^{n+1}q_n \leq i_2 < 2^{n+2}q_n$, $i_2$ odd.} \\
	\end{cases}
\end{equation}
As required, we have
$
	h_{n+1,1}\circ R_{1/q_n} = R_{1/q_n}\circ h_{n+1,1}.
$

\begin{rem} \label{rem:an}
	The choice of $a_n$ allows us to deduce the following Lemma~\ref{lem:horidistri} on almost uniform distribution in the horizontal direction. The underlying mechanism is illustrated in Figure~\ref{fig:fig1}. This distribution result in turn is used in the proof of weak mixing in Proposition~\ref{prop:WM2}. Our mechanism to produce weak mixing is inspired by the original construction of weakly mixing AbC transformations in \cite[section~5]{AK70}. We use the different definitions of $a_n(i_2)$ for indices $0\leq i_2 < 2^{n+1}q_n$ and $2^{n+1}q_n \leq i_2 < 2^{n+2}q_n$ in order to achieve that spacer symbols in our symbolic representation will occur at the same positions in forward and reverse words (see Lemma~\ref{lem:ReverseTwist}).
\end{rem}

\begin{figure}
	\centering
	\includegraphics[width=\textwidth]{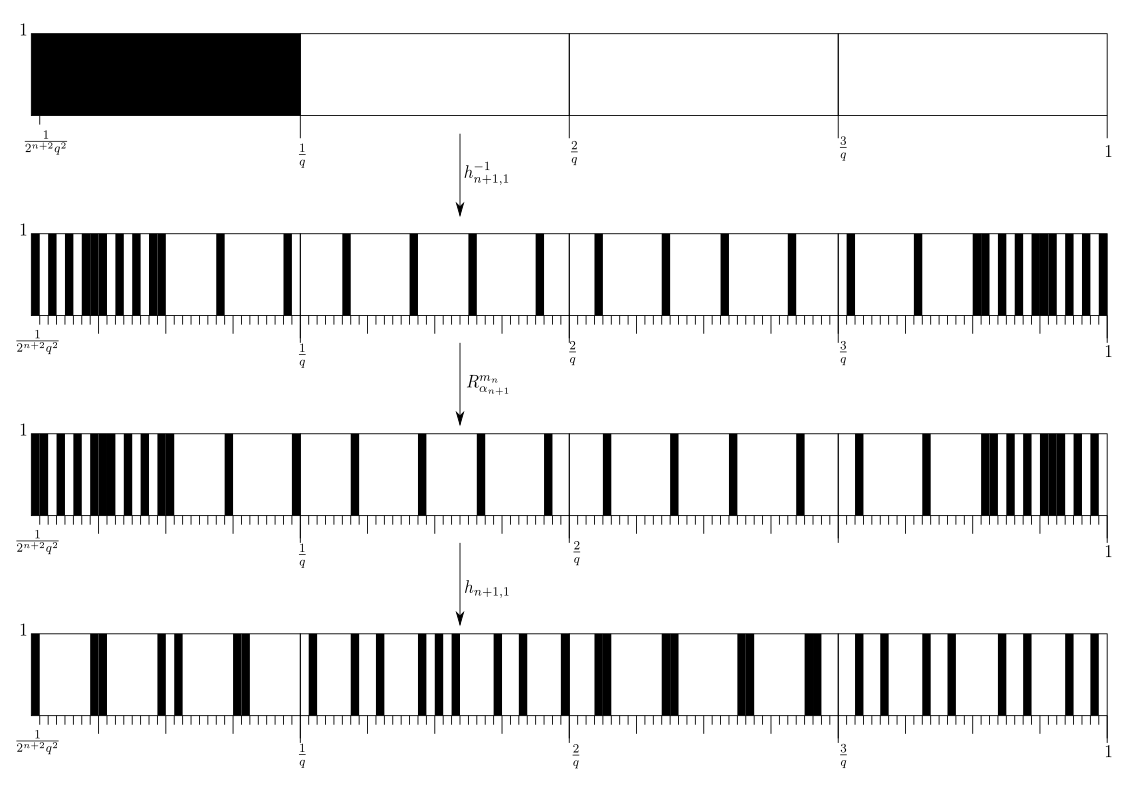}
	\caption{Visualization of the action of $h_{n+1,1} \circ R^{m_n}_{\alpha_{n+1}}\circ h^{-1}_{n+1,1}$. Here, $n=1$ and $q=q_1=4$ (for illustration purposes; actual values will be much larger).}
	\label{fig:fig1}
\end{figure}

\begin{lem}\label{lem:horidistri}
	For all pairs of $0\leq j,k < q_n$ we have
	\begin{align*}
		& 2^{n+2}-1 \leq \\
		&\abs{\Meng{0\leq i_2<2^{n+2}q_n}{h_{n+1,1} \circ R^{m_n}_{\alpha_{n+1}}\circ h^{-1}_{n+1,1}\left(\Delta^{2^{n+2}q_n \cdot j + i_2,0}_{2^{n+2}q^2_n,1}\right)=\Delta^{2^{n+2}q_n \cdot k + i_2,0}_{2^{n+2}q^2_n,1}}} \\
		&\leq 2^{n+2}+1.
	\end{align*}
	In particular, we conclude
	\begin{equation*}
		\abs{\lambda\left( h_{n+1,1} \circ R^{m_n}_{\alpha_{n+1}}\circ h^{-1}_{n+1,1}(\Delta^{j,0}_{q_n,1}) \triangle \Delta^{k,0}_{q_n,1}\right) - \frac{1}{q^2_n}}<\frac{1}{2^{n+2}} \cdot \lambda(\Delta^{j,0}_{q_n,1}) \cdot \lambda(\Delta^{k,0}_{q_n,1}).
	\end{equation*}
\end{lem}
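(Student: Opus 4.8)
The plan is to reduce the statement to an equidistribution count modulo $q_n$ for the index $i_2$. Write $g_n\coloneqq h_{n+1,1}\circ R^{m_n}_{\alpha_{n+1}}\circ h^{-1}_{n+1,1}$. By its definition $h_{n+1,1}$, and hence $h^{-1}_{n+1,1}$ (which on $\Delta^{i_1k_n+i_2C_n+i_3,0}_{k_nq_n,1}$ subtracts $a_n(i_2)$ from $i_1$ modulo $q_n$), is a permutation of the strips $\Delta^{i,0}_{k_nq_n,1}$ fixing the coordinates $i_2,i_3$ in the decomposition \eqref{eq:idecomp} $i=i_1k_n+i_2C_n+i_3$; by \eqref{eq:idecompCor} the same holds for $R^{m_n}_{\alpha_{n+1}}$. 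Thus $g_n$ fixes $i_3$ and descends to a permutation of the coarser partition $\xi_{2^{n+2}q_n^2,1}$, whose atoms I label by pairs $(i_1,i_2)$ with $0\le i_1<q_n,\ 0\le i_2<2^{n+2}q_n$ through $(i_1,i_2)\leftrightarrow\Delta^{2^{n+2}q_n\cdot i_1+i_2,0}_{2^{n+2}q_n^2,1}$. First I would compute $g_n$ on these labels by composing the three explicit maps, using $a_n(0)=0$ and $a_n(2^{n+2}q_n-1)=1$ in the carry case of \eqref{eq:idecompCor}; the result is
\[
g_n\bigl((i_1,i_2)\bigr)=\bigl((i_1+\delta_n(i_2))\bmod q_n,\ (i_2+1)\bmod 2^{n+2}q_n\bigr),
\]
where $\delta_n(i_2)=a_n(i_2+1)-a_n(i_2)$ for $0\le i_2<2^{n+2}q_n-1$ and $\delta_n(2^{n+2}q_n-1)=0$. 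In particular the image of $\Delta^{2^{n+2}q_n\cdot j+i_2,0}_{2^{n+2}q_n^2,1}$ is the strip whose first label is $(j+\delta_n(i_2))\bmod q_n$, hence lies in the fundamental domain $\Delta^{k,0}_{q_n,1}$ exactly when $\delta_n(i_2)\equiv k-j\pmod{q_n}$. So the cardinality in the statement equals $N_n(k-j)$, where $N_n(d)\coloneqq\#\{\,0\le i_2<2^{n+2}q_n:\delta_n(i_2)\equiv d\pmod{q_n}\,\}$, and it suffices to prove $2^{n+2}-1\le N_n(d)\le 2^{n+2}+1$ for every residue $d$.

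To estimate $N_n$ I would partition the range of $i_2$ into the four runs given by the parity of $i_2$ and by the threshold $2^{n+1}q_n$ occurring in \eqref{eq:an}. Reading \eqref{eq:an} off, for a generic $i_2$ in each run one gets
\[
\delta_n(i_2)\equiv
\begin{cases}
\tfrac{i_2}{2}+1, & i_2\text{ even},\ i_2<2^{n+1}q_n,\\
-\tfrac{i_2+1}{2}, & i_2\text{ odd},\ i_2<2^{n+1}q_n,\\
-\tfrac{i_2}{2}, & i_2\text{ even},\ i_2\ge 2^{n+1}q_n,\\
\tfrac{i_2+1}{2}, & i_2\text{ odd},\ i_2\ge 2^{n+1}q_n,
\end{cases}
\pmod{q_n},
\]
the single exception being $\delta_n(2^{n+1}q_n-1)=1$ (whereas the fourth line, evaluated at $i_2=2^{n+2}q_n-1$, already reproduces the carry value $0$). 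On each run the half-index $i_2/2$, resp.\ $(i_2+1)/2$, runs through $2^nq_n$ consecutive integers, so — since $q_n\mid 2^nq_n$ — an affine function of it takes each residue class modulo $q_n$ exactly $2^n$ times. Summing the four runs would therefore give $N_n(d)=4\cdot2^n=2^{n+2}$ for every $d$; the only correction is that the exceptional value $\delta_n(2^{n+1}q_n-1)=1$ replaces the value $0$ that would complete the arithmetic progression of the odd, first-regime run, which moves one unit of count from residue $0$ to residue $1$. Hence $N_n(0)=2^{n+2}-1$, $N_n(1)=2^{n+2}+1$, and $N_n(d)=2^{n+2}$ otherwise; in particular $2^{n+2}-1\le N_n(d)\le 2^{n+2}+1$ for all $d$. (As a check, $\sum_d N_n(d)=q_n2^{n+2}=2^{n+2}q_n$, the total number of strips.) The split of $a_n$ at $2^{n+1}q_n$, cf.\ Remark~\ref{rem:an}, is precisely what arranges that the other potential seam $i_2=2^{n+2}q_n-1$ — where $R^{m_n}_{\alpha_{n+1}}$ carries into the next fundamental domain — contributes the progression-consistent value $0$ and so introduces no further defect.

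For the measure estimate I would note that $g_n\bigl(\Delta^{j,0}_{q_n,1}\bigr)$ is the disjoint union of the $2^{n+2}q_n$ strips $g_n\bigl(\Delta^{2^{n+2}q_n\cdot j+i_2,0}_{2^{n+2}q_n^2,1}\bigr)$, each of length $\tfrac{1}{2^{n+2}q_n^2}$ and each contained in a single fundamental domain, of which exactly $N_n(k-j)$ lie inside $\Delta^{k,0}_{q_n,1}$. Hence $\lambda\bigl(g_n(\Delta^{j,0}_{q_n,1})\cap\Delta^{k,0}_{q_n,1}\bigr)=N_n(k-j)\cdot\tfrac{1}{2^{n+2}q_n^2}$, and $\lvert N_n(k-j)-2^{n+2}\rvert\le 1$ gives
\[
\Bigl\lvert\lambda\bigl(g_n(\Delta^{j,0}_{q_n,1})\cap\Delta^{k,0}_{q_n,1}\bigr)-\tfrac{1}{q_n^2}\Bigr\rvert\le\tfrac{1}{2^{n+2}q_n^2}=\tfrac{1}{2^{n+2}}\,\lambda\bigl(\Delta^{j,0}_{q_n,1}\bigr)\,\lambda\bigl(\Delta^{k,0}_{q_n,1}\bigr),
\]
which is the asserted bound.

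The one genuinely delicate step is the bookkeeping at the two seam indices $i_2=2^{n+1}q_n-1$ and $i_2=2^{n+2}q_n-1$: one has to check that the regime switch in \eqref{eq:an} and the carry in \eqref{eq:idecompCor} together perturb each $N_n(d)$ by at most $1$, and to pin down which residues move. The rest — equidistribution of each affine run modulo $q_n$, and the passage between cardinalities and measures — is routine.
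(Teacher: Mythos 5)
Your proof is correct and follows essentially the same route as the paper: both compute the action of $h_{n+1,1}\circ R^{m_n}_{\alpha_{n+1}}\circ h^{-1}_{n+1,1}$ on the strips via \eqref{eq:idecompCor}, reduce the claim to counting the indices $i_2$ with $a_n(i_2+1)-a_n(i_2)\equiv k-j \pmod{q_n}$, and arrive at the same counts $2^{n+2}-1$, $2^{n+2}+1$, and $2^{n+2}$ for the residues $0$, $1$, and all others, respectively. Your bookkeeping at the two seam indices is in fact more explicit than the paper's (which counts in blocks of length $2q_n$), and your reading of the displayed set and of the symmetric-difference symbol as an intersection matches the intended meaning used in the proof of Proposition \ref{prop:WM2}.
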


\begin{proof}
	Using our observation from \eqref{eq:idecompCor} we obtain that
	\begin{equation*}
		\begin{split}
		& h_{n+1,1} \circ R^{m_n}_{\alpha_{n+1}}\circ h^{-1}_{n+1,1}\left(\Delta^{jk_n+i_2C_n+i_3,0}_{k_nq_n,1}\right) \\
		=& \begin{cases}
			\Delta^{\left((j+a_n(i_2+1)-a_n(i_2))\mod q_n \right)\cdot k_n + (i_2+1)\cdot C_n +i_3 , 0}_{k_nq_n, 1}, & \text{ if } 0\leq i_2<2^{n+2}q_n-1, \\
			\Delta^{j\cdot k_n +i_3 , 0}_{k_nq_n, 1}, & \text{ if }  i_2=2^{n+2}q_n-1.
		\end{cases}
	\end{split}
	\end{equation*}
The definition of $a_n(i_2)$ in \eqref{eq:an} implies that for every $2\leq k <q_n$ and every $0\leq \ell<2^{n+1}$ there are two indices $i_2 \in \{\ell 2q_n, \dots , (\ell+1) 2 q_n -1\}$ such that $a_n(i_2+1)-a_n(i_2) = k \mod q_n$. Thus, there are $2^{n+2}$ many indices $0\leq i_2 <2^{n+2}q_n$ with $a_n(i_2+1)-a_n(i_2) = k \mod q_n$. Similarly, we count that there are $2^{n+2}+1$ many indices $0\leq i_2 <2^{n+2}q_n$ such that $h_{n+1,1} \circ R^{m_n}_{\alpha_{n+1}}\circ h^{-1}_{n+1,1}$ causes a horizontal translation by $1/q_n$ on $\Delta^{2^{n+2}q_n \cdot j + i_2,0}_{2^{n+2}q^2_n,1}$. Moreover, there are $2^{n+2}-1$ many indices $0\leq i_2 <2^{n+2}q_n$ such that $h_{n+1,1} \circ R^{m_n}_{\alpha_{n+1}}\circ h^{-1}_{n+1,1}$ does not cause a horizontal translation on $\Delta^{2^{n+2}q_n \cdot j + i_2,0}_{2^{n+2}q^2_n,1}$.
\end{proof}
	
	\subsubsection{\label{subsubsec:h2}Construction of the conjugation map $h_{n+1,2}$}
We start the description of the map $h_{n+1,2}$ on the fundamental domain $\Delta^{0,0}_{q_n,1}$. Given $0\leq i < 2^{n+2}q_n$ and $0\leq s<s_{n+1}$ we can associate a $C_n$-tuple 
\begin{equation*}
\mathfrak{b}_n(i,s)\coloneqq \left(b_n(i,0,s),b_n(i,1,s),\dots , b_n(i,C_n-1,s) \right) \in \{0,1,\dots , s_n-1 \}^{C_n}
\end{equation*}
so that for $0\leq j < s_n$
	\begin{equation*}
		h_{n+1,2}\left(\Delta^{iC_n+j,s}_{k_nq_n,s_{n+1}} \right) \subseteq \Delta^{iC_n,b_n(i,j,s)}_{k_nq_n,s_{n}},
	\end{equation*}
where we impose the following conditions:

\begin{enumerate}[label={\bf(R\arabic*)}]
	\setcounter{enumi}{1}
	\item\label{item:R2} For every  $0\leq s < s_{n+1}$, $0\leq t <s_n$ we have 
	\begin{equation}\label{eq:R2}
		r\left(t,\,\mathfrak{b}_n(0,s)\dots \mathfrak{b}_n(2^{n+2}q_n-1,s)\right) = \frac{k_n}{s_n},
	\end{equation}
	where we recall that $k_n$ is a multiple of $s_n$ by our condition \eqref{eq:k}.
    \item\label{item:R3} We assume that the map $s \mapsto \left(\mathfrak{b}_n(0,s),\dots , \mathfrak{b}_n(2^{n+2}q_n,s)\right)$ is one-to-one. 
\end{enumerate}
In other words, requirement~\ref{item:R2} expresses the strong uniformity of symbols $t\in \{0,1,\dots , s_n\}$ in the sequences $\mathfrak{b}_n(0,s)\dots \mathfrak{b}_n(2^{n+2}q_n-1,s)$ while \ref{item:R3} says that we get different sequences for different $s\in \{0,1,\dots, s_{n+1}-1\}$.

Finally, the definition of $h_{n+1,2}$ is extended to the whole space by 
$ 
	h_{n+1,2}\circ R_{1/q_n} = R_{1/q_n}\circ h_{n+1,2}.
$

	\subsubsection{Verification of the weak mixing property}
	In order to prove the weak mixing property for our AbC transformation we have to strengthen the uniformity assumption~\ref{item:R2} to the following requirement.
	\begin{enumerate}[label={\bf(R\arabic*)}]
		\setcounter{enumi}{3}
		\item\label{item:R4} For every $0\leq  i<2^{n+2}q_n$, every $0\leq s <s_{n+1}$ and all pairs $(t,u)$ with $0\leq t,u <s_n$  we have that
		\begin{equation}\label{eq:R4}
			r\left(t,u,\mathfrak{b}_n(i,s), \mathfrak{b}_n(i+1 \mod 2^{n+2}q_n,s)\right) = \frac{C_n}{s^2_n},
		\end{equation}
		where we recall the notation for $r(\cdot , \cdot , \cdot , \cdot)$ from Definition~\ref{dfn:freq}.
	\end{enumerate}
	In other words, the requirement \ref{item:R4} says that all pairs $(t,u)$ occur uniformly in the adjacent tuples $\mathfrak{b}_n(i,s)$ and $\mathfrak{b}_n(i+1 \mod 2^{n+2}q_n,s)$.
	
	Then we can verify the assumptions of our criterion for weak mixing in Proposition~\ref{prop:WM} for the AbC constructions described above.	
	\begin{prop}\label{prop:WM2}
		Suppose that $(T_n)_{n\in \N}$ is a sequence of AbC transformations with parameters $(k_n)_{n\in \N}$ as in \eqref{eq:k}, $(s_n)_{n\in \N}$ satisfying requirement~\ref{item:R1}, and $(l_n)_{n\in \N}$ satisfying  $\sum_{n\in \N}\frac{1}{l_n}<\infty$. Furthermore, we assume conjugation maps of the form $h_{n+1}=h_{n+1,2} \circ h_{n+1,1}$ with maps $h_{n+1,1}$ as in Subsection~\ref{subsubsec:h1} and $h_{n+1,2}$ as in Subsection~\ref{subsubsec:h2} satisfying requirement~\ref{item:R4}. 
		Then $(T_n)_{n\in \N}$ converges in the weak topology to a weakly mixing transformation $T$.
	\end{prop}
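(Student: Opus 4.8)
The plan is to verify the hypotheses of our weak mixing criterion, Proposition~\ref{prop:WM}, for the sequence $(T_n)_{n\in\N}$ with the mixing times $m_n=C_nl_nq_n$ from \eqref{eq:m}. Convergence of $(T_n)_{n\in\N}$ in the weak topology to a measure-preserving transformation $T$ is already provided by Lemma~\ref{lem:MPconv}, since $\sum_{n\in\N}1/l_n<\infty$; and by \eqref{eq:m} and \eqref{eq:k} we have $m_n=q_{n+1}/(2^{n+2}q_n^2)\le q_{n+1}$ as well as $m_{n+1}=C_{n+1}l_{n+1}q_{n+1}\ge q_{n+1}>m_n$, so $(m_n)_{n\in\N}$ is an admissible increasing sequence of mixing times. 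It remains to establish the distribution estimate \eqref{eq:WMassum}.

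The key is to analyze the conjugated rotation through the splitting $h_{n+1}=h_{n+1,2}\circ h_{n+1,1}$. Setting $g_n\coloneqq h_{n+1,1}\circ R^{m_n}_{\alpha_{n+1}}\circ h^{-1}_{n+1,1}$ we have
\[
h_{n+1}\circ R^{m_n}_{\alpha_{n+1}}\circ h^{-1}_{n+1}=h_{n+1,2}\circ g_n\circ h^{-1}_{n+1,2},
\]
and the point is that $g_n$ acts \emph{purely horizontally}: it permutes the full-height vertical stripes $\Delta^{\cdot,0}_{k_nq_n,1}$ and preserves the vertical coordinate, so that all of the vertical mixing is produced by conjugating $g_n$ with $h_{n+1,2}$. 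Concretely, combining \eqref{eq:idecompCor} with the definition \eqref{eq:an} of $a_n$ exactly as in the proof of Lemma~\ref{lem:horidistri}, $g_n$ sends the stripe with triple index $(i_1,i_2,i_3)$ from \eqref{eq:idecomp} to the stripe with index $\bigl(i_1+\delta_n(i_2)\bmod q_n,\ i_2+1\bmod 2^{n+2}q_n,\ i_3\bigr)$, where the shift $\delta_n(i_2)$ of the fundamental-domain index depends only on $i_2$; the count carried out in the proof of Lemma~\ref{lem:horidistri} shows that for each residue $c\bmod q_n$ the number of indices $i_2\in\{0,\dots,2^{n+2}q_n-1\}$ with $\delta_n(i_2)\equiv c\bmod q_n$ equals $2^{n+2}$ up to an additive error of at most $1$.

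It then remains to follow the vertical coordinate through $h_{n+1,2}$. Since $h_{n+1,2}$ is $1/q_n$-equivariant, leaves the horizontal coordinate essentially unchanged, and recodes the vertical level of the fine strip sitting over block $i_2$, in-block offset $j'$, and fine-level $s$ by the value $b_n(i_2,j',s)$, a direct computation at the level of the atoms of $\xi_{q_n,s_n}$ shows that the portion of $h_{n+1,2}\circ g_n\circ h^{-1}_{n+1,2}(\Delta^{i,t}_{q_n,s_n})$ landing in $\Delta^{j,u}_{q_n,s_n}$ is the union of the fine strips indexed by triples $(i_2,j',s)$ satisfying $\delta_n(i_2)\equiv j-i\bmod q_n$, $b_n(i_2,j',s)=t$ and $b_n(i_2+1\bmod 2^{n+2}q_n,\,j',s)=u$; the last condition involves precisely the \emph{adjacent} tuples $\mathfrak{b}_n(i_2,s)$ and $\mathfrak{b}_n(i_2+1\bmod 2^{n+2}q_n,s)$, because $g_n$ advances the block index $i_2$ by one. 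For each admissible $i_2$ and each fixed $s$, requirement~\ref{item:R4} gives exactly $C_n/s_n^2$ offsets $j'<C_n$ with $(b_n(i_2,j',s),b_n(i_2+1,j',s))=(t,u)$; summing over the $s_{n+1}$ values of $s$, over the roughly $2^{n+2}$ admissible $i_2$, and multiplying by the measure $1/(k_nq_ns_{n+1})$ of a fine strip---and recalling $k_n=2^{n+2}q_nC_n$---the resulting measure lies within $2^{-n-2}/(q_n^2s_n^2)$ of $1/(q_n^2s_n^2)=\lambda(\Delta^{i,t}_{q_n,s_n})\cdot\lambda(\Delta^{j,u}_{q_n,s_n})$. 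As $2^{-n-2}<1/n$ for every $n\ge 1$, this is exactly the estimate \eqref{eq:WMassum}, so Proposition~\ref{prop:WM} applies and $T$ is weakly mixing.

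The main obstacle is the bookkeeping in the last step: one must carry out the composition $h_{n+1,2}\circ g_n\circ h^{-1}_{n+1,2}$ on the atoms of $\xi_{q_n,s_n}$ carefully enough to see that the horizontal almost-equidistribution furnished by Lemma~\ref{lem:horidistri} and the vertical equidistribution built into requirement~\ref{item:R4} combine multiplicatively; the crux is recognizing that the relevant vertical condition couples \emph{adjacent} blocks $i_2$ and $i_2+1\bmod 2^{n+2}q_n$, which is exactly why \ref{item:R4} is formulated for adjacent tuples and why the horizontal twist \eqref{eq:an} is arranged so that $R^{m_n}_{\alpha_{n+1}}$ advances the block index by precisely one.
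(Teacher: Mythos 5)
Your proposal is correct and follows essentially the same route as the paper's proof: you verify the hypothesis \eqref{eq:WMassum} of Proposition \ref{prop:WM} by combining the horizontal almost-equidistribution of Lemma \ref{lem:horidistri} with the adjacent-block pair counts from requirement \ref{item:R4}, exactly as the paper does via the index sets $\mathcal{I}(i_2,t,s)$ and $\mathcal{I}(i_2,t,u,s)$. The resulting error bound $2^{-(n+2)}\lambda(\Delta^{i,t}_{q_n,s_n})\lambda(\Delta^{j,u}_{q_n,s_n})$ and the final appeal to Proposition \ref{prop:WM} coincide with the paper's argument.
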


\begin{proof}
	In order to apply Proposition \ref{prop:WM} we have to check condition \eqref{eq:WMassum}. For every $0\leq s < s_{n+1}$, $0\leq i_2 < 2^{n+2}q_n$, and $0\leq t < s_n$ there is a set of indices
	\[
	\mathcal{I}(i_2,t,s) \coloneqq \Meng{0\leq i_3 <C_n}{h_{n+1,2}\left( \Delta^{ i_2 C_n + i_3, s}_{k_nq_n , s_{n+1}}\right) \subseteq \Delta^{i_2C_n,t}_{k_n q_n , s_n}}.
	\] 
	Then we observe that
	\begin{align*}
		& h_{n+1} \circ R^{m_n}_{\alpha_{n+1}} \circ h^{-1}_{n+1,1} \left( h^{-1}_{n+1,2} \left( \Delta^{i_12^{n+2}q_n+i_2,t}_{2^{n+2}q^2_n,s_n}\right) \cap \Delta^{i_12^{n+2}q_n+i_2,s}_{2^{n+2}q^2_n,s_{n+1}}\right) \\
		= & h_{n+1} \circ R^{m_n}_{\alpha_{n+1}} \circ h^{-1}_{n+1,1} \left( h^{-1}_{n+1,2} \left( \bigcup^{C_n - 1}_{i_3=0} \Delta^{i_1 k_n + i_2 C_n + i_3 , t}_{k_nq_n,s_n}\right) \cap \bigcup^{C_n - 1}_{j_3=0} \Delta^{i_1 k_n + i_2 C_n + j_3 , s}_{k_nq_n,s_{n+1}}\right)  \\
		= & h_{n+1,2} \circ h_{n+1 , 1} \circ R^{m_n}_{\alpha_{n+1}} \circ h^{-1}_{n+1,1} \left( \bigcup_{\ell \in \mathcal{I}(i_2,t,s)} \Delta^{i_1 k_n + i_2 C_n + \ell , s}_{k_nq_n,s_{n+1}}\right) \\
		= &  h_{n+1,2} \left( \bigcup_{\ell \in \mathcal{I}(i_2,t,s)} \Delta^{b k_n + ((i_2+1) \mod 2^{n+2}q_n) C_n + \ell , s}_{k_nq_n,s_{n+1}}\right)
	\end{align*}
with
\begin{equation*}
	b = \begin{cases}
		(i_1+a_n(i_2+1)-a_n(i_2)) \mod q_n, & \text{ if } i_2<2^{n+2}q_n-1, \\
		i_1, & \text{ if } i_2 = 2^{n+2}q_n.
	\end{cases}
\end{equation*}
Furthermore, for every $0\leq s < s_{n+1}$, $0\leq i_2 < 2^{n+2}q_n$, and all pairs $(t,u)$ with $0\leq t,u < s_n$ there is a set of indices
\begin{align*}
&\mathcal{I}(i_2,t,u,s)\coloneqq  \\
& \Meng{\ell \in \mathcal{I}(i_2,t,s)}{h_{n+1,2}\left( \Delta^{ (i_2+1 \mod 2^{n+2}q_n)\cdot  C_n + \ell, s}_{k_nq_n , s_{n+1}}\right) \subseteq \Delta^{(i_2+1 \mod 2^{n+2}q_n)\cdot C_n,u}_{k_n q_n , s_n}}.
\end{align*}
By requirement \ref{item:R4} we have $
	\abs{\mathcal{I}(i_2,t,u,s)} = C_n/s^2_n$.
Continuing the calculation from above, we obtain that
\begin{equation*}
	\begin{split}
	& \lambda\Bigg(h_{n+1} \circ R^{m_n}_{\alpha_{n+1}} \circ h^{-1}_{n+1,1} \left( h^{-1}_{n+1,2} \left( \Delta^{i_12^{n+2}q_n+i_2,t}_{2^{n+2}q^2_n,s_n}\right)\cap \Delta^{i_12^{n+2}q_n+i_2,s}_{2^{n+2}q^2_n,s_{n+1}}\right) \\
	& \quad \quad   \cap \Delta^{b2^{n+2}+((i_2+1) \mod 2^{n+2}q_n) , u}_{2^{n+2}q^2_n,s_n} \Bigg) \\
	= & \lambda \left(h_{n+1,2} \left( \bigcup_{\ell \in \mathcal{I}(i_2,t,u,s)} \Delta^{b k_n + ((i_2+1) \mod 2^{n+2}q_n) C_n + \ell , s}_{k_nq_n,s_{n+1}}\right) \right) \\
	= & \frac{1}{s^2_n}\cdot \lambda\left(\Delta^{i_12^{n+2}q_n+i_2,s}_{2^{n+2}q^2_n,s_{n+1}}\right).
	\end{split}
\end{equation*}
Since this holds true for every $0\leq s < s_{n+1}$, we conclude that
\begin{align*}
	& \lambda\left(h_{n+1} \circ R^{m_n}_{\alpha_{n+1}} \circ h^{-1}_{n+1} \left(\Delta^{i_12^{n+2}q_n+i_2,t}_{2^{n+2}q^2_n,s_n}\right) \cap  \Delta^{j_12^{n+2}q_n+((i_2+1) \mod 2^{n+2}q_n ) , u}_{2^{n+2}q^2_n,s_n} \right) \\
	= & \begin{cases}
		\frac{1}{s^2_n} \cdot \frac{1}{2^{n+2}q^2_n}, & \text{ if } j_1 = b, \\
		0, & \text{ otherwise}. 
	\end{cases}
\end{align*}
Then we use Lemma \ref{lem:horidistri} to estimate for any $0\leq i_1,j_1<2^{n+2}q_n$ and $0\leq t,u<s_n$ that
\begin{align*}
	\lambda\left( h_{n+1}\circ R^{m_n}_{\alpha_{n+1}} \circ h^{-1}_{n+1}(\Delta^{i_1,t}_{q_n,s_{n}}) \cap \Delta^{j_1,u}_{q_n,s_{n}}\right) 
	\geq  \left(1-\frac{1}{2^{n+2}}\right) \cdot \left(\frac{1}{s_nq_n}\right)^2 
\end{align*}
as well as
\begin{align*}
	\lambda\left( h_{n+1}\circ R^{m_n}_{\alpha_{n+1}} \circ h^{-1}_{n+1}(\Delta^{i_1,t}_{q_n,s_{n}}) \cap \Delta^{j_1,u}_{q_n,s_{n}}\right) 
	\leq  \left(1+\frac{1}{2^{n+2}}\right) \cdot \left(\frac{1}{s_nq_n}\right)^2.
\end{align*}
Altogether, assumption \eqref{eq:WMassum} is satisfied and Proposition~\ref{prop:WM} yields that $T$ is weakly mixing.
\end{proof}
	
	\subsection{\label{subsec:symbolic}Symbolic representation of our weakly mixing AbC transformations}
	We follow the approach in \cite[section 7]{FW1} to find a symbolic representation for our specific twisted constructions of weakly mixing AbC transformations from the previous subsection.
	
	\subsubsection{The dynamical and geometric orderings} 
	We start by recalling the dynamical and geometric orderings of intervals from \cite[section 7.2]{FW1}. For $q \in \Z^+$ we let 
	$\mathcal{I}_q \coloneqq \Meng{I^i_q}{0\leq i <q}$
	be the partition of $[0,1)$ and $\mathbb{S}^1=\R/\Z$, respectively, with atoms
	\begin{equation*}
		I^i_q \coloneqq \left[ \frac{i}{q}, \frac{i+1}{q}\right).
	\end{equation*}
\begin{defn}
	The \emph{geometric ordering} of the intervals in $\mathcal{I}_q$ is given by
	\[
	I^i_q <_g I^j_q \ \text{ if and only if } \ i<j,
	\]
	that is, we order these intervals from left to right according to their left endpoints.
\end{defn}

	To define the dynamical ordering, we fix a rational number $\alpha = p/q$ with $p,q$ relatively prime. Set
	\begin{equation}\label{eq:ji}
		j_i = p^{-1}i \mod q ,
	\end{equation}
    where the $p^{-1}$ is the multiplicative inverse of $p$ modulo $q$. We also note that
    \begin{equation}\label{eq:ji-rel}
    	q-j_i = j_{q-i}.
    \end{equation}
    The rotation by $\alpha$ defined by $\mathcal{R}_{\alpha}: \mathbb{S}^1 \to \mathbb{S}^1, \ x \mapsto x+\alpha \mod 1$,
    gives us another ordering of the intervals in $\mathcal{I}_q$.
	\begin{defn}
		The \emph{dynamical ordering} of the intervals in $\mathcal{I}_q$ is given by 
		\[
		I^i_q <_d I^j_q \text{ iff there are $n<m<q$ such that $np=i \mod q$ and $mp=j \mod q$.}
		\]
	\end{defn}
	In other words, the list $I^0_q, \, \mathcal{R}_{\alpha}I^0_q,\, \mathcal{R}^2_{\alpha}I^0_q, \dots , \, \mathcal{R}^{q-1}_{\alpha}I^0_q$
	gives the dynamical ordering of $\mathcal{I}_q$.
	\begin{rem*}
		With $j_i = p^{-1}i \mod q $ from equation \eqref{eq:ji} the $i$-th interval in the geometric ordering, $I^i_q$ , is the $j_i$-th interval in the dynamical ordering.
	\end{rem*}

\subsubsection{An analysis on the circle}
     To find a symbolic representation of our AbC transformations we start with a simplified analysis of the projection to the horizontal $\mathbb{S}^1$-coordinate. We explore how the dynamical ordering determined by 
     \[
     \alpha_{n+1}=\frac{p_{n+1}}{q_{n+1}}=\alpha_n + \frac{1}{k_nl_nq^2_n} =: \alpha_n+\beta_n
     \]
     interacts with the dynamical ordering determined by $\alpha_n=p_n/q_n$ and the varying horizontal translation by $a_n(\cdot)$ caused by the conjugation map $h_{n+1,1}$. For that purpose, we introduce the notation $\overline{h}_{n+1,1}$ for the projectivized action of $h_{n+1,1}$ on $\mathbb{S}^1$. Furthermore, we divide the atoms of $\mathcal{I}_{k_nq_n}$ into $k_n$ many ordered sets $\omega_0,\dots , \omega_{k_n-1} $ defined by
     \begin{equation*}
     	\omega_j \coloneqq \Ordered{I^{j+tk_n}_{k_nq_n}}{0\leq t < q_n} = \Ordered{\mathcal{R}^t_{\alpha_n}(I^j_{k_nq_n})}{0\leq t < q_n} .
     \end{equation*}
     Each $\omega_j$ can be viewed as a word of length $q_n$ over the alphabet $\mathcal{I}_{k_nq_n}$. We now want to determine a $\mathcal{I}_{k_nq_n}$-name for the trajectory of $J\coloneqq [ 0, 1/q_{n+1})$ under 
     \[
     \Phi^m_{n+1}\coloneqq \overline{h}_{n+1,1} \circ \mathcal{R}^m_{\alpha_{n+1}} = \mathcal{R}^m_{\alpha_n}\circ \overline{h}_{n+1,1} \circ \mathcal{R}^m_{\beta_n},
     \]
     where we used the commutativity relation $h_{n+1,1}\circ R_{\alpha_n} = R_{\alpha_n} \circ h_{n+1,1}$. Recall that the twist map $h_{n+1,1}$ caused a varying horizontal translation by $a_n(\cdot)$ from \eqref{eq:an}. Based upon these horizontal translations we define
     \begin{equation}\label{eq:psi}
     	\psi_n(i) = j_{a_n(i)} \ \text{ for all } 0\leq i < 2^{n+2}q_n,
     \end{equation}
     with the numbers $j_{a_n(i)}$ as defined in equation \eqref{eq:ji}, that is,
     \begin{equation*} 
     	\psi_n(i) = \begin{cases}
     		0, & \text{ if  $0\leq i < 2^{n+1}q_n$, $i$ even,} \\
     		j_{\frac{i+1}{2}\mod q_n}, & \text{ if  $0\leq i < 2^{n+1}q_n$, $i$ odd,} \\
     		j_{\frac{i}{2}+1 \mod q_n}, & \text{ if  $2^{n+1}q_n \leq i < 2^{n+2}q_n$, $i$ even,} \\
     		j_1, & \text{ if  $2^{n+1}q_n \leq i < 2^{n+2}q_n$, $i$ odd.} \\
     	\end{cases}
     \end{equation*}
     
     We now follow our original interval $J=[ 0, 1/q_{n+1})$ through the $w_j$'s under the iterates $\Phi^m_{n+1}$. For $0\leq m< C_nl_nq_n$ we have $R^m_{\beta_n}(J)\subset I^{(m \mod q_n)\cdot 2^{n+2}q_n}_{2^{n+2}q^2_n}$. Since $a_n(0)=0$, we could write the $\mathcal{I}_{k_nq_n}$-name of any $x \in J$ in the first $C_nl_nq_n$ iterates as 
     $
     \omega^{l_n}_0\omega_1^{l_n}\dots \omega^{l_n}_{C_n-1}.
     $
     Applying $R^{C_nl_nq_n}_{\beta_n}$ on $J$ makes it the geometrically first $1/q_{n+1}$-subinterval of $I^1_{2^{n+2}q^2_n}$. On $I^1_{2^{n+2}q^2_n}$ the map $\overline{h}_{n+1,1}$ causes a horizontal translation by $1/q_n$ because of $a_n(1)=1$. Thus, $\overline{h}_{n+1,1} \circ \mathcal{R}^{C_nl_nq_n}_{\beta_n}(J)$ is a subinterval of $I^{C_n+k_n}_{k_nq_n}$, that is, the $j_1$-th element of $\omega_{C_n}$. Thus, we must wait $q_n-j_1$ further iterates to have $\Phi^{C_nl_nq_n+q_n-j_1}_{n+1}(J) \subset I^{C_n}_{k_nq_n}$. Then we can follow $l_n-1$ copies of $\omega_{C_n}$. With the remaining $j_1$ many iterates we can write the $\mathcal{I}_{k_nq_n}$-name of any $x \in J$ in the iterates $C_nl_nq_n\leq m < (C_n+1)l_nq_n$ as
     \[
     b^{q_n-j_1}\,\omega^{l_n-1}_{C_n}\,e^{j_1} \ = \ b^{q_n-\psi_n(1)}\,\omega^{l_n-1}_{C_n}\,e^{\psi_n(1)}.
     \]  
     Since $\overline{h}_{n+1,1} \circ \mathcal{R}^{(C_n+1)l_nq_n}_{\beta_n}(J)$ is a subinterval of $I^{C_n+1+k_n}_{k_nq_n}$ (i.e., the $j_1$-th element of $\omega_{C_n+1}$), we must wait $q_n-j_1$ further iterates before can follow $l_n-1$ copies of $\omega_{C_n+1}$. Altogether, 
     we can write the $\mathcal{I}_{k_nq_n}$-name of any $x \in J$ in the iterates $C_nl_nq_n\leq m < 2C_nl_nq_n$ as
     \[
     b^{q_n-\psi_n(1)}\,\omega^{l_n-1}_{C_n}\,e^{\psi_n(1)}\,b^{q_n-\psi_n(1)}\,\omega^{l_n-1}_{C_n+1}\,e^{\psi_n(1)} \dots b^{q_n-\psi_n(1)}\,\omega^{l_n-1}_{2C_n-1}\,e^{\psi_n(1)}.
     \]  
     
     Continuing like this, we deduce the $\mathcal{I}_{k_nq_n}$-name of any $x \in J$ in the iterates $0\leq m < k_nl_nq_n$ as 
     \begin{equation}\label{eq:pattern1}
     \prod^{2^{n+2}q_n-1}_{i=0} \prod^{C_n-1}_{c=0} b^{q_n-\psi_n(i)}\,(\omega_{iC_n+c})^{l_n-1}\,e^{\psi_n(i)}.
     \end{equation}
     
     Applying $R^{k_nl_nq_n}_{\beta_n}$ on $J$ makes it the geometrically first $1/q_{n+1}$-subinterval of $I^1_{q_n}$ and $I^{k_n}_{k_nq_n}$, respectively. The pattern in \eqref{eq:pattern1} would repeat itself up to the fact that we start in the $j_1$-th element of $\omega_0$. By the same reasoning as above, we can write  the
     $\mathcal{I}_{k_nq_n}$-name of any $x \in J$ in the iterates $k_nl_nq_n\leq m < 2k_nl_nq_n$ as 
     \begin{equation*}
     	\prod^{2^{n+2}q_n-1}_{i=0} \prod^{C_n-1}_{c=0} b^{q_n-\psi_n(i)-j_1 \mod q_n}\,(\omega_{iC_n+c})^{l_n -1}\,e^{\psi_n(i)+j_1\mod q_n}.
     \end{equation*}
     
     In this way, we obtain the $\mathcal{I}_{k_nq_n}$-name of any $x \in J$ in the iterates $0\leq m < k_nl_nq^2_n=q_{n+1}$ as 
     \begin{equation}\label{eq:CodeCircle}
     	\prod^{q_n-1}_{m=0}\prod^{2^{n+2}q_n-1}_{i=0} \prod^{C_n-1}_{c=0} b^{q_n-\psi_n(i)-j_m \mod q_n}\,(\omega_{iC_n+c})^{l_n-1}\,e^{\psi_n(i)+j_m \mod q_n},
     \end{equation}
	 that is, its coding is given by $\mathcal{C}^{\text{twist}}_n(\omega_0,\dots , \omega_{k_n-1})$ using our twisting operator from Definition~\ref{def:twist}. This finding motivated the definition of the twisting operator.
	 
	 For our goal to find a symbolic representation of our weakly mixing AbC transformations, we note that the action of $R_{\alpha}$ on $M\in \{\mathbb{T}^2, \mathbb{D}^2,\mathbb{A}\}$ exactly mimics the action of $\mathcal{R}_{\alpha}$ on the circle in the first coordinate. We use this to label all atoms $\Delta^{i,s}_{q_{n+1},s_{n+1}}$ of $\xi_{q_{n+1},s_{n+1}}$ by $b$ (respectively $e$) whose projection $I^i_{q_{n+1}} \in \mathcal{I}_{q_{n+1}}$ on the first coordinate is labelled with $b$ (respectively $e$) in \eqref{eq:CodeCircle}. 	 
	 Then we inductively define sequences of subsets $B_n$ and $E_n$ of $M$ as follows: 
	 \begin{defn}
	 	Put $B_0 = E_0 = \emptyset$. If $B_n$ (respectively $E_n$) has been defined, let $B_{n+1}$ (resp. $E_{n+1}$) be the union
	 	of $B_n$ (resp. $E_n$) with the set of all $x\in M$ whose projection onto the horizontal axis is contained
	 	in an atom of $\mathcal{I}_{q_{n+1}}$ labelled with $b$ (resp. $e$) in \eqref{eq:CodeCircle}. Furthermore, we define sets $B^{\prime}_{n+1} = B_{n+1}\setminus B_n$, $E^{\prime}_{n+1} = E_{n+1}\setminus E_n$, and $\Gamma_n = \Meng{x \in M}{\text{for all $m>n$: } x\notin H_m(B^{\prime}_m \cup E^{\prime}_m)}$.
	 	Finally, we put
	 	\begin{align*}
	 		B & \coloneqq \Meng{x\in M}{ \text{for some $m\leq n$: } x\in \Gamma_n \text{ and } x\in H_m(B_m)}, \\
	 		E & \coloneqq \Meng{x\in M}{ \text{for some $m\leq n$: } x\in \Gamma_n \text{ and } x\in H_m(E_m)} 
	 	\end{align*}
	 \end{defn}
	 For every $n \in \N$ the measure of $B^{\prime}_{n+1} \cup E^{\prime}_{n+1}$ is $1/l_n$ because the occurences of $b$ and $e$ comprise a proportion of $1/l_n$ of the symbols in \eqref{eq:CodeCircle}. Since $\Gamma_n \subseteq \Gamma_{n+1}$ and $\sum_{n\in \N}1/l_n < \infty$ by \eqref{eq:lgeneral}, the Borel-Cantelli Lemma then implies that for almost every $x \in M$ there is $m \in \N$ such that $x \in \Gamma_n$ for all $n>m$.
	 
	 \subsubsection{The symbolic representation}
	We follow \cite[section 7.5]{FW1}. We start by defining the partition of $M$,
	\begin{equation} \label{eq:partQ}
		\mathcal{Q}\coloneqq \{A_i:i<s_0\}\cup\{B,E\},
	\end{equation}
	where $A_i\coloneqq [0,1)\times[i/s_0,(i+1)/s_0)\setminus (B\cup E)$. 
	
	For the limit $T$ of our Anosov Katok process, we can construct $(T,\mathcal{Q})$-names for each $x\in M$ using the alphabet $\Sigma\cup\{b,e\}$, where $\Sigma\coloneqq\{a_i\}_{i=0}^{s_0-1}$. Hence, the name of point $x\in M$ will be an $f\in (\Sigma\cup\{b,e\})^\Z$ with $f(n)=a_i\iff T^n(x)\in A_i$, $f(n)=b\iff T^n(x)\in B$, and $f(n)=e\iff T^n(x)\in E$. 
	
	We describe how to associate a construction sequence to an AbC transformation $T$ obtained as the limit of periodic transformations $T_n\coloneqq H_n\circ R_{\alpha_n}\circ H_n^{-1}$. Let $H_{n+1}(\Delta^{0,s^*}_{q_{n+1},s_{n+1}})$ for some $s^*<s_{n+1}$ be the base of a tower of $\tau_{n+1}$, where $\tau_n$ is the periodic process given by $T_n$ on the partition 
	\begin{equation} \label{eq:zeta}
		\zeta_n \coloneqq H_n(\xi_n) \quad \text{ with } \quad \xi_n \coloneqq \xi_{q_n,s_n}.	
	\end{equation}
	Inductively we assume that for every $0\leq s <s_n$ the $(\tau_n,\mathcal{Q})$-name of the tower with base $H_n(\Delta^{0,s}_{q_n,s_n})$ is $u_s$. At the $(n+1)$-th stage of the construction we define for each $0\leq s^{\ast} < s_{n+1}$ words $w_{0,s^{\ast}},\dots , w_{k_n-1,s^{\ast}}$ by setting
	\begin{equation} \label{eq:StepWords}
		w_{j,s^{\ast}} = u_s \ \ \Longleftrightarrow \ \ h_{n+1,2}\left( \Delta^{j,s^{\ast}}_{k_nq_n , s_{n+1}}\right) \subseteq \Delta^{0,s}_{q_n, s_n}.
	\end{equation}
    We say that $(w_{0,s^{\ast}},\dots , w_{k_n-1,s^{\ast}})$ is the sequence of $n$-words associated with the $s^{\ast}$-th tower of $\tau_{n+1}$.
	
Then the $(\tau_{n+1},\mathcal{Q})$-name of the tower with base $H_{n+1}(\Delta^{0,s^*}_{q_{n+1},s_{n+1}})$ is given by
\begin{equation}
	\prod^{q_n-1}_{m=0} \prod^{2^{n+2}q_n-1}_{i=0}\prod^{C_n-1}_{c=0}b^{q_n-\psi_n(i)-j_m \mod q_n}\, w^{l_n-1}_{iC_n+c,s^*} \, e^{\psi_n(i)+j_m \mod q_n}.
\end{equation}

So, we can define a symbolic twist system: Put $\mathcal{W}_0\coloneqq\{a_i\}_{i=0}^{s_0-1}$; having defined $\mathcal{W}_n$ we define $\mathcal{W}_{n+1}$ as
\begin{equation*}
	\Meng{\mathcal{C}^{\text{twist}}_{n}(w_0,\ldots,w_{k_n-1})}{(w_0,\ldots,w_{k_n-1})\text{ is associated with a tower  in }\tau_{n+1}}.
\end{equation*}
We say that $\{\mathcal{W}_n\}_{n\in\N}$ is the construction sequence associated with the AbC construction. Under some conditions we can then show that our abstract weakly mixing constructions from Section~\ref{subsec:constr} are isomorphic to symbolic twist systems. This is the content of the subsequent proposition. It should be compared with \cite[Theorem 58]{FW1} for an analogous result that some untwisted AbC transformations are isomorphic to circular symbolic systems. 

\begin{prop} \label{prop:CodeAbC}
	Suppose that the measure-preserving system $(M,\mathcal{B},\lambda, T)$ is built by
	the AbC method with parameters $(k_n)_{n\in \N}$ as in \eqref{eq:k}, $(s_n)_{n\in \N}$ satisfying requirement~\ref{item:R1}, and $(l_n)_{n\in \N}$ satisfying  $\sum_{n\in \N}\frac{1}{l_n}<\infty$. Furthermore, we assume conjugation maps of the form $h_{n+1}=h_{n+1,2} \circ h_{n+1,1}$ with maps $h_{n+1,1}$ as in Subsection \ref{subsubsec:h1} and $h_{n+1,2}$ as in Subsection \ref{subsubsec:h2} satisfying requirements \ref{item:R2} and \ref{item:R3}. 
	Let $\mathcal{Q}$ be the partition defined in \eqref{eq:partQ}. Then the $\mathcal{Q}$-names describe a strongly uniform twisting construction sequence $\{\mathcal{W}_n\}_{n\in\N}$. Let $\mathbb{K}$ be the associated twisted system and $\phi:M\to\mathbb{K}$ be the map sending each $x\in M$ to its $\mathcal{Q}$-name. Then $\phi$ is one-to-one on a set of $\lambda$-measure one. Moreover, there is a unique non-atomic shift-invariant measure $\nu$ concentrating on the range of $\phi$. In particular, $(M,\mathcal{B},\lambda, T)$ is isomorphic to $(\mathbb{K},\mathcal{B},\nu,\text{sh})$. 
\end{prop}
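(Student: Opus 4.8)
The plan is to mimic the proof of the analogous statement for untwisted AbC maps, \cite[Theorem~58]{FW1}, with the circular operator $\mathcal{C}_n$ replaced by our twisting operator $\mathcal{C}^{\text{twist}}_n$. The map $\phi$ is measurable, and off the $\lambda$-null set of points whose $T$-orbit meets a boundary of an atom of $\mathcal{Q}$ it is everywhere defined and shift-equivariant; for a.e.\ $x$ it takes values in $\mathbb{K}$ (indeed in $S$), since long central windows of $\phi(x)$ are eventually subwords of $\mathcal{W}_n$-words. So there are two genuine tasks: (i) verify that the $\mathcal{Q}$-names of towers of $\tau_n$ form a strongly uniform twisting construction sequence; (ii) verify that $\phi$ is injective off a $\lambda$-null set, whence $\phi_{\ast}\lambda$ is the unique non-atomic shift-invariant measure on the range and $\phi$ is the claimed isomorphism.

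For (i), note first that the towers of $\tau_n$ have height $q_n$ and that $\mathcal{C}^{\text{twist}}_n$ sends $k_n$ words of length $q_n$ to one word of length $q_{n+1}$ (the Remark after Definition~\ref{def:twist}), so all words of $\mathcal{W}_{n+1}$ have the common length $q_{n+1}$; the twisting coefficient sequence is $(C_n,l_n)_{n\in\N}$. The boundary, i.e.\ the $b$- and $e$-runs, makes up a proportion $1/l_n$ of every $(n+1)$-word, which is a summable bound and gives Definition~\ref{def:ConstrSeq}(3). By \eqref{eq:StepWords} the string of indices recording which $u_s$ equals $w_{iC_n+c,s^{\ast}}$ is exactly $\mathfrak{b}_n(0,s^{\ast})\cdots\mathfrak{b}_n(2^{n+2}q_n-1,s^{\ast})$, so requirement~\ref{item:R2} says each $n$-word $u_s$ occurs exactly $k_n/s_n$ times among $w_0,\dots,w_{k_n-1}$ (this is $\geq 1$ since $C_n$ is a multiple of $s_n^2$), hence exactly $(k_n/s_n)q_n(l_n-1)$ times in each word of $\mathcal{W}_{n+1}$ --- a constant independent of the word, which is strong uniformity once unique readability excludes occurrences that straddle a boundary or two consecutive $n$-blocks. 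For the strong readability assumption I would arrange, consistently with \ref{item:R2}--\ref{item:R4}, that the index strings $\mathfrak{b}_n(0,s^{\ast})\cdots\mathfrak{b}_n(2^{n+2}q_n-1,s^{\ast})$ are themselves uniquely readable over $\{0,\dots,s_n-1\}$ (requirement~\ref{item:R3} already forces them to be distinct in $s^{\ast}$); unique readability of each $\mathcal{W}_n$ then follows as in \cite[Lemma~45]{FW2}.

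For (ii), $\phi_{\ast}\lambda$ is shift-invariant because $T$ preserves $\lambda$ and $\phi$ is equivariant. For a.e.\ injectivity, take a typical $x$: by the construction of $\mathcal{Q}$ and the $\Gamma_n$ Borel--Cantelli argument of Section~\ref{subsec:symbolic} (using $\lambda(B'_m\cup E'_m)=1/l_m$ and $\sum 1/l_m<\infty$), for all large $n$ the coordinate $0$ of $\phi(x)$ lies in the interior of an occurrence of an $n$-word inside $\phi(x)$, and by unique readability that occurrence, hence the offset of $0$ within it, is determined by $\phi(x)$. Unwinding \eqref{eq:StepWords} and the definition of the periodic process $\tau_n$, the $n$-word so read off equals a tower name $u_s$ with $s$ unique (the $u_s$, $0\leq s<s_n$, being pairwise distinct by induction via \ref{item:R3}), and the offset fixes the level of $x$ in the tower $t_s$ of $\tau_n$; hence $\phi(x)$ determines the atom of $\zeta_n=H_n(\xi_{q_n,s_n})$ containing $x$. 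Since $\{\zeta_n\}_{n\in\N}$ is generating (Lemma~\ref{lem:MPconv}), $x$ is recovered from $\phi(x)$. Therefore $\phi_{\ast}\lambda$ has no atoms; by the characterization of non-atomic shift-invariant measures on a twisted system in Section~\ref{sec:twist} it concentrates on $S$, and by its uniqueness there (equivalently Fact~\ref{fact:MeasureConstrSeq}(2), the sequence being uniform) it coincides with $\nu$. Thus $\phi$ is a measure-theoretic isomorphism of $(M,\mathcal{B},\lambda,T)$ onto $(\mathbb{K},\mathcal{B},\nu,\text{sh})$.

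The main obstacle I expect is the a.e.\ injectivity in (ii): one must discard the points $x$ whose orbit lands infinitely often (in $n$) in the new boundary sets $B'_n\cup E'_n$ or at positions where the parse into $n$-blocks is ambiguous, and this is precisely where the summability $\sum 1/l_n<\infty$ and the unique readability of the $\mathcal{W}_n$ must be used in tandem. A secondary point, requiring care rather than depth, is the verification of the strong readability assumption: it is not automatic from the AbC data and must be engineered into the tuples $\mathfrak{b}_n$ alongside \ref{item:R2}--\ref{item:R4}.
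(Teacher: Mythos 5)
Your proposal is correct and takes essentially the same route as the paper: the paper's own proof simply observes that requirements \ref{item:R1}--\ref{item:R3} correspond to Requirements 1--3 of \cite{FW1} (with \ref{item:R2} giving strong uniformity) and then defers to the argument of \cite[Theorem 58]{FW1} with the twisting operator in place of the circular operator. Your write-up is a more detailed unpacking of that same reduction, and your flagged caveats (strong readability of the prewords, and the Borel--Cantelli/unique-readability interplay in the injectivity argument) are exactly the points that the cited argument handles.
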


\begin{proof}
	We note that our requirements \ref{item:R1} and \ref{item:R3} correspond to Requirements 1 and 3, respectively, in \cite{FW1}. Moreover, our requirement \ref{item:R2} implies that for each $\Delta^{0,u}_{q_n,s_n} \in \xi_{q_n,s_n}$ and every $0\leq s <s_{n+1}$ we have 
	\[
	\abs{\Meng{0\leq i <k_n}{\h_{n+1,2}(\Delta^{i,s}_{k_nq_n,s_{n+1}})\subseteq \Delta^{0,u}_{q_n,s_n}}}=\frac{k_n}{s_n},
	\]
	that is, the construction sequence $\{\mathcal{W}_n\}_{n\in\N}$ is strongly uniform (which corresponds to Requirement 2 in \cite{FW1}). Then the proof follows along the lines of the proof of \cite[Theorem 58]{FW1} using the twisting operator instead of the circular operator.  
\end{proof}

	\subsection{\label{subsec:smooth}Smooth realization}
	We now show how the AbC method can be used to construct $C^{\infty}$ diffeomorphisms isomorphic to the abstract AbC transformations described in Sections \ref{subsec:abstract} and \ref{subsec:constr}.
	
	\subsubsection{Approximating partition permutations by smooth diffeomorphisms}
	In our abstract construction of weakly mixing AbC transformations we used specific partition permutations $h_{n+1}$ introduced in Section \ref{subsec:constr}.
	We will show that on the manifold $M \in \{ \mathbb{T}^2, \mathbb{D}, \mathbb{A}\}$ we can find for each of these partition permutations $h_{n+1}$ an area-preserving $C^{\infty}$ diffeomorphism $h^{(\mathfrak{s})}_{n+1}$ that closely approximates $h_{n+1}$. The diffeomorphism $h^{(\mathfrak{s})}_{n+1}$ will coincide with the identity in a neighborhood of the boundary of $M$ and with the action of $h_{n+1}$ on the ``inner kernels''
	\begin{equation}\label{eq:InnerKernels}
	\tilde{\Delta}^{i,j}_{k_nq_n,s_{n+1}, \varepsilon_n} \coloneqq \Bigg[\frac{i+\varepsilon_n}{k_nq_n}, \frac{i+1-\varepsilon_n}{k_nq_n}\Bigg] \times \Bigg[\frac{j+\varepsilon_n}{s_{n+1}}, \frac{j+1-\varepsilon_n}{s_{n+1}} \Bigg]
	\end{equation} 
	of all the partition elements $\Delta^{i,j}_{k_nq_n,s_{n+1}}\in \xi_{k_nq_n,s_{n+1}}$. To be more precise, we show the following realization result in this section.
	
	\begin{prop}\label{prop:hReal}
		Let $h_{n+1}=h_{n+1,2}\circ h_{n+1,1}$ be a partition permutation as defined in Section \ref{subsec:constr} satisfying requirement \ref{item:R2}. Then for any $\varepsilon>0$ there is a diffeomorphism $h^{(\mathfrak{s})}_{n+1}\in \text{Diff}^{\,\infty}_{\,\lambda}(M)$ such that
		 $h^{(\mathfrak{s})}_{n+1} \circ R_{1/q_n}=R_{1/q_n} \circ h^{(\mathfrak{s})}_{n+1}$,
		$h^{(\mathfrak{s})}_{n+1}$ is the identity in a neighborhood of the boundary of $M$,
		and for all $0\leq i<k_nq_n$ and $0\leq j<s_{n+1}$ we have $h^{(\mathfrak{s})}_{n+1}(x) = h_{n+1}(x)$ for all $x\in \tilde{\Delta}^{i,j}_{k_nq_n,s_{n+1},\varepsilon}$.
	\end{prop}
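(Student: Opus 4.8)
The plan is to build $h^{(\mathfrak{s})}_{n+1}$ explicitly as a composition of smooth approximations to the two partition permutations $h_{n+1,1}$ and $h_{n+1,2}$, mirroring the decomposition $h_{n+1}=h_{n+1,2}\circ h_{n+1,1}$. The key point is that both $h_{n+1,1}$ and $h_{n+1,2}$ act on the atoms of $\xi_{k_nq_n,s_{n+1}}$ (suitably refined) by piecewise translations: $h_{n+1,1}$ is a permutation of full-height vertical strips $\Delta^{i,0}_{k_nq_n,1}$ realized by horizontal translations by multiples of $1/q_n$, while $h_{n+1,2}$ leaves each fundamental domain invariant and permutes the sub-boxes $\Delta^{iC_n+j,s}_{k_nq_n,s_{n+1}}$ by (horizontal and vertical) translations mapping them into the coarser boxes $\Delta^{iC_n,b_n(i,j,s)}_{k_nq_n,s_n}$. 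A rigid translation of a box onto another box of equal dimensions is area-preserving; the only obstruction to smoothness is that such a piecewise-rigid map is discontinuous across the box boundaries. So the strategy is the standard AbC device: interpolate smoothly in the $\varepsilon$-collar of each box while keeping the map equal to a rigid translation on the inner kernel $\tilde\Delta^{i,j}_{k_nq_n,s_{n+1},\varepsilon}$, and equal to the identity near $\partial M$.

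Concretely, I would first recall (or cite the standard lemma, e.g.\ from \cite{AK70} or \cite{FK}) that for a single source box $B=[a_1,b_1]\times[a_2,b_2]$ and target box $B'$ of the same side lengths, with a prescribed $\varepsilon$-collar, there is an area-preserving $C^\infty$ diffeomorphism of $\mathbb{R}^2$ supported in an $\varepsilon$-neighborhood of $B\cup B'$ that equals the translation $B\to B'$ on the inner kernel of $B$ — this is done coordinate by coordinate using a smooth monotone ``bump/ramp'' function $\chi_\varepsilon$ that transitions from $0$ to the translation amount, and area-preservation is automatic because each coordinate map is a shear depending on the transverse variable only. Applying this simultaneously over the (disjoint, once the collars are taken small enough) families of boxes permuted by $h_{n+1,1}$ gives $h^{(\mathfrak{s})}_{n+1,1}$; doing the same over the boxes permuted by $h_{n+1,2}$ gives $h^{(\mathfrak{s})}_{n+1,2}$; and $h^{(\mathfrak{s})}_{n+1}\coloneqq h^{(\mathfrak{s})}_{n+1,2}\circ h^{(\mathfrak{s})}_{n+1,1}$. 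On $M=\mathbb{D}$ or $\mathbb{A}$ one must additionally arrange the interpolation near $\partial M$ so that the diffeomorphism is the identity in a boundary neighborhood; since $h_{n+1,1}$ and $h_{n+1,2}$ already fix (setwise) the relevant coordinate strips touching $\partial M$, this only requires choosing the support of the ramp functions to avoid a fixed collar of $\partial M$, which is compatible with the inner-kernel requirement by taking $\varepsilon$ small.

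The $1/q_n$-equivariance $h^{(\mathfrak{s})}_{n+1}\circ R_{1/q_n}=R_{1/q_n}\circ h^{(\mathfrak{s})}_{n+1}$ is enforced by construction: since $h_{n+1,1}$ and $h_{n+1,2}$ each commute with $R_{1/q_n}$ (see \eqref{eq:comm1} and \eqref{eq:comm2}), the box families being permuted are invariant under $R_{1/q_n}$, so one builds the interpolating diffeomorphism on one fundamental domain $\Delta^{0,0}_{q_n,1}$ and extends it to the other $q_n-1$ fundamental domains by conjugating with the iterates of $R_{1/q_n}$; the resulting map is globally smooth because $R_{1/q_n}$ is an isometry and the pieces agree (both equal the identity) near the vertical lines $\{x_1\in\frac1{q_n}\mathbb{Z}\}$ once the collars are chosen not to cross those lines. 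I expect the main technical obstacle to be bookkeeping rather than anything deep: one must verify that the $\varepsilon$-collars of all the boxes involved in $h_{n+1,1}$, and separately all those involved in $h_{n+1,2}$, can be chosen pairwise disjoint (so the local interpolations do not interfere and area-preservation/smoothness is preserved under taking the union), and that the inner kernels $\tilde\Delta^{i,j}_{k_nq_n,s_{n+1},\varepsilon}$ of the \emph{finest} partition $\xi_{k_nq_n,s_{n+1}}$ are mapped correctly by the \emph{composition} — i.e.\ that on these kernels $h^{(\mathfrak{s})}_{n+1,1}$ lands inside a kernel on which $h^{(\mathfrak{s})}_{n+1,2}$ still acts rigidly. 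This is arranged by taking the collar parameter for the first map strictly smaller than for the second (a shrinking-$\varepsilon$ trick), so that the image of a kernel under the rigid part of $h^{(\mathfrak{s})}_{n+1,1}$ remains within the kernel used to define $h^{(\mathfrak{s})}_{n+1,2}$; then on $\tilde\Delta^{i,j}_{k_nq_n,s_{n+1},\varepsilon}$ the composition equals $h_{n+1,2}\circ h_{n+1,1}=h_{n+1}$, as required. Finally, area-preservation of the composition is immediate from area-preservation of the two factors, and membership in $\text{Diff}^{\,\infty}_{\,\lambda}(M)$ follows since the construction produces a genuine $C^\infty$ diffeomorphism (with $C^\infty$ inverse, being a composition of explicitly invertible shears).
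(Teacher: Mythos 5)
Your treatment of the untwisted factor $h_{n+1,2}$ is essentially sound (though the correct tool there is the Moser-type realization result, Lemma \ref{lem:FWreal} from \cite{FW1}, rather than literal transverse shears), and the shrinking-$\varepsilon$ trick for the composition is fine. But there is a genuine gap in your treatment of the twist map $h_{n+1,1}$, which is precisely the hard part of this proposition. The map $h_{n+1,1}$ translates the full-height vertical strips $\Delta^{i,0}_{k_nq_n,1}$ horizontally by amounts $a_n(i_2)/q_n$ that depend on the strip index, i.e.\ on the $x_1$-coordinate --- the \emph{same} coordinate in which the translation occurs, not the transverse one. A map of the form $(x_1,x_2)\mapsto(x_1+u(x_1),x_2)$ has Jacobian $1+u'(x_1)$, so it cannot preserve area unless $u$ is locally constant; hence no smooth interpolation of the translation amount in a collar inside each strip can be area-preserving. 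Moreover, the strips tile $M$, so your ``local diffeomorphisms with pairwise disjoint supports'' do not exist: any connected support containing a source strip and its (far-away) target strip necessarily meets many other strips, and adjacent strips sharing a boundary edge leave no room in which to hide an interpolation region. For the same reason your single-box lemma as justified (``area-preservation is automatic because each coordinate map is a shear depending on the transverse variable only'') is not correct: such a shear is supported in a full coordinate strip, not in an $\varepsilon$-neighborhood of $B\cup B'$.

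The paper resolves this with an idea you are missing: conjugate by a \emph{pseudo-rotation}. Lemma \ref{lem:Pseudorotate} (from \cite{FS}) provides a smooth area-preserving $\varphi_\delta$ rotating the bulk of a fundamental domain by $\pi/2$; after conjugating by $\phi_{q_n,\delta}=D_n^{-1}\circ\varphi_\delta\circ D_n$, the vertical strips become horizontal ones, so the required index-dependent horizontal translation becomes a genuine shear $(x_1,x_2)\mapsto(x_1+\tilde\psi(x_2),x_2)$ depending only on the transverse variable $x_2$, which is automatically area-preserving and globally smooth. Setting $h^{(\mathfrak{s})}_{n+1,1}=\phi^{-1}_{q_n,\delta}\circ\psi_{{\bf a}_n,q_n,\varepsilon}\circ\phi_{q_n,\delta}$ (Lemma \ref{lem:h1real}) then realizes $h_{n+1,1}$ on the inner kernels; note also that since $h_{n+1,1}$ is twisted it does not fix the fundamental domain, so one cannot simply ``build it on $\Delta^{0,0}_{q_n,1}$ and extend by $R_{1/q_n}$-conjugation'' as you propose --- equivariance instead follows because each of the three composed maps individually commutes with $R_{1/q_n}$. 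Without this (or an equivalent) device, your construction does not produce an area-preserving diffeomorphism realizing $h_{n+1,1}$.
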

	
	To build $h^{(\mathfrak{s})}_{n+1}$ we start with the following realization result from \cite{FW1} based on ``Moser's trick'' (there are similar results in \cite[section 1]{AK70}).
	
	\begin{lem}\label{lem:FWreal}
		Let $\sigma$ be a permutation of the rectangles $\Delta^{i,j}_{kq,s}\in \xi_{kq,s}$ which commutes with $R_{1/q}$ and is untwisted, that is, $\sigma(\Delta^{0,0}_{q,1})=\Delta^{0,0}_{q,1}$. Then for any $\varepsilon>0$ there is a diffeomorphism $\phi\in \text{Diff}^{\,\infty}_{\,\lambda}(M)$ such that
		$\phi \circ R_{1/q}=R_{1/q} \circ \phi$, $\phi$ is the identity in a neighborhood of the boundary of $M$,
			for all $0\leq i<kq$ and $0\leq j<s$ we have
			$\phi(x) = \sigma(x)$ for all $x\in \tilde{\Delta}^{i,j}_{kq,s,\varepsilon}$.
			
		We say that the partition permutation $\sigma$ is $\varepsilon$-approximated by $\phi$.
	\end{lem}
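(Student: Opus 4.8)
The plan is to prove Lemma~\ref{lem:FWreal} by Moser's deformation argument (``Moser's trick''), following \cite{FW1} and \cite[section~1]{AK70}. First one builds a rough, possibly non-volume-preserving model $\phi_{0}$ that already has the correct combinatorial behaviour; then one corrects its Jacobian defect by a Moser flow that is inert on the inner kernels and near $\partial M$.

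\emph{Step 1: a rough model.} Write $\sigma^{\sharp}$ for the map that carries each rectangle $\Delta^{i,j}_{kq,s}$ onto $\sigma(\Delta^{i,j}_{kq,s})$ by the (unique) translation between them. I would construct a $C^{\infty}$ diffeomorphism $\phi_{0}$ of $M$ such that (i) $\phi_{0}\circ R_{1/q}=R_{1/q}\circ\phi_{0}$, (ii) $\phi_{0}=\sigma^{\sharp}$ on each slightly enlarged inner kernel $\tilde{\Delta}^{i,j}_{kq,s,\varepsilon/2}$, and (iii) $\phi_{0}=\operatorname{id}$ on a collar of $\partial M$ and on a neighbourhood of every vertical seam $\{x\in\tfrac1q\Z\}$. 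The untwisted hypothesis enters here: together with $R_{1/q}$-equivariance it forces $\sigma(\Delta^{i_{1},0}_{q,1})=\Delta^{i_{1},0}_{q,1}$ for every $i_{1}$, so $\phi_{0}$ may be built on the single fundamental domain $\Delta^{0,0}_{q,1}$ and then propagated by $\phi_{0}\circ R_{1/q}^{m}=R_{1/q}^{m}\circ\phi_{0}$, which is consistent precisely because (iii) makes $\phi_{0}$ the identity near the seams. Inside $\Delta^{0,0}_{q,1}$ one has finitely many congruent rectangles (the enlarged kernels) that $\sigma^{\sharp}$ maps rigidly onto another such family, with all translations staying a definite distance away from $\partial M$ and from the seams (the kernels, being shrunk by $\varepsilon$, miss neighbourhoods of all rectangle edges and of $\{y=0,1\}$); a standard extension argument (isotopy extension / partition of unity) then interpolates a diffeomorphism of the complement realising these translations on the kernels and the identity near $\partial M$ and the seams. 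For $M=\mathbb{D}$ the same is done with the usual care at the fixed centre of $R_{1/q}$, where $\sigma$ is already rotation-equivariant; for $M=\mathbb{T}^{2}$ the part of (iii) concerning $\partial M$ is vacuous.

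\emph{Step 2: the Moser correction.} Since $\phi_{0}$ is a local isometry on $U:=\bigcup_{i,j}\tilde{\Delta}^{i,j}_{kq,s,\varepsilon/2}$ and the identity near $\partial M$, the density $\rho$ defined by $\phi_{0}^{*}\lambda=\rho\,\lambda$ is a positive $C^{\infty}$ function with $\int_{M}\rho\,d\lambda=1$, with $\rho\equiv1$ on $U$ and near $\partial M$, and $R_{1/q}$-invariant (as $\phi_{0}$ commutes with the $\lambda$-preserving $R_{1/q}$). The standard Moser computation, interpolating $\rho_{t}=1+t(\rho-1)$, reduces the problem to solving a divergence equation whose right-hand side is a mean-zero $C^{\infty}$ function vanishing on $U$ and near $\partial M$, by a time-dependent vector field $X_{t}$ that is tangent to $\partial M$ and also vanishes on $U$ and near $\partial M$; averaging $X_{t}$ over the finite group $\langle R_{1/q}\rangle$ makes it $R_{1/q}$-invariant without spoiling the support condition. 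Let $\psi$ be the time-one map of the flow of $X_{t}$; then $\psi^{*}(\rho\,\lambda)=\lambda$, $\psi$ commutes with $R_{1/q}$, and $\psi=\operatorname{id}$ on $U$ and near $\partial M$. Setting $\phi:=\phi_{0}\circ\psi$ gives $\phi^{*}\lambda=\psi^{*}(\phi_{0}^{*}\lambda)=\psi^{*}(\rho\,\lambda)=\lambda$, so $\phi\in\text{Diff}^{\,\infty}_{\,\lambda}(M)$; it commutes with $R_{1/q}$ and is the identity near $\partial M$; and on $\tilde{\Delta}^{i,j}_{kq,s,\varepsilon}\subset U$ we have $\psi=\operatorname{id}$, hence $\phi=\phi_{0}=\sigma^{\sharp}=\sigma$ there, which is the assertion.

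The main obstacle is the relative (parametric) form of Moser's theorem used in Step~2: one must arrange that the corrector $\psi$ is \emph{exactly} the identity on the prescribed open set $U$ and on a neighbourhood of $\partial M$ while remaining $R_{1/q}$-equivariant. This rests on the zero-mean condition for $\rho-1$ together with the connectedness of $M\setminus\overline{U}$ (a connected surface with finitely many small rectangles deleted), which lets the mean-zero right-hand side be written as a divergence of a vector field supported in $M\setminus\overline{U}$; the equivariance is then recovered by group-averaging. The construction of the rough model $\phi_{0}$ and the elementary geometry of the inner kernels are routine and account for the remainder of the argument.
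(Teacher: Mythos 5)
Your proposal is correct and follows essentially the same route as the paper: the paper's proof simply invokes \cite[Theorem 35]{FW1} (itself a ``Moser's trick'' argument of exactly the kind you spell out in Steps 1--2) to $\varepsilon$-approximate $\sigma$ on the single fundamental domain $\Delta^{0,0}_{q,1}$, and then extends by the commutation relation $\phi\circ R_{1/q}=R_{1/q}\circ\phi$, which is precisely your reduction via the untwisted hypothesis. The only difference is that you unpack the cited theorem rather than quoting it.
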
 

\begin{proof}
	By \cite[Theorem 35]{FW1} we can $\varepsilon$-approximate $\sigma|_{\Delta^{0,0}_{q,1}}$ by a smooth area-preserving diffeomorphism $\phi$ of $[0,1/q]\times [0,1]$ that is the identity in a neighborhood of the boundary. Hence, we can extend $\phi$ to a diffeomorphism of $M$ by $\phi \circ R_{1/q}=R_{1/q} \circ \phi$. Then $\phi$ is still a $\varepsilon$-approximation of $\sigma$ since $\sigma$ commutes with $R_{1/q}$ as well.
\end{proof}
	
	In particular, this lemma allows us to find smooth approximations to the untwisted conjugation map $h_{n+1,2}$.
	
	\begin{lem}\label{lem:h2real}
		Let $h_{n+1,2}$ be a partition permutation as defined in Subsection \ref{subsubsec:h2} with tuples $\mathfrak{b}_n(i,s)$ satisfying requirement \ref{item:R2}, that is, for every  $0\leq s < s_{n+1}$, $0\leq u <s_n$ we have 
		\begin{equation}\label{eq:h2unif}
			\abs{\Meng{0\leq i <k_n}{\h_{n+1,2}(\Delta^{i,s}_{k_nq_n,s_{n+1}})\subseteq \Delta^{0,u}_{q_n,s_n}}}=\frac{k_n}{s_n}.
		\end{equation}
	    Then for every $\varepsilon>0$ there is an $1/q_n$-equivariant area-preserving $C^{\infty}$ diffeomorphism $h^{(\mathfrak{s})}_{n+1,2}$ that is equal to the identity in a neighborhood of the boundary and $\varepsilon$-approximates $h_{n+1,2}$.
	\end{lem}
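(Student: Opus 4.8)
The plan is to deduce the statement directly from Lemma~\ref{lem:FWreal}, specialized to $q=q_n$, $k=k_n$, $s=s_{n+1}$. All of the genuine analytic content — smoothing a partition permutation by Moser's trick, arranging that the resulting diffeomorphism is the identity near $\partial M$, and matching it with the permutation on the inner kernels $\tilde{\Delta}^{i,j}_{k_nq_n,s_{n+1},\varepsilon}$ — is already packaged in that lemma (ultimately in \cite[Theorem~35]{FW1}). So the only thing I would need to verify is that the map $h_{n+1,2}$ of Subsection~\ref{subsubsec:h2} satisfies the three hypotheses of Lemma~\ref{lem:FWreal}: it is a permutation of the atoms of $\xi_{k_nq_n,s_{n+1}}$, it commutes with $R_{1/q_n}$, and it is \emph{untwisted}, i.e.\ $h_{n+1,2}(\Delta^{0,0}_{q_n,1})=\Delta^{0,0}_{q_n,1}$.

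First I would record the first two hypotheses, which are immediate from the construction: by definition $h_{n+1,2}$ permutes the rectangles of $\xi_{k_nq_n,s_{n+1}}$, and it is extended to $M$ by the commutation relation \eqref{eq:comm2}, so that $h_{n+1,2}\circ R_{1/q_n}=R_{1/q_n}\circ h_{n+1,2}$. Here requirement~\ref{item:R2}, in the form \eqref{eq:h2unif}, is exactly the consistency condition under which the tuples $\mathfrak{b}_n(i,s)$ define a genuine (measure-preserving) permutation; the finer uniformity~\ref{item:R4} plays no role for this lemma.

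The point that requires an argument is untwistedness. The atoms of $\xi_{k_nq_n,s_{n+1}}$ lying inside the fundamental domain $\Delta^{0,0}_{q_n,1}=[0,1/q_n)\times[0,1)$ are precisely those $\Delta^{m,s}_{k_nq_n,s_{n+1}}$ with $0\le m<k_n$. Writing $m=iC_n+j$ with $0\le i<2^{n+2}q_n$ and $0\le j<C_n$ — possible since $k_n=2^{n+2}q_nC_n$ by \eqref{eq:k} — the defining property of $h_{n+1,2}$ sends $\Delta^{iC_n+j,s}_{k_nq_n,s_{n+1}}$ into $\Delta^{iC_n,b_n(i,j,s)}_{k_nq_n,s_n}$, which is contained in $[0,1/q_n)\times[0,1)$ because $iC_n<k_n$. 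Hence $h_{n+1,2}(\Delta^{0,0}_{q_n,1})\subseteq\Delta^{0,0}_{q_n,1}$, and since $h_{n+1,2}$ is a measure-preserving bijection this inclusion is an equality. Lemma~\ref{lem:FWreal} applied to $\sigma=h_{n+1,2}$ then produces, for the given $\varepsilon>0$, an $1/q_n$-equivariant area-preserving $C^{\infty}$ diffeomorphism $h^{(\mathfrak{s})}_{n+1,2}$ that equals the identity in a neighborhood of $\partial M$ and agrees with $h_{n+1,2}$ on every inner kernel $\tilde{\Delta}^{i,j}_{k_nq_n,s_{n+1},\varepsilon}$ — i.e.\ it $\varepsilon$-approximates $h_{n+1,2}$, as claimed.

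I do not expect any serious obstacle here. The content is the bookkeeping observation that $h_{n+1,2}$, in contrast to the twist map $h_{n+1,1}$, never transports a column out of its fundamental domain, so it is untwisted and the earlier Lemma~\ref{lem:FWreal} applies verbatim. The genuinely new smoothing — approximating a \emph{twisted} partition permutation while still commuting with $R_{1/q_n}$ — will only be required for $h_{n+1,1}$ in the next step of the construction of $h^{(\mathfrak{s})}_{n+1}$.
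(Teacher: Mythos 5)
Your proposal is correct and follows exactly the paper's route: verify that $h_{n+1,2}$ is a well-defined partition permutation of $\xi_{k_nq_n,s_{n+1}}$ (using \eqref{eq:h2unif}), that it commutes with $R_{1/q_n}$, and that it is untwisted, and then invoke Lemma~\ref{lem:FWreal}. The only difference is that you spell out the untwistedness check (that $h_{n+1,2}$ never moves an atom out of $\Delta^{0,0}_{q_n,1}$), which the paper merely asserts.
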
 

\begin{proof}
	By our assumption \eqref{eq:h2unif}, the map $h_{n+1,2}$ is a well-defined partition permutation satisfying $h_{n+1,2}(\Delta^{0,0}_{q_n,1})=\Delta^{0,0}_{q_n,1}$ and $h_{n+1,2}\circ R_{1/q_n} =R_{1/q_n}\circ h_{n+1,2}$. Hence, we can apply Lemma \ref{lem:FWreal} to conclude the proof of Lemma \ref{lem:h2real}.
\end{proof}
	
	To construct smooth approximations to the twisting map $h_{n+1,1}$ we use the following ``pseudo-rotations'' introduced in \cite{FS}.
	
	\begin{lem}[\cite{FS}, Lemma 5.3]\label{lem:Pseudorotate}
		For any $\delta < 1/2$ there exists a smooth area-preserving diffeomorphism $\varphi_{\delta}$ of $\mathbb{R}^2$, that is equal to the identity outside $[\delta,1-\delta]^2$ and rotating the square $[2\delta,1-2\delta]^2$ by $\pi/2$.
	\end{lem}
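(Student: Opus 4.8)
The construction I would use is a square-adapted analogue of a localized rotation, the point being that the naive radial approach cannot work here: the circumscribed disc of $[2\delta,1-2\delta]^{2}$ has radius $\sqrt2\,(\tfrac12-2\delta)$, which for small $\delta$ exceeds the inradius $\tfrac12-\delta$ of $[\delta,1-\delta]^{2}$, so no diffeomorphism that restricts to a rigid Euclidean rotation on a round disc containing $[2\delta,1-2\delta]^{2}$ can be supported inside $[\delta,1-\delta]^{2}$. (For $\delta\ge 1/4$ the inner square is empty and one simply takes $\varphi_{\delta}=\operatorname{id}$; so assume $\delta<1/4$.) Instead I would rotate along closed curves that hug the \emph{squares} centred at $c:=(1/2,1/2)$. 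Let $R$ denote the rotation by $\pi/2$ about $c$; note that $R$ preserves every square centred at $c$.

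First I would fix a small parameter $\eta>0$, depending only on $\delta$, and a smooth strictly convex curve $\gamma\subset\R^{2}$ that is invariant under the rotation by $\pi/2$ about the origin and lies within Hausdorff distance $\eta$ of the $\ell^{\infty}$-unit sphere $\partial([-1,1]^{2})$ (obtained by equivariantly rounding the four corners). Setting $\Gamma_{r}:=c+r\gamma$ for $r>0$ produces a smooth, nested, $R$-invariant foliation of $\R^{2}\setminus\{c\}$ by convex curves, with $\Gamma_{r}$ shrinking to $c$ as $r\to0$ and each $\Gamma_{r}$ within Hausdorff distance $O(\eta)$ of $\partial([\tfrac12-r,\tfrac12+r]^{2})$. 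Passing to action--angle coordinates for this foliation, I obtain smooth coordinates $(r,s)\colon\R^{2}\setminus\{c\}\to(0,\infty)\times(\R/\Z)$ with $\{r=\mathrm{const}\}=\Gamma_{r}$, in which Lebesgue measure has the form $w(r)\,dr\wedge ds$ for a smooth positive $w$; since $R$ preserves both the area and the foliation it acts in these coordinates as $(r,s)\mapsto(r,s+\tfrac14)$ --- being an orientation-preserving, area-preserving, fixed-point-free transformation of order $4$ of each leaf, once $s$ is oriented consistently.

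Next I would set $r^{*}:=\sup\{r:\Gamma_{r}\cap[2\delta,1-2\delta]^{2}\neq\emptyset\}$ and $r_{**}:=\inf\{r:\Gamma_{r}\not\subseteq[\delta,1-\delta]^{2}\}$. Since every point of $\Gamma_{r}$ is at $\ell^{\infty}$-distance $r+O(\eta)$ from $c$, one gets $r^{*}\le\tfrac12-2\delta+O(\eta)$ and $r_{**}\ge\tfrac12-\delta-O(\eta)$, so choosing $\eta$ small enough relative to $\delta$ gives $r^{*}<r_{**}$. Fix a smooth $g\colon(0,\infty)\to[0,\tfrac14]$ with $g\equiv\tfrac14$ on $(0,r^{*}]$ and $g\equiv0$ on $[r_{**},\infty)$, and define $\varphi_{\delta}$ by $\varphi_{\delta}(c)=c$ and $\varphi_{\delta}(r,s)=(r,s+g(r))$ in the coordinates above. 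This is a shear in the $(r,s)$-chart, hence a smooth diffeomorphism of $\R^{2}\setminus\{c\}$ with smooth inverse $(r,s)\mapsto(r,s-g(r))$, and it preserves $w(r)\,dr\wedge ds$, i.e.\ Lebesgue measure; since $g\equiv\tfrac14$ on a punctured neighbourhood of $c$ and $\varphi_{\delta}(c)=c$, the map coincides with $R$ near $c$ and is therefore smooth on all of $\R^{2}$. Finally, for $x\notin[\delta,1-\delta]^{2}$ the leaf through $x$ has parameter $r\ge r_{**}$, so $g(r)=0$ and $\varphi_{\delta}(x)=x$; and for $x\in[2\delta,1-2\delta]^{2}$ the leaf through $x$ has parameter $r\le r^{*}$, so $g(r)=\tfrac14$ and $\varphi_{\delta}(x)=R(x)$, which is exactly the required behaviour.

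The step I expect to demand the most care is the construction of the square-hugging foliation together with its adapted coordinates: making the foliation jointly smooth (including near $c$, where the $(r,s)$-chart is singular), and setting up the action--angle coordinates with the precise equivariance $R\colon s\mapsto s+\tfrac14$ and the clean splitting $w(r)\,dr\wedge ds$ of the area form. Given those, the separation $r^{*}<r_{**}$ is a one-line estimate and the verification of the three properties of $\varphi_{\delta}$ is routine.
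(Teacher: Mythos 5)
Your construction is correct; the paper does not prove this lemma but imports it verbatim from \cite{FS}, and your argument --- interpolating between the $\pi/2$-rotation about the centre and the identity by a radius-dependent shear in action--angle coordinates adapted to a $\pi/2$-equivariant foliation by smoothed squares, which is forced because the circumscribed disc of $[2\delta,1-2\delta]^2$ need not fit inside $[\delta,1-\delta]^2$ --- is essentially the standard construction behind the cited result. Two cosmetic points: strict convexity of $\gamma$ is neither achieved by literally rounding the corners nor needed (smoothness and star-shapedness about the centre suffice for the dilates to foliate $\mathbb{R}^2\setminus\{c\}$ with $\det(\gamma,\gamma')>0$), and at $\delta=1/4$ the inner square degenerates to the single point $c$ rather than being empty, where the identity of course still works.
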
 
	
	In the construction of smooth approximations to $h_{n+1,1}$ we use these pseudo-rotations to map horizontal stripes into vertical ones. The construction is visualised in Figure \ref{fig:fig2}.
	
	\begin{figure}
		\centering
		\includegraphics[width=\textwidth]{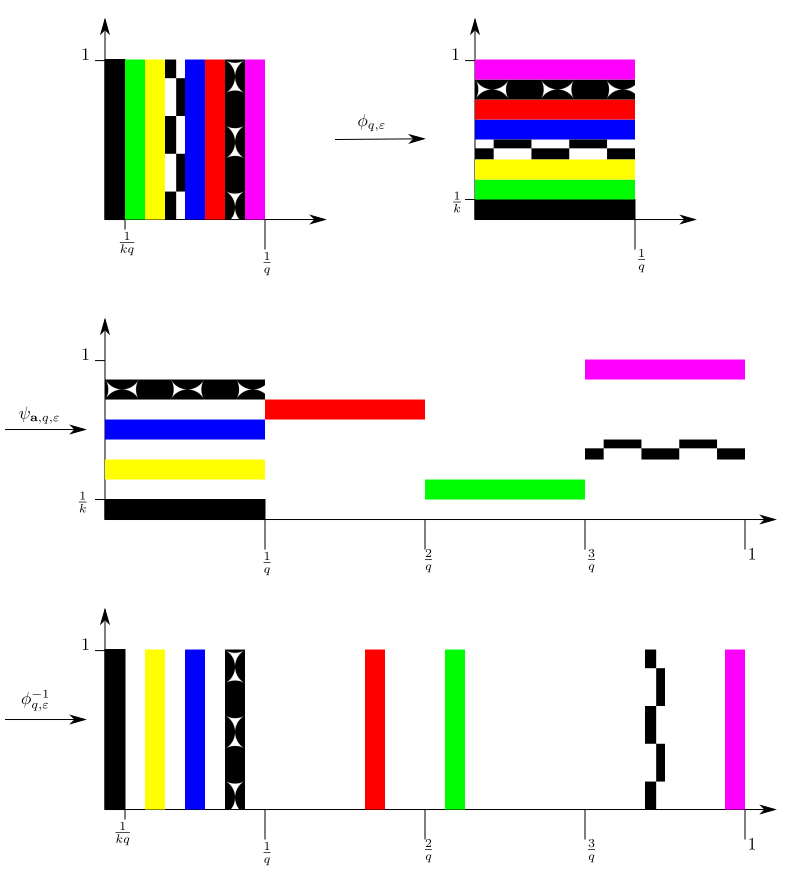}
		\caption{Visualization of the construction of $h_{n+1,1}$. Here, $k=8$ and $q=4$ (for illustration purposes; actual values will be much larger).}
		\label{fig:fig2}
	\end{figure}
	
	\begin{lem}\label{lem:h1real}
		Let $h_{n+1,1}$ be a partition permutation as defined in Subsection \ref{subsubsec:h1}. For every $\varepsilon>0$ there is an $1/q_n$-equivariant area-preserving $C^{\infty}$ diffeomorphism $h^{(\mathfrak{s})}_{n+1,1}$, which is equal to the identity in a neighborhood of the boundary and $\varepsilon$-approximates $h_{n+1,1}$.
	\end{lem}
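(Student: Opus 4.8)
Regarding $h_{n+1,1}$ as a permutation of the refinement $\xi_{k_nq_n,s_{n+1}}$ of $\xi_{k_nq_n,1}$, it acts on each rectangle $\Delta^{i,j}_{k_nq_n,s_{n+1}}$ with $i=i_1k_n+i_2C_n+i_3$ decomposed as in \eqref{eq:idecomp} by the horizontal translation $R_{a_n(i_2)/q_n}$, and we are free to let it be exactly this translation there. So the task is to produce an area-preserving $C^{\infty}$ diffeomorphism of $M$ that equals $R_{a_n(i_2)/q_n}$ on every inner kernel $\tilde{\Delta}^{i,j}_{k_nq_n,s_{n+1},\varepsilon}$, is the identity near $\partial M$, and commutes with $R_{1/q_n}$. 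The obstruction is that a column-dependent horizontal translation cannot be smoothed naively: near a wall separating two columns with $a_n(i_2)\neq a_n(i_2{+}1)$, and near $\mathbb{S}^{1}\times\{0,1\}$, interpolating the translation amount in the $r$-variable produces a map that is neither continuous nor area-preserving. Following the device of \cite{FS} and \cite{FW1}, the plan is to use the pseudo-rotations of Lemma \ref{lem:Pseudorotate} to interchange the two coordinates in the interior, so that ``which column'' becomes a transverse coordinate and the motion turns into an honest shear.

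Concretely, I would first fix $\delta$ much smaller than $\varepsilon$ and lay down a grid of interior squares that is $1/q_n$-periodic in the $\theta$-direction, whose horizontal edges lie among the level boundaries $r\in\tfrac{1}{s_{n+1}}\mathbb{Z}$ of $\xi_{k_nq_n,s_{n+1}}$, and that stays clear of $\partial M$; on each such square I put a rescaled copy of the pseudo-rotation $\varphi_{\delta}$. Since each $\varphi_{\delta}$ is the identity near the boundary of its square, these glue to an area-preserving $C^{\infty}$ diffeomorphism $g$ of $M$ that is the identity near $\partial M$ and near every grid line and that carries each thin vertical stripe of $\xi_{k_nq_n,s_{n+1}}$ into a thin horizontal band. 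In the $g$-picture the column index $i_2$ is recovered from the $r$-coordinate by a function that is $1/q_n$-periodic in $r$ (all fundamental domains carry the same internal pattern), so what is left to perform is the shear $S\colon(\theta,r)\mapsto\bigl(\theta+s(r),\,r\bigr)$, where $s$ is a smoothing of the $1/q_n$-periodic step function whose value on the band arising from the $i_2$-th sub-block is $a_n(i_2)/q_n$; the smoothing is carried out inside the $\varepsilon$-gaps separating the images of consecutive sub-blocks and is tapered to $0$ on the $\delta$-collars along the horizontal grid lines. Then $S$ is automatically area-preserving, $C^{\infty}$, commutes with $R_{1/q_n}$, and is the identity near $\partial M$, and $h^{(\mathfrak{s})}_{n+1,1}:=g^{-1}\circ S\circ g$ inherits all four properties. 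On an inner kernel $\tilde{\Delta}^{i,j}_{k_nq_n,s_{n+1},\varepsilon}$ the map $g$ acts as a genuine $\pi/2$-rotation inside the relevant square (this is where $\delta\ll\varepsilon$ is used) and $S$ as the genuine translation $R_{a_n(i_2)/q_n}$, so $h^{(\mathfrak{s})}_{n+1,1}$ agrees there with $h_{n+1,1}$.

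The heart of the matter is the compatibility of scales. One has to choose the grid step so that each of the translation amounts $a_n(i_2)/q_n$ is an integer multiple of it (so that $S$ sends grid squares to grid squares and $g^{-1}$ can rotate the images back correctly), keep $\delta\ll\varepsilon$ so that the $\varphi_{\delta}$'s never reach into an inner kernel, and keep the grid $1/q_n$-periodic in $\theta$ for the equivariance; note that the amounts $a_n(i_2)/q_n$ exhaust $\{0,1/q_n,\dots,(q_n-1)/q_n\}$, so $S$ is a genuinely large shear and not a small perturbation of the identity. Arranging all of this simultaneously---possibly after imposing a harmless divisibility condition such as $q_n\mid s_{n+1}$, which is available in the later inductive choice of parameters---is the only real work; once it is done, smoothness, measure preservation, equivariance and the behaviour near $\partial M$ are automatic, and Lemma \ref{lem:h1real} follows. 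The cases $M=\mathbb{D}$ and $M=\mathbb{T}^{2}$ are handled by the same construction, the condition near $\partial M$ being vacuous on the torus and the degenerate centre of the disk being absorbed into the fixed $\varphi_{\delta}$-collars exactly as in the untwisted smooth realizations of \cite{FW1} and \cite{AK70}.
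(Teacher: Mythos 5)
Your strategy coincides with the paper's: use the pseudo-rotations of Lemma \ref{lem:Pseudorotate} to turn the column index into a function of the second coordinate, perform the column-dependent translation as an honest shear $(x,y)\mapsto(x+\tilde\psi(y),y)$ built from a smoothed step function, and rotate back. The paper's implementation is the degenerate case of your grid: a \emph{single} pseudo-rotation $\phi_{q_n,\varepsilon/(2s_{n+1})}=D_n^{-1}\circ\varphi_{\varepsilon/(2s_{n+1})}\circ D_n$ of the whole fundamental domain $[0,1/q_n]\times[0,1]$ rescaled to the unit square, extended $1/q_n$-equivariantly. With this choice every inner kernel $\tilde{\Delta}^{i,j}_{k_nq_n,s_{n+1},\varepsilon}$ is carried to $\tilde{\Delta}^{s_{n+1}-j-1,\,i}_{s_{n+1}q_n,k_n,\varepsilon}$, the shear moves the resulting full-width horizontal bands by the amounts $a_n(\lfloor i/C_n\rfloor)/q_n$, and the only compatibility needed is that these amounts are multiples of $1/q_n$ and that $\phi$ commutes with $R_{1/q_n}$ --- all of your ``compatibility of scales'' issues evaporate. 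One concrete caution about your variant: the divisibility $q_n\mid s_{n+1}$ that you invoke is \emph{not} available in the inductive construction --- by \ref{item:E1} the cardinalities $s_{n+1}=|\mathtt{W}_{n+1}|$ are powers of $2$, whereas $q_n$ acquires the odd prime factors $\mathfrak{p}_m^2$ through the frequencies $f_m$ of \ref{item:E2}, so $q_n\nmid s_{n+1}$ for all large $n$. This does not sink your argument, since genuine geometric squares are unnecessary (rectangles rescaled to squares, exactly as with the paper's map $D_n$, serve equally well), but the cleanest repair is simply to take one full-height rectangle per fundamental domain.
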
 
	
	\begin{proof}
		Let $D_{n}: [0,1/q_n] \times [0,1]$ be defined by $D_{n}(x,y)=(q_nx,y)$. We use this map and the pseudo-rotation $\varphi_{\delta}$ from Lemma \ref{lem:Pseudorotate} to define the area-preserving diffeomorphism
		\[
		\phi_{q_n,\delta}:[0,1/q_n] \times [0,1] \to [0,1/q_n] \times [0,1], \ \phi_{q_n,\delta} = D^{-1}_n \circ \varphi_{\delta} \circ D_n.
		\]
		Since $\phi_{q_n,\delta}$ coincides with the identity in a neighborhood of the boundary, we can extend it to a diffeomorphism $\phi_{q_n,\delta} \in \text{Diff}^{\,\infty}_{\,\lambda}(M)$ commuting with $R_{1/q_n}$. Furthermore, we note that
		\begin{equation} \label{eq:PhiMap}
			\begin{split}
			& \phi_{q_n, \frac{\varepsilon}{2s_{n+1}}}\left( \tilde{\Delta}^{i,j}_{k_nq_n, s_{n+1}, \varepsilon} \right) \\
			= & \Bigg[ \frac{1}{q_n}-\frac{j+1-\varepsilon}{q_n s_{n+1}},  \frac{1}{q_n}-\frac{j+\varepsilon}{q_n s_{n+1}}\Bigg] \times \Bigg[ \frac{i+\varepsilon}{k_n}, \frac{i+1-\varepsilon}{k_n}\Bigg] = \tilde{\Delta}^{s_{n+1}-j-1, i}_{s_{n+1}q_n , k_n, \varepsilon}
			\end{split}
		\end{equation}
	    for all $0\leq i < k_n$, $0\leq j < s_{n+1}$.
	    
	    In the next step, we build a smooth map that will introduce some horizontal translation depending on the height value $i$ of $\tilde{\Delta}^{s_{n+1}-j-1, i}_{s_{n+1}q_n , k_n, \varepsilon}$. For that purpose, let $\rho: \mathbb{R} \rightarrow \mathbb{R}$ be a smooth increasing function that equals $0$ for $x\leq \frac{1}{2}$ and $1$ for $x\geq 1$. Then we define the map $\tilde{\psi}_{{\bf a}_n,q_n,\varepsilon}: \left[0,1\right]\rightarrow\mathbb{R}$ by 
	    \begin{equation*}
	    	\tilde{\psi}_{{\bf a}_n,q_n,\varepsilon}\left(y\right)=\sum^{k_n-1}_{i=0} \frac{a_n\left( \lfloor \frac{i}{C_n}\rfloor\right)}{q_n}\cdot \left(\rho \left(\frac{k_n \cdot y}{\varepsilon}-\frac{i}{\varepsilon} \right) - \rho \left(\frac{k_n \cdot y}{\varepsilon}-\frac{i+1}{\varepsilon}+\frac{3}{2} \right) \right),
	    \end{equation*}
        where $a_n(\cdot)$ are the numbers from \eqref{eq:an} defined in the construction of $h_{n+1,1}$. Note that $\tilde{\psi}_{{\bf a}_n,q_n,\varepsilon}$ coincides with the identity in a neighborhood of the boundary and for every $0 \leq i < k_n$ we have 
        \begin{equation}\label{eq:psiTrans}
        \tilde{\psi}_{{\bf a}_n,q_n,\varepsilon}|_{\left[\frac{i+\varepsilon}{k_n}, \frac{i+1-\varepsilon}{k_n}\right]} \equiv \frac{a_n\left( \lfloor \frac{i}{C_n}\rfloor\right)}{q_n} \mod 1.
        \end{equation}
		Using this map $\tilde{\psi}_{{\bf a}_n,q_n,\varepsilon}$ we define the area-preserving diffeomorphism $\psi_{{\bf a}_n,q_n,\varepsilon}:M \to M$ by
		\begin{equation*}
			\psi_{{\bf a}_n,q_n,\varepsilon}\left(x,y\right)= \left(x+\tilde{\psi}_{{\bf a}_n,q_n,\varepsilon}\left(y\right) ,y\right).
		\end{equation*}
	    By \eqref{eq:psiTrans}, we have for every $0 \leq i < k_n$ that
	    \begin{equation}\label{eq:PsiMap}
	    \psi_{{\bf a}_n,q_n,\varepsilon} \left(\tilde{\Delta}^{j, i}_{s_{n+1}q_n , k_n, \varepsilon}\right) =\tilde{\Delta}^{a_n\left( \lfloor \frac{i}{C_n}\rfloor\right)s_{n+1}+j, i}_{s_{n+1}q_n , k_n, \varepsilon}
	\end{equation}
	    for all $0\leq j <s_{n+1}q_n$.    
	    Finally, we define the area-preserving smooth diffeomorphism $h^{(\mathfrak{s})}_{n+1,1}:M \to M$ by 
	    \[
	    h^{(\mathfrak{s})}_{n+1,1} = \phi^{-1}_{q_n, \frac{\varepsilon}{2s_{n+1}}} \circ \psi_{{\bf a}_n,q_n,\varepsilon} \circ \phi_{q_n, \frac{\varepsilon}{2s_{n+1}}}. 
	    \]
	    It is $1/q_n$-equivariant since all composed maps commute with $R_{1/q_n}$. Using equations \eqref{eq:PhiMap} and \eqref{eq:PsiMap} we also conclude that
	    \[
	    h^{(\mathfrak{s})}_{n+1,1}|_{\tilde{\Delta}^{i,j}_{k_nq_n, s_{n+1}, \varepsilon}} = h_{n+1,1}|_{\tilde{\Delta}^{i,j}_{k_nq_n, s_{n+1}, \varepsilon}}
	    \]
	    for all $0\leq i < k_nq_n$, $0\leq j < s_{n+1}$, that is, $h^{(\mathfrak{s})}_{n+1,1}$ $\varepsilon$-approximates $h_{n+1,1}$.
	\end{proof}

We are ready to prove Proposition \ref{prop:hReal}.
	  
	  \begin{proof}[Proof of Proposition \ref{prop:hReal}]
	  	Combining Lemmas \ref{lem:h1real} and \ref{lem:h2real}, we obtain $h^{(\mathfrak{s})}_{n+1}\in \text{Diff}^{\,\infty}_{\,\lambda}(M)$ approximating $h_{n+1}=h_{n+1,2}\circ h_{n+1,1}$ in the sense of Proposition \ref{prop:hReal}.
	  \end{proof}

	\subsubsection{Smooth AbC method}
	After approximating the partition permutation $h_{n+1}$ by smooth area-preserving diffeomorphisms that commute with $R_{1/q_n}$, we show that we can realize any weakly mixing transformation built by the abstract AbC method from Sections \ref{subsec:abstract} and \ref{subsec:constr} as an area-preserving $C^{\infty}$-diffeomorphism provided that the sequence $(l_n)_{n\in \N}$ grows sufficiently fast. This result is the counterpart of \cite[Theorem 38]{FW1}. 
	
	\begin{prop} \label{prop:SmoothReal}
		Let $(\varepsilon_n)_{n\in \N}$ be a summable sequence of positive reals satisfying
		\begin{equation} \label{eq:EpsDecrease}
			\sum_{m>n}\varepsilon_m < \frac{\varepsilon_n}{4} \ \text{ for every } n \in \mathbb{N}.
		\end{equation}
		Suppose $T:M\to M$ is a MPT built by the abstract AbC method from Sections \ref{subsec:abstract} and \ref{subsec:constr} satisfying requirements \ref{item:R1}, \ref{item:R2}, \ref{item:R3} and using parameter sequences $(k_n)_{n\in \N}$ and $(l_n)_{n\in \N}$. If $(l_n)_{n\in \N}$ grows fast enough (see condition \eqref{eq:fast enough} and Remark \ref{rem:FastEnough}), then there exists a  sequence of smooth AbC diffeomorphisms $(T^{(\mathfrak{s})}_n)_{n\in \N}$ that satisfies
		\begin{equation} \label{eq:ClosenesDiffeos}
			d_{\infty}(T^{(\mathfrak{s})}_n,T^{(\mathfrak{s})}_{n+1})<\frac{\varepsilon_n}{4} \ \text{ for every } n \in \mathbb{N}
		\end{equation}
		and converges to a diffeomorphism $T^{(\mathfrak{s})}\in\text{Diff }^{\infty}(M,\lambda)$ which is measure-theoretically isomorphic to $T$.
	\end{prop}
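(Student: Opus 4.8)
The plan is to run the smooth AbC construction in parallel with the abstract one, replacing at each stage the partition permutation $h_{n+1}$ by the smooth diffeomorphism $h^{(\mathfrak{s})}_{n+1}$ supplied by Proposition \ref{prop:hReal}, and then to choose the free parameters $l_n$ so large that the resulting sequence of conjugates converges in $C^\infty$ while still being isomorphic to the abstract limit $T$. Concretely, I would set $H^{(\mathfrak{s})}_0=\operatorname{id}$, $H^{(\mathfrak{s})}_{n+1}=H^{(\mathfrak{s})}_n\circ h^{(\mathfrak{s})}_{n+1}$ (with $h^{(\mathfrak{s})}_{n+1}$ chosen to $\delta_{n+1}$-approximate $h_{n+1}$ for a suitably small $\delta_{n+1}$ to be specified), and $T^{(\mathfrak{s})}_n=H^{(\mathfrak{s})}_n\circ R_{\alpha_n}\circ (H^{(\mathfrak{s})}_n)^{-1}$. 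The equivariance $h^{(\mathfrak{s})}_{n+1}\circ R_{1/q_n}=R_{1/q_n}\circ h^{(\mathfrak{s})}_{n+1}$, hence $h^{(\mathfrak{s})}_{n+1}\circ R_{\alpha_n}=R_{\alpha_n}\circ h^{(\mathfrak{s})}_{n+1}$, gives the telescoping identity
\[
T^{(\mathfrak{s})}_{n+1}=H^{(\mathfrak{s})}_n\circ h^{(\mathfrak{s})}_{n+1}\circ R_{\alpha_{n+1}}\circ (h^{(\mathfrak{s})}_{n+1})^{-1}\circ (H^{(\mathfrak{s})}_n)^{-1},
\]
so that $T^{(\mathfrak{s})}_{n+1}$ and $T^{(\mathfrak{s})}_n$ differ, after conjugation by the fixed diffeomorphism $H^{(\mathfrak{s})}_n$, only by the maps $h^{(\mathfrak{s})}_{n+1}\circ R_{\alpha_{n+1}-\alpha_n}\circ (h^{(\mathfrak{s})}_{n+1})^{-1}$ versus the identity. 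Since $\alpha_{n+1}-\alpha_n=1/(k_nl_nq_n^2)$, the $C^0$-size of this perturbation is controlled by $1/l_n$ times derivatives of $h^{(\mathfrak{s})}_{n+1}$ up to the relevant order; the higher $C^\infty$-norms are bounded similarly by $\|h^{(\mathfrak{s})}_{n+1}\|_{C^{r+1}}^{\,O(r)}/(l_nq_n)$ via the chain rule and Faà di Bruno. This is exactly the point where $l_n$ enters: once $h^{(\mathfrak{s})}_{n+1}$ (and hence all its $C^r$-norms, for every $r$) has been fixed at stage $n+1$, we choose $l_n$ large enough that
\[
d_\infty\!\left(T^{(\mathfrak{s})}_n,T^{(\mathfrak{s})}_{n+1}\right)<\frac{\varepsilon_n}{4};
\tag{$\star$}
\]
this is the meaning of the referenced ``fast enough'' condition \eqref{eq:fast enough}, and it is legitimate because $l_n$ does not affect $q_n$, $k_n$, $\alpha_n$, or $h^{(\mathfrak{s})}_{n+1}$, only $q_{n+1}$ and $\alpha_{n+1}$. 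By \eqref{eq:EpsDecrease} the bound ($\star$) makes $(T^{(\mathfrak{s})}_n)_{n\in\N}$ Cauchy in $C^\infty$, hence convergent to some $T^{(\mathfrak{s})}\in\operatorname{Diff}^\infty(M,\lambda)$.

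It then remains to show $T^{(\mathfrak{s})}$ is measure-theoretically isomorphic to the abstract limit $T$, for which I would invoke Fact \ref{fact:isomorphism}. Take the decreasing generating sequence of partitions $\zeta_n=H_n(\xi_{q_n,s_n})$ on the abstract side and $\zeta^{(\mathfrak{s})}_n=H^{(\mathfrak{s})}_n(\xi_{q_n,s_n})$ on the smooth side, and the measure-preserving maps $K_n$ that conjugate $T_n$ to $T^{(\mathfrak{s})}_n$ (built from the known isomorphisms between periodic processes with the same combinatorics — or, more simply, take $K_n$ to be the a.e.-defined map matching the inner kernels of corresponding rectangles). One checks: (1) $K_n$ is an isomorphism of $T_n$ with $T^{(\mathfrak{s})}_n$ because both are conjugates of $R_{\alpha_n}$ by maps inducing the same permutation on $\xi_{k_nq_n,s_n}$; (2) $\zeta^{(\mathfrak{s})}_n=K_n(\zeta_n)$ is generating, since $H^{(\mathfrak{s})}_n$ is a diffeomorphism and $\xi_{q_n,s_n}$ generates as $q_n,s_n\to\infty$ (using \ref{item:R1}); (3) $D_\lambda\!\left(K_{n+1}(\zeta_n),K_n(\zeta_n)\right)<\varepsilon_n$ — this is where the approximation quality $\delta_{n+1}$ of $h^{(\mathfrak{s})}_{n+1}$ to $h_{n+1}$ is used, since $K_{n+1}$ and $K_n$ differ by the insertion of $h^{(\mathfrak{s})}_{n+1}$ versus $h_{n+1}$ and these agree off a set of measure $O(\varepsilon\cdot(\text{number of rectangles}))$; choosing $\delta_{n+1}$ (equivalently the $\varepsilon$ in Proposition \ref{prop:hReal}) small relative to $\varepsilon_n/(k_nq_ns_{n+1})$ makes this work. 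Fact \ref{fact:isomorphism} (together with the summability of $(\varepsilon_n)$) then yields that $K_n$ converges weakly to a measure-theoretic isomorphism between $T=\lim T_n$ (Lemma \ref{lem:MPconv}) and $T^{(\mathfrak{s})}=\lim T^{(\mathfrak{s})}_n$, completing the proof.

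The main obstacle is bookkeeping the \emph{order of parameter selection} and the associated $C^\infty$-estimates: at stage $n+1$ one must first fix $\varepsilon$ in Proposition \ref{prop:hReal} (hence $h^{(\mathfrak{s})}_{n+1}$ and all its derivative norms) small enough to serve the isomorphism estimate (3) above, and only afterwards choose $l_n$ large enough to serve ($\star$); crucially $l_n$ must be chosen after $h^{(\mathfrak{s})}_{n+1}$ so that the bound on $d_\infty(T^{(\mathfrak{s})}_n,T^{(\mathfrak{s})}_{n+1})$, which involves $\|h^{(\mathfrak{s})}_{n+1}\|_{C^r}$ for arbitrarily large $r$, can be driven below $\varepsilon_n/4$. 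Making the $C^r$-dependence explicit — i.e. writing down that $d_{C^r}(T^{(\mathfrak{s})}_n,T^{(\mathfrak{s})}_{n+1})\le P_r\big(\|H^{(\mathfrak{s})}_n\|_{C^{r}},\|h^{(\mathfrak{s})}_{n+1}\|_{C^{r+1}}\big)/(l_nq_n)$ for some universal polynomial $P_r$, and then picking $l_n$ to beat the first $n$ of these simultaneously (a standard diagonal choice, since $d_\infty$ is the metric $\sum_r 2^{-r}\min(1,d_{C^r})$) — is the one genuinely delicate point; everything else is a routine adaptation of \cite[Theorem 38]{FW1}, and I would state it as Remark \ref{rem:FastEnough}.
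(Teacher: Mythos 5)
Your overall architecture matches the paper's proof: build $h^{(\mathfrak{s})}_{n+1}$ from Proposition \ref{prop:hReal}, use equivariance to telescope $T^{(\mathfrak{s})}_{n+1}$ against $T^{(\mathfrak{s})}_n$, choose $l_n$ last so that $d_\infty(T^{(\mathfrak{s})}_n,T^{(\mathfrak{s})}_{n+1})<\varepsilon_n/4$ (the paper does this by forcing $d_{\lceil 8/\varepsilon_n\rceil}<\varepsilon_n/8$ and summing the tail of the $d_\infty$ series, which is your ``diagonal choice''), and then apply Fact \ref{fact:isomorphism} with $K_n=H^{(\mathfrak{s})}_n\circ H_n^{-1}$ and the partitions $\zeta_n=H_n(\xi_{q_n,s_n})$, $K_n(\zeta_n)=H^{(\mathfrak{s})}_n(\xi_{q_n,s_n})$. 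Your verification of hypothesis (3) of Fact \ref{fact:isomorphism} via the approximation quality of $h^{(\mathfrak{s})}_{n+1}$ is also the paper's argument.

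There is, however, one genuine gap: your justification of hypothesis (2), namely that $\{K_n(\zeta_n)\}_{n\in\N}=\{H^{(\mathfrak{s})}_n(\xi_{q_n,s_n})\}_{n\in\N}$ is generating ``since $H^{(\mathfrak{s})}_n$ is a diffeomorphism and $\xi_{q_n,s_n}$ generates.'' That reasoning would be valid if the partitions were images of a generating sequence under a \emph{single fixed} diffeomorphism, but here the conjugating map $H^{(\mathfrak{s})}_n$ changes with $n$ and the sequence $(H^{(\mathfrak{s})}_n)_{n\in\N}$ does not converge to a diffeomorphism (only the conjugates $T^{(\mathfrak{s})}_n$ converge). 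A priori the varying maps could smear the small atoms of $\xi_{q_n,s_n}$ so that the images fail to separate points. This is in fact the most substantial part of the paper's proof: one introduces the sets $L_m$ of inner kernels on which $h^{(\mathfrak{s})}_m=h_m$, forms $G_n=L_n\cap\bigcap_{m>n}h^{(\mathfrak{s})}_{n+1}\circ\cdots\circ h^{(\mathfrak{s})}_m(L_m)$, and uses summability of $(\varepsilon_n)$ plus Borel--Cantelli to show $\lambda(G_n)\nearrow 1$; on $G_n$ the smooth composition tracks the combinatorial permutation $\Pi=h_{n_0+1}\circ\cdots\circ h_m$ of $\xi_m$, so that $\bigcup_{c\in\xi_m}H^{(\mathfrak{s})}_m(c)\,\triangle\,H^{(\mathfrak{s})}_{n_0}(\Pi(c))$ has small measure, and any measurable set is then approximated by unions of atoms of $H^{(\mathfrak{s})}_m(\xi_m)$ via its preimage under the fixed diffeomorphism $H^{(\mathfrak{s})}_{n_0}$. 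You have all the ingredients for this (the inner-kernel agreement and the summable error budget), but the step needs to be argued rather than asserted.
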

	
	\begin{proof}
		Let $\{h_n\}_{n\in \N}$, $\{H_n=h_1\circ\ldots\circ h_n\}_{n\in \N}$, and $\{T_n=H_n\circ R_{\alpha_n}\circ H_n^{-1}\}_{n\in \N}$ be a sequence of measure-preserving transformations constructed using parameters $\{k_n\}_{n\in \N}$, $\{l_n\}_{n\in \N}$, and $\{s_n\}_{n\in \N}$ via our abstract AbC method described in Sections \ref{subsec:abstract} and \ref{subsec:constr}. Let $T$ be the limit of the $\{T_n\}_{n=1}^\infty$ in the weak topology.
		
		Using Proposition \ref{prop:hReal} we construct diffeomorphisms $h_n^{(\mathfrak{s})}\in\text{Diff }^\infty_\lambda(M)$ that satisfy $h_n^{(\mathfrak{s})}\circ R_{1/q_{n-1}}= R_{1/q_{n-1}} \circ h_n^{(\mathfrak{s})}$ and coincide with $h_n$ on the set
		\begin{equation} \label{eq:Ln}
			L_n \coloneqq \bigcup^{k_{n-1}q_{n-1}-1}_{i=0} \bigcup^{s_n-1}_{j=0}\tilde{\Delta}^{i,j}_{k_{n-1}q_{n-1},s_n,\varepsilon_{n-1}}
		\end{equation}
		that is, the union of the ``inner kernels'' $\tilde{\Delta}^{i,j}_{k_{n-1}q_{n-1},s_n,\varepsilon_{n-1}}$ defined in \eqref{eq:InnerKernels}. We put 
		\begin{equation}
			H_n^{(\mathfrak{s})} \coloneqq h_1^{(\mathfrak{s})}\circ\ldots\circ h_n^{(\mathfrak{s})} \ \text{ and } \ T_n^{(\mathfrak{s})} \coloneqq H_n^{(\mathfrak{s})}\circ R_{\alpha_n}\circ (H_n^{(\mathfrak{s})})^{-1}.
		\end{equation}
		
		Exploiting the commutation relation we obtain for any $n \in \N$ that
		\begin{align*}
			T_{n+1}^{(\mathfrak{s})} = H_{n}^{(\mathfrak{s})}\circ R_{\alpha_{n}}\circ [h_{n+1}^{(\mathfrak{s})}\circ R_{1/(k_nl_nq_n^2)}\circ (h_{n+1}^{(\mathfrak{s})})^{-1}]\circ (H_n^{(\mathfrak{s})})^{-1}.
		\end{align*}
		Since the number $l_n$ is chosen last in the induction step, we can choose $l_n \in \Z^+$ large enough to obtain 
		\begin{equation} \label{eq:fast enough}
			d_{\lceil \frac{8}{\varepsilon_n}\rceil } \left(T_{n+1}^{(\mathfrak{s})},T_n^{(\mathfrak{s})}\right)<\frac{\varepsilon_n}{8}.
		\end{equation}  
		This yields
		\begin{align*}
			& d_{\infty} \left(T_{n+1}^{(\mathfrak{s})},T_n^{(\mathfrak{s})}\right) \\
			\leq & \sum^{\lceil \frac{8}{\varepsilon_n}\rceil}_{k=1} \frac{d_k\left(T_{n+1}^{(\mathfrak{s})},T_n^{(\mathfrak{s})}\right)}{2^k \cdot \left(1+d_k\left(T_{n+1}^{(\mathfrak{s})},T_n^{(\mathfrak{s})}\right)\right)} + \sum^{\infty}_{k=\lceil \frac{8}{\varepsilon_n}\rceil+1} \frac{d_k\left(T_{n+1}^{(\mathfrak{s})},T_n^{(\mathfrak{s})}\right)}{2^k \cdot \left(1+d_k\left(T_{n+1}^{(\mathfrak{s})},T_n^{(\mathfrak{s})}\right)\right)} \\
			< & \sum^{\lceil \frac{8}{\varepsilon_n}\rceil}_{k=1} \frac{\varepsilon_n/8}{2^k } + \sum^{\infty}_{k=\lceil \frac{8}{\varepsilon_n}\rceil+1} \frac{1}{2^k} \leq \frac{\varepsilon_n}{8} + 2^{-\lceil \frac{8}{\varepsilon_n}\rceil} \leq \frac{\varepsilon_n}{8} + \frac{\varepsilon_n}{8} = \frac{\varepsilon_n}{4}, 
		\end{align*} 
		that is, \eqref{eq:ClosenesDiffeos} holds. Furthermore, this implies for all $n,m \in \Z^+$ that 
		\[
		d_{\infty} \left(T_{n+m}^{(\mathfrak{s})},T_n^{(\mathfrak{s})}\right) < \sum^{n+m-1}_{k=n}\frac{\varepsilon_k}{4}.
		\]
		Since $(\varepsilon_n)_{n\in \N}$ is a summable sequence, the sequence $T_n^{(\mathfrak{s})}$ is a Cauchy sequence and, hence, converges to some $T^{(\mathfrak{s})}\in \text{Diff }^\infty_\lambda(M)$.
		
		In the next step, we prove that $T^{(\mathfrak{s})}$ is in fact measure-theoretically isomorphic to $T$. Our plan is to use Fact \ref{fact:isomorphism} for the proof. In the terminology of the lemma, we put $(\Omega,\mathcal{M},\mu)=(\Omega',\mathcal{M}',\mu')=(M,\mathcal{B},\lambda)$. We define $K_n:M \to M$ by 
		$K_n \coloneqq H_n^{(\mathfrak{s})}\circ H_n^{-1}$.
		From the definition it follows that $K_n$ is an isomorphism between $T_n$ and $T_n^{(\mathfrak{s})}$. We define the two sequences of partitions $\mathcal{P}_n:=\zeta_n=H_n(\xi_n)=H_n(\xi_{q_n,s_n})$ and $\mathcal{P}_n':=K_n(\zeta_n)=H_n^{(\mathfrak{s})}(\xi_n)$. Using Lemma \ref{lem:MPconv} we observe that $\{\mathcal{P}_n\}^{\infty}_{n=1}$ is generating. Next we need to show that $\mathcal{P}_n'$ is generating, too.
		
		We recall the definition of the set $L_{n}$ from equation \eqref{eq:Ln} and note that
		\begin{equation}
			\lambda(L_n) \geq 1-4\varepsilon_{n-1}.
		\end{equation}
		Then we consider the following sequence of sets: 
		\begin{align*}
			G_n\coloneqq L_n\cap \bigcap_{m=n+1}^{\infty} h_{n+1}^{(\mathfrak{s})}\circ\ldots\circ h_{m}^{(\mathfrak{s})}(L_m).
		\end{align*}
		Note that $\lambda(G_n)\nearrow 1$ by summability of $(\varepsilon_n)_{n\in \N}$ and Borel--Cantelli. By definition we have for every $y \in G_n$ and all $m>n$ that 
		\[(h_{n+1}^{(\mathfrak{s})}\circ\ldots\circ h_{m}^{(\mathfrak{s})})^{-1}(y) \in L_m \cap (h_{m}^{(\mathfrak{s})})^{-1}(L_{m-1}) \cap \ldots \cap (h_{m}^{(\mathfrak{s})})^{-1}\circ \dots \circ (h_{n+1}^{(\mathfrak{s})})^{-1}(L_n).
		\]
		Then we conclude for $c \in \xi_m$ and $x \in c \cap (h_{n+1}^{(\mathfrak{s})}\circ\ldots\circ h_{m}^{(\mathfrak{s})})^{-1}(G_n)$ that $h_{n+1}^{(\mathfrak{s})}\circ\ldots\circ h_{m}^{(\mathfrak{s})}(x)$ belongs to the same atom of $\xi_n$ as $h_{n+1}\circ\ldots\circ h_{m}(x)$ does.
		
		We pick $\delta>0$. There exists some $n_0$ such that $\lambda(G_m)>1-\frac{\d}{2}$ for all $m>n_0$. By the observation in the previous paragraph we get for any $m>n_0$ that 
		\[
		\lambda\big(\bigcup_{c\in\xi_m}  h_{n_0+1}^{(\mathfrak{s})}\circ\ldots\circ h_{m}^{(\mathfrak{s})}(c) \;\triangle\; h_{n_0+1}\circ\ldots\circ h_m(c)\big)<\delta/2.
		\]
		This implies
		\begin{align*}
			\lambda\big(\bigcup_{c\in\xi_m} (H_{n_0}^{(\mathfrak{s})}\circ h_{n_0+1}^{(\mathfrak{s})}\circ\ldots\circ h_{m}^{(\mathfrak{s})}(c) \;\triangle\; H_{n_0}^{(\mathfrak{s})}\circ h_{n_0+1}\circ\ldots\circ h_{m}(c)\big)<\delta/2,
		\end{align*}
	    that is,
	    \begin{align*}
			\lambda\big(\bigcup_{c\in\xi_m} (H_{m}^{(\mathfrak{s})} (c) \;\triangle\; H_{n_0}^{(\mathfrak{s})} (\Pi(c))\big)<\delta/2,
		\end{align*}
		where $\Pi\coloneqq h_{n_0+1}\circ\ldots\circ h_{m}$ is a permutation of $\xi_m$.		
		Now let $D\subset \T^2$ be any measurable set.  We put $D'=(H^{(\mathfrak{s})}_{n_0})^{-1}(D)$ and since $\{\xi_n\}^{\infty}_{n=1}$ is a generating sequence, there exists an $m>n_0$ and a collection $\mathcal{C}^{\prime}_m\subset \xi_m$ such that 
		\begin{align*}
			\lambda\big((\bigcup_{C\in\mathcal{C}^{\prime}_m} C)\triangle D'\big)<\d/2, \quad \text{ that is, } \quad \lambda\big((\bigcup_{C\in\mathcal{C}^{\prime}_m} H_{n_0}^{(\mathfrak{s})}(C))\triangle D\big)<\d/2.
		\end{align*}
		
		Combining the previous estimates, we obtain
		\begin{align*}
			& \lambda\big(\bigcup_{c\in\Pi^{-1}(\mathcal{C}^{\prime}_m)} H_{m}^{(\mathfrak{s})} (c) \;\triangle\; D\big) \\
			\leq & \; \lambda\big(\bigcup_{c\in\Pi^{-1}(\mathcal{C}^{\prime}_m)} H_{m}^{(\mathfrak{s})} (c) \;\triangle\; H_{n_0}^{(\mathfrak{s})} (\Pi(c))\big) + \lambda\big(\bigcup_{c\in\Pi^{-1}(\mathcal{C}^{\prime}_m)} H_{n_0}^{(\mathfrak{s})} (\Pi(c)) \;\triangle\; D\big)\\
			\leq & \; \lambda\big(\bigcup_{c\in\xi_m} H_{m}^{(\mathfrak{s})} (c) \;\triangle\; H_{n_0}^{(\mathfrak{s})} (\Pi(c))\big) + \lambda\big(\bigcup_{C\in\mathcal{C}^{\prime}_m} H_{n_0}^{(\mathfrak{s})} (C) \;\triangle\; D\big)
			\leq  \d/2 +\d/2 = \d.
		\end{align*}
		This shows that $\{\mathcal{P}_n'\}_{n=1}^{\infty}$ is a generating sequence of partitions.
		
		To verify the remaining assumption of Fact \ref{fact:isomorphism} we have to show that $$D_\lambda(K_{n+1}(\mathcal{P}_n),K_n(\mathcal{P}_n))<\varepsilon_n.$$ On the one hand, we compute
		\begin{align*}
			K_n(\mathcal{P}_n)= & \; K_n(\zeta_n)
			=  \; H_n^{(\mathfrak{s})}\circ H_n^{-1}(H_n(\xi_n))
			=  \; H_n^{(\mathfrak{s})}(\xi_n)
			=  \; H_{n}^{(\mathfrak{s})}\circ h_{n+1}\circ h_{n+1}^{-1}(\xi_n).
		\end{align*}
		On the other hand, 
		\begin{align*}
			K_{n+1}(\mathcal{P}_n)= & \; K_{n+1}(\zeta_n)
			= \; H_{n+1}^{(\mathfrak{s})}\circ H_{n+1}^{-1}(H_n(\xi_n))
			= \; H_{n}^{(\mathfrak{s})}\circ h_{n+1}^{(\mathfrak{s})}\circ h_{n+1}^{-1}(\xi_n).
		\end{align*}
		We put $\mathcal{Q}_n\coloneqq h_{n+1}^{-1}(\xi_n)$ and note that by construction $h_{n+1}^{(\mathfrak{s})}(\mathcal{Q}_n)$ approximates $h_{n+1}(\mathcal{Q}_n)$. This finishes the proof.
	\end{proof}

\begin{rem}\label{rem:FastEnough}
	For each choice of sequences $\{k_n\}_{m=1}^{n}$, $\{l_n\}_{m=1}^{n-1}$ and $\{s_n\}_{m=1}^{n+1}$ of natural numbers, we have finitely many permutations of $\xi_{k_nq_n , s_{n+1}}$ and hence finitely many choices of $h_{n+1}$. As seen in the proof of Proposition \ref{prop:SmoothReal}, for each such choice there exists a natural number $l_n \coloneqq l_n(h_{n+1}, \{k_m\}_{m=1}^{n},\{l_m\}_{m=1}^{n-1},\{s_m\}_{m=1}^{n+1}, \{\varepsilon_m\}_{m=1}^{n})$ such that for any $l\geq l_n$ we can choose $h_{n+1}^{(\mathfrak{s})}$ such that 
	$d_\infty(T_n^{(\mathfrak{s})},T_{n+1}^{(\mathfrak{s})})<\varepsilon_n/4$.
	To get an uniform estimate, we set 
	\begin{align*}
		l_n^*&=l_n^*(\{k_m\}_{m=1}^{n},\{l_m\}_{m=1}^{n-1},\{s_m\}_{m=1}^{n+1}, \{\varepsilon_m\}_{m=1}^{n})\\
		&\coloneqq \max_{h_{n+1}}l_n(h_{n+1}, \{k_m\}_{m=1}^{n},\{l_m\}_{m=1}^{n-1},\{s_m\}_{m=1}^{n+1}, \{\varepsilon_m\}_{m=1}^{n}).
	\end{align*}
\end{rem}

	With information on how to associate a construction sequence with an AbC transformation, we can now state the main theorem of this subsection (compare with \cite[Theorem 60]{FW1}).
	
	\begin{thm}\label{theo:TwistRealSmooth}
		Consider three sequences of natural numbers $\left(k_n\right)_{n\in\N},\left(l_n\right)_{n\in\N},\left(s_n\right)_{n\in\N}$ tending to infinity. Assume that 
		\begin{itemize}
			\item[(1)] $l_n$ grows sufficiently fast (see the previous Remark \ref{rem:FastEnough});
			\item[(2)] $k_n$ is of the form $k_n = 2^{n+2}q_nC_n$ for some $C_n \in \Z^+$;
			\item[(3)] $s_n$ divides  both $k_n$ and $s_{n+1}$.
		\end{itemize} 
		Let $\{\mathcal{W}_n\}_{n\in\N}$ be a twisting construction sequence over $\Sigma\cup\{b,e\}$ such that
		\begin{itemize}
			\item[(4)] $\mathcal{W}_0=\Sigma$, $|\mathcal{W}_{n+1}|=s_{n+1}$ for any $n\in \mathbb{N}$.
			\item[(5)] For each $w'\in\mathcal{W}_{n+1}$ and $w\in\mathcal{W}_n$, if $w'=\mathcal{C}^{\text{twist}}_{n}(w_0,\ldots,w_{k_n-1})$, then there are $k_n/s_n$ many $j$ with $w = w_j$. 
		\end{itemize}
		Furthermore, let $\mathbb{K}$ be the associated subshift and $\nu$ its unique non-atomic ergodic measure. Then there is $T^{(\mathfrak{s})}\in\text{Diff}^{\infty}_{\lambda}(M)$ such that the system $(M,\mathcal{B},\lambda,T^{(\mathfrak{s})})$ is isomorphic to $(\mathbb{K},\mathcal{B},\nu,sh)$.
	\end{thm}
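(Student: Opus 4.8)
The plan is to reverse-engineer an abstract AbC transformation from the prescribed combinatorial data and then invoke the three realization results of this section. First I would reconstruct conjugation maps $h_{n+1}=h_{n+1,2}\circ h_{n+1,1}$ for which the construction sequence associated to the resulting abstract AbC limit $T$ (in the sense of Section \ref{subsec:symbolic}) is exactly the given $\{\mathcal{W}_n\}_{n\in\N}$. Then Proposition \ref{prop:CodeAbC} identifies $(M,\mathcal{B},\lambda,T)$ with $(\mathbb{K},\mathcal{B},\nu,sh)$, and Proposition \ref{prop:SmoothReal} --- applicable since $l_n$ grows sufficiently fast --- produces $T^{(\mathfrak{s})}\in\text{Diff}^{\,\infty}_{\,\lambda}(M)$ measure-theoretically isomorphic to $T$, hence to $(\mathbb{K},\mathcal{B},\nu,sh)$.

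To build the conjugation maps I would set $H_0=\operatorname{id}$, fix $\alpha_0\in\Q$, and define $\alpha_n=p_n/q_n$ through the recursion of Section \ref{subsec:abstract} with $q_{n+1}=k_nl_nq_n^2$; hypothesis (2) puts $k_n$ into the form \eqref{eq:k}, and hypothesis (3) secures requirement \ref{item:R1}. The twist map $h_{n+1,1}$ of Subsection \ref{subsubsec:h1} depends only on $q_n,k_n,C_n$, so it is already determined. For $h_{n+1,2}$, I would enumerate $\mathcal{W}_n=\{u_0,\dots,u_{s_n-1}\}$ (legitimate by (4), with base case $\mathcal{W}_0=\Sigma$) and $\mathcal{W}_{n+1}=\{w'_0,\dots,w'_{s_{n+1}-1}\}$, and for each $w'_s$ fix one preword $(w^{(s)}_0,\dots,w^{(s)}_{k_n-1})\in(\mathcal{W}_n)^{k_n}$ with $\mathcal{C}^{\text{twist}}_n(w^{(s)}_0,\dots,w^{(s)}_{k_n-1})=w'_s$; since $w'_s$ determines its preword (the powers $w_j^{l_n-1}$ being its $0$-subsections), distinct words $w'_s$ yield distinct prewords. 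Then I would declare $h_{n+1,2}\bigl(\Delta^{j,s}_{k_nq_n,s_{n+1}}\bigr)\subseteq\Delta^{0,\iota(j,s)}_{q_n,s_n}$, where $\iota(j,s)$ is the index with $w^{(s)}_j=u_{\iota(j,s)}$, and extend by the commutation relation \eqref{eq:comm2}. This is a measure-preserving partition permutation: hypothesis (5) says precisely that for each $s$ and each target level $t$ exactly $k_n/s_n$ of the indices $j$ satisfy $w^{(s)}_j=u_t$, which is requirement \ref{item:R2} and lets one distribute the $k_n$ rectangles of the $s$-th base bijectively over the $s_n$ sub-boxes; distinctness of the prewords is requirement \ref{item:R3}.

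Next I would check that this AbC construction has construction sequence $\{\mathcal{W}_n\}_{n\in\N}$: by construction the sequence of $n$-words associated with the $s$-th tower of $\tau_{n+1}$ in the sense of \eqref{eq:StepWords} is $(w^{(s)}_0,\dots,w^{(s)}_{k_n-1})$, so the $(\tau_{n+1},\mathcal{Q})$-name of that tower equals $\mathcal{C}^{\text{twist}}_n(w^{(s)}_0,\dots,w^{(s)}_{k_n-1})=w'_s$, and induction gives the claim. With requirements \ref{item:R1}, \ref{item:R2}, \ref{item:R3} in force, Proposition \ref{prop:CodeAbC} shows that $T$ is isomorphic to $(\mathbb{K},\mathcal{B},\nu,sh)$. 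Finally, interpreting ``$l_n$ grows sufficiently fast'' as $l_n\geq l_n^{*}$ for the threshold of Remark \ref{rem:FastEnough} --- which depends only on $(k_m)_{m\le n}$, $(l_m)_{m<n}$, $(s_m)_{m\le n+1}$, $(\varepsilon_m)_{m\le n}$ --- Proposition \ref{prop:SmoothReal} yields the desired $T^{(\mathfrak{s})}$.

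The main obstacle I anticipate is not any single computation but the bookkeeping around the parameter dependencies: the growth threshold for $l_n$ must not depend on $l_n$ itself, which is exactly why Remark \ref{rem:FastEnough} passes to a maximum over the finitely many admissible permutations $h_{n+1}$, turning ``sufficiently fast'' into an inductively checkable condition on the three given sequences. The one genuinely substantive point is verifying that the reconstructed $h_{n+1,2}$ is a well-defined measure-preserving $1/q_n$-equivariant partition permutation, and this is precisely where the strong-uniformity hypothesis (5) enters; everything else amounts to matching the coding procedure of Section \ref{subsec:symbolic} with the prescribed twisting construction sequence and quoting Propositions \ref{prop:hReal}, \ref{prop:CodeAbC}, and \ref{prop:SmoothReal}.
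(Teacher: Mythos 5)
Your proposal is correct and follows essentially the same route as the paper: the paper likewise reads off the combinatorics of the untwisted permutation $h_{n+1,2}$ from the prewords $P_{n+1}$ (so that hypothesis (5) gives requirement \ref{item:R2} and distinctness of prewords gives \ref{item:R3}), takes $h_{n+1,1}$ as in Subsection \ref{subsubsec:h1}, notes that hypothesis (3) gives \ref{item:R1}, and then quotes Propositions \ref{prop:SmoothReal} and \ref{prop:CodeAbC}. Your additional remarks on recovering the prewords from the $0$-subsections and on the non-circularity of the threshold for $l_n$ are consistent with the paper's setup.
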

	
	  \begin{proof}
	  	Let $P_{n+1}=\{\tilde{w}_0,\ldots,\tilde{w}_{s_{n+1}-1}\}\subset \mathcal{W}^{k_n}_n$ be the prewords of the twisting construction sequence (see Definition \ref{def:twistedConstSeq}). By assumption (5) for each $0\leq i< s_n$ the word $w_i$ occurs $k_n/s_n$ times in each $\tilde{w}_j$. These prewords describe the combinatorics of an untwisted permutation $h_{n+1,2}$ of $\xi_{k_nq_n,s_{n+1}} \cap \Delta^{0,0}_{q_n,1}$ as follows: if $w_{j_t}$ is the $t$-th $n$-word of $\tilde{w}_s$, then 
	  	\begin{equation*}
	  		h_{n+1,2}\left(\Delta^{t,s}_{k_nq_n,s_{n+1}} \right) \subset \Delta^{0,j_t}_{q_n,s_n}.
	  	\end{equation*}
  	  Afterwards, we extend this map to an invertible measure-preserving transformation $h_{n+1,2}$ commuting with $R_{1/q_n}$. This map is of the form as described in Section \ref{subsubsec:h2}. It satisfies requirement \ref{item:R2} by assumption (5) and requirement \ref{item:R3} since the pre-words in $P_{n+1}$ are distinct.
  	  
  	  Additionally, we take the conjugation map $h_{n+1,1}$ as in Section \ref{subsubsec:h1} and set $h_{n+1} = h_{n+1,2} \circ h_{n+1,1}$. The associated AbC construction $(T_n)_{n\in \N}$ also satisfies requirement \ref{item:R1} by assumption (3). Hence, Proposition \ref{prop:SmoothReal} guarantees that there is $T^{(\mathfrak{s})}\in\text{Diff}^{\infty}_{\lambda}(M)$ measure-theoretically isomorphic to the abstract AbC map $T$ which is isomorphic to the symbolic system $\mathbb{K}$ by Proposition \ref{prop:CodeAbC}.
	  \end{proof}
	
	We note that the sequence $\{P_n\}_{n\in\N}$ of prewords determines the conjugation maps $h_n$ in the AbC method which in turn determine $h^{(\mathfrak{s})}_n$ and a neighborhood in the smooth topology which the resulting AbC diffeomorphism belongs to. Therefore, different choices of $P_n$ give distant maps $h_n$ and, hence, distant diffeomorphisms $h_n^{(\mathfrak{s})}$ in the smooth topology. 
	
		\begin{lem}\label{lem:closeSmooth}
		Let $(\varepsilon_n)_{n\in \N}$ be a summable sequence of positive reals satisfying \eqref{eq:EpsDecrease}. Suppose $\{\mathcal{U}_n\}_{n\in\N}$ and $\{\mathcal{W}_n\}_{n\in\N}$ are two construction sequences for twisting systems and $N\in \Z^+$ such that $\mathcal{U}_n=\mathcal{W}_n$ for all $n\leq N$. If $S^{(\mathfrak{s})}$ and $T^{(\mathfrak{s})}$ are the smooth realizations of the twisting systems using the AbC method given in this paper, then
		\begin{equation} \label{eq:closeSmooth}
			d_{\infty}(S^{(\mathfrak{s})},T^{(\mathfrak{s})})<\varepsilon_N.
		\end{equation}  
	\end{lem}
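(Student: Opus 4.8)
The plan is to exploit the inductive, data-driven nature of the smooth AbC realization of Theorem \ref{theo:TwistRealSmooth}: agreement of the two twisting construction sequences through level $N$ should force all the construction data through level $N$ to agree, so that the two realizations share a common $N$-th periodic approximant $U_N$, and each limit diffeomorphism lies within $\tfrac14\sum_{k\ge N}\varepsilon_k$ of $U_N$ in $d_\infty$.

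First I would check that the realization data agree through level $N$. Since the twisting operator $\mathcal{C}^{\text{twist}}_n$ repeats each $n$-word $w_{iC_n+c}$ exactly $l_n-1\ge 1$ times inside an $(n+1)$-word and $\mathcal{W}_n$ is uniquely readable, one can read off from $\mathcal{W}_{n+1}$ (together with $\mathcal{W}_n$) the parameters $k_n,l_n,C_n$, the cardinality $s_{n+1}=|\mathcal{W}_{n+1}|$, the preword set $P_{n+1}\subseteq(\mathcal{W}_n)^{k_n}$, and the combinatorial pattern of $n$-words inside each $(n+1)$-word. Hence $\mathcal{U}_n=\mathcal{W}_n$ for $n\le N$ implies that both invocations of Theorem \ref{theo:TwistRealSmooth} use the same parameters $k_n,l_n,s_n$ (and the same fast choice $l_n=l_n^{\ast}$ of Remark \ref{rem:FastEnough}, since $l_n^{\ast}$ depends only on these parameters and the $\varepsilon_m$) and the same prewords $P_{n+1}$ for all $n\le N-1$, hence the same untwisted maps $h_{n+1,2}$; since $h_{n+1,1}$ is determined by the parameters alone (Subsection \ref{subsubsec:h1}), the conjugation maps $h_{n+1}=h_{n+1,2}\circ h_{n+1,1}$ coincide for $n\le N-1$. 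The smooth surrogates $h_n^{(\mathfrak{s})}$ furnished by Proposition \ref{prop:hReal} are built from $h_n$ and the approximation parameter $\varepsilon_{n-1}$ by the explicit recipe of Lemmas \ref{lem:h1real} and \ref{lem:h2real}, so they too may be taken identical for $n\le N$. Consequently $H_N^{(\mathfrak{s})}=h_1^{(\mathfrak{s})}\circ\cdots\circ h_N^{(\mathfrak{s})}$ and $\alpha_N=p_N/q_N$ agree for both systems, so the $N$-th approximants agree; write $U_N$ for this common diffeomorphism $H_N^{(\mathfrak{s})}\circ R_{\alpha_N}\circ (H_N^{(\mathfrak{s})})^{-1}$, i.e.\ $S_N^{(\mathfrak{s})}=T_N^{(\mathfrak{s})}=U_N$.

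Next I would invoke the distance estimate already proved in Proposition \ref{prop:SmoothReal}, namely $d_\infty(T_{n+1}^{(\mathfrak{s})},T_n^{(\mathfrak{s})})<\varepsilon_n/4$ for every $n$, which gives $d_\infty(T_{n+m}^{(\mathfrak{s})},T_n^{(\mathfrak{s})})<\sum_{k=n}^{n+m-1}\varepsilon_k/4$ and, letting $m\to\infty$, $d_\infty(T^{(\mathfrak{s})},T_n^{(\mathfrak{s})})\le\sum_{k\ge n}\varepsilon_k/4$; the same holds with $S$ in place of $T$. Applying this with $n=N$ and using $S_N^{(\mathfrak{s})}=T_N^{(\mathfrak{s})}=U_N$, the triangle inequality yields
\[
d_\infty(S^{(\mathfrak{s})},T^{(\mathfrak{s})})\le d_\infty(S^{(\mathfrak{s})},U_N)+d_\infty(U_N,T^{(\mathfrak{s})})\le \tfrac12\sum_{k\ge N}\varepsilon_k .
\]
Finally, \eqref{eq:EpsDecrease} gives $\sum_{k>N}\varepsilon_k<\varepsilon_N/4$, so $\sum_{k\ge N}\varepsilon_k<\tfrac54\varepsilon_N$ and $d_\infty(S^{(\mathfrak{s})},T^{(\mathfrak{s})})<\tfrac58\varepsilon_N<\varepsilon_N$, which is \eqref{eq:closeSmooth}.

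The genuinely delicate part is the first step: one must verify that equality of the word collections through level $N$ really propagates to equality of the underlying prewords, of all parameters, and thence of the conjugation maps and their smooth surrogates — in other words, that the realization recipe is a well-defined function of the construction sequence truncated at level $N$. I expect this bookkeeping, together with the care needed because $l_n$ is chosen last and via the uniformizing quantity $l_n^{\ast}$, to be the main obstacle; once it is settled, the remainder is a routine summable-tail estimate.
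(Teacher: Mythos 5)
Your proposal is correct and follows essentially the same route as the paper's proof: agreement of the construction sequences through level $N$ forces the parameters, prewords, conjugation maps, and hence the $N$-th smooth approximants to coincide, after which \eqref{eq:ClosenesDiffeos} and the summability condition \eqref{eq:EpsDecrease} give the bound via the triangle inequality. Your first step is somewhat more detailed than the paper's (which simply asserts the agreement of the associated parameter sequences), but the argument is the same.
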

	
	\begin{proof}
		Associated with the two construction sequences $\{\mathcal{U}_n\}_{n\in\N}$ and $\{\mathcal{W}_n\}_{n\in\N}$ there are sequences $\{k_n^\mathcal{U},l_n^\mathcal{U},h_n^\mathcal{U},s_n^\mathcal{U}\}_{n\in\N}$ and $\{k_n^\mathcal{W},l_n^\mathcal{W},h_n^\mathcal{W},s_n^\mathcal{W}\}_{n\in\N}$ determining the approximations $\{S^{(\mathfrak{s})}_n\}_{n\in\N}$ and $\{T^{(\mathfrak{s})}_n\}_{n\in\N}$ to the AbC diffeomorphisms $S^{(\mathfrak{s})}$ and $T^{(\mathfrak{s})}$. Since $\mathcal{U}_n=\mathcal{W}_n$ for all  $n\leq N$, these sequences have the property 
		$k_n^\mathcal{U}=k_n^\mathcal{W},l_n^\mathcal{U}=l_n^\mathcal{W}$ for all $n\leq N-1$ as well as $h_n^\mathcal{U}=h_n^\mathcal{W},s_n^\mathcal{U}=s_n^\mathcal{W}$ for all $n\leq N$. Thus, $S^{(\mathfrak{s})}_N=T^{(\mathfrak{s})}_N$. It follows from equations \eqref{eq:EpsDecrease} and \eqref{eq:ClosenesDiffeos} that
		\begin{align*}
			d_\infty(S^{(\mathfrak{s})}_N,S^{(\mathfrak{s})})\leq \sum^{\infty}_{n=N} d_{\infty}(S^{(\mathfrak{s})}_n,S^{(\mathfrak{s})}_{n+1}) < \sum^{\infty}_{n=N} \frac{\varepsilon_n}{4} <\frac{\varepsilon_N}{2}
		\end{align*}
		and, similarly, $d_\infty(T^{(\mathfrak{s})}_N,T^{(\mathfrak{s})}) <\varepsilon_N/2$. We conclude \eqref{eq:closeSmooth} by combining these two estimates together with the triangle inequality.
	\end{proof}

	\subsection{\label{subsec:analytic}Real-analytic realization}
	We now upgrade the realization results to the real-analytic category. Here, we have to restrict to $M=\mathbb{T}^2$.
	
	\subsubsection{Approximating partition permutations by real-analytic diffeomorphisms}
	Following \cite[section 4.2]{BK2} we find area-preserving real-analytic diffeomorphisms $h^{(\mathfrak{a})}_{n+1}$ that closely approximate the partition permutations $h_{n+1}$ from our abstract weakly mixing constructions in Section \ref{subsec:constr}. 
	\begin{prop}
		\label{prop:hRealAnalytic}
		Let $h_{n+1}=h_{n+1,2}\circ h_{n+1,1}$ be a partition permutation as defined in Section \ref{subsec:constr}. Then for any $\varepsilon>0$ there is a diffeomorphism $h^{(\mathfrak{a})}_{n+1}\in \text{Diff}_{\infty}^{\,\omega}(\mathbb{T}^2,\lambda)$ such that
		\begin{itemize}
			\item $h^{(\mathfrak{a})}_{n+1} \circ R_{1/q_n}=R_{1/q_n} \circ h^{(\mathfrak{a})}_{n+1}$,
			\item there is a set $L\subset \mathbb{T}^2$ with $\lambda(L)>1-\varepsilon$ satisfying that for all $0\leq i<k_nq_n$ and $0\leq j<s_{n+1}$ we have
			\[
			h^{(\mathfrak{a})}_{n+1}(x) \in h_{n+1}(\Delta^{i,j}_{k_nq_n,s_{n+1}}) \ \text{ for all } x\in L \cap \Delta^{i,j}_{k_nq_n,s_{n+1}}.
			\]
		\end{itemize}
	\end{prop}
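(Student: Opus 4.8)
The plan is to follow the strategy of \cite[section 4.2]{BK2}: realize the partition permutation $h_{n+1}=h_{n+1,2}\circ h_{n+1,1}$ as a finite composition of \emph{block-slide type maps} and then replace each factor by a real-analytic area-preserving diffeomorphism of $\mathbb{T}^{2}$ that reproduces its combinatorics on a large set. Recall that a block-slide type map is a map of the form $(x,y)\mapsto(x+p(y),y)$ or $(x,y)\mapsto(x,y+p(x))$, where $p$ is a step function constant on the atoms of some $\mathcal{I}_{r}$; such a map is area-preserving, defined off a null set, and commutes with $R_{1/q_{n}}$ as soon as $p$ is $1/q_{n}$-periodic.

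First I would produce the combinatorial decomposition $h_{n+1}=g_{N}\circ\dots\circ g_{1}$ into block-slide type maps, each commuting with $R_{1/q_{n}}$. That every partition permutation of a grid $\xi_{k_{n}q_{n},s_{n+1}}$ commuting with $R_{1/q_{n}}$ admits such a decomposition is the standard mechanism behind real-analytic AbC constructions; concretely, the level permutation carried out by $h_{n+1,2}$ (Subsection \ref{subsubsec:h2}) is realized by vertical block-slides on the columns of $\xi_{k_{n}q_{n},1}$ preceded and followed by horizontal block-slides routing the sub-columns, while the twist map $h_{n+1,1}$ is handled as in Lemma \ref{lem:h1real}, with the pseudo-rotation $\phi_{q_{n},\delta}$ replaced by a composition of elementary shears carrying $1/q_{n}$-periodic step-function slide amounts (a quarter turn being a product of three elementary shears) and with $\psi_{\mathbf{a}_{n},q_{n},\varepsilon}$ kept as a genuine block-slide map carrying the slide amounts $a_{n}(\cdot)/q_{n}$ from \eqref{eq:an}.

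Second, for a fixed block-slide type map $g$ and any $\eta>0$, the construction of \cite{Ba17} (see also \cite[section 4.2]{BK2}) produces an area-preserving real-analytic diffeomorphism $g^{(\mathfrak{a})}\in\text{Diff}_{\infty}^{\,\omega}(\mathbb{T}^{2},\lambda)$ commuting with $R_{1/q_{n}}$, together with a set $L_{g}$ of measure $\lambda(L_{g})>1-\eta$ on which $g^{(\mathfrak{a})}$ agrees with $g$; the underlying idea is that a step function on $\mathbb{S}^{1}$ whose jumps sum to an integer is uniformly approximable away from its jumps by (the lift of) a real-analytic function, and the resulting map on $\mathbb{T}^{2}$ is controlled in $\|\cdot\|_{\rho}$ by the conjugation-by-affine-maps estimates of \cite{Ba17}. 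Choosing $\eta$ small I would then set $h_{n+1}^{(\mathfrak{a})}:=g_{N}^{(\mathfrak{a})}\circ\dots\circ g_{1}^{(\mathfrak{a})}$, which lies in $\text{Diff}_{\infty}^{\,\omega}(\mathbb{T}^{2},\lambda)$ and commutes with $R_{1/q_{n}}$, and take $L:=\bigcap_{i=1}^{N}\bigl(g_{i-1}^{(\mathfrak{a})}\circ\dots\circ g_{1}^{(\mathfrak{a})}\bigr)^{-1}(L_{g_{i}})$; on $L$ one has $h_{n+1}^{(\mathfrak{a})}=h_{n+1}$, so in particular $h_{n+1}^{(\mathfrak{a})}(x)\in h_{n+1}(\Delta^{i,j}_{k_{n}q_{n},s_{n+1}})$ whenever $x\in L\cap\Delta^{i,j}_{k_{n}q_{n},s_{n+1}}$, which is the assertion.

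The main obstacle is the last step: $g^{(\mathfrak{a})}$ is uncontrolled on the small set near the jumps of the underlying step function, so one must ensure that these ``bad'' sets, pulled back through the earlier factors, do not accumulate. This is exactly where the block-slide structure is used: each $g_{i}$ acts on the relevant grid cells by rigid translations, so the measure of each pulled-back bad set remains of order $\eta$, and a union bound gives $\lambda(L)>1-N\eta>1-\varepsilon$ once $\eta$ is chosen small relative to $\varepsilon$ and $N$. A secondary point requiring care is that every factor, and hence its real-analytic approximant, must commute with $R_{1/q_{n}}$; this is arranged by performing the block-slide decomposition with $1/q_{n}$-periodic step functions, just as the extension-by-$R_{1/q}$ step in Lemma \ref{lem:FWreal} is carried out $R_{1/q}$-equivariantly.
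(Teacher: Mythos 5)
Your strategy is exactly the paper's: the proof there consists of two citations to \cite{Ba-Ku}, namely that any partition permutation of $\xi_{k_nq_n,s_{n+1}}$ commuting with $R_{1/q_n}$ is a block-slide type of map (Theorem E of \cite{Ba-Ku}, restated as Lemma \ref{permutation = block-slide}), and that any block-slide type map commuting with $R_{1/q}$ is $(\varepsilon,\delta)$-approximable by a real-analytic measure-preserving diffeomorphism still commuting with $R_{1/q}$ (Proposition 2.22 of \cite{Ba-Ku}, restated as Lemma \ref{proposition approximation}). Your reconstruction of both ingredients, the composition of the approximants, and the union bound over the pulled-back bad sets all match the intended argument.

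One claim in your write-up is, however, literally false and needs to be weakened: you assert that the real-analytic approximant $g^{(\mathfrak{a})}$ \emph{agrees} with the block-slide map $g$ on a set $L_g$ of measure $>1-\eta$. A $\mathbb{Z}^2$-periodic real-analytic function that coincides with a (non-constant) step function on a set of positive measure would have to be constant, so exact agreement is impossible whenever the slide amounts are non-trivial. What Lemma \ref{proposition approximation} actually provides is \emph{proximity}: $\sup_{x\in\mathbb{T}^2\setminus E}\|g^{(\mathfrak{a})}(x)-g(x)\|<\varepsilon'$ with $\lambda(E)<\delta$. This is still enough for the conclusion of the proposition, because the assertion is only that $h^{(\mathfrak{a})}_{n+1}(x)$ lands in the cell $h_{n+1}(\Delta^{i,j}_{k_nq_n,s_{n+1}})$: take $\varepsilon'$ small compared with $(k_nq_ns_{n+1})^{-1}$ and delete from $L$, in addition to the pulled-back exceptional sets, the thin collar of points whose $h_{n+1}$-image lies within $\varepsilon'$ of the boundary of its image cell; this collar has measure $O(\varepsilon' k_nq_ns_{n+1})$, which can be made smaller than the remaining budget in $\varepsilon$. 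With that correction (and the same correction propagated through the composition $g^{(\mathfrak{a})}_N\circ\dots\circ g^{(\mathfrak{a})}_1$, where at each stage proximity rather than equality must be tracked), your argument is sound.
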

	
	These approximations are the counterparts of the smooth realization results in Proposition \ref{prop:hReal} and of the real-analytic realization results for circular systems from \cite[section 4]{BK2}. As in \cite{BK2}, we use the concept of \emph{block-slide type of maps} introduced in \cite{Ba17} and their sufficiently precise approximation by area-preserving real-analytic diffeomorphisms. 	
	We recall that a \emph{step function} on the unit interval is a finite linear combination of indicator functions on intervals. We define the following two types of piecewise continuous maps on $\T^2$,
	\begin{align}
		& \mathfrak{h}_1:\T^2\to\T^2\qquad\text{defined by}\qquad\mathfrak{h}_1(x_1,x_2):=(x_1,\; x_2 + s_1(x_1)\mod 1),\\
		& \mathfrak{h}_2:\T^2\to\T^2\qquad\text{defined by}\qquad\mathfrak{h}_2(x_1,x_2):=(x_1 + s_2(x_2)\mod 1,\; x_2),
	\end{align}
	where $s_1$ and $s_2$ are step functions on the unit interval. Descriptively, the first map $\mathfrak{h}_1$ decomposes $\T^2$ into smaller rectangles using vertical lines and slides those rectangles vertically according to $s_1$, while the second map $\mathfrak{h}_2$ decomposes $\T^2$ into smaller rectangles using horizontal lines and slides those rectangles horizontally according to $s_2$. Any finite composition of maps of the above kind is called a \textit{block-slide} type of map on $\T^2$. 	
	By the following lemma, block-slide type of maps on $\T^2$ can be approximated well by real-analytic diffeomorphisms that can be extended to entire maps. This can be achieved because step functions can be approximated extremely well by real-analytic functions  (see e.g. \cite[Lemma 2.13]{Ba-Ku}). 
	
	\begin{lem}[\cite{Ba-Ku}, Proposition 2.22] \label{proposition approximation}
		Let $\mathfrak{h}:\T^2\to\T^2$ be a block-slide type of map which commutes with $R_{1/q}$ for some natural number $q$. Then for any $\varepsilon>0$ and $\delta>0$ there exists an area-preserving diffeomorphim $h^{(\mathfrak{a})}\in\text{Diff }^{\omega}_\infty(\T^2,\lambda)$ such that the following conditions are satisfied:
		\begin{enumerate}
			\item Proximity property: There exists a set $E\subset\T^2$ such that $\lambda(E)<\delta$ and $$\sup_{x\in\T^2\setminus E}\|h^{(\mathfrak{a})}(x)-\mathfrak{h}(x)\|<\varepsilon.$$ 
			\item Commutative property: $h^{(\mathfrak{a})}\circ R_{1/q}=R_{1/q}\circ h^{(\mathfrak{a})}$.
		\end{enumerate} 
	\end{lem}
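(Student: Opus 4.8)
The plan is to realize $\mathfrak{h}$ as a finite composition of elementary block-slide factors each of which commutes with $R_{1/q}$, to replace the step functions occurring in them by extremely close real-analytic (trigonometric-polynomial) approximants, and then to compose the resulting real-analytic area-preserving diffeomorphisms while carefully tracking the exceptional set. First I would reduce to $R_{1/q}$-equivariant elementary factors. Writing $\mathfrak{h}=\mathfrak{f}_N\circ\cdots\circ\mathfrak{f}_1$ with each $\mathfrak{f}_i$ of type $\mathfrak{h}_1$ or $\mathfrak{h}_2$, one descends $\mathfrak{h}$ through the $q$-fold covering $\pi\colon\T^2\to\T^2$, $\pi(x_1,x_2)=(qx_1\bmod 1,x_2)$, whose deck group is $\langle R_{1/q}\rangle$: since $\mathfrak{h}$ commutes with $R_{1/q}$ it descends to a block-slide type map $\bar{\mathfrak{h}}$ of the base (this step uses the structure theory of block-slide maps from \cite{Ba17,Ba-Ku}), and $\bar{\mathfrak{h}}$ decomposes into $\mathfrak{h}_1$- and $\mathfrak{h}_2$-factors on the base. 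Lifting these back, an $\mathfrak{h}_1$-factor lifts to an $\mathfrak{h}_1$-factor on $\T^2$ whose step function is $1/q$-periodic, an $\mathfrak{h}_2$-factor lifts (after dividing its step function by $q$) to an $\mathfrak{h}_2$-factor, and every $\mathfrak{h}_2$-factor commutes with $R_{1/q}$ automatically. Since these chosen lifts compose to some lift $R_{m/q}\circ\mathfrak{h}$ of $\mathfrak{h}$, and $R_{-m/q}$ is itself a real-analytic diffeomorphism commuting with $R_{1/q}$, I absorb it as one more factor. This yields $\mathfrak{h}=\mathfrak{g}_M\circ\cdots\circ\mathfrak{g}_1$ where each $\mathfrak{g}_i$ is a rotation, an $\mathfrak{h}_2$-type map, or an $\mathfrak{h}_1$-type map with a $1/q$-periodic step function; in particular each $\mathfrak{g}_i$ commutes with $R_{1/q}$ and is a piecewise isometry whose discontinuity set $D_i$ is a finite union of horizontal and vertical lines.

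Next I would approximate a single factor. Rotations need no approximation. For $\mathfrak{g}(x_1,x_2)=(x_1,x_2+s(x_1))$ with $s$ a $1/q$-periodic step function, \cite[Lemma 2.13]{Ba-Ku} supplies a $1/q$-periodic trigonometric polynomial $\tilde s$ with $|\tilde s(x_1)-s(x_1)|<\eta$ for all $x_1$ outside a set of measure less than $\theta$. Then $\tilde{\mathfrak{g}}(x_1,x_2):=(x_1,x_2+\tilde s(x_1))$ belongs to $\text{Diff}^{\,\omega}_{\infty}(\T^{2},\lambda)$: it is real-analytic and extends to an entire map bounded on every strip $\Omega_\rho$, it preserves $\lambda$ since its Jacobian determinant is identically $1$, it is homotopic to the identity, and it commutes with $R_{1/q}$ because $\tilde s$ is $1/q$-periodic; moreover $\|\tilde{\mathfrak{g}}(x)-\mathfrak{g}(x)\|<\eta$ outside a set of measure less than $\theta$. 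The case of an $\mathfrak{h}_2$-factor is identical with the two coordinates interchanged, and no periodicity of the step function is needed there.

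Finally I would compose the approximants. Choosing approximants $\tilde{\mathfrak{g}}_i$ of $\mathfrak{g}_i$ with parameters $(\eta_i,\theta_i)$ still to be fixed, set $h^{(\mathfrak{a})}:=\tilde{\mathfrak{g}}_M\circ\cdots\circ\tilde{\mathfrak{g}}_1$; this is a real-analytic area-preserving diffeomorphism in $\text{Diff}^{\,\omega}_{\infty}(\T^{2},\lambda)$ that commutes with $R_{1/q}$. For the proximity property I would argue by induction on $i$, with $\Phi_i:=\mathfrak{g}_i\circ\cdots\circ\mathfrak{g}_1$ and $\tilde\Phi_i:=\tilde{\mathfrak{g}}_i\circ\cdots\circ\tilde{\mathfrak{g}}_1$: assuming that outside a set $F_{i-1}$ of small measure one has $\|\tilde\Phi_{i-1}(x)-\Phi_{i-1}(x)\|<\eta_1+\cdots+\eta_{i-1}$ and that $\Phi_{i-1}(x)$ lies outside the $(\eta_i'+\eta_1+\cdots+\eta_{i-1})$-neighborhood of $D_i$, then $\Phi_{i-1}(x)$ and $\tilde\Phi_{i-1}(x)$ lie in the same component of $\T^2\setminus D_i$, on which $\mathfrak{g}_i$ is a genuine translation, so $\|\tilde\Phi_i(x)-\Phi_i(x)\|\le\|\tilde{\mathfrak{g}}_i(\tilde\Phi_{i-1}(x))-\mathfrak{g}_i(\tilde\Phi_{i-1}(x))\|+\|\mathfrak{g}_i(\tilde\Phi_{i-1}(x))-\mathfrak{g}_i(\Phi_{i-1}(x))\|<\eta_i+(\eta_1+\cdots+\eta_{i-1})$. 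The exceptional set grows only by $F_{i-1}$, by the $\Phi_{i-1}$-preimage of the exceptional set of $\tilde{\mathfrak{g}}_i$, and by the $\Phi_{i-1}$-preimage of a thin neighborhood of $D_i$; since $\Phi_{i-1}$ preserves $\lambda$ and $D_i$ is a finite union of lines, each of these has measure controlled by $\theta_i,\eta_i,\eta_i'$. Picking $\eta_1,\dots,\eta_M$ with $\sum_i\eta_i<\varepsilon$, then the $\eta_i'$ small relative to the mutual distances of the lines in the $D_i$, and finally the $\theta_i$ small, the total exceptional set $E:=F_M$ has measure less than $\delta$ and $\|h^{(\mathfrak{a})}(x)-\mathfrak{h}(x)\|<\varepsilon$ for all $x\notin E$, which is exactly the claimed $(\varepsilon,\delta)$-closeness.

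The hard part will be this last step: because the elementary factors are discontinuous, the errors of the individual real-analytic approximants do not simply add near the cut lines, so one must systematically enlarge the exceptional set by the $\Phi_{i-1}$-preimages of thin neighborhoods of the discontinuity lines of the next factor — using measure-preservation to keep these small — before one may invoke the local smoothness (translation structure) of the factor. The other point needing care is the reduction in the first paragraph, namely that a block-slide type map commuting with $R_{1/q}$ admits a decomposition into $R_{1/q}$-equivariant elementary factors; here I would rely on the structural results for block-slide maps established in \cite{Ba17} and \cite{Ba-Ku}.
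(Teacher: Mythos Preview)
The paper does not give its own proof of this lemma: it is quoted verbatim as \cite[Proposition~2.22]{Ba-Ku}, with only the one-line remark that ``step function[s] can be approximated extremely well by real analytic functions (see e.g.\ \cite[Lemma~2.13]{Ba-Ku}).'' Your sketch is entirely consistent with that hint---approximate each step function in the block-slide factorization by a trigonometric polynomial, obtain area-preserving real-analytic shears, and compose while tracking the exceptional set---and this is indeed the mechanism behind the cited result.

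One comment on your reduction step. The statement only assumes that the \emph{composite} $\mathfrak{h}$ commutes with $R_{1/q}$, not that each elementary factor does, and you are right that this needs to be addressed before the approximants can be made $1/q$-equivariant. Your covering-map argument (descend via $\pi(x_1,x_2)=(qx_1,x_2)$, decompose on the base, lift the factors) is a clean way to obtain an $R_{1/q}$-equivariant factorization, but the assertion that the descended map $\bar{\mathfrak{h}}$ is again of block-slide type is not automatic from the definition and is precisely where you lean on the structural results of \cite{Ba17,Ba-Ku}. In practice the block-slide maps actually used in this paper arise from Lemma~\ref{permutation = block-slide} (=\cite[Theorem~E]{Ba-Ku}), whose proof already produces a factorization in which every elementary factor commutes with $R_{1/q}$, so in the intended applications this reduction is not needed. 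Either way your overall strategy is sound; the inductive bookkeeping of the exceptional set in your last paragraph---enlarging by $\Phi_{i-1}$-preimages of thin neighborhoods of the discontinuity lines and using measure-preservation---is exactly the point one has to be careful about, and you have identified it correctly.
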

	
	To approximate our partition permutations $h_{n+1}$ from Section \ref{subsec:constr} by real-analytic diffeomorphisms, we exploit that they are block-side type maps.
	
	\begin{lem}[\cite{Ba-Ku}, Theorem E] \label{permutation = block-slide}
		Let $k,q,s \in \N$ and $\Pi$ be a partition permutation of $\xi_{kq,s}$ of $\mathbb{T}^2$. Assume that $\Pi$ commutes with $R_{1/q}$. Then $\Pi$ is of block-slide type.
	\end{lem}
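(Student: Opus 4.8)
The statement is Theorem~E of \cite{Ba-Ku}; here is the proof I would give. First observe that maps of type $\mathfrak{h}_{1}$ and of type $\mathfrak{h}_{2}$ are invertible — the inverse of a slide by the step function $s_{i}$ is the slide by $-s_{i}$ — so the block-slide type maps of $\mathbb{T}^{2}$ form a group under composition, and it suffices to exhibit $\Pi$ as a finite composition of block-slide type maps. Since a partition permutation is determined by the permutation it induces on the (finitely many) atoms of $\xi_{kq,s}$, and since transpositions generate the symmetric group on these atoms, it is enough to realize, as a block-slide type map, the transposition that exchanges two given atoms and fixes all others. Conjugating by a suitable such transposition, via $(A\,D)=(A\,C)(C\,D)(A\,C)$ with $C$ chosen in the same row as $A$ and the same column as $D$, reduces this to the case of two atoms sharing a row, or sharing a column; a transposition of two atoms in a common row is a finite product of transpositions of horizontally \emph{adjacent} atoms $\Delta^{i_{0},j}_{kq,s},\Delta^{i_{0}+1,j}_{kq,s}$ (indices modulo $kq$), just as a transposition in $S_{kq}$ factors into adjacent ones, and these auxiliary transpositions likewise fix every atom outside the row $j$; and exchanging the two coordinates of $\mathbb{T}^{2}$ interchanges the types $\mathfrak{h}_{1}$ and $\mathfrak{h}_{2}$ and replaces $\xi_{kq,s}$ by $\xi_{s,kq}$, so the common-column case follows from the common-row case. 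The same reductions apply verbatim to any grid partition of $\mathbb{T}^{2}$, so everything reduces to one model situation: realizing, as a block-slide type map, the transposition of two horizontally adjacent atoms of a grid partition $\xi_{r,s}$ of $\mathbb{T}^{2}$.

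For the model situation one may assume $s\geq 2$, because a partition permutation of $\xi_{r,1}$ is also one of $\xi_{r,2}$ and a block-slide realization of the latter serves for the former. The transposition of $\Delta^{i_{0},j_{0}}_{r,s}$ and $\Delta^{i_{0}+1,j_{0}}_{r,s}$ is then written as a finite composition of \emph{elementary} slides — full-row slides (type $\mathfrak{h}_{2}$ with step function supported on a single strip $[j/s,(j+1)/s)$ and taking a value that is a multiple of $1/r$) and full-column slides (type $\mathfrak{h}_{1}$, analogously) — using the neighbouring row $j_{0}+1$ as a buffer: a column slide first carries $\Delta^{i_{0}+1,j_{0}}_{r,s}$ down into row $j_{0}+1$; row slides on rows $j_{0}$ and $j_{0}+1$ then move $\Delta^{i_{0},j_{0}}_{r,s}$ into column $i_{0}+1$ and $\Delta^{i_{0}+1,j_{0}}_{r,s}$ into column $i_{0}$; and the remaining slides undo the auxiliary column and row rotations. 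The shift amounts are chosen so that, away from the $2\times 2$ block of atoms $\{\Delta^{i_{0}+\alpha,j_{0}+\beta}_{r,s}:\alpha,\beta\in\{0,1\}\}$, all side effects of the auxiliary rotations cancel, so that the total composition is the identity there and equals the desired transposition. Producing this cancelling sequence explicitly and verifying it by a direct computation is the substantive point, and it is the part carried out in detail in \cite{Ba-Ku}. (There is no parity obstruction: a single row slide on a row of length $r$ is an $r$-cycle and can realize an odd permutation.)

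Finally one arranges that the realization commutes with $R_{1/q}$, which is the role of the hypothesis on $\Pi$ and what is needed when the lemma is applied in Lemma~\ref{proposition approximation}. Since $\Pi$ commutes with $R_{1/q}$, it descends to a partition permutation $\overline{\Pi}$ of $\xi_{k,s}$ on the quotient $\mathbb{T}^{2}/\langle R_{1/q}\rangle$, which we identify with $\mathbb{T}^{2}$ via $(x_{1},x_{2})\mapsto(qx_{1}\bmod 1,x_{2})$. The preceding two paragraphs (the case $q=1$) write $\overline{\Pi}$ as a composition of type-$\mathfrak{h}_{1}$ and type-$\mathfrak{h}_{2}$ maps, each of which lifts to an $R_{1/q}$-equivariant map of the same type — a slide by $s_{1}(qx_{1}\bmod 1)$, respectively by $s_{2}(x_{2})/q$, again step functions — whose composition $\Pi_{0}$ is an $R_{1/q}$-equivariant block-slide type map with the same descent as $\Pi$. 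The remaining discrepancy $D=\Pi\circ\Pi_{0}^{-1}$ must then rotate each $R_{1/q}$-orbit of atoms within itself by an orbit-dependent cyclic amount; such a $D$ is a product, over the rows $j$, of permutations of the atoms in a single row, each commuting with $R_{1/q}$ and, by what was shown above, a block-slide type map. Hence $\Pi=D\circ\Pi_{0}$ is a block-slide type map commuting with $R_{1/q}$.

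The main obstacle is the model situation of the second paragraph: constructing the explicit finite sequence of elementary row and column slides that effects a single adjacent transposition with all auxiliary effects cancelling. The reductions of the first paragraph are elementary symmetric-group combinatorics and the equivariance argument of the third is a routine covering-space manipulation, so once the model situation is in hand the lemma follows.
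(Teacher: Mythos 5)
Two preliminary remarks. First, the paper itself offers no proof of this lemma: it is imported verbatim as Theorem~E of \cite{Ba-Ku}, and its only role here is the one-line proof of Proposition~\ref{prop:hRealAnalytic}. So there is no in-paper argument to compare against, and your proposal has to stand as a reconstruction of the cited result. Second, your own write-up concedes that ``the substantive point'' --- the explicit cancelling sequence of slides realizing a single adjacent transposition --- ``is the part carried out in detail in \cite{Ba-Ku}.'' Deferring the one nontrivial step to the reference being proved means the proposal is not yet a proof; the group-theoretic reductions in your first paragraph (transpositions generate, conjugate to a common row, factor into adjacent transpositions) and the covering-space argument in your third are the routine parts.

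More seriously, the parenthetical you use to dismiss the main potential obstruction is backwards, and the obstruction is real for your setup. An $r$-cycle is an \emph{odd} permutation only when $r$ is even; when $r$ is odd, every power of an $r$-cycle is even. Your ``elementary slides'' are full-row slides by multiples of $1/r$ and full-column slides by multiples of $1/s$, i.e.\ exactly powers of $r$-cycles and $s$-cycles on the atoms of the grid. If $r=kq$ and $s$ are both odd (the lemma allows, say, $k=q=s=3$), every elementary slide is an even permutation of the $rs$ atoms, so the group they generate sits inside the alternating group and cannot contain the single transposition your model situation demands: as you have set it up, the model situation is not a computation to be looked up but a false statement. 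Escaping this is precisely where the full strength of the definition is needed --- step functions supported on intervals strictly finer than the atoms (equivalently, a preliminary refinement of the grid, under which $\Pi$ becomes an even permutation of the refined atoms) --- and your reduction to grid-preserving slides forecloses that. A related, smaller issue infects your equivariance step: if the constituent slides of $\Pi_0$ are not adapted to the grid $\xi_{kq,s}$, the discrepancy $D=\Pi\circ\Pi_0^{-1}$ need not be a partition permutation at all, so the claim that $D$ merely rotates each $R_{1/q}$-orbit of atoms requires justification. (None of this affects the present paper's use of the lemma, since $k_n=2^{n+2}q_nC_n$ forces both $k_nq_n$ and $s_{n+1}$ to be even; but the lemma is stated for arbitrary $k,q,s$.)
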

	
	
	\begin{proof}[Proof of Proposition \ref{prop:hRealAnalytic}]
		We apply Lemma \ref{permutation = block-slide} followed by Lemma \ref{proposition approximation}.  
	\end{proof}
	
	\subsubsection{Real-analytic AbC method}
	As an analogue of Proposition \ref{prop:SmoothReal}, we can realize any weakly mixing transformation built by the abstract AbC method from Sections \ref{subsec:abstract} and \ref{subsec:constr} as an area-preserving real-analytic diffeomorphism provided that the sequence $(l_n)_{n\in \N}$ grows sufficiently fast.
	\begin{prop} \label{theorem analytic AbC}
		Fix a number $\rho>0$. Let $(\varepsilon_n)_{n\in \N}$ be a summable sequence of positive reals satisfying
		$
			\sum_{m>n}\varepsilon_m < \varepsilon_n/4.
		$
	Suppose $T:\mathbb{T}^2\to\mathbb{T}^2$ is a measure-preserving transformation built by the abstract AbC method from Sections \ref{subsec:abstract} and \ref{subsec:constr} using parameter sequences $(k_n)_{n\in \N}$ and $(l_n)_{n\in \N}$. If $(l_n)_{n\in \N}$ grows fast enough, then there exists a 
	sequence of real-analytic AbC diffeomorphisms $(T^{(\mathfrak{a})}_n)_{n\in \N}$ that satisfies
	$d_{\rho}(T^{(\mathfrak{a})}_n,T^{(\mathfrak{a})}_{n+1})<\varepsilon_n/4$  for every $n \in \mathbb{N}$
	and converges to a diffeomorphism $T^{(\mathfrak{a})}\in \text{Diff}_{\rho}^{\,\omega}(\mathbb{T}^2,\lambda)$  which is measure-theoretically isomorphic to $T$.
	\end{prop}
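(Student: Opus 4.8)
The plan is to mirror the proof of Proposition \ref{prop:SmoothReal} step by step, substituting the real-analytic approximations of the partition permutations furnished by Proposition \ref{prop:hRealAnalytic} for the smooth ones of Proposition \ref{prop:hReal}, and replacing every $C^{\infty}$-estimate by an estimate in the $d_{\rho}$-metric. Concretely, I would start from the abstract data $\{h_n\}_{n\in\N}$, $\{H_n=h_1\circ\cdots\circ h_n\}_{n\in\N}$, $\{T_n=H_n\circ R_{\alpha_n}\circ H_n^{-1}\}_{n\in\N}$ produced by the construction of Sections \ref{subsec:abstract}--\ref{subsec:constr}, with weak limit $T$, and for each $n$ invoke Proposition \ref{prop:hRealAnalytic} to choose $h_{n+1}^{(\mathfrak{a})}\in\text{Diff}^{\,\omega}_{\infty}(\mathbb{T}^2,\lambda)$ commuting with $R_{1/q_n}$ and $\varepsilon_n$-approximating $h_{n+1}$: there is a set $L_{n+1}\subset\mathbb{T}^2$ with $\lambda(L_{n+1})>1-\varepsilon_n$ on which $h_{n+1}^{(\mathfrak{a})}$ carries each $\Delta^{i,j}_{k_nq_n,s_{n+1}}$ into $h_{n+1}(\Delta^{i,j}_{k_nq_n,s_{n+1}})$. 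Put $H_n^{(\mathfrak{a})}=h_1^{(\mathfrak{a})}\circ\cdots\circ h_n^{(\mathfrak{a})}$ and $T_n^{(\mathfrak{a})}=H_n^{(\mathfrak{a})}\circ R_{\alpha_n}\circ(H_n^{(\mathfrak{a})})^{-1}$. Since each $h_m^{(\mathfrak{a})}$ and its inverse extend holomorphically with finite sup-norm on every $\Omega_{\rho'}$, the same holds for any finite composition; hence $H_n^{(\mathfrak{a})}\in\text{Diff}^{\,\omega}_{\infty}(\mathbb{T}^2,\lambda)$ and $T_n^{(\mathfrak{a})}\in\text{Diff}^{\,\omega}_{\infty}(\mathbb{T}^2,\lambda)\subseteq\text{Diff}_{\rho}^{\,\omega}(\mathbb{T}^2,\lambda)$ for every $n$.

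The heart of the argument is the $d_{\rho}$-Cauchy estimate. Using $h_{n+1}^{(\mathfrak{a})}\circ R_{1/q_n}=R_{1/q_n}\circ h_{n+1}^{(\mathfrak{a})}$ together with $\alpha_{n+1}=\alpha_n+\tfrac{1}{k_nl_nq_n^2}$ one rewrites
\[
T_{n+1}^{(\mathfrak{a})}=H_n^{(\mathfrak{a})}\circ R_{\alpha_n}\circ\bigl[h_{n+1}^{(\mathfrak{a})}\circ R_{1/(k_nl_nq_n^2)}\circ(h_{n+1}^{(\mathfrak{a})})^{-1}\bigr]\circ(H_n^{(\mathfrak{a})})^{-1},
\]
while $T_n^{(\mathfrak{a})}=H_n^{(\mathfrak{a})}\circ R_{\alpha_n}\circ(H_n^{(\mathfrak{a})})^{-1}$. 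Since $l_n$ is the last parameter chosen at stage $n$, the entire diffeomorphisms $h_1^{(\mathfrak{a})},\dots,h_{n+1}^{(\mathfrak{a})}$ are already fixed; as $l_n\to\infty$ one has $R_{1/(k_nl_nq_n^2)}\to\operatorname{id}$ in every analytic norm, and conjugation by the fixed element $H_n^{(\mathfrak{a})}\in\text{Diff}^{\,\omega}_{\infty}(\mathbb{T}^2,\lambda)$ takes a sequence tending to the identity to a sequence tending to the identity in $d_{\rho}$. Hence $l_n\in\Z^+$ can be chosen large enough that $d_{\rho}(T_{n+1}^{(\mathfrak{a})},T_n^{(\mathfrak{a})})<\varepsilon_n/4$; as in Remark \ref{rem:FastEnough}, since only finitely many $h_{n+1}$ are admissible once $\{k_m\}_{m\le n}$, $\{l_m\}_{m<n}$, $\{s_m\}_{m\le n+1}$ are fixed, one takes $l_n$ above the maximum of the corresponding finitely many thresholds. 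Summability of $(\varepsilon_n)_{n\in\N}$ then makes $(T_n^{(\mathfrak{a})})_{n\in\N}$ a $d_{\rho}$-Cauchy sequence, which converges, by completeness of $\text{Diff}_{\rho}^{\,\omega}(\mathbb{T}^2,\lambda)$, to some $T^{(\mathfrak{a})}\in\text{Diff}_{\rho}^{\,\omega}(\mathbb{T}^2,\lambda)$.

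To identify $T^{(\mathfrak{a})}$ with $T$ up to measure-theoretic isomorphism I would apply Fact \ref{fact:isomorphism} verbatim as in the smooth case, with $K_n=H_n^{(\mathfrak{a})}\circ H_n^{-1}$ (a measure-preserving isomorphism of $T_n$ onto $T_n^{(\mathfrak{a})}$) and the decreasing partitions $\mathcal{P}_n=\zeta_n=H_n(\xi_n)$, $\mathcal{P}_n'=K_n(\zeta_n)=H_n^{(\mathfrak{a})}(\xi_n)$. The sequence $\{\mathcal{P}_n\}$ is generating by Lemma \ref{lem:MPconv}; for $\{\mathcal{P}_n'\}$ one uses the large sets $L_n$ of Proposition \ref{prop:hRealAnalytic} and Borel--Cantelli to show that $h_{n+1}^{(\mathfrak{a})}$ respects the partition combinatorics on a set of measure tending to $1$, exactly as the sets $G_n$ were used in the proof of Proposition \ref{prop:SmoothReal}. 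The same computation gives $K_{n+1}(\mathcal{P}_n)=H_n^{(\mathfrak{a})}\circ h_{n+1}^{(\mathfrak{a})}\circ h_{n+1}^{-1}(\xi_n)$ and $K_n(\mathcal{P}_n)=H_n^{(\mathfrak{a})}\circ h_{n+1}\circ h_{n+1}^{-1}(\xi_n)$, whence $D_{\lambda}(K_{n+1}(\mathcal{P}_n),K_n(\mathcal{P}_n))<\varepsilon_n$ because $h_{n+1}^{(\mathfrak{a})}$ $\varepsilon_n$-approximates $h_{n+1}$. Fact \ref{fact:isomorphism} then yields that $\{K_n\}$ converges weakly to a measure-theoretic isomorphism between $T$ and $T^{(\mathfrak{a})}$.

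The step I expect to be the main obstacle — and the only genuine departure from the smooth proof — is keeping the conjugacies inside $\text{Diff}_{\rho}^{\,\omega}$. In the $C^{\infty}$ setting one freely composes and estimates a fixed number of derivatives; here it is essential that each $h_{n+1}^{(\mathfrak{a})}$ extends to an \emph{entire} map (Proposition \ref{prop:hRealAnalytic}, via the block-slide approximation of Lemma \ref{proposition approximation}), so that $H_n^{(\mathfrak{a})}$ remains entire and conjugating the near-identity twist $h_{n+1}^{(\mathfrak{a})}\circ R_{1/(k_nl_nq_n^2)}\circ(h_{n+1}^{(\mathfrak{a})})^{-1}$ by it does not destroy control of a fixed-width complex band; making the dependence of the relevant analyticity widths on the already-fixed earlier conjugacies explicit is what turns "$l_n$ large" into the required $d_{\rho}$-closeness.
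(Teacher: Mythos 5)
Your proposal is correct and follows essentially the same route as the paper, which itself proves this proposition by pointing to the argument of Proposition \ref{prop:SmoothReal} combined with the real-analytic approximations of Proposition \ref{prop:hRealAnalytic} (and the analogous result in \cite{BK2}). Your explicit attention to the weaker, measure-only approximation property of $h^{(\mathfrak{a})}_{n+1}$ and to the need for entire extensions when conjugating the near-identity twist is exactly the right adaptation; the only detail left implicit is that the $d_{\rho}$-estimate must also be checked for the inverses, which follows by the same argument.
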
 

\begin{proof}
	Using Proposition \ref{prop:hRealAnalytic}, the proof follows along the lines of Proposition \ref{prop:SmoothReal} and \cite[Theorem 4.17]{BK2}.
\end{proof}

\begin{rem}\label{rem:analyticCounterpart}
	For any fixed  $\rho>0$ we can use this Proposition~\ref{theorem analytic AbC} to find real-analytic counterparts of the realization result for twisted symbolic systems in Theorem~\ref{theo:TwistRealSmooth} and the proximity result in Lemma~\ref{lem:closeSmooth}.
\end{rem}

	\section{\label{sec:reduction}Building the reduction}
	
	In this section we build the continuous reduction $\Phi:\mathcal{T}\kern-.5mm rees\to \text{Diff}^{\,\infty}_{\,\lambda}(M)$ that will satisfy the properties required in Theorem~\ref{thm:criterion}. For that purpose, we start by constructing strongly uniform and uniquely readable odometer-based construction sequences $\left(\mathtt{W}_{n}\left(\mathcal{T}\right)\right)_{n\in\mathbb{N}}$ similarly to the ones in \cite{GK3}. These constructions also specify and use equivalence relations $\mathcal{Q}_{s}^{n}(\mathcal{T})$ on the collections $\mathtt{W}_{n}(\mathcal{T})$ of $n$-words and group actions on the equivalence classes in $\mathtt{W}_{n}(\mathcal{T})/\mathcal{Q}_{s}^{n}(\mathcal{T})$ as in \cite{FRW}. Then odometer-based $(n+1)$-words are constructed by substituting Feldman patterns of finer equivalence classes of $n$-words into Feldman patterns of coarser classes. We collect important properties of these systems in Section \ref{subsec:spec} and describe such a substitution step in detail in Section \ref{subsec:Substitution}. Here, we point out small modifications to the substitution step from \cite{GK3} in order to verify the weak mixing property using our criterion from Proposition \ref{prop:WM2}.	
	We continue the inductive construction process by applying the twisting operator under some growth condition on the parameter sequence $(l_n)_{n\in \N}$ that will allow the smooth realization of the associated twisted systems according to Theorem \ref{theo:TwistRealSmooth}. We present the details of this construction process to get $\Phi:\mathcal{T}\kern-.5mm rees\to \text{Diff}^{\,\infty}_{\,\lambda}(M)$ in Section \ref{subsec:Construction}. In the next Section \ref{sec:Proof} we finally verify that $\Phi$ satisfies the properties stated in Theorem~\ref{thm:criterion}.
	
	\subsection{\label{subsec:spec}Specifications}
	Slightly modifying the constructions in \cite{GK3} we will construct for each $\mathcal{T}\in\mathcal{T}\kern-.5mm rees$ an odometer-based construction sequence $\Meng{\mathtt{W}_{n}\left(\mathcal{T}\right)}{\sigma_n \in \mathcal{T}}$ over the basic alphabet $\Sigma=\{1,\dots,2^{12}\}$, where for each $n\in\mathbb{N}$ with $\sigma_{n}\in\mathcal{T}$ the set of words $\mathtt{W}_{n}=\mathtt{W}_{n}(\mathcal{T})$ depends only on $\mathcal{T}\cap\left\{ \sigma_{m}:m\leq n\right\} $. 	
	The structure of the tree $\mathcal{T}\subset\mathbb{N}^{\mathbb{N}}$ is also used to build a sequence of groups $G_s(\mathcal{T})$: We define $G_{0}(\mathcal{T})$ to be the trivial group and assign
	to each level $s>0$ a so-called \emph{group of involutions} 
	\[
	G_s(\mathcal{T})=\sum_{\tau\in\mathcal{T},\,lh(\tau)=s}\left(\mathbb{Z}_{2}\right)_{\tau}.
	\]
	We have a well-defined notion of \emph{parity}
	for elements in such a group of involutions: an element is called \emph{even} if
	it can be written as the sum of an even number of generators.
	Otherwise, it is called \emph{odd}. 
	
	For levels $0<s<t$ of $\mathcal{T}$ we have a canonical homomorphism
	$\rho_{t,s}:G_{t}\left(\mathcal{T}\right)\to G_{s}\left(\mathcal{T}\right)$
	that sends a generator $\tau$ of $G_{t}\left(\mathcal{T}\right)$
	to the unique generator $\sigma$ of $G_{s}\left(\mathcal{T}\right)$
	that is an initial segment of $\tau$. The map $\rho_{t,0}$ is the trivial homomorphism $\rho_{t,0}:G_t(\mathcal{T})\to G_0(\mathcal{T})=\{0\}.$ We denote the inverse limit
	of $\left\langle G_{s}\left(\mathcal{T}\right),\rho_{t,s}:s<t\right\rangle $
	by $G_{\infty}\left(\mathcal{T}\right)$ and we let $\rho_{s}:G_{\infty}\left(\mathcal{T}\right)\to G_{s}\left(\mathcal{T}\right)$
	be the projection map.	
	Since there is a one-to-one correspondence between the infinite branches
	of $\mathcal{T}$ and infinite sequences $\left(g_{s}\right)_{s\in\mathbb{Z}^+}$
	of generators $g_{s}\in G_{s}\left(\mathcal{T}\right)$ with $\rho_{t,s}\left(g_{t}\right)=g_{s}$
	for $t>s>0$, we obtain the following characterization.
	\begin{fact} \label{fact:oddElement}
		Let $\mathcal{T}\subset\mathbb{N}^{\mathbb{N}}$ be a tree. Then $G_{\infty}\left(\mathcal{T}\right)$
		has a nonidentity element of odd parity if and only if $\mathcal{T}$
		has an infinite branch. 
	\end{fact}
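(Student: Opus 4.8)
The plan is to route everything through the bijection recalled just before the statement between infinite branches of $\mathcal{T}$ and coherent sequences of \emph{generators} $(g_s)_{s\geq 1}$ (with $g_s\in G_s(\mathcal{T})$ and $\rho_{t,s}(g_t)=g_s$). Such a coherent sequence is in particular an element of $G_\infty(\mathcal{T})$ each of whose projections is a single generator, hence of odd parity; conversely, from a non-identity element of $G_\infty(\mathcal{T})$ of odd parity I shall extract a coherent sequence of generators.

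For ``$\mathcal{T}$ ill-founded $\Rightarrow$ odd element exists'': given an infinite branch $f$, the initial segments $\sigma^{(s)}:=(f(0),\dots,f(s-1))$ lie in $\mathcal{T}$, so they name generators $g_s$ of $G_s(\mathcal{T})$; since $\sigma^{(s)}$ is an initial segment of $\sigma^{(t)}$ for $t>s$ we get $\rho_{t,s}(g_t)=g_s$, hence $g:=(g_s)_s\in G_\infty(\mathcal{T})$. Each $\rho_s(g)=g_s$ is a single generator, so of odd parity, and $g_1\neq 0$ because $G_1(\mathcal{T})$ is non-trivial; thus $g$ is a non-identity element of odd parity.

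For the converse I first record the auxiliary fact that each connecting map $\rho_{t,s}\colon G_t(\mathcal{T})\to G_s(\mathcal{T})$ preserves parity: representing an element of $G_t(\mathcal{T})$ by the finite set $A$ of length-$t$ nodes of $\mathcal{T}$ it is supported on, its image is the set $B$ of length-$s$ nodes $\sigma$ for which the number of extensions of $\sigma$ lying in $A$ is odd, and summing $|A|$ after grouping the elements of $A$ by their length-$s$ initial segment gives $|A|\equiv|B|\pmod 2$. Consequently all projections $\rho_s(g)$ of a fixed $g\in G_\infty(\mathcal{T})$ share a common parity, so ``$g$ has odd parity'' means that every $\rho_s(g)$ is odd, and in particular non-zero. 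Given such a $g$, write $\rho_s(g)$ as a non-empty finite set $A_s$ of length-$s$ nodes of $\mathcal{T}$, with $|A_s|$ odd. Coherence unwinds to: $\sigma\in A_s$ iff the number of nodes of $A_{s+1}$ whose length-$s$ initial segment is $\sigma$ is odd; in particular every $\sigma\in A_s$ has at least one extension in $A_{s+1}$. Hence one may choose recursively $\sigma^{(1)}\in A_1$ and, given $\sigma^{(s)}\in A_s$, some $\sigma^{(s+1)}\in A_{s+1}$ extending $\sigma^{(s)}$. The nodes $\sigma^{(s)}$ form an increasing chain in $\mathcal{T}$, so the generators they name form a coherent sequence, and the recalled correspondence yields an infinite branch through $\mathcal{T}$.

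The only non-routine ingredient is the parity-preservation of the connecting maps, which is what makes parity meaningful on $G_\infty(\mathcal{T})$; the remaining steps are bookkeeping with the branch/generator dictionary, and in particular the recursion uses no compactness or finite-branching property of $\mathcal{T}$ since the element $g$ itself supplies all the choices. The place where oddness is genuinely needed, rather than mere non-triviality of $g$, is that it forces $A_s\neq\emptyset$ for \emph{every} $s\geq 1$: a non-trivial element of the inverse limit may well have $\rho_s(g)=0$ for small $s$, and without a parity constraint non-emptiness of the sets $A_s$ need not descend to smaller $s$, so the recursion above could have no level at which to begin.
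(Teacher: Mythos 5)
Your proof is correct and follows the same route the paper merely gestures at (the correspondence between infinite branches and coherent sequences of generators), filling in the two details the paper leaves implicit: that the connecting homomorphisms $\rho_{t,s}$ preserve parity, and that oddness of every projection lets one recursively select an increasing chain of nodes from the supports $A_s$. No gaps.
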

	
	During the construction one uses the following finite approximations:
	For every $n\in \N$ we let $G_{0}^{n}\left(\mathcal{T}\right)$ be the trivial group and
	for $s>0$ we let 
	\[
	G_{s}^{n}\left(\mathcal{T}\right)=\sum\left(\mathbb{Z}_{2}\right)_{\tau}\text{ where the sum is taken over }\tau\in\mathcal{T}\cap\left\{ \sigma_{m}:m\leq n\right\} ,\,lh(\tau)=s.
	\]
	We also introduce the finite approximations $\rho_{t,s}^{(n)}:G_{t}^{n}(\mathcal{T})\to G_{s}^{n}(\mathcal{T})$
	to the canonical homomorphisms.	
	In the following, we simplify notation by enumerating $\Meng{\mathtt{W}_{n}\left(\mathcal{T}\right)}{\sigma_n \in \mathcal{T}}$ and $\Meng{G_{s}^{n}\left(\mathcal{T}\right)}{\sigma_n \in \mathcal{T}}$ as $\{\mathtt{W}_n\}_{n\in \mathbb{N}}$ and $\{G^n_s\}_{n\in \mathbb{N}}$, respectively.
	
	During the course of construction one also defines an increasing sequence of prime numbers $(\mathfrak{p}_n)_{n\in \N}$ satisfying
	$
		\sum_{n\in \N} \frac{1}{\mathfrak{p}_n} <\infty.
	$
	We now collect important properties of the odometer-based construction sequence $\{\mathtt{W}_n\}_{n\in \mathbb{N}}$. To start we set $\mathtt{W}_{0}=\Sigma$. 
	\begin{enumerate}[label=(E\arabic*)]
		\item\label{item:E1}  All words in $\mathtt{W}_{n}$ have the same length $h_{n}$ and
		the cardinality $|\mathtt{W}_{n}|$ is a power of $2$.
		\item\label{item:E2}  There are $f_{n},k_n\in\mathbb{Z}^{+}$ such that every word in $\mathtt{W}_{n+1}$ is built by concatenating $k_n$ words
		in $\mathtt{W}_{n}$ and such that
		every word in $\mathtt{W}_{n}$ occurs in each word of $\mathtt{W}_{n+1}$
		exactly $f_{n}$ times. The number $f_{n}$ is a product of $\mathfrak{p}_{n}^{2}$
		and powers of $2$. 
		\item\label{item:E3}  If $\mathtt{w}=\mathtt{w}_{1}\dots \mathtt{w}_{k_n}\in \mathtt{W}_{n+1}$ and $\mathtt{w}^{\prime}=\mathtt{w}_{1}^{\prime}\dots \mathtt{w}_{k_n}^{\prime}\in\mathtt{W}_{n+1}$, where $\mathtt{w}_{i},\mathtt{w}_{i}^{\prime}\in\mathtt{W}_{n}$, then for any $k\geq\lfloor\frac{k_n}{2}\rfloor$ and $1\leq i \leq k_n-k$, we have $\mathtt{w}_{i+1}\dots \mathtt{w}_{i+k}\neq \mathtt{w}_{1}^{\prime}\dots \mathtt{w}_{k}^{\prime}$. 
	\end{enumerate}
		In particular, these specifications say that $\{\mathtt{W}_n\}_{n\in \mathbb{N}}$
		is a uniquely readable and strongly uniform construction sequence
		for an odometer-based system. 
	
For each $s \leq s(n)$, there is an equivalence relation $\mathcal{Q}_{s}^{n}$ on $\mathtt{W}_{n}$ satisfying the following specifications. To start, we let $\mathcal{Q}_{0}^{0}$ be the equivalence relation
on $\mathtt{W}_{0}=\Sigma$ which has one equivalence class, that is,
any two elements of $\Sigma$ are equivalent. 
\begin{enumerate}[label=(Q\arabic*)]
	\setcounter{enumi}{3}
	\item\label{item:Q4} Suppose that $n=M(s)$ for some $s \in \Z^+$. There is a specific number $J_{s,n}\in\mathbb{Z}^{+}$
	such that $2J_{s,n}\mathfrak{p}^2_n$ divides $k_{n-1}$ and two words $\mathtt{w}=\mathtt{w}_{0}\dots \mathtt{w}_{k_{n-1}-1}\in \mathtt{W}_{n}$ and $\mathtt{w}^{\prime}=\mathtt{w}_{0}^{\prime}\dots \mathtt{w}_{k_{n-1}-1}^{\prime}\in\mathtt{W}_{n}$ are in the same $\mathcal{Q}_{s}^{n}$ class iff
	\[
	\mathtt{w}_i = \mathtt{w}^{\prime}_i \text{ for all $i$ with } \frac{k_{n-1}}{2\mathfrak{p}_{n}J_{s,n}} \leq i \mod \frac{k_{n-1}}{J_{s,n}} < \frac{k_{n-1}}{J_{s,n}}-\frac{k_{n-1}}{2\mathfrak{p}_{n}J_{s,n}}.
	\]
	\item\label{item:Q5}  For $n\geq M(s)+1$ we can consider words in $\mathtt{W}_{n}$
	as concatenations of words from $\mathtt{W}_{M(s)}$ and define $\mathcal{Q}_{s}^{n}$
	as the product equivalence relation of $\mathcal{Q}_{s}^{M(s)}$. 
	\item\label{item:Q6} $\mathcal{Q}_{s+1}^{n}$ refines $\mathcal{Q}_{s}^{n}$ and each
	$\mathcal{Q}_{s}^{n}$ class contains $2^{4e(n)}$ many $\mathcal{Q}_{s+1}^{n}$
	classes. 
\end{enumerate}
We write $Q_{s}^{n}$ for the number of equivalence classes in $\mathcal{Q}_{s}^{n}$
and enumerate the classes by $\left\{ c_{j}^{(n,s)}:j=1,\dots,Q_{s}^{n}\right\} $.
Occasionally, we will identify $\mathtt{W}_{n}/\mathcal{Q}_{s}^{n}$
with an alphabet denoted by $\left(\mathtt{W}_{n}/\mathcal{Q}_{s}^{n}\right)^{\ast}$
of $Q_{s}^{n}$ symbols $\left\{ 1,\dots,Q_{s}^{n}\right\} $. 

Each equivalence relation $\mathcal{Q}_{s}^{n}$ will induce an equivalence
relation on $rev(\mathtt{W}_{n})$, which we will also call $\mathcal{Q}_{s}^{n}$,
as follows: $rev(\mathtt{w}),rev(\mathtt{w}')\in rev(\mathtt{W}_{n})$ are equivalent
with respect to $\mathcal{Q}_{s}^{n}$ if and only if $\mathtt{w},\mathtt{w}'\in\mathtt{W}_{n}$
are equivalent with respect to $\mathcal{Q}_{s}^{n}$.

\begin{rem*}
	By \ref{item:Q5} we can view $\mathtt{W}_{n}/\mathcal{Q}_{s}^{n}$ as sequences
	of elements $\mathtt{W}_{M(s)}/\mathcal{Q}_{s}^{M(s)}$ and similarly
	for $rev(\mathtt{W}_{n})/\mathcal{Q}_{s}^{n}$. It allows us to regard elements in $\mathtt{W}_{n}/\mathcal{Q}_{s}^{n}$
	for $n\geq M(s)+1$ as sequences of symbols from the alphabet $\left(\mathtt{W}_{M(s)}/\mathcal{Q}_{s}^{M(s)}\right)^{\ast}$. In particular,
	it follows that $\mathcal{Q}_{0}^{n}$ is the equivalence relation
	on $\mathtt{W}_{n}$ which has one equivalence class. 
\end{rem*}

We now list specifications on actions by the groups of involutions $G^n_s$.
\begin{enumerate}[label=(A\arabic*)]
	\setcounter{enumi}{6}
	\item\label{item:A7}  $G_{s}^{n}$ acts freely on $\mathtt{W}_{n}/\mathcal{Q}_{s}^{n}$
	and the $G_{s}^{n}$ action is subordinate to the $G_{s-1}^{n}$ action
	on $\mathtt{W}_{n}/\mathcal{Q}_{s-1}^{n}$ via the canonical homomorphism
	$\rho_{s,s-1}^{(n)}:G_{s}^{n}\to G_{s-1}^{n}$.
	\item\label{item:A8} Suppose $n>M(s)$. We view $G_{s}^{n}=G_{s}^{n-1}\oplus H$.
	Then the action of $G_{s}^{n-1}$ on $\mathtt{W}_{n-1}/\mathcal{Q}_{s}^{n-1}$
	is extended to an action on $\mathtt{W}_{n}/\mathcal{Q}_{s}^{n}$
	by the skew diagonal action. 
\end{enumerate}
\begin{rem}
	\label{rem:Closed-under-skew} In particular, in the above situation
	with $M(s)<n$ both specifications together yield that $\mathtt{W}_{n}/\mathcal{Q}_{s}^{n}$
	is closed under the skew diagonal action by $G_{s}^{n-1}$. Clearly,
	this also holds if we view each element in $\mathtt{W}_{n}/\mathcal{Q}_{s}^{n}$
	as a sequence over the alphabet $\left(\mathtt{W}_{M(s)}/\mathcal{Q}_{s}^{M(s)}\right)^{\ast}$.
\end{rem}
	
	In addition to the odometer-based construction we will define a twisting coefficient sequence $(C_n,l_n)_{n\in \N}$ with $l_n\in \Z^+$ growing sufficiently fast such that
	\begin{equation}\label{eq:lStrict}
		\sum_{m>n}\frac{1}{l_m} < \frac{1}{l_n} \text{ for every } n\in \N
	\end{equation}
as well as an associated twisted construction sequence $\left(\mathcal{W}_n\right)_{n \in \N}$ and bijections $\kappa_{n}:\mathtt{W}_{n}\to\mathcal{W}_{n}$ by induction:
\begin{itemize}
	\item Let $\mathcal{W}_{0}=\Sigma$ and $\kappa_{0}$ be the identity map. 
	\item Suppose that $\mathtt{W}_{n+1}$, $\mathcal{W}_{n}$ and $\kappa_{n}$
	have already been defined. Then we define 
	\[
	\mathcal{W}_{n+1}=\left\{ \mathcal{C}^{\text{twist}}_{n}\left(\kappa_{n}\left(\mathtt{w}_{0}\right),\kappa_{n}\left(\mathtt{w}_{1}\right),\dots,\kappa_{n}\left(\mathtt{w}_{k_{n}-1}\right)\right)\::\:\mathtt{w}_{0}\mathtt{w}_{1}\dots \mathtt{w}_{k_{n}-1}\in\mathtt{W}_{n+1}\right\} 
	\]
	and the map $\kappa_{n+1}$ by setting 
	\[
	\kappa_{n+1}\left(\mathtt{w}_{0}\mathtt{w}_{1}\dots \mathtt{w}_{k_{n}-1}\right)=\mathcal{C}^{\text{twist}}_{n}\left(\kappa_{n}\left(\mathtt{w}_{0}\right),\kappa_{n}\left(\mathtt{w}_{1}\right),\dots,\kappa_{n}\left(\mathtt{w}_{k_{n}-1}\right)\right).
	\]
	In particular, the prewords are 
	\[
	P_{n+1}=\left\{ \kappa_{n}\left(\mathtt{w}_{0}\right)\kappa_{n}\left(\mathtt{w}_{1}\right)\dots \kappa_{n}\left(\mathtt{w}_{k_{n}-1}\right)\::\:\mathtt{w}_{0}\mathtt{w}_{1}\dots \mathtt{w}_{k_{n}-1}\in\mathtt{W}_{n+1}\right\} .
	\]
\end{itemize}
	
	\subsubsection{Transferring equivalence relations and actions}\label{subsubsec:propagate}
	We also transfer our equivalence relations and group actions to the twisted system. We proceed in an analogous manner to transferring equivalence relations and actions to circular systems in \cite[section 5.10]{FW3}.
	\begin{enumerate}[label=(P\arabic*)]
		\item\label{item:P1}  For all $n\in\mathbb{N}$, $\mathtt{w}_{1},\mathtt{w}_{2}\in\mathtt{W}_{n}$ we let
		$\left(\kappa_{n}(\mathtt{w}_{1}),\kappa_{n}(\mathtt{w}_{2})\right)$ be in the relation $(\mathcal{Q}_{s}^{n})^{\text{twist}}$
		if and only if $(\mathtt{w}_{1},\mathtt{w}_{2})\in\mathcal{Q}_{s}^{n}$. 
		\item\label{item:P2} Given
		$g\in G_{s}^{n}$, we let $g[\kappa_{n}(\mathtt{w}_{1})]_{s}=[\kappa_{n}(\mathtt{w}_{2})]_{s}$
		if and only if $g[\mathtt{w}_{1}]_{s}=[\mathtt{w}_{2}]_{s}$.
	\end{enumerate}
	
	In order to prepare the construction of isomorphisms between $\mathbb{K}^{\text{twist}}$ and $(\mathbb{K}^{\text{twist}})^{-1}$ in case of $\mathcal{T}$ having an infinite branch (see  Section \ref{subsec:Isom}), it proves useful to also give an intrinsic and inductive description of equivalence relations as well as group actions.	
	An inductive definition of $(\mathcal{Q}_{s}^{n})^{\text{twist}}$ is given as follows: 
	\begin{itemize}
		\item Define $(\mathcal{Q}_{0}^{n})^{\text{twist}}$ to have exactly one class in $\mathcal{W}_n$.
		\item For $s\in \Z^+$ and $\mathtt{w}_1,\mathtt{w}_2 \in \mathtt{W}_{M(s)}$ put $\left(\kappa_{M(s)}(\mathtt{w}_1),\kappa_{M(s)}(\mathtt{w}_2)\right) \in (\mathcal{Q}_{s}^{M(s)})^{\text{twist}}$ if and only if $(\mathtt{w}_1,\mathtt{w}_2) \in \mathcal{Q}_{s}^{M(s)}$
		\item Suppose $(\mathcal{Q}_{s}^{n})^{\text{twist}}$ on $\mathcal{W}_n$ is defined. Then we define $(\mathcal{Q}_{s}^{n+1})^{\text{twist}}$ on $\mathcal{W}_{n+1}$ by setting $\mathcal{C}^{\text{twist}}_n(w_0,\dots,w_{k_n-1})$ equivalent to $\mathcal{C}^{\text{twist}}_n(w^{\prime}_0,\dots,w^{\prime}_{k_n-1})$ if and only if  $w_i$ is $(\mathcal{Q}_{s}^{n})^{\text{twist}}$-equivalent to $w^{\prime}_i$ for all $i=0,\dots,k_n-1$.
	\end{itemize}

    To inductively define the skew-diagonal action of $G_s$ it suffices to specify it on the canonical generators of $G_s$:
    \begin{itemize}
    	\item For the canonical generator $g\in G^{M(s)}_s$ and $\mathtt{w}_1,\mathtt{w}_2 \in \mathtt{W}_{M(s)}$ we let $g[\kappa_{M(s)}(\mathtt{w}_{1})]_{s}=[\kappa_{M(s)}(\mathtt{w}_{2})]_{s}$ if and only if $g[\mathtt{w}_{1}]_{s}=[\mathtt{w}_{2}]_{s}$. 
    	\item Suppose $n\geq M(s)$ and we view $G^{n+1}_s = G^n_s \oplus H$. Then the action of $G^n_s$	on $\mathcal{W}_n/(\mathcal{Q}_{s}^{n})^{\text{twist}}$	is extended to an action on $\mathcal{W}_{n+1}/(\mathcal{Q}_{s}^{n+1})^{\text{twist}}$ by the \emph{twisted skew diagonal action}: For a generator $g \in G^n_s$ we set
    	\begin{equation}\label{eq:TwistGenerator}
    		g\mathcal{C}^{\text{twist}}_n\left([\kappa_n(\mathtt{w}_0)]_s,\dots,[\kappa_n(\mathtt{w}_{k_n-1})]_s\right) = \mathcal{C}^{\text{twist}}_n\left(g[\kappa_n(\mathtt{w}_{k_n-1})]_s,\dots,g[\kappa_n(\mathtt{w}_0)]_s\right).
    	\end{equation}
    If $H$ is non-trivial, then for the canonical generator $g\in H$ and $\mathtt{w}_{1},\mathtt{w}_{2}\in\mathtt{W}_{n+1}$ we let $g[\kappa_{n+1}(\mathtt{w}_{1})]_{s}=[\kappa_{n+1}(\mathtt{w}_{2})]_{s}$ if and only if $g[\mathtt{w}_{1}]_{s}=[\mathtt{w}_{2}]_{s}$.
    \end{itemize}

\begin{rem}\label{rem:A8Twist}
	Regarding equation \eqref{eq:TwistGenerator} we stress that $\mathtt{W}_{n+1}/\mathcal{Q}^{n+1}_s$ is closed under the skew-diagonal action by \ref{item:A8}. Thus, for $[\mathtt{w}_0]_s\dots [\mathtt{w}_{k_n-1}]_s \in \mathtt{W}_{n+1}/\mathcal{Q}^{n+1}_s$ we have that $g[\mathtt{w}_{k_n-1}]_s \dots g[\mathtt{w}_0]_s \in \mathtt{W}_{n+1}/\mathcal{Q}^{n+1}_s$ which implies $g[\kappa_n(\mathtt{w}_{k_n-1})]_s\ldots g[\kappa_n(\mathtt{w}_0)]_s \in P_{n+1}/\mathcal{Q}^{n+1}_s$ and, hence, $\mathcal{C}^{\text{twist}}_n\left(g[\kappa_n(\mathtt{w}_{k_n-1})]_s,\dots,g[\kappa_n(\mathtt{w}_0)]_s\right) \in \mathcal{W}_{n+1}/(\mathcal{Q}_{s}^{n+1})^{\text{twist}}$.
\end{rem}

Let $n>m$. By specification \ref{item:E2} we can write $\mathtt{w} \in \mathtt{W}_n \subset (\mathtt{W}_m)^{h_n/h_m}$ as $\mathtt{w}=\mathtt{w}_0\dots \mathtt{w}_{h_n/h_m-1}$ with $\mathtt{w}_i\in \mathtt{W}_m$. Then let $\mathcal{C}^{\text{twist}}_{m,n}$ denote the operator such that
$
\kappa_n(\mathtt{w}) = \mathcal{C}^{\text{twist}}_{m,n}\left(\kappa_m(\mathtt{w}_0),\kappa_m(\mathtt{w}_1),\dots , \kappa_m(\mathtt{w}_{h_n/h_m-1})\right).
$
In particular, $\mathcal{C}^{\text{twist}}_{m,m+1}=\mathcal{C}^{\text{twist}}_{m}$ with the twisting operator from Definition \ref{def:twist} and
\begin{equation*}
	\begin{split}
	\kappa_n(\mathtt{w}) & = \mathcal{C}^{\text{twist}}_{n-1}\left(\kappa_{n-1}(\mathtt{w}_0\dots \mathtt{w}_{\frac{h_{n-1}}{h_m}-1}),\dots , \kappa_{n-1}(\mathtt{w}_{\frac{h_n-h_{n-1}}{h_m}}\dots \mathtt{w}_{\frac{h_{n}}{h_m}-1}) \right) \\
	& =  \mathcal{C}^{\text{twist}}_{n-1}\Bigg( \mathcal{C}^{\text{twist}}_{m,n-1}\left(\kappa_m(\mathtt{w}_0),\dots,\kappa_m(\mathtt{w}_{\frac{h_{n-1}}{h_m}-1})\right),\dots,\\
	& \quad \quad \quad \quad \quad \quad \quad \quad \mathcal{C}^{\text{twist}}_{m,n-1}\left(\kappa_m(\mathtt{w}_{\frac{h_n-h_{n-1}}{h_m}}),\dots, \kappa_m(\mathtt{w}_{\frac{h_{n}}{h_m}-1})\right)\Bigg).
	\end{split}
\end{equation*} 
We define the operator $\widetilde{\mathcal{C}}^{\text{twist}}_{m,n}$ by interchanging the role of $b$ and $e$ in $\mathcal{C}^{\text{twist}}_{m,n}$.

\begin{rem}\label{rem:E2Twist}
	For every $w\in \mathcal{W}_n$ there is a unique $\mathtt{w} \in \mathtt{W}_n$ such that $w=\kappa_n(\mathtt{w})$. Since $\mathtt{w}$ can be written as $\mathtt{w}=\mathtt{w}_0\dots \mathtt{w}_{h_n/h_m-1}$ with $\mathtt{w}_i\in \mathtt{W}_m$ by specification \ref{item:E2}, we have
	$
	w=\mathcal{C}^{\text{twist}}_{m,n}\left(\kappa_m(\mathtt{w}_0),\kappa_m(\mathtt{w}_1),\dots , \kappa_m(\mathtt{w}_{h_n/h_m-1})\right).
	$ 
\end{rem}

\begin{lem}\label{lem:RevTwistInd}
	Let $n>m$. We write $\mathtt{w} \in \mathtt{W}_n \subset (\mathtt{W}_m)^{h_n/h_m}$ as $\mathtt{w}=\mathtt{w}_0\dots \mathtt{w}_{h_n/h_m-1}$ with $\mathtt{w}_i\in \mathtt{W}_m$. Then
	\begin{equation}\label{eq:RevTwistInd}
		rev\left(\mathcal{C}_{m,n}^{\text{twist}}\left(\kappa_m(\mathtt{w}_{0}),\dots,\kappa_m(\mathtt{w}_{h_n/h_m-1})\right) \right) 
		= \widetilde{\mathcal{C}}_{m,n}^{\text{twist}}\left(rev(\kappa_m(\mathtt{w}_{h_{n}/h_m-1})),\dots ,rev(\kappa_m(\mathtt{w}_0))\right).
	\end{equation}
\end{lem}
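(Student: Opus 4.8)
The plan is to prove this by induction on $n-m$, using Lemma \ref{lem:ReverseTwist} as the base case and the inductive description of the iterated operator $\mathcal{C}^{\text{twist}}_{m,n}$ given just before the statement. For the base case $n = m+1$ we have $\mathcal{C}^{\text{twist}}_{m,m+1} = \mathcal{C}^{\text{twist}}_m$, and the identity \eqref{eq:RevTwistInd} is precisely Lemma \ref{lem:ReverseTwist} applied to the preword $\kappa_m(\mathtt{w}_0)\ldots\kappa_m(\mathtt{w}_{k_m-1}) \in P_{m+1}$ (noting $h_{m+1}/h_m = k_m$): indeed, $rev(\mathcal{C}^{\text{twist}}_m(\kappa_m(\mathtt{w}_0),\dots,\kappa_m(\mathtt{w}_{k_m-1}))) = \widetilde{\mathcal{C}}^{\text{twist}}_m(rev(\kappa_m(\mathtt{w}_{k_m-1})),\dots,rev(\kappa_m(\mathtt{w}_0)))$, which is the claimed formula with $n = m+1$.

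For the inductive step, suppose the identity holds for all pairs with difference less than $n-m$. Using the nested decomposition displayed before the lemma, write
\[
\kappa_n(\mathtt{w}) = \mathcal{C}^{\text{twist}}_{n-1}\Big(\kappa_{n-1}(\mathtt{v}_0),\dots,\kappa_{n-1}(\mathtt{v}_{h_n/h_{n-1}-1})\Big),
\]
where each $\mathtt{v}_r = \mathtt{w}_{r\cdot h_{n-1}/h_m}\ldots \mathtt{w}_{(r+1)h_{n-1}/h_m - 1} \in \mathtt{W}_{n-1}$, so that $\kappa_{n-1}(\mathtt{v}_r) = \mathcal{C}^{\text{twist}}_{m,n-1}(\kappa_m(\mathtt{w}_{r h_{n-1}/h_m}),\dots,\kappa_m(\mathtt{w}_{(r+1)h_{n-1}/h_m-1}))$. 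Now apply $rev$. First, $rev$ reverses the order of the outer blocks and acts on each; this is handled by Lemma \ref{lem:ReverseTwist} at level $n-1$, turning $\mathcal{C}^{\text{twist}}_{n-1}$ into $\widetilde{\mathcal{C}}^{\text{twist}}_{n-1}$ with arguments $rev(\kappa_{n-1}(\mathtt{v}_{h_n/h_{n-1}-1})),\dots,rev(\kappa_{n-1}(\mathtt{v}_0))$. Then apply the inductive hypothesis to each $rev(\kappa_{n-1}(\mathtt{v}_r)) = rev(\mathcal{C}^{\text{twist}}_{m,n-1}(\kappa_m(\mathtt{w}_{r h_{n-1}/h_m}),\dots))$, rewriting it as $\widetilde{\mathcal{C}}^{\text{twist}}_{m,n-1}(rev(\kappa_m(\mathtt{w}_{(r+1)h_{n-1}/h_m-1})),\dots,rev(\kappa_m(\mathtt{w}_{r h_{n-1}/h_m})))$. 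Finally, observe that substituting a $\widetilde{\mathcal{C}}^{\text{twist}}_{m,n-1}$-word into each slot of $\widetilde{\mathcal{C}}^{\text{twist}}_{n-1}$ is, by definition, the operator $\widetilde{\mathcal{C}}^{\text{twist}}_{m,n}$ (the $b\leftrightarrow e$ swap is consistent at every nesting level), and that the combined reversal of the outer block order and the inner block orders amounts to a single global reversal of the full index sequence $0,1,\dots,h_n/h_m-1$. This yields exactly the right-hand side of \eqref{eq:RevTwistInd}.

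The only point requiring care — and the step I expect to be the main obstacle — is bookkeeping the two nested reversals so they compose to the single global reversal $\kappa_m(\mathtt{w}_j) \mapsto rev(\kappa_m(\mathtt{w}_{h_n/h_m-1-j}))$: the outer application of Lemma \ref{lem:ReverseTwist} reverses the $h_n/h_{n-1}$ blocks $\mathtt{v}_r$, and within each reversed block the inductive hypothesis reverses the $h_{n-1}/h_m$ constituents $\mathtt{w}_j$, and one must check these two reversals assemble into the correct total permutation of $\{0,\dots,h_n/h_m-1\}$ rather than some partial shuffle. This is a routine index computation (reversal of a concatenation of equal-length blocks equals reversal of each block followed by reversal of the block order), but it is where the proof could go wrong if done carelessly. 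One should also remark explicitly that $\widetilde{\mathcal{C}}^{\text{twist}}_{m,n}$ is well-defined as the $b\leftrightarrow e$ interchange of $\mathcal{C}^{\text{twist}}_{m,n}$ at every level of the nesting, which is immediate from the inductive construction of $\mathcal{C}^{\text{twist}}_{m,n}$ itself.
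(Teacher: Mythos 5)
Your proposal is correct and follows essentially the same route as the paper: induction with base case Lemma \ref{lem:ReverseTwist}, and an inductive step that peels off the outermost twisting operator via the nested decomposition of $\mathcal{C}^{\text{twist}}_{m,n}$, applies the one-level reversal lemma to it, and then the inductive hypothesis to each inner block. The index bookkeeping you flag (reversal of a concatenation of equal-length blocks equals reversal of the block order composed with reversal within each block) is exactly the computation the paper carries out explicitly.
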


\begin{proof}
	The proof follows by induction for $n$. The case $n=m+1$ holds by Lemma \ref{lem:ReverseTwist}. Suppose now that \eqref{eq:RevTwistInd} holds for $n$. Then we calculate
	\begin{align*}
		& rev\left(\mathcal{C}_{m,n+1}^{\text{twist}}\left(\kappa_m(\mathtt{w}_{0}),\kappa_m(\mathtt{w}_{1}),\dots,\kappa_m(\mathtt{w}_{h_{n+1}/h_m-1})\right) \right) \\
		= & rev\Bigg(\mathcal{C}^{\text{twist}}_{n}\Big(\mathcal{C}^{\text{twist}}_{m,n}\left(\kappa_m(\mathtt{w}_0),\dots,\kappa_m(\mathtt{w}_{\frac{h_n}{h_m}-1})\right),\dots,\\
		& \quad \quad \quad \quad \quad \quad \quad \quad \mathcal{C}^{\text{twist}}_{m,n}\left(\kappa_m(\mathtt{w}_{\frac{h_{n+1}-h_{n}}{h_m}}),\dots, \kappa_m(\mathtt{w}_{\frac{h_{n+1}}{h_m}-1})\right)\Big)\Bigg) \\
		= & \widetilde{\mathcal{C}}^{\text{twist}}_{n}\Bigg(rev\left(\mathcal{C}^{\text{twist}}_{m,n}\left(\kappa_m(\mathtt{w}_{\frac{h_{n+1}-h_{n}}{h_m}}),\dots, \kappa_m(\mathtt{w}_{\frac{h_{n+1}}{h_m}-1})\right)\right),\dots,\\
		& \quad \quad \quad \quad \quad \quad \quad \quad rev\left(\mathcal{C}^{\text{twist}}_{m,n}\left(\kappa_m(\mathtt{w}_0),\dots,\kappa_m(\mathtt{w}_{\frac{h_n}{h_m}-1})\right)\right)\Bigg) \\
		= & \widetilde{\mathcal{C}}^{\text{twist}}_{n}\Bigg( \widetilde{\mathcal{C}}^{\text{twist}}_{m,n}\left(rev(\kappa_m(\mathtt{w}_{\frac{h_{n+1}}{h_m}-1})),\dots,rev(\kappa_m(\mathtt{w}_{\frac{h_{n+1}-h_{n}}{h_m}}))\right),\dots,\\
		& \quad \quad \quad \quad \quad \quad \quad \quad \widetilde{\mathcal{C}}^{\text{twist}}_{m,n}\left(rev(\kappa_m(\mathtt{w}_{\frac{h_n}{h_m}-1})),\dots,rev(\kappa_m(\mathtt{w}_0))\right)\Bigg) \\
		= & \widetilde{\mathcal{C}}_{m,n+1}^{\text{twist}}\left(rev(\kappa_m(\mathtt{w}_{h_{n+1}/h_m-1})),\dots , rev(\kappa_m(\mathtt{w}_1)),rev(\kappa_m(\mathtt{w}_0))\right)
	\end{align*}
where we used Lemma \ref{lem:ReverseTwist} and the induction assumption from \eqref{eq:RevTwistInd}.
\end{proof}

\begin{lem}\label{lem:groupTwistInd}
Let $s\in \N$ and $n>m\geq M(s)$. We write $\mathtt{w} \in \mathtt{W}_n \subset (\mathtt{W}_m)^{h_n/h_m}$ as $\mathtt{w}=\mathtt{w}_0\dots \mathtt{w}_{h_n/h_m-1}$ with $\mathtt{w}_i\in \mathtt{W}_m$. Then for a canonical generator $g\in G^m_s$ we have
\begin{equation}\label{eq:groupTwistInd}
	\begin{split}
	&g\mathcal{C}_{m,n}^{\text{twist}}\left([\kappa_m(\mathtt{w}_{0})]_s,[\kappa_m(\mathtt{w}_{1})]_s,\dots,[\kappa_m(\mathtt{w}_{h_n/h_m-1})]_s\right) \\
	=& \mathcal{C}_{m,n}^{\text{twist}}\left(g[\kappa_m(\mathtt{w}_{h_n/h_m-1})]_s,\dots,g[\kappa_m(\mathtt{w}_{1})]_s,g[\kappa_m(\mathtt{w}_{0})]_s\right)
	\end{split}
\end{equation} 	
\end{lem}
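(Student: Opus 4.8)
The plan is to argue by induction on $n$, following the structure of the proof of Lemma \ref{lem:RevTwistInd}, with the twisted skew diagonal action \eqref{eq:TwistGenerator} playing the role that Lemma \ref{lem:ReverseTwist} plays there. The only bookkeeping point is that if $g$ is a canonical generator of $G^m_s$ and $N\geq m\geq M(s)$, then $g$ is also an element of $G^N_s$ under the inclusion $G^m_s\subseteq G^N_s$, so that in passing from $\mathcal{W}_N/(\mathcal{Q}^N_s)^{\text{twist}}$ to $\mathcal{W}_{N+1}/(\mathcal{Q}^{N+1}_s)^{\text{twist}}$ the action of $g$ is governed by \eqref{eq:TwistGenerator} at level $N$:
\[
g\,\mathcal{C}^{\text{twist}}_N\big([v_0]_s,\dots,[v_{k_N-1}]_s\big)=\mathcal{C}^{\text{twist}}_N\big(g[v_{k_N-1}]_s,\dots,g[v_0]_s\big)
\]
for every $[v_0]_s\dots[v_{k_N-1}]_s\in P_{N+1}/(\mathcal{Q}^{N+1}_s)^{\text{twist}}$, the right-hand side again being a legitimate element of $\mathcal{W}_{N+1}/(\mathcal{Q}^{N+1}_s)^{\text{twist}}$ by specification \ref{item:A8} and Remark \ref{rem:A8Twist}.

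For the base case $n=m+1$ we have $h_{m+1}/h_m=k_m$ and $\mathcal{C}^{\text{twist}}_{m,m+1}=\mathcal{C}^{\text{twist}}_m$, and \eqref{eq:groupTwistInd} is then precisely \eqref{eq:TwistGenerator} at level $m$ with the generator $g\in G^m_s$. For the inductive step, assume \eqref{eq:groupTwistInd} at some $n\geq m+1$, take $\mathtt{w}=\mathtt{w}_0\dots\mathtt{w}_{h_{n+1}/h_m-1}\in\mathtt{W}_{n+1}$, and set $L\coloneqq h_n/h_m$, so that $h_{n+1}/h_m=k_nL$. Grouping the $\mathtt{w}_i$ into $k_n$ consecutive blocks of length $L$, the composition identity for the operators $\mathcal{C}^{\text{twist}}_{m,\cdot}$ recorded just before Lemma \ref{lem:RevTwistInd} — which descends to $\mathcal{Q}$-classes by the inductive definition of $(\mathcal{Q}^{n+1}_s)^{\text{twist}}$ — gives
\[
\mathcal{C}^{\text{twist}}_{m,n+1}\big([\kappa_m(\mathtt{w}_0)]_s,\dots,[\kappa_m(\mathtt{w}_{k_nL-1})]_s\big)=\mathcal{C}^{\text{twist}}_n\big(X_0,\dots,X_{k_n-1}\big),
\]
where $X_j\coloneqq\mathcal{C}^{\text{twist}}_{m,n}\big([\kappa_m(\mathtt{w}_{jL})]_s,\dots,[\kappa_m(\mathtt{w}_{(j+1)L-1})]_s\big)$. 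Applying $g$ and invoking \eqref{eq:TwistGenerator} at level $n$ turns the right-hand side into $\mathcal{C}^{\text{twist}}_n\big(gX_{k_n-1},\dots,gX_0\big)$; the induction hypothesis then rewrites each $gX_j$ as $\mathcal{C}^{\text{twist}}_{m,n}$ applied to the $j$-th block with its $L$ entries in reverse order, each acted on by $g$. Reading the composition identity from right to left collapses the result to $\mathcal{C}^{\text{twist}}_{m,n+1}\big(g[\kappa_m(\mathtt{w}_{k_nL-1})]_s,\dots,g[\kappa_m(\mathtt{w}_0)]_s\big)$, which is exactly \eqref{eq:groupTwistInd} at $n+1$.

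I do not expect a genuine obstacle: the lemma is a routine consequence of the inductive set-up, and the only step requiring care is the index bookkeeping, namely that composing the outer reversal of the $k_n$ blocks with the inner reversal of the $L$ entries inside each block yields precisely the single global reversal of the $h_{n+1}/h_m$ letters $\kappa_m(\mathtt{w}_i)$. This is the exact analogue of the verification carried out in the proof of Lemma \ref{lem:RevTwistInd}; the only simplification is that the group action permutes the word-arguments of $\mathcal{C}^{\text{twist}}_{m,n}$ but leaves the symbols $b$ and $e$ untouched, so no $\widetilde{\mathcal{C}}^{\text{twist}}$ operator enters. A minor secondary point is to confirm at each stage — using specification \ref{item:A8} and Remark \ref{rem:A8Twist} — that the tuples of $\mathcal{Q}$-classes fed into \eqref{eq:TwistGenerator} really do lie in the relevant $P_{N+1}/(\mathcal{Q}^{N+1}_s)^{\text{twist}}$.
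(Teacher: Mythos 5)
Your proof is correct and follows essentially the same route as the paper's: induction on $n$ with base case given by the definition \eqref{eq:TwistGenerator} of the twisted skew diagonal action, and inductive step via the composition identity $\mathcal{C}^{\text{twist}}_{m,n+1}=\mathcal{C}^{\text{twist}}_n\circ(\mathcal{C}^{\text{twist}}_{m,n},\dots)$, applying \eqref{eq:TwistGenerator} at level $n$ to reverse the outer blocks and the induction hypothesis to reverse within each block. The bookkeeping points you flag (the inclusion $G^m_s\subseteq G^N_s$ and closure under the twisted skew diagonal action via Remark \ref{rem:A8Twist}) are exactly the ones the paper relies on implicitly.
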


\begin{proof}
	We proceed similarly to the proof of Lemma \ref{lem:RevTwistInd} by induction in $n$. The case $n=m+1$ holds by definition of the twisted skew-diagonal action in \eqref{eq:TwistGenerator}. Suppose now that \eqref{eq:groupTwistInd} holds for $n$. Then we calculate that
	\begin{equation*}
		\begin{split}
		&g\mathcal{C}_{m,n+1}^{\text{twist}}\left([\kappa_m(\mathtt{w}_{0})]_s,[\kappa_m(\mathtt{w}_{1})]_s,\dots,[\kappa_m(\mathtt{w}_{h_{n+1}/h_m-1})]_s\right) \\
		=&g\Bigg(\mathcal{C}^{\text{twist}}_{n}\Big(\mathcal{C}^{\text{twist}}_{m,n}\left([\kappa_m(\mathtt{w}_0)]_s,\dots,[\kappa_m(\mathtt{w}_{\frac{h_n}{h_m}-1})]_s\right),\dots,\\
		&\quad \quad \quad \quad \quad \quad \quad \quad \mathcal{C}^{\text{twist}}_{m,n}\left([\kappa_m(\mathtt{w}_{\frac{h_{n+1}-h_{n}}{h_m}})]_s,\dots, [\kappa_m(\mathtt{w}_{\frac{h_{n+1}}{h_m}-1})]_s\right)\Big)\Bigg) \\
		=&\mathcal{C}^{\text{twist}}_{n}\Bigg(g\mathcal{C}^{\text{twist}}_{m,n}\left([\kappa_m(\mathtt{w}_{\frac{h_{n+1}-h_{n}}{h_m}})]_s,\dots, [\kappa_m(\mathtt{w}_{\frac{h_{n+1}}{h_m}-1})]_s\right),\dots,\\
		&\quad \quad \quad \quad \quad \quad \quad \quad g\mathcal{C}^{\text{twist}}_{m,n}\left([\kappa_m(\mathtt{w}_0)]_s,\dots,[\kappa_m(\mathtt{w}_{\frac{h_n}{h_m}-1})]_s\right)\Bigg) \\
		=&\mathcal{C}^{\text{twist}}_{n}\Bigg(\mathcal{C}^{\text{twist}}_{m,n}\left(g[\kappa_m(\mathtt{w}_{\frac{h_{n+1}}{h_m}-1})]_s,\dots,g[\kappa_m(\mathtt{w}_{\frac{h_{n+1}-h_{n}}{h_m}})]_s\right),\dots,\\
		&\quad \quad \quad \quad \mathcal{C}^{\text{twist}}_{m,n}\left(g[\kappa_m(\mathtt{w}_{\frac{h_n}{h_m}-1})]_s,\dots,g[\kappa_m(\mathtt{w}_0)]_s\right)\Bigg) \\
		=&\mathcal{C}_{m,n+1}^{\text{twist}}\left(g[\kappa_m(\mathtt{w}_{h_{n+1}/h_m-1})]_s,\dots,g[\kappa_m(\mathtt{w}_{1})]_s,g[\kappa_m(\mathtt{w}_{0})]_s\right),
		\end{split}
	\end{equation*}
where we used \eqref{eq:TwistGenerator} and the induction assumption from \eqref{eq:groupTwistInd}.
\end{proof}

\begin{rem}\label{rem:ClosedUnderTwistSkew}
	In continuation of Remark \ref{rem:A8Twist} we see that $\mathcal{W}_n/(\mathcal{Q}^n_s)^{\text{twist}}$ is closed under the twisted skew diagonal action.
\end{rem}

\subsubsection{Canonical factors of twisted systems}
In \cite[section 5]{FRW} and \cite[section 4.2.2]{GK3} the equivalence relations $\mathcal{Q}_s$ are used to define a canonical sequence of factors $\mathbb{K}_s$. The equivariant transfer of equivalence relations and group actions from the previous Subsection~\ref{subsubsec:propagate} gives factors $\mathbb{K}^{\text{twist}}_s$ of the associated twisted system $\mathbb{K}^{\text{twist}}$. In this section we describe these factors explicitly.

For each $s\geq1$ unique readability allows us to identify in a typical $x\in\mathbb{K}^{\text{twist}}$ the $M(s)$-blocks and spacer symbols $b,e$ introduced at a later stage of the construction, that is, they belong to $\bigcup_{i>M(s)}\partial_i(x)$. This gives a bi-infinite sequence of spacer symbols $b,e$ and classes $\Meng{ c_{j}^{(M(s),s)}}{j=1,\dots,Q_{s}^{M(s)}}$
in $(\mathcal{Q}_{s}^{M(s)})^{\text{twist}}$. In fact, we define a map $$\tilde{\pi}^{\text{twist}}_{s}:\mathbb{K}^{\text{twist}}\to\left\{ 1,\dots,Q_{s}^{M(s)},b,e\right\} =\left(\mathcal{W}_{M(s)}/\mathcal{Q}_{s}^{M(s)}\right)^{\ast}\cup \{b,e\}$$ as follows:
If there is a $M(s)$-block at position $0$ of $x\in\mathbb{K}^{\text{twist}}$, then $\tilde{\pi}^{\text{twist}}_s$ assigns to $x$ the letter $j$, where the word $w\in\mathcal{W}_{M(s)}$
on the $M(s)$-block of $x$ containing the position $0$ satisfies $[w]_{s}=c_{j}^{(M(s),s)}$. Otherwise, there is a spacer symbol $b$ or $e$ at position $0$ of $x$ and $\tilde{\pi}^{\text{twist}}_s$ assigns to $x$ the letter $b$ or $e$, respectively. Hereby, we define a shift-equivariant
map $\pi^{\text{twist}}_{s}:\mathbb{K}^{\text{twist}}\to\left\{ 1,\dots,Q_{s}^{M(s)},b,e\right\} ^{\mathbb{Z}}$
by letting  $\pi_{s}(x)=\left(\tilde{\pi}_{s}(sh^{k}(x))\right)_{k\in\mathbb{Z}}.$ We denote the image of $\pi^{\text{twist}}_s$ by $\mathbb{K}^{\text{twist}}_{s}$ which is a factor of $\mathbb{K}^{\text{twist}}$ by construction. There is an analogous map from $rev(\mathbb{K}^{\text{twist}})$ to $rev(\mathbb{K}^{\text{twist}}_s)$ that we also denote by $\pi^{\text{twist}}_s$.

Next, we describe a convenient base for the topology on $\mathbb{K}^{\text{twist}}_{s}$. For this purpose, we recall from Remark \ref{rem:E2Twist} that for $n\geq M(s)$ any word $w\in \mathcal{W}_n$ can be written as $w=\mathcal{C}^{\text{twist}}_{M(s),n}\left(w_0,w_1,\dots , w_{h_n/h_{M(s)}-1}\right)$ with $w_i \in \mathcal{W}_{M(s)}$. Thus, $[w]_s$ can be written as a concatenation of $\mathcal{Q}_{s}^{M(s)}$ classes and spacers. Accordingly, for $n\geq M(s)$, $w\in\mathcal{W}_{n}$ and $0\leq k<q_{n}$, we
let $\left\langle [w]_{s},k\right\rangle $ be the collection of $x\in\mathbb{K}^{\text{twist}}_{s}$
such that its position $0$ lies in an $n$-block, the position $0$ is at the $k$-th place in the $n$-block
$B$ of $x$ containing the position $0$, and if $v\in\left\{ 1,\dots,Q_{s}^{M(s)},b,e\right\} ^{q_{n}}$
is the word in $x$ at the block $B$, then $v$ is the sequence of
$\mathcal{Q}_{s}^{M(s)}$ classes and spacers given by $[w]_{s}$. Then the collection
of those $\left\langle [w]_{s},k\right\rangle $ for $n\geq M(s)$, $w\in\mathcal{W}_{n}$ and $0\leq k<q_{n}$ forms a basis for the
topology of $\mathbb{K}^{\text{twist}}_{s}$ consisting of clopen sets. 

\begin{rem} \label{rem:ConstrSeqKs}
In particular, for $m=M(s)$ a word $w\in\mathcal{W}_{m}$ gives a word of length
$q_{m}$ over our alphabet $\left(\mathcal{W}_{M(s)}/\mathcal{Q}_{s}^{M(s)}\right)^{\ast}$
that is the repetition of the same letter. We denote the collection
of these words by $\left(\mathcal{W}_{m}\right)_{s}^{\ast}$. Then
for $n>m=M(s)$ each word $w\in \mathcal{W}_{n}$ can be written as $\mathcal{C}^{\text{twist}}_{m,n}\left(w_0,\dots , w_{h_n/h_m-1}\right)$ with $w_i \in \mathcal{W}_{m}$
by Remark \ref{rem:E2Twist} and, thus, determines a sequence of spacer symbols $b,e$ and elements of $\left(\mathcal{W}_{m}\right)_{s}^{\ast}$. We let
$\left(\mathcal{W}_{n}\right)_{s}^{\ast}$ be the collection of words
over the alphabet $\left(\mathcal{W}_{M(s)}/\mathcal{Q}_{s}^{M(s)}\right)^{\ast}\cup\{b,e\}$
arising this way. Then the sequence $\left((\mathcal{W}_{n})_{s}^{\ast}\right)_{n\geq M(s)}$
gives a well-defined twisted construction sequence for $\mathbb{K}^{\text{twist}}_{s}$
over the alphabet $\left(\mathcal{W}_{M(s)}/\mathcal{Q}_{s}^{M(s)}\right)^{\ast}\cup\{b,e\}$.
\end{rem}

We also define the measure $\nu_{s}\coloneqq(\pi^{\text{twist}}_{s})^{\ast}\nu$ on
$\mathbb{K}^{\text{twist}}_{s}$. To be more explicit, with the aid of specifications
\ref{item:E2} and \ref{item:Q6} for the odometer-based system, the transfer specifications~\ref{item:P1} and~\ref{item:P2}, and the proportion of symbols in the boundary for a typical $x \in \mathbb{K}$ we can show that $$\nu_{s}(\left\langle [w]_{s},k\right\rangle)=\frac{1}{q_nQ_{s}^{n}}\cdot (1-\sum_{i\geq n}\frac{1}{l_i})$$ for any $w\in \mathcal{W}_{n}$ and $0\leq k<q_n$. 

Finally, we let $\mathcal{H}^{\text{twist}}_{s}$
be the shift-invariant sub-$\sigma$-algebra of $\mathcal{B}(\mathbb{K}^{\text{twist}})$
generated by the collection of $(\pi^{\text{twist}}_{s})^{-1}(B)$, where $B$ is a
basic open set in $\mathbb{K}^{\text{twist}}_{s}$. Then $\mathcal{H}^{\text{twist}}_{s}$
is the sub-$\sigma$-algebra determined by the factor map $\pi^{\text{twist}}_{s}$. 

Since the equivalence relation $\mathcal{Q}_{s+1}^{M(s+1)}$ refines
$\left(\mathcal{Q}_{s}^{M(s)}\right)^{h_{M(s+1)}/h_{M(s)}}$ by specifications
\ref{item:E2} and \ref{item:Q6}, we have $\mathcal{H}^{\text{twist}}_{s+1}\supseteq\mathcal{H}^{\text{twist}}_{s}$
and a continuous factor map $\pi^{\text{twist}}_{s+1,s}:\mathbb{K}^{\text{twist}}_{s+1}\to\mathbb{K}^{\text{twist}}_{s}$.
To express this one explicitly, we note that a $\mathbb{Z}$-sequence
of $(\mathcal{Q}_{s+1}^{M(s+1)})^{\text{twist}}$-classes and spacers determines a sequence of
$(\mathcal{Q}_{s}^{M(s)})^{\text{twist}}$-classes and spacers, because a $(\mathcal{Q}_{s+1}^{M(s+1)})^{\text{twist}}$
class is contained in a $(\mathcal{Q}_{s}^{M(s+1)})^{\text{twist}}$ class which can be expressed as 
\[
c^{(M(s+1),s)}_j =\mathcal{C}^{\text{twist}}_{M(s),M(s+1)} \left( c^{(M(s),s)}_{0},\dots , c^{(M(s),s)}_{h_{M(s+1)}/h_{M(s)}-1}\right)
\]
with an $h_{M(s+1)}/h_{M(s)}$-tuple of $(\mathcal{Q}_{s}^{M(s)})^{\text{twist}}$ classes $\left( c^{(M(s),s)}_{0},\dots , c^{(M(s),s)}_{h_{M(s+1)}/h_{M(s)}-1}\right)$. Analogously we define a map from $rev(\mathbb{K}^{\text{twist}}_{s+1})$ to $rev(\mathbb{K}^{\text{twist}}_{s})$ that we also denote by $\pi^{\text{twist}}_{s+1,s}$.

Let $n=M(s)$ for some $s \in \Z^+$. We define $G_{n} \subset \mathbb{K}^{\text{twist}}\setminus \bigcup_{m\geq n}\partial_m$ to be the collection of $x \in \mathbb{K}^{\text{twist}}$ such that there is an $n$-block at position $0$ of $x$ and if this principal $n$-word is given by $\mathcal{C}^{\text{twist}}_{n-1}(w_0,\dots , w_{k_{n-1}-1})$, then $x(0)$ belongs to an $(n-1)$-word $w_i$ with
\[
\frac{k_{n-1}}{2\mathfrak{p}_{n}J_{s,n}} \leq i \mod \frac{k_{n-1}}{J_{s,n}} < \frac{k_{n-1}}{J_{s,n}}-\frac{k_{n-1}}{2\mathfrak{p}_{n}J_{s,n}}.
\]
We note that
\[
\nu(G^c_n) \leq \frac{1}{\mathfrak{p}_n} + \sum_{m\geq n}\frac{1}{l_{m-1}}.
\]
Let $G$ be the collection of $x\in\mathbb{K}^{\text{twist}}$ such that for all large $s$,
if $n=M(s)$ then $x$ belongs to $G_{n}$. Then $G$ has measure one by
equation~\eqref{eq:lStrict}, $\sum_{n \in \N}1/\mathfrak{p}_n < \infty$, and the Borel-Cantelli Lemma.
In analogy with \cite[Proposition 23]{FRW} we can prove the following
	statement making use of specification \ref{item:Q4}.
	\begin{lem}
		\label{lem:algebra}$\mathcal{B}\left(\mathbb{K}^{\text{twist}}\right)$ is the smallest
		invariant $\sigma$-algebra that contains $\bigcup_{s}\mathcal{H}^{\text{twist}}_{s}$.
	\end{lem}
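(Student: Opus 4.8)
The plan is to mirror the proof of \cite[Proposition 23]{FRW}, transported along our equivariant propagation maps to the twisted setting. Let $\mathcal{A}$ be the smallest shift-invariant $\sigma$-algebra containing $\bigcup_s \mathcal{H}^{\text{twist}}_s$. Since each $\mathcal{H}^{\text{twist}}_s\subseteq \mathcal{B}(\mathbb{K}^{\text{twist}})$ and $\mathcal{B}(\mathbb{K}^{\text{twist}})$ is shift-invariant, the inclusion $\mathcal{A}\subseteq\mathcal{B}(\mathbb{K}^{\text{twist}})$ is immediate; the content is the reverse inclusion. It suffices to show that for $\nu$-a.e.\ $x\in\mathbb{K}^{\text{twist}}$ the value $x(0)$ is determined (modulo $\nu$-null sets) by the data $\left(\pi^{\text{twist}}_s(\sh^k(x))\right)_{s\in\Z^+,\,k\in\Z}$, i.e.\ by $\bigvee_s \mathcal{H}^{\text{twist}}_s$ together with the shift; equivalently, the clopen cylinder $\langle \sigma\rangle$ for each $\sigma\in\Sigma$ lies in $\mathcal{A}$ up to a null set. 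Because $\Sigma\cup\{b,e\}$ is finite and the factor maps $\pi^{\text{twist}}_s$ already record the $b,e$ positions (they belong to $\bigcup_{i>M(s)}\partial_i$), the only thing to recover is \emph{which} basic symbol of $\Sigma$ sits at position $0$ inside a given $M(s)$-block, beyond the information of which $(\mathcal{Q}^{M(s)}_s)^{\text{twist}}$-class that block lies in.

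First I would work on the good set $G\subseteq\mathbb{K}^{\text{twist}}$ of full measure introduced just before the lemma, on which for all large $s$, writing $n=M(s)$, the point $x$ lies in $G_n$: the principal $n$-word $\mathcal{C}^{\text{twist}}_{n-1}(w_0,\dots,w_{k_{n-1}-1})$ has $x(0)$ inside an $(n-1)$-subword $w_i$ whose index $i$ satisfies the ``middle'' congruence condition
\[
\frac{k_{n-1}}{2\mathfrak{p}_nJ_{s,n}}\le i \bmod \frac{k_{n-1}}{J_{s,n}} < \frac{k_{n-1}}{J_{s,n}}-\frac{k_{n-1}}{2\mathfrak{p}_nJ_{s,n}}.
\]
This is exactly the range of indices used in specification \ref{item:Q4} to define the coarse equivalence relation $\mathcal{Q}^n_s$: two $n$-words agree on all their ``middle'' $(n-1)$-subwords precisely when they are $\mathcal{Q}^n_s$-equivalent. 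Translating through the propagation rules (P1), (P2) and the intrinsic inductive description of $(\mathcal{Q}^n_s)^{\text{twist}}$, knowing the $(\mathcal{Q}^{M(s)}_s)^{\text{twist}}$-class of the $M(s)$-block of $x$ at position $0$ — which is exactly $\tilde\pi^{\text{twist}}_s(x)$ — together with the position $k$ of $0$ inside that block, pins down the $(n-1)$-subword $w_i$ containing $x(0)$ whenever $i$ is a middle index, hence pins down $x(0)$ itself. So on $G$, for all sufficiently large $s$, the pair $\left(\pi^{\text{twist}}_s(x),\ \text{position of }0\text{ in the }M(s)\text{-block}\right)$ determines $x(0)$. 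The position of $0$ in the $M(s)$-block is itself recoverable from $\pi^{\text{twist}}_s(x)$ and the construction-sequence structure of $\mathbb{K}^{\text{twist}}_s$ (Remark \ref{rem:ConstrSeqKs} and Fact \ref{fact:MeasureConstrSeq}(3)), since for $\nu_s$-a.e.\ point the parsing into $(\mathcal{W}_n)^*_s$-words of increasing length is eventually unambiguous.

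Assembling: fix $\sigma\in\Sigma$. For each large $s$ let $A_s$ be the set of $x$ in $G$ whose $M(s)$-block at position $0$ is a middle $(n-1)$-subword, with the congruence and with the recovered symbol equal to $\sigma$; this set is a countable union over $n$-words $w\in\mathcal{W}_{M(s)}$ of suitable cylinders $(\pi^{\text{twist}}_s)^{-1}\langle [w]_s,k\rangle$, hence lies in $\mathcal{H}^{\text{twist}}_s\subseteq\mathcal{A}$. By the Borel--Cantelli argument behind the definition of $G$ (using \eqref{eq:primes} and \eqref{eq:lStrict}), $\nu$-a.e.\ $x\in\langle\sigma\rangle$ lies in $A_s$ for all large $s$, and conversely; so $\langle\sigma\rangle$ differs from $\liminf_s A_s\in\mathcal{A}$ by a $\nu$-null set, whence $\langle\sigma\rangle\in\mathcal{A}$. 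Since the cylinders $\langle\sigma\rangle$, $\sigma\in\Sigma\cup\{b,e\}$, generate $\mathcal{B}(\mathbb{K}^{\text{twist}})$ under the shift, we conclude $\mathcal{B}(\mathbb{K}^{\text{twist}})\subseteq\mathcal{A}$, giving equality.

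The main obstacle I expect is bookkeeping, not mathematics: one has to check carefully that the twisting operator does not destroy the ``middle'' recovery mechanism — that is, that after applying $\mathcal{C}^{\text{twist}}_{M(s),n}$ the boundary spacers $b^{\cdot}$, $e^{\cdot}$ inserted at subscales $1$ and $2$ only ever occupy positions recorded by $\bigcup_{i>M(s)}\partial_i$ and never overlap the interiors of the middle $(n-1)$-subwords, so that the position-$k$-within-block data together with $[w]_s$ genuinely localizes $x(0)$. This is where specification \ref{item:Q4} (that $2J_{s,n}\mathfrak{p}_n^2$ divides $k_{n-1}$) and the description of interior versus boundary in Definition \ref{defn:boundary} and Remark \ref{rem:Subsection} must be combined; once that compatibility is in hand the rest is the standard Borel--Cantelli packaging, exactly as in \cite{FRW}.
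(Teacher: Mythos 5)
Your proposal is correct and follows essentially the same route as the paper: the paper's proof also restricts to the full-measure-up-to-$\varepsilon$ sets $G_n$ and uses specification \ref{item:Q4} to deduce that the $(\mathcal{Q}_s^{M(s)})^{\text{twist}}$-class of the principal $M(s)$-block, together with the position of $0$ inside it (which is exactly the data carried by the cylinders $\langle c_j^{(n,s)},k\rangle\in\mathcal{H}^{\text{twist}}_s$), determines the word at position $0$ whenever $0$ lies in a middle $(n-1)$-subword. The only cosmetic differences are that the paper approximates $m$-word cylinders $\langle u\rangle$ in measure by finite unions of shifted $\mathcal{H}^{\text{twist}}_s$-sets rather than packaging the recovery as a $\liminf$ over $s$ of single-symbol events, and your compatibility worry about the spacers is already resolved by the definition of $G_n$, which excludes $\bigcup_{m\ge n}\partial_m$.
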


	Using the set $G$ from above, we collect the following properties as in Propositions 24 and 25 of
	\cite{FRW}.
	\begin{lem}
		\label{lem:subalgebra}
		\begin{enumerate}
			\item For all $x\neq y$ belonging to $G$, there is an open set $S\in\bigcup_{s}\mathcal{H}^{\text{twist}}_{s}$
			such that $x\in S$ and $y\notin S$.
			\item For all $s\geq1$, $\mathcal{H}^{\text{twist}}_{s}$ is a strict subalgebra of $\mathcal{H}^{\text{twist}}_{s+1}$.
		\end{enumerate}
\end{lem}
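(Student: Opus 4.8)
The plan is to run, \emph{mutatis mutandis}, the arguments of \cite[Propositions 24 and 25]{FRW}, with the odometer computations there replaced by the twisted bookkeeping set up in Subsection~\ref{subsubsec:propagate} (in particular Remarks~\ref{rem:A8Twist}, \ref{rem:ClosedUnderTwistSkew}, \ref{rem:E2Twist} and Lemmas~\ref{lem:RevTwistInd}, \ref{lem:groupTwistInd}), together with the description of the factors $\mathbb{K}^{\text{twist}}_s$, the base $\left\langle [w]_s,k\right\rangle$, and the measure formula for $\nu_s$ recorded above. Throughout we use that each $\mathcal{H}^{\text{twist}}_s$ is shift-invariant and equals $(\pi^{\text{twist}}_s)^{-1}\big(\mathcal{B}(\mathbb{K}^{\text{twist}}_s)\big)$, so that to separate two points (resp.\ to exhibit a set outside $\mathcal{H}^{\text{twist}}_s$) it suffices to work at position $0$ after a shift (resp.\ to produce two points of $\mathbb{K}^{\text{twist}}$ with the same $\pi^{\text{twist}}_s$-image).

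For part~(1), fix $x\neq y$ in $G$ and a coordinate $k_0$ with $x(k_0)\neq y(k_0)$. For all large $s$, writing $n=M(s)$, both $x$ and $y$ lie in $G_n$, so by unique readability of $\{\mathtt{W}_n\}$ and of the twisted construction sequence the entire $n$-block parsing of $x$ and of $y$ is determined, and specification \ref{item:Q4} says that the class of an $n$-word in $(\mathcal{Q}^n_s)^{\text{twist}}$ records \emph{exactly} the sequence of its $(n-1)$-subwords on the ``middle'' indices isolated in \ref{item:Q4}. I would then argue that, for $n=M(s)$ large enough, one can shift so that $k_0$ falls inside a common $n$-block of $x$ and $y$ and onto one of those middle indices; if the two $n$-words already differ there, their $(\mathcal{Q}^n_s)^{\text{twist}}$-classes differ and $(\pi^{\text{twist}}_s)^{-1}\big(\left\langle [w]_s,k\right\rangle\big)$ (with $w$ the $n$-word of $x$, $k$ the place of $k_0$) is the desired clopen $S\in\mathcal{H}^{\text{twist}}_s$; if not, specifications \ref{item:Q5}, \ref{item:Q6} let one push the discrepancy at $k_0$ onto a middle index at a later stage $s'\le s(n')$ for suitable $n'$. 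The content specific to our (lightly modified) substitution of Section~\ref{subsec:Substitution} is precisely that any two distinct $n$-words are separated by $(\mathcal{Q}^n_{s})^{\text{twist}}$ for some $s$, and that the middle-index windows of \ref{item:Q4}, as $s$ ranges, eventually cover every coordinate.

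For part~(2), we already know $\mathcal{H}^{\text{twist}}_s\subseteq\mathcal{H}^{\text{twist}}_{s+1}$, so only strictness needs proof. Set $n=M(s+1)$. By \ref{item:Q6} a $(\mathcal{Q}^n_s)^{\text{twist}}$-class splits into $2^{4e(n)}\ge 2$ classes in $(\mathcal{Q}^n_{s+1})^{\text{twist}}$; pick $w,w'\in\mathcal{W}_n$ with $[w]_s=[w']_s$ but $[w]_{s+1}\neq[w']_{s+1}$ (these exist because $\mathcal{W}_n/(\mathcal{Q}^n_s)^{\text{twist}}$ is closed under the twisted skew diagonal action and \ref{item:E2}, \ref{item:Q6} hold for the $\mathtt{W}_n$-side via $\kappa_n$). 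Put $A\coloneqq(\pi^{\text{twist}}_{s+1})^{-1}\big(\left\langle [w]_{s+1},0\right\rangle\big)\in\mathcal{H}^{\text{twist}}_{s+1}$, which has positive $\nu$-measure by the formula for $\nu_{s+1}$. Since both $w$ and $w'$ occur, in parallel positions, inside $(n+1)$-words (by strong uniformity \ref{item:E2} and the group-action structure propagated in (P1)--(P2)), one can build $z,z'\in\mathbb{K}^{\text{twist}}$ that agree everywhere except on a single $n$-block carrying $w$ in $z$ and $w'$ in $z'$; then $\pi^{\text{twist}}_s(z)=\pi^{\text{twist}}_s(z')$ while $z\in A$, $z'\notin A$. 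As any member of $\mathcal{H}^{\text{twist}}_s=(\pi^{\text{twist}}_s)^{-1}(\mathcal{B}(\mathbb{K}^{\text{twist}}_s))$ cannot separate $z$ from $z'$, $A\notin\mathcal{H}^{\text{twist}}_s$, proving strictness.

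The main obstacle I anticipate is in part~(1): because $G$ is not shift-invariant and the twisted $n$-blocks are separated by boundary spacers of varying length $q_n-\psi_n(i)-j_m$ and $\psi_n(i)+j_m$, aligning two generic points so that a prescribed discrepancy coordinate lands on a common middle index is not as automatic as in the purely odometer-based case of \cite{FRW}; making this precise is where the combinatorics of \ref{item:Q4} interacts with the explicit form of $\mathcal{C}^{\text{twist}}_n$. Once that alignment step is in place, the remaining verifications are routine transcriptions of \cite[Props.\ 24--25]{FRW} using the twisted analogues of the relevant identities.
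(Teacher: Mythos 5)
Your overall route matches the paper's: the paper gives no written proof of this lemma and simply invokes Propositions 24 and 25 of \cite{FRW}, using the set $G$ and specification \ref{item:Q4}, which is exactly what you reconstruct. For part (1) your outline is essentially the intended argument (the same mechanism as in the proof of Lemma \ref{lem:algebra}: for $x\in G_n$ the $(\mathcal{Q}^n_s)^{\text{twist}}$-class of the principal $n$-block, together with its location, determines the symbols of $x$ at all middle-indexed $(n-1)$-subwords and at all spacer positions, since the spacer pattern of $\mathcal{C}^{\text{twist}}_{n-1}$ is the same for every $n$-word); the alignment worry you flag is resolved by letting $s$ grow, since the contiguous middle windows of \ref{item:Q4} have length tending to infinity and $x,y\in G$ places position $0$ in such a window at every large scale.

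There is, however, a genuine gap in part (2): the points $z,z'$ that ``agree everywhere except on a single $n$-block carrying $w$ in $z$ and $w'$ in $z'$'' generally do not both lie in $\mathbb{K}^{\text{twist}}$. Membership in $\mathbb{K}^{\text{twist}}$ requires every finite substring to occur inside some word of the construction sequence, and the $(n+1)$-words are rigid concatenations produced by the substitution process; replacing one $n$-subword $w$ by another $w'$ at a fixed position almost never yields an element of $\mathcal{W}_{n+1}$, so the ``swapped'' point leaves the subshift. Neither strong uniformity \ref{item:E2} nor (P1)--(P2) licenses such a local surgery. Two standard repairs are available. Either (i) build $z,z'$ by a nested selection: using \ref{item:Q5} and \ref{item:Q6}, choose for every $m\geq n=M(s+1)$ words $v_m,v_m'\in\mathcal{W}_m$ with $[v_m]_s=[v_m']_s$, with $v_m$ (resp.\ $v_m'$) occurring in $v_{m+1}$ (resp.\ $v_{m+1}'$) at the same position, and with $[v_n]_{s+1}\neq[v_n']_{s+1}$ at the block containing position $0$; the limits $z,z'$ then satisfy $\pi^{\text{twist}}_s(z)=\pi^{\text{twist}}_s(z')$ (they agree only at the level of $\mathcal{Q}_s$-names, not symbol-for-symbol, which is all you need since every set in $\mathcal{H}^{\text{twist}}_s=(\pi^{\text{twist}}_s)^{-1}(\mathcal{B}(\mathbb{K}^{\text{twist}}_s))$ is $\pi^{\text{twist}}_s$-saturated), while $z\in A$, $z'\notin A$. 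Or (ii), the measure-theoretic version robust even modulo null sets: by \ref{item:Q6} and the uniform distribution of $\mathcal{Q}_{s+1}$-classes within $\mathcal{Q}_s$-classes coming from the substitution step, $E_{\nu}\left[\mathbf{1}_A\mid\mathcal{H}^{\text{twist}}_s\right]$ equals $2^{-4e(n)}$ on the positive-measure set $(\pi^{\text{twist}}_s)^{-1}\left(\left\langle [w]_s,0\right\rangle\right)$, hence is not $\{0,1\}$-valued and $A\notin\mathcal{H}^{\text{twist}}_s$. With either repair the rest of your part (2) goes through.
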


\subsection{\label{subsec:Substitution}A general substitution step}
	In this subsection we describe a step in our iteration of substitutions that we use in the following subsection. It is very similar to the general substitution step in \cite[section 7]{GK3} with some small modifications to satisfy the requirements in our criterion for weak mixing in Proposition \ref{prop:WM2}. For the reader's convenience we present the substitution step in detail and emphasize the modifications to \cite[section 7]{GK3}. As in \cite{GK3} the substitution will have the following initial data: 
	\begin{itemize}
		\item An alphabet $\Sigma$ and a collection of words $X\subset\Sigma^{\mathfrak{h}}$
		\item Equivalence relations $\mathcal{P}$ and $\mathcal{R}$ on $X$ with
		$\mathcal{R}$ refining $\mathcal{P}$
		\item Groups of involutions $G$ and $H$ with distinguished generators
		\item A homomorphism $\rho:H\to G$ that preserves the distinguished generators.
		We denote the range of $\rho$ by $G^{\prime}$ and its kernel by
		$H_{0}$ with cardinality $|H_{0}|=2^{t}$ for some $t\in\mathbb{N}$.
		\item A free $G$ action on $X/\mathcal{P}$ and a free $H$ action on $X/\mathcal{R}$
		such that the $H$ action is subordinate to the $G$ action via $\rho$. 
		\item There are $N$ different equivalence classes in $X/\mathcal{P}$ denoted
		by $[A_{i}]_{\mathcal{P}}$, $i=1,\dots,N$, where $N=2^{\nu+N'}$
		with $N',\nu\in\mathbb{N}$.
		\item Each equivalence class $[A_{i}]_{\mathcal{P}}$ contains $2^{4e}$
		elements of $X/\mathcal{R}$, where $e\in\mathbb{Z}^{+}$ with $e\geq \max(2,t)$.
		We subdivide these $\mathcal{R}$ classes contained in $[A_{i}]_{\mathcal{P}}$ into $2^t$ tuples 
		$
		\left(\left[A_{i,u2^{4e-t}+1}\right]_{\mathcal{R}},\dots,\left[A_{i,(u+1)2^{4e-t}}\right]_{\mathcal{R}}\right),
		$
		where $u\in\{0,\dots,2^t-1\}$, such that each tuple intersects each orbit
		of the $H_{0}$ action exactly once and the tuples are images of each
		other under the action by $H_{0}$. 
		\item For some $R\ge 2$ and some $\alpha\in\left(0,\frac{1}{8}\right)$
		we have 
		$
			\overline{f}(A,\bar{A})\geq\alpha
		$
		for any substantial substrings $A$ and $\bar{A}$ of at least $\frac{\mathfrak{h}}{R}$
		consecutive symbols in any representatives of two different $\mathcal{P}$-equivalence
		classes, that is, representatives of $\left[A_{i_{1},j_{1}}\right]_{\mathcal{R}}$
		and $\left[A_{i_{2},j_{2}}\right]_{\mathcal{R}}$ for $i_{1}\neq i_{2}$
		and any $j_{1},j_{2}\in\left\{ 1,\dots,2^{4e}\right\} $.
		\item For some $\beta\in(0,\alpha]$ we have 
		$
			\overline{f}(A,\bar{A})\geq\beta
		$
		for any substantial substrings $A$ and $\bar{A}$ of at least $\frac{\mathfrak{h}}{R}$
		consecutive symbols in any representatives of two different $\mathcal{R}$-equivalence
		classes, that is, representatives of $\left[A_{i,j_{1}}\right]_{\mathcal{R}}$
		and $\left[A_{i,j_{2}}\right]_{\mathcal{R}}$, respectively, for $j_{1}\neq j_{2}$.
	\end{itemize}
	Let $K,T_{2}\in\mathbb{Z}^{+}$ and $\tilde{R}\ge 2$ be given. Moreover, let an even $D \in \mathbb{Z}^+$ be given. 
	\begin{rem*}
		This parameter $D$ is introduced in addition to the parameters from \cite[section 7]{GK3}. In the applications of our substitution step in Section \ref{subsec:Construction}, we choose $D=2^{n+2}q_n$ decribing a division of newly constructed pre-words into segments as required for the application of the twisting operator (recall the condition $k_n=2^{n+2}q_n C_n$ from \eqref{eq:k}). This corresponds to the subdivision of the fundamental domain $\Delta^{0,0}_{q_n,1}$ into $2^{n+2}q_n$ vertical segments in our weakly mixing constructions, particularly in the definition of the numbers $a_n(\cdot)$ in the construction of map $h_{n+1,1}$. 
	\end{rem*}
	 
	 We also suppose that
	 there are numbers $M_{1},P,U_{1}\in\mathbb{Z}^{+}$, where $\nu\cdot(2M_{1}+3)\geq2(\nu+N')$
	 and $U_{1}$ is a multiple of $D2^{2t}$ such that 
	 \begin{equation}
	 	U_{1}\geq2\tilde{R}^{2}.\label{eq:U}
	 \end{equation}
	 Finally, let $M_{2}\in\mathbb{Z}^{+}$ with
	 \begin{equation}
	 	M_{2}\geq K\cdot P\cdot U_{1}\cdot2^{\nu\cdot(2M_{1}+3)}.\label{eq:M2}
	 \end{equation}
	 Hereby, we define the numbers
	 \begin{equation}
	 	T_{1}\coloneqq T_{2}\cdot2^{(4e-t)\cdot(2M_{2}+3)},\label{eq:T1}
	 \end{equation}
	 \begin{equation}
	 	U_{2}\coloneqq U_{1}\cdot2^{\nu\cdot(2M_{1}+3)},\label{eq:U2}
	 \end{equation}
	 and
	 $
	 	k=U_{1}\cdot T_{1}\cdot2^{\nu\cdot(2M_{1}+3)}=U_2 \cdot T_1.
	 $
	 Note that $k$ is a multiple of $N^2$ by our assumption $\nu\cdot(2M_{1}+3)\geq2(\nu+N')$.
	 
	 Suppose we have a collection $\Omega\subset\left(X/\mathcal{P}\right)^{k}$
	 of cardinality $|\Omega|=P$ that satisfies the following properties:
	 \begin{enumerate}[label=(B\arabic*)]
	 	\item\label{item:B1} $\Omega$ is closed under the skew diagonal action of $G$,
	 	\item\label{item:B2} each $\omega\in\Omega$ is a different concatenation of $U_{1}$ many
	 	different $\left(T_{1},2^{\nu},M_{1}\right)$-Feldman patterns  as described in Section \ref{subsec:Feldman}, each
	 	of which is constructed out of a tuple consisting of $2^{\nu}$ many
	 	$[A_{i}]_{\mathcal{P}}$, 
	 	\item\label{item:B3} each $\omega \in \Omega$ can be written as $\omega=[A_{k(1)}]^{T_12^{2t}}_{\mathcal{P}}\dots [A_{k(U_2/2^{2t})}]^{T_12^{2t}}_{\mathcal{P}}$, where $k(1),\dots , k(U_2/2^{2t}) \in \{1,\dots , N\}$. We require for every $\omega \in \Omega$, $i_1,i_2 \in \{1,\dots , N\}$, and $d\in \{0,1,\dots D-1\}$ that the set $K(i_1,i_2)$ defined by
	 	\begin{equation*}
	 		\Meng{1\leq \ell \leq \frac{U_2}{D2^{2t}}}{k(d\frac{U_2}{D2^{2t}}+\ell)=i_1, \, k((d+1)\frac{U_2}{D2^{2t}}+\ell \mod \frac{U_2}{2^{2t}})=i_2}
	 	\end{equation*}
	 	has cardinality $\frac{U_2}{N^2D2^{2t}}$. (At this point, we note that $U_2=U_{1}\cdot2^{\nu\cdot(2M_{1}+3)}$ is a multiple of $N^2D2^{2t}$ since $\nu\cdot(2M_{1}+3)\geq2(\nu+N')$ and $U_1$ is a multiple of $D2^{2t}$.)
	 	This implies for every $\omega \in \Omega$, $i_0 \in \{1,\dots , N\}$, and $d\in \{0,1,\dots D-1\}$ that
	 	\begin{equation}\label{eq:Assum3}
	 		r\left(i_0, \, k(d\frac{U_2}{D2^{2t}}+1)k(d\frac{U_2}{D2^{2t}}+2)\dots k((d+1)\frac{U_2}{D2^{2t}})\right) = \frac{1}{N}\cdot \frac{U_2}{D2^{2t}}.
	 	\end{equation}  
	 \end{enumerate}
 
 \begin{rem*}
 	Clearly, our assumption \ref{item:B3} implies that each $[A_{i}]_{\mathcal{P}}\in X/\mathcal{P}$ occurs exactly $\frac{k}{N}$
 	times in each $\omega\in\Omega$. Hence, this uniformity assumption in \cite{GK3} is satisfied. We ask for the stronger assumption \ref{item:B3} in order to produce words satisfying condition \ref{item:R4} for our weakly mixing constructions.
 \end{rem*}

Then we construct a collection $S\subset\left(X/\mathcal{R}\right)^{k}$
of substitution instances of $\Omega$: 
\begin{enumerate}
	\item We start by choosing a set $\Upsilon\subset\Omega$ that intersects
	each orbit of the action by the group $G^{\prime}$ exactly once.
	\item We construct a collection
	of $M_{2}$ many different $\left(T_{2},2^{4e-t},M_{2}\right)$-Feldman
	patterns, where the tuple of building blocks is to be determined in
	step~(6). Note that each such pattern is constructed as a concatenation
	of $T_{2}2^{(4e-t)\cdot(2M_{2}+3)}$ many building blocks in total
	which motivates the definition of the number $T_{1}$ from equation \eqref{eq:T1}.
	\item By assumption on $\Omega$ we can subdivide each element $r\in\Upsilon$
	as a concatenation of $U_{1}2^{\nu\cdot(2M_{1}+3)}=U_{2}$ strings
	of the form $[A_{i}]_{\mathcal{P}}^{T_{1}}$ and each $i\in\left\{ 1,\dots,N\right\} $
	occurs exactly $\frac{1}{N}U_{2}$ many times in this decomposition. 
	\item For each $r\in\Upsilon$ we choose $K$ different sequences of $U_{2}$
	concatenations of different $\left(T_{2},2^{4e-t},M_{2}\right)$-Feldman
	patterns in ascending order from our collection in the second step
	and enumerate the sequences by $j\in\left\{ 1,\dots,K\right\} $.
	\item This time, we define 
	\begin{equation*}
		\tilde{V} =\frac{1}{ND}U_2= \frac{1}{ND}U_{1}2^{\nu\cdot(2M_{1}+3)}
	\end{equation*}
and we introduce two different sequences of length $\tilde{V}$ that will determine the choice of tuple patterns in step (6) of the construction:
\begin{itemize}
	\item The sequence $\psi=\left(\psi_{1},\dots,\psi_{\tilde{V}}\right)$ with $\psi_{v}=u\in\{0,\dots,2^t-1\}$,
	where $u\equiv v\:\mod\:2^{t}.$ That is, the sequence $\psi$ cycles through the symbols in $\left\{ 0,\dots,2^{t}-1\right\}$.
	\item The sequence $\phi=\left(\phi_{1},\dots,\phi_{\tilde{V}}\right)$ with $\phi_{v}=u\in\{0,\dots,2^t-1\}$,
	where $$u\equiv \lfloor \frac{v}{2^t} \rfloor\:\mod\:2^{t}.$$ That is, the sequence $\phi$ cycles through repetitions of length $2^t$ of symbols in $\left\{ 0,\dots,2^{t}-1\right\}$.
\end{itemize}
	Since $U_{1}$ was chosen as a multiple of $D2^{2t}$ and $\frac{1}{N}2^{\nu\cdot(2M_{1}+3)}\in\mathbb{Z}$
	by the assumption $\nu\cdot(2M_{1}+3)\geq2(\nu+N')$, each symbol from
	$\left\{ 0,\dots,2^{t}-1\right\} $ occurs the same number $\frac{\tilde{V}}{2^{t}}$
	of times in the sequence $\psi$ and $\phi$, respectively.
	\item Let $r\in\Upsilon$ and $j\in\left\{ 1,\dots,K\right\} $, and write 
	\[
	r=\left[A_{i(1)}\right]_{\mathcal{P}}^{T_1}\cdots\left[A_{i(\ell)}\right]_{\mathcal{P}}^{T_1}\cdots
	\left[A_{i(U_2)}\right]_{\mathcal{P}}^{T_1},
	\]
	where $i(1),\dots,i(U_2)\in\{1,\dots,N\}$. By assumption \eqref{eq:Assum3}, we have for every $i_0 \in \{1,\dots , N\}$ and every $d \in \{0,1,\dots , D-1\}$ that
	\begin{equation*}
		r \left( i_0,\, i(d\frac{U_2}{D} +1)i(d\frac{U_2}{D} +2) \dots i((d+1)\frac{U_2}{D})\right) =\frac{1}{ND}U_2 = \tilde{V}.
	\end{equation*}
    Let $\ell \in \{ 1, \dots , U_2\}$. Then there is $d_{\ell} \in \{0,1,\dots , D-1\}$ such that $d_{\ell} \frac{U_2}{D}+1 \leq \ell \leq (d_{\ell}+1)\frac{U_2}{D}$. Suppose $i(\ell)=i_0$
	and this is the $m$th occurrence of $i_0$ in the sequence $i(d_{\ell} \frac{U_2}{D}+1),\dots,i(\ell)$. If $d_{\ell}$ is even, 
	then we let $u=\psi_m$. If $d_{\ell}$ is odd, 
	then we let $u=\phi_m$. Then we substitute a Feldman pattern built with the tuple 
	$
	\left(\left[A_{i_0,u2^{4e-t}+1}\right]_{\mathcal{R}},\dots,\left[A_{i_0,(u+1)2^{4e-t}}\right]_{\mathcal{R}}\right)
	$
	into $\left[A_{i(\ell)}\right]_{\mathcal{P}}^{T_1}.$ 
	The Feldman pattern that is used is the $\ell$th pattern among the $U_2$ 
	patterns previously chosen for the given $r$ and $j$. We follow this procedure for each $\ell=1,\dots,U_2$
	to obtain an element $s\in (X/R)^k$. Let $S$ be the collection of such $s\in (X/R)^k$
	obtained for all $r\in\Upsilon$ and $j\in\{1,\dots,K\}.$
	\end{enumerate}
	
	Using this collection $S\subset\left(X/\mathcal{R}\right)^{k}$ we
	define 
	$
	\Omega^{\prime}=HS.
	$
	
	\begin{rem*}
		Here, steps (5) and (6) are modifications from the corresponding steps in \cite{GK3}. The two different sequences $\psi$ and $\phi$ are used to satisfy assumption \ref{item:B3} for a next substitution step. We refer to the proof of part (3) of the subsequent Proposition \ref{prop:PropertiesCollection}.  In Remark \ref{rem:VerifyWM} we use this part (3) to verify requirement \ref{item:R4} in our weakly mixing constructions.
	\end{rem*}
	
	As in \cite[Proposition 42]{GK3} we collect some properties of the collection $\Omega^{\prime}$.  In addition to strong uniformity of $\mathcal{R}$-classes in elements of $\Omega^{\prime}$, part (3) implies that assumption \ref{item:B3} is satisfied for a next substitution step.
	
		\begin{prop}
		\label{prop:PropertiesCollection}This collection $\Omega^{\prime}\subset\left(X/\mathcal{R}\right)^{k}$
		satisfies the following properties.
		\begin{enumerate}
			\item $\Omega^{\prime}$ is closed under the skew diagonal action by $H$.
			\item For each element in $\omega\in\Omega$ there are $K\cdot|H_{0}|$
			many substitution instances in $\Omega^{\prime}$. 
			\item Let $U_3 \coloneqq U_2 2^{(4e-t)\cdot (2M_2+3)}$. Each element $\omega ^{\prime} \in \Omega^{\prime}$ can be written as $\omega^{\prime}= [A_{k(1)}]^{T_22^{8e-2t}}_{\mathcal{R}} \dots [A_{k(U_3/2^{8e-2t})}]^{T_22^{8e-2t}}_{\mathcal{R}}$. Then for every $i_1,i_2\in \{1,\dots , N\}$, $j_1,j_2 \in \{1,\dots , 2^{4e}\}$, $d \in \{0,1,\dots , D-1\}$ we have that the set
			\begin{equation*}
				\begin{split}
			K\left( (i_1,j_1),(i_2,j_2)\right) \coloneqq	& \Bigg\{1\leq \ell \leq \frac{U_3}{D2^{8e-2t}}\, : \, [A_{k(d\frac{U_3}{D2^{8e-2t}}+\ell)}]^{T_22^{8e-2t}}_{\mathcal{R}}=[A_{i_1,j_1}]^{T_22^{8e-2t}}_{\mathcal{R}}, \\
				& \ [A_{k((d+1)\frac{U_3}{D2^{8e-2t}}+\ell \mod \frac{U_3}{2^{8e-2t}})}]^{T_22^{8e-2t}}_{\mathcal{R}} = [A_{i_2,j_2}]^{T_22^{8e-2t}}_{\mathcal{R}}\Bigg\}
			    \end{split}
			\end{equation*}
			has cardinality $\frac{1}{N^22^{8e}}\cdot \frac{U_3}{D2^{8e-2t}}$.
		\end{enumerate}
	\end{prop}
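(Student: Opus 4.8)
The plan is to establish parts (1) and (2) exactly as in \cite[Proposition 42]{GK3} and to prove the new statement (3) by a two-scale analysis of the substitution.

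For (1), I would note that $\Omega'=HS$ and that the skew diagonal action of the group of involutions $H$ on $\left(X/\mathcal{R}\right)^{k}$ is a genuine group action (even elements act diagonally, odd elements diagonally followed by reversal of the order of the $k$ coordinates, as in \cite{FRW}), so $\Omega'$ is $H$-invariant by construction. For (2), the key observation is that the $\mathcal{P}$-projection of any $s\in S$ built from $r\in\Upsilon$ — obtained by replacing each $\mathcal{R}$-class by the $\mathcal{P}$-class containing it — is $r$ itself, since in each block $[A_{i(\ell)}]^{T_{1}}_{\mathcal{P}}$ the substituted Feldman pattern uses only $\mathcal{R}$-classes inside $[A_{i(\ell)}]_{\mathcal{P}}$; hence the $\mathcal{P}$-projection of $hs$ is $\rho(h)r$. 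Given $\omega\in\Omega$, freeness of the $G'$-action together with $\Upsilon$ meeting each $G'$-orbit once yields a unique $r\in\Upsilon$ and $g\in G'$ with $\omega=gr$, so $hs$ is a substitution instance of $\omega$ precisely when its underlying preword is $r$ and $\rho(h)=g$; thus $h$ ranges over a coset of $H_{0}=\ker\rho$ (of size $2^{t}=|H_{0}|$) and $s$ over the $K$ instances attached to $r$, giving $K\cdot|H_{0}|$ instances, pairwise distinct by freeness of the $H$-action and distinguishability of distinct Feldman patterns.

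For (3), I would first reduce to $\omega'=s\in S$: writing $\omega'=hs$, the facts that $h$ acts bijectively on $X/\mathcal{R}$ and the skew diagonal action is at worst order-reversing turn the asserted count for $hs$ into the same count for $s$, after relabelling the two targets by $h^{-1}$ and (in the order-reversing case) replacing $d$ by $D-2-d\bmod D$ and transposing the targets; since the claimed value depends on none of these, the reduction is legitimate. I would also assume $d$ even, the odd case following by interchanging $\psi$ and $\phi$. Then I analyse $s$ at two scales. \emph{$\mathcal{P}$-scale:} writing $r=[A_{i(1)}]^{T_{1}}_{\mathcal{P}}\cdots[A_{i(U_{2})}]^{T_{1}}_{\mathcal{P}}$, assumption \ref{item:B3} says $i(\cdot)$ is constant on the macro-blocks of $2^{2t}$ consecutive indices, each $D$-segment holds $\tfrac{U_{2}}{D2^{2t}}$ macro-blocks, and for fixed $(i_{1},i_{2})$ exactly $\tfrac{U_{2}}{N^{2}D2^{2t}}$ of them have value $i_{1}$ in segment $d$ and value $i_{2}$ in segment $d+1\bmod D$. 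Fix such a macro-block $\mu$ and let $\ell$ run through its $2^{2t}$ positions; on $\mu$ the functions $i_{d}(\cdot),i_{d+1}(\cdot)$ are constant ($=i_{1},i_{2}$), and since positions with value $i_{1}$ (resp.\ $i_{2}$) occur in complete macro-blocks, the occurrence counts satisfy $m_{d}(\ell)=a+\varrho$, $m_{d+1}(\ell)=a'+\varrho$ with $\varrho\in\{1,\dots,2^{2t}\}$ the position of $\ell$ inside $\mu$ and $a,a'$ multiples of $2^{2t}$; moreover $m_{d}(\ell)\le\tilde V$, so $\psi_{m_{d}(\ell)}$ is defined. Hence the tuple index used at $\ell$ in segment $d$ is $\psi_{m_{d}(\ell)}=\varrho\bmod 2^{t}$, and the one used at $\ell$ in segment $d+1$ is $\phi_{m_{d+1}(\ell)}=\lfloor\varrho/2^{t}\rfloor\bmod 2^{t}$. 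Because $\varrho\mapsto(\varrho\bmod 2^{t},\lfloor\varrho/2^{t}\rfloor\bmod 2^{t})$ is a bijection $\{1,\dots,2^{2t}\}\to(\mathbb{Z}/2^{t})^{2}$, each prescribed pair $(u_{1},u_{2})$ of tuple indices is realized by exactly one $\ell$ in $\mu$; so the number of blocks $\ell$ with $i_{d}(\ell)=i_{1}$, $i_{d+1}(\ell)=i_{2}$ and tuple indices $u_{1},u_{2}$ is $\tfrac{U_{2}}{N^{2}D2^{2t}}$ for every $(u_{1},u_{2})$. \emph{$\mathcal{R}$-scale:} writing $j_{1}=u_{1}2^{4e-t}+v_{1}$, $j_{2}=u_{2}2^{4e-t}+v_{2}$, a minimal run of $\omega'$ inside a substituted block $\ell$ has class $[A_{i_{d}(\ell),\,u_{d}(\ell)2^{4e-t}+v}]$ where $v$ is the building-block slot of the $(T_{2},2^{4e-t},M_{2})$-pattern $B_{k_{j}(\ell)}$ at that run; so only the blocks just counted contribute, and for each of them the patterns used at the matched positions of segments $d$ and $d+1\bmod D$ are distinct because $k_{j}(\cdot)$ is strictly increasing. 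Lemma \ref{lem:FP}(3) then gives that two distinct $(T_{2},2^{4e-t},M_{2})$-patterns place any prescribed ordered pair of slots at the same minimal-run position in exactly $2^{(4e-t)(2M_{2}-1)}$ of their $2^{(4e-t)(2M_{2}+1)}$ minimal runs. Multiplying the two counts and using $U_{3}=U_{2}2^{(4e-t)(2M_{2}+3)}$ and $8e-2t=2(4e-t)$ should give
\[
\left|K\!\left((i_{1},j_{1}),(i_{2},j_{2})\right)\right|=\frac{U_{2}}{N^{2}D2^{2t}}\cdot 2^{(4e-t)(2M_{2}-1)}=\frac{1}{N^{2}2^{8e}}\cdot\frac{U_{3}}{D2^{8e-2t}}.
\]

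The hard part will be the $\mathcal{P}$-scale step: verifying that feeding the two different auxiliary sequences $\psi$ (period $2^{t}$) and $\phi$ (constant on blocks of length $2^{t}$, period $2^{2t}$) into the occurrence-count dynamics inside a single macro-block hits each pair $(u_{1},u_{2})$ exactly once. Using a single sequence for both segments would instead collapse the whole macro-block onto one pair, so this is precisely the modification over \cite{GK3} that makes requirement \ref{item:R4} attainable. Everything else is bookkeeping, including the divisibility facts (that $\tfrac{U_{2}}{D}$ and $\tilde V=\tfrac{U_{2}}{ND}$ are divisible by the relevant powers of $2$ and by $D$, that $D2^{8e-2t}\mid U_{3}$, etc.), which follow from $U_{1}$ being a multiple of $D2^{2t}$, from $N=2^{\nu+N'}$ dividing $2^{\nu(2M_{1}+3)}$, and from $e\ge\max(2,t)$.
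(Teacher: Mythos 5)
Your proposal is correct and follows essentially the same route as the paper's proof: parts (1)--(2) are delegated to \cite[Proposition 42]{GK3}, and part (3) is obtained by combining the macro-block count $\frac{U_2}{N^2D2^{2t}}$ from assumption \ref{item:B3} with the observation that the sequences $\psi$ and $\phi$ realize each pair of tuple indices exactly once per macro-block of $2^{2t}$ positions, and then applying part (3) of Lemma \ref{lem:FP} to the pair of distinct Feldman patterns, giving the product $\frac{U_2}{N^2D2^{2t}}\cdot 2^{(4e-t)(2M_2-1)}=\frac{1}{N^22^{8e}}\cdot\frac{U_3}{D2^{8e-2t}}$ exactly as in the paper. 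Your extra bookkeeping (the reduction from $hs$ to $s\in S$ and the explicit bijection $\varrho\mapsto(\varrho\bmod 2^t,\lfloor\varrho/2^t\rfloor\bmod 2^t)$) only makes explicit what the paper leaves implicit.
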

	
	\begin{proof}
		The first two parts follow as in \cite[Proposition 42]{GK3}. To see the third part we use
		that in each $\omega=[A_{k(1)}]^{T_12^{2t}}_{\mathcal{P}}\dots [A_{k(U_2/2^{2t})}]^{T_12^{2t}}_{\mathcal{P}}\in\Omega$ we have for every $i_1,i_2\in \{1,\dots , N\}$ and $d \in \{0,1,\dots , D-1\}$ that the set $K(i_1,i_2) $ defined by
		\begin{equation*}
			\Meng{1\leq \ell \leq \frac{U_2}{D2^{2t}}}{k(d\frac{U_2}{D2^{2t}}+\ell)=i_1, \, k((d+1)\frac{U_2}{D2^{2t}}+\ell \mod \frac{U_2}{2^{2t}})=i_2}
		\end{equation*}
	    has cardinality $\frac{U_2}{DN^22^{2t}}$. By steps (4) and (6) we substitute $2^{2t}$ many different $\left(T_{2},2^{4e-t},M_{2}\right)$-Feldman
	    patterns into strings of the form $[A_{i}]_{\mathcal{P}}^{T_{1}2^{2t}}$. We recall from the definition of $\left(T_{2},2^{4e-t},M_{2}\right)$-Feldman
	    patterns that such a pattern is built by repeating single $\mathcal{R}$-classes some multiple of $T_22^{8e-2t}$ many times. Thus, $\omega ^{\prime} \in \Omega^{\prime}$ is of the form as described in part~(3) of Proposition \ref{prop:PropertiesCollection}.  
	    
	    For $\ell \in K(i_1,i_2)$ let $a_1(1)\dots a_1(2^{2t})$ and $a_2(1)\dots a_2(2^{2t})$ with $a_1(j),a_2(j) \in \{0,\dots , 2^t-1\}$ enumerate the $2^{2t}$ many
	     building tuples $\left(\left[A_{i_1,a_1+1}\right]_{\mathcal{R}},\dots,\left[A_{i_1,a_1+2^{4e-t}}\right]_{\mathcal{R}}\right)$ and $\left(\left[A_{i_2,a_2+1}\right]_{\mathcal{R}},\dots,\left[A_{i_2,a_2+2^{4e-t}}\right]_{\mathcal{R}}\right)$, respectively, that are substituted into the repetitions $[A_{k(d\frac{U_2}{D2^{2t}}+\ell)}]_{\mathcal{P}}^{T_{1}2^{2t}}$ and $[A_{k((d+1)\frac{U_2}{D2^{2t}}+\ell \mod \frac{U_2}{2^{2t}})}]_{\mathcal{P}}^{T_{1}2^{2t}}$, respectively. This choice of tuple patterns is determined by $\psi$ and $\phi$ in step (6). The construction of the sequences $\psi$ and $\phi$ in step (5) implies that for every pair of $a_1,a_2 \in \{0,2^t-1\}$ we have
        $r\left( a_1,a_2,\, a_1(1)\dots a_1(2^{2t}), \, a_2(1)\dots a_2(2^{2t})\right)=1.$ 
        Since the Feldman patterns substituted into the different $[A_{i(j)}]_{\mathcal{P}}^{T_{1}}$-instances are different from each other by step (4), we can apply part (3) of Lemma \ref{lem:FP} to obtain 
        \[
        \abs{K\left( (i_1,j_1),(i_2,j_2)\right)} = \abs{K(i_1,i_2)} \cdot \frac{2^{(4e-t)\cdot (2M_2+1)}}{2^{8e-2t}}=\frac{U_3}{D2^{8e-2t}N^22^{8e}}.
        \]
	\end{proof}

We end this subsection by pointing out that our modifications do not affect the estimates in \cite[Proposition 43]{GK3} on the $\overline{f}$ distance of elements
in $\Omega'$ that are equivalent with respect to the $\mathcal{P}$
product relation but are not $\mathcal{R}$-equivalent.
	
	\subsection{\label{subsec:Construction}The Construction Process}
	In this section, we describe the construction of our continuous reduction $\Phi:\mathcal{T}\kern-.5mm rees\to\text{Diff}^{\,\infty}_{\,\lambda}(M)$ for the proof of Theorem~\ref{thm:criterion}. For each $\mathcal{T}\in\mathcal{T}\kern-.5mm rees$ we
	build a construction sequence $\Meng{\mathcal{W}_{n}\left(\mathcal{T}\right)}{\sigma_n \in \mathcal{T}}$ based on collections of odometer-based words $\Meng{\mathtt{W}_{n}\left(\mathcal{T}\right)}{\sigma_n \in \mathcal{T}}$
	satisfying our specifications from Section \ref{subsec:spec} and bijections $\kappa_n: \mathtt{W}_{n}\left(\mathcal{T}\right) \to \mathcal{W}_{n}\left(\mathcal{T}\right)$. 
	To prove continuity of our map $\Phi:\mathcal{T}\kern-.5mm rees\to\text{Diff}^{\,\infty}_{\,\lambda}(M)$ in Lemma \ref{lem:ContinuityPhi}, this has to be done in such a way that $\mathcal{W}_{n}\left(\mathcal{T}\right)$
	is entirely determined by $\mathcal{T}\cap\left\{ \sigma_{m}:m\leq n\right\} $,
	that is, 
	\begin{equation}
		\begin{split} & \text{if }\mathcal{T}\cap\left\{ \sigma_{m}:m\leq n\right\} =\mathcal{T}^{\prime}\cap\left\{ \sigma_{m}:m\leq n\right\} ,\\
			& \text{then for all }m\leq n:\:\mathcal{W}_{m}\left(\mathcal{T}\right)=\mathcal{W}_{m}(\mathcal{T}^{\prime}).
		\end{split}
		\label{eq:ContinuityF}
	\end{equation}
    Therefore, we follow \cite{FRW} and \cite[section 8]{GK3} to organize our construction. To simplify notation we enumerate $\Meng{\mathcal{W}_{n}\left(\mathcal{T}\right)}{\sigma_n \in \mathcal{T}}$ as $\{\mathcal{W}_n\}_{n\in \mathbb{N}}$, that is, $(n + 1)$-words are built by concatenating $n$-words. Compared to the construction process in \cite[section 8]{GK3}, we use a slightly different order of choices of parameters $\mathfrak{p}_n$, $R_n$, $e(n)$, and $l_n$. We carefully describe their interdependencies and show that there are no circular dependencies. As in \cite[section 10.4]{GK3} we will also use a sequence $\left(R_{n}^{c}\right)_{n=1}^{\infty}$,
    where $R_{1}^{c}=R_{1}$ (with $R_{1}$ from the sequence $(R_{n})_{n\in\mathbb{N}}$ above)
    and $R_{n}^{c}=\lfloor\sqrt{l_{n-2}}\cdot k_{n-2}\cdot q_{n-2}^{2}\rfloor$
    for $n\geq2$. We note that for $n\geq2$, 
    \begin{equation}
    	\frac{q_{n}}{R_{n}^{c}}=\frac{k_{n-1}\cdot l_{n-1}\cdot\left(k_{n-2}\cdot l_{n-2}\cdot q_{n-2}^{2}\right)\cdot q_{n-1}}{\lfloor\sqrt{l_{n-2}}\cdot k_{n-2}\cdot q_{n-2}^{2}\rfloor}\geq\sqrt{l_{n-2}}\cdot k_{n-1}\cdot l_{n-1}\cdot q_{n-1}.\label{eq:Rcircular}
    \end{equation}
    Hence, for $n\geq2$ a substring of at least $q_{n}/R_{n}^{c}$ consecutive
    symbols in a twisted $n$-block contains at least $\sqrt{l_{n-2}}-1$
    complete $2$-subsections, which have length $k_{n-1}l_{n-1}q_{n-1}$
    (recall the notion of a $2$-subsection from Remark \ref{rem:Subsection}). When conducting $\overline{f}$ estimates on the twisting system, this allows us to ignore incomplete $2$-subsections
    at the ends of the substring.
    
    We describe how to construct $\mathtt{W}_{n+1}\left(\mathcal{T}\right)$, $\mathcal{W}_{n+1}\left(\mathcal{T}\right)$, $\mathcal{Q}_{s}^{n+1}\left(\mathcal{T}\right)$,
    and the action of $G_{s}^{n+1}\left(\mathcal{T}\right)$, which we abbreviate by $\mathtt{W}_{n+1}$, $\mathcal{W}_{n+1}$,
    $\mathcal{Q}_{s}^{n+1}$, and $G_{s}^{n+1}$, respectively. We also define a bijection $\kappa_{n+1}: \mathtt{W}_{n+1} \to \mathcal{W}_{n+1}$. If $n=0$ we have $\mathcal{W}_0=\mathtt{W}_{0}=\Sigma=\left\{ 1,\dots,2^{12}\right\} $ and $\kappa_0$ is the identity map. We also take a prime number $\mathfrak{p}_1> 2$ and an integer $R_1\geq 40\mathfrak{p}_1$.  If $n\geq 1$ our {\bf induction assumption} says that we have $\mathtt{W}_{n}$, $\mathcal{W}_{n}$,
    $\mathcal{Q}_{s}^{n}$,
    and $G_{s}^{n}$ satisfying our
    specifications. In particular, the odometer-based words in $\mathtt{W}_n$ have length $h_{n}$ and the words in $\mathcal{W}_n$ have length $q_n$. We also assume that there is a bijection $\kappa_{n}: \mathtt{W}_{n} \to \mathcal{W}_{n}$ and that there are $\frac{1}{8}>\alpha^{(t)}_{1,n} > \dots > \alpha^{(t)}_{s(n),n}>\beta_{n}^{(t)}$ such that the following estimates on $\overline{f}$ distances hold:
    \begin{itemize}
    	\item For every $s \in \{1,\dots, s(n)\}$ we assume that if $w,\overline{w}\in\mathcal{W}_{n}$ with
    	$[w]_{s}\neq[\overline{w}]_{s}$, then
    	$
    		\overline{f}\left(\mathcal{A},\overline{\mathcal{A}}\right)>\alpha_{s,n}^{(t)}
    	$
    	on any substrings $\mathcal{A}$, $\overline{\mathcal{A}}$ of at
    	least $q_{n}/R_{n}^{c}$ consecutive symbols in $w$ and $\overline{w}$,
    	respectively.
    	\item For $w,\overline{w}\in\mathcal{W}_{n}$ with $w\neq\overline{w}$, we have
    	$
    		\overline{f}\left(\mathcal{A},\overline{\mathcal{A}}\right)>\beta_{n}^{(t)}
    	$
    	on any substrings $\mathcal{A}$, $\overline{\mathcal{A}}$ of at
    	least $q_{n}/R_{n}^{c}$ consecutive symbols in $w$ and $\overline{w}$,
    	respectively.
    \end{itemize}
Moreover, the integer parameters $\{e(m)\}^{n}_{m=1}$, $\{\mathfrak{p}_m\}^{n+1}_{m=1}$, $\{R_m\}^{n+1}_{m=1}$ are given, where
    \begin{equation} \label{eq:pN+1}
    	\mathfrak{p}_{n+1}>  4R_n,
    \end{equation}
    and 
    \begin{equation} \label{eq:RN+1} 
    	R_{n+1}\geq \frac{40\mathfrak{p}_{n+1}}{\beta^{(t)}_n}.
    \end{equation}
Furthermore, we assume
\begin{equation} \label{eq:RN}
	R_{n}\geq \frac{7}{\beta^{(t)}_n}.
\end{equation}

   We start the {\bf induction step} by choosing an integer $e(n+1) >e(n)$ such that
   \begin{equation}\label{eq:eN}
   	2^{e(n+1)} > \max \left( 10R_{n+1}, \max_{s \leq s(n+1)} \abs{G^{n+1}_s}\right). 
   \end{equation} 

We also choose the parameters $\mathfrak{p}_{n+2}, R_{n+2} \in \mathbb{Z}^+$ sufficiently large such that
\begin{equation}\label{eq:pN+2}
	\mathfrak{p}_{n+2} > 4 R_{n+1}
\end{equation}
and
\begin{equation} \label{eq:RN+2}
	R_{n+2} > \frac{40^2\cdot \mathfrak{p}_{n+2}\cdot \mathfrak{p}_{n+1}}{\beta^{(t)}_{n}}.
\end{equation}

We distinguish between the two possible cases $s(n+1)=s(n)$ and $s(n+1)=s(n)$ for constructing $(n+1)$-words and extending $G^n_s$ actions to $G^{n+1}_s$ actions.

{\bf Case 1: $s(n+1)=s(n)$.} We apply the general substitution step from Section \ref{subsec:Substitution} in an iterative manner as in \cite[section 8.1.1]{GK3}. To carry out the successive application of the substitution step we
	always set $\tilde{R}=R_{n+1}$ and $D=2^{n+2}q_n$. We introduce $t_{i}\in\mathbb{N}$
	for $i=1,\dots,s(n)+1$ such that $t_{s(n)+1}=0$ and $2^{t_{i}}=|\ker(\rho_{i,i-1}^{(n-1)})|$ for $i=1,\dots,s(n)$.
	Here, we recall that $\rho_{i,i-1}^{(n-1)}:G_{i}^{n-1}\to G_{i-1}^{n-1}$
	is the canonical homomorphism defined in Subsection \ref{subsec:spec}. Let $t^{\ast}\coloneqq \max_{i=1,\dots,s(n)+1} t_i$. Then we choose 
	$
	U_1 =2^{n+2}q_n2^{U_1^{\prime}+2t^{\ast}},
	$
	where $U^{\prime}_1 \in \mathbb{Z}^+$ satisfies
	$
		2^{U^{\prime}_1} \geq \max\left(2R^2_{n+1},2^{8e(n)}\right)$.
 Notably, condition \eqref{eq:U} is satisfied in all construction steps. Following the successive application of the substitution step as in \cite[section 8.1.1]{GK3} gives us the collection of odometer-based words $\mathtt{W}_{n+1}$ and $\mathtt{W}_{n+1}/\mathcal{Q}^{n+1}_s$ for all $s=0,\dots,s(n)$. In particular, this determines $k_n$, $h_{n+1}=k_nh_n$, and $s_{n+1}=\abs{\mathtt{W}_{n+1}}=2^{4e(n+1)\cdot (s(n)+1)}$. 
We define the group actions for $G^{n+1}_s$, $s=0,\dots,s(n)$, as in \cite[section 8.1.2]{GK3}.
    
    {\bf Case 2: $s(n+1)=s(n)+1$.} This time, we apply the general substitution step from Section \ref{subsec:Substitution} in an iterative manner as in \cite[section 8.2]{GK3}. We start the construction by
    choosing the number $J_{s(n)+1,n+1}=2^{J_{s(n)+1,n+1}^{\prime}}$ appearing in specification \ref{item:Q4} on $\mathcal{Q}_{s(n)+1}^{n+1}$ to be a sufficiently
    large integer such that
    $
    	J_{s(n)+1,n+1}>2R_{n+1}^{2}.
    $ 
    We set $\tilde{R}=2J_{s(n)+1,n+1}^{3}$ and $D=2^{n+2}q_n$ in each substitution
    step and choose 
    $
    U_1 =2^{n+2}q_n2^{U_1^{\prime}+2t^{\ast}},
    $
    where $U^{\prime}_1 \in \mathbb{Z}^+$ satisfies
    $
    	2^{U^{\prime}_1} \geq \max\left(8J_{s(n)+1,n+1}^{6},2^{8e(n)}\right).
    $ 
    As before, condition \eqref{eq:U} is satisfied in all construction steps. Then we construct two collections
    $\mathcal{W}^{\dagger}$ and $\mathcal{W}^{\dagger\dagger}$ of concatenation
    of $n$-words as in sections 8.2.1 and 8.2.2 of \cite{GK3}. As in \cite[8.2.3]{GK3} we use them to build $\mathtt{W}_{n+1}$. This determines $k_n$, $h_{n+1}=k_nh_n$, and $s_{n+1}=\abs{\mathtt{W}_{n+1}}=2^{4e(n+1)\cdot (s(n)+2)}$.     
    Finally, we define the new equivalence relation $\mathcal{Q}^{n+1}_{s(n)+1}$ as in \cite[8.2.4]{GK3} and the actions for $G^{n+1}_s$, $s=0,\dots,s(n)+1$, as in \cite[8.2.5]{GK3}. 

\begin{rem}\label{rem:VerifyWM}
	As in \cite{GK3} these constructions satisfy the specifications stated in Section \ref{subsec:spec}. In both cases we iteratively apply the general substitution step with $D=2^{n+2}q_n$. Hence, $k_n$ is of the form $k_n=2^{n+2}q_nC_n$ for some $C_n \in \mathbb{Z}^+$ as a multiple of $s^2_n$ as required in \eqref{eq:k} for our twisted AbC constructions. Since the odometer-based words in $\mathtt{W}_{n+1}$ from above determine the combinatorics of the abstract conjugation map $h_{n+1}$, we can verify requirements \ref{item:R1}, \ref{item:R3}, and \ref{item:R4} from properties of odometer-based words in $\mathtt{W}_{n+1}$:
	Since $s_{n+1} = \abs{\mathcal{W}_{n+1}} = \abs{\mathtt{W}_{n+1}}$ is a multiple of $s_n$, assumption \ref{item:R1} holds. At each application of the substitution step we can apply part~(3) of Proposition \ref{prop:PropertiesCollection} with $D=2^{n+2}q_n$. By induction we deduce that \ref{item:R4} holds. Clearly, \ref{item:R3} holds because different $(n+1)$-blocks in $\mathtt{W}_{n+1}$ are obtained by different concatenations of $n$-blocks from $\mathtt{W}_n$.
\end{rem}
 
We recall that the odometer-based words $\mathtt{W}_{n+1}$ from above determine the combinatorics of the abstract conjugation map $h_{n+1}$. Finally, we choose the parameter $l_n \in \mathbb{Z}^+$ sufficiently large to allow the smooth or real-analytic realization of the twisting system with odometer-based words $\mathtt{W}_{n+1}$ by Theorem \ref{theo:TwistRealSmooth} or its real-analytic counterpart. Additionally, we can choose $l_n$ large enough to satisfy
\begin{equation}
	l_{n}\geq\max\left(4R_{n+2}^{2},9l_{n-1}^{2}\right).\label{eq:lCondTwist}
\end{equation}
Then we also know
$
q_{n+1}=k_nl_nq^2_n
$
from the relation \eqref{eq:alpha}. 

We are now ready to construct $\mathcal{W}_{n+1}$. Since we have determined the parameters $l_n$ and $C_n$, we can define the twisting operator $\mathcal{C}^{\text{twist}}_n$ according to equation \eqref{eq:TwistOperator}. Using the bijection $\kappa_{n}: \mathtt{W}_{n} \to \mathcal{W}_{n}$ from the induction assumption we define
\[
\mathcal{W}_{n+1}=\left\{ \mathcal{C}^{\text{twist}}_{n}\left(\kappa_{n}\left(\mathtt{w}_{0}\right),\kappa_{n}\left(\mathtt{w}_{1}\right),\dots,\kappa_{n}\left(\mathtt{w}_{k_{n}-1}\right)\right)\::\:\mathtt{w}_{0}\mathtt{w}_{1}\dots \mathtt{w}_{k_{n}-1}\in\mathtt{W}_{n+1}\right\} 
\]
and the map $\kappa_{n+1}$ by setting 
\[
\kappa_{n+1}\left(\mathtt{w}_{0}\mathtt{w}_{1}\dots \mathtt{w}_{k_{n}-1}\right)=\mathcal{C}^{\text{twist}}_{n}\left(\kappa_{n}\left(\mathtt{w}_{0}\right),\kappa_{n}\left(\mathtt{w}_{1}\right),\dots,\kappa_{n}\left(\mathtt{w}_{k_{n}-1}\right)\right).
\]
In particular, the prewords are 
\[
P_{n+1}=\left\{ \kappa_{n}\left(\mathtt{w}_{0}\right)\kappa_{n}\left(\mathtt{w}_{1}\right)\dots \kappa_{n}\left(\mathtt{w}_{k_{n}-1}\right)\::\:\mathtt{w}_{0}\mathtt{w}_{1}\dots \mathtt{w}_{k_{n}-1}\in\mathtt{W}_{n+1}\right\} .
\]

Next, we aim at $\overline{f}$-estimates in the twisted system. For that purpose, we follow the $\overline{f}$-estimates in \cite[section 10.4]{GK3}, where the associated circular systems are analyzed. In that analysis the newly introduced spacers $b$ and $e$ are always ignored. In particular, the explicit form of the circular operator does not matter as long as each $0$-subsection $w^{l_n -1}_j$ comes with $q_n$ many newly introduced spacers, that is, we have a string of the form $b^{q_n - i}w^{l_n -1}_je^i$ for some $0\leq i < q_n$ as an $1$-subsection (recall the terminology from Remark \ref{rem:Subsection}). Since our twisting operator is an operator of such type, we can apply the same analysis to conclude the following analogue of \cite[Lemma 89]{GK3}.

\begin{lem}
	\label{lem:TwistBlock}Let $w,\overline{w}\in\mathcal{W}_{n+1}$
	and $\mathcal{B}$, $\overline{\mathcal{B}}$ be any substrings of
	at least $q_{n+1}/R_{n+1}^{c}$ consecutive symbols in $w$
	and $\overline{w}$, respectively. 
	\begin{enumerate}
		\item If $s(n+1)=s(n)$, then we have for every $s\in \{1,\dots , s(n)\}$ that
		\begin{equation}
			\overline{f}\left(\mathcal{B},\overline{\mathcal{B}}\right)>\begin{cases}
				\alpha_{s,n}^{(t)}-\frac{2}{R_{n}}-\frac{3}{R_{n+1}}, & \text{if }[w]_{s}\neq[\overline{w}]_{s},\\
				\beta_{n}^{(t)}-\frac{2}{R_{n}}-\frac{3}{R_{n+1}}, & \text{if }w\neq\overline{w}.
			\end{cases}\label{eq:TwistBlock1}
		\end{equation}
		\item If $s(n+1)=s(n)+1$, then we have for every $s\in \{1,\dots , s(n)\}$ that
		\begin{equation}
			\overline{f}\left(\mathcal{B},\overline{\mathcal{B}}\right)>\begin{cases}
				\alpha_{s,n}^{(t)}-\frac{2}{R_{n}}-\frac{4}{R_{n+1}}, & \text{if }[w]_{s}\neq[\overline{w}]_{s},\\
				\beta_{n}^{(t)}-\frac{2}{R_{n}}-\frac{4}{R_{n+1}}, & \text{if }[w]_{s(n)+1}\neq[\overline{w}]_{s(n)+1},\\
				\frac{1}{2\mathfrak{p}_{n+1}}\cdot\left(\beta_{n}^{(t)}-\frac{1}{R_{n}^{c}}-\frac{2}{\sqrt{l_{n}}}\right)-\frac{3}{R_{n+1}}, & \text{if }w\neq\overline{w}.
			\end{cases}\label{eq:TwistBlock2}
		\end{equation}
	\end{enumerate}
\end{lem}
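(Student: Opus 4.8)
The plan is to reduce the statement to the corresponding $\overline{f}$-estimates for the odometer-based construction sequence $\{\mathtt{W}_n\}_{n\in\N}$ that were established in \cite[Lemma 89]{GK3}, transferring them to the twisted words by treating the newly introduced spacers $b,e$ as "free" symbols that we may delete at a controlled cost. First I would recall from Remark~\ref{rem:Subsection} the subscale structure of a twisted $(n{+}1)$-word: each $0$-subsection is a power $w_j^{l_n-1}$ of an $n$-word, wrapped into a $1$-subsection $b^{q_n-\psi_n(i)-j_m}\,w_j^{l_n-1}\,e^{\psi_n(i)+j_m}$, and $2^{n+2}q_n$ of these (times $C_n$) make up a $2$-subsection. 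The key geometric observation, already recorded in \eqref{eq:Rcircular}, is that a substring of at least $q_{n+1}/R^c_{n+1}$ consecutive symbols in a twisted $(n{+}1)$-block contains at least $\sqrt{l_{n-2}}-1$ complete $2$-subsections; hence after deleting the (at most two) incomplete $2$-subsections at the ends, and then deleting all $b,e$ spacers from what remains, we obtain from $\mathcal B$ (resp.\ $\overline{\mathcal B}$) a substring of the underlying concatenation of $n$-words that has length at least $(1-\tfrac{1}{R_{n+1}})$ of the original up to the spacer proportion $1/l_n$. Via Fact~\ref{fact:omit_symbols} each such deletion of a $\gamma$-fraction of symbols costs at most $2\gamma$ in $\overline f$.

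Next I would invoke \cite[Lemma 89]{GK3} — which is exactly the analogous statement for circular construction sequences and whose proof, as the text stresses, never uses the explicit form of the circular operator but only that each $0$-subsection $w_j^{l_n-1}$ is flanked by exactly $q_n$ spacer symbols forming a string $b^{q_n-i}w_j^{l_n-1}e^i$. Since our twisting operator $\mathcal C^{\text{twist}}_n$ produces $1$-subsections of precisely this shape (with $i=\psi_n(i')+j_m\bmod q_n$), the entire $\overline f$-analysis of \cite[section 10.4]{GK3} applies verbatim with $\mathcal C_n$ replaced by $\mathcal C^{\text{twist}}_n$. Concretely: in Case~1 ($s(n+1)=s(n)$), two twisted $(n{+}1)$-blocks that differ on the $\mathcal Q_s$-class (resp.\ as words) correspond to odometer-based $(n{+}1)$-blocks differing on $\mathcal Q_s$ (resp.\ as words), so the substitution-step estimate \cite[Proposition 43]{GK3} together with the inductive hypotheses \eqref{eq:CircularAssump1}--\eqref{eq:CircularAssump2} at level $n$ gives a lower bound $\alpha^{(t)}_{s,n}$ (resp.\ $\beta^{(t)}_n$) for matches of $q_n/R^c_n$-substrings of the constituent $n$-words; propagating this through the Feldman-pattern structure and accounting for the $2/R_n$ loss (from restricting to substrings that are "substantial" inside single $n$-blocks) and the $3/R_{n+1}$ loss (from the end-truncation plus spacer-deletion described above) yields \eqref{eq:TwistBlock1}. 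In Case~2 ($s(n+1)=s(n)+1$) the same argument gives the first two lines of \eqref{eq:TwistBlock2}, while for the third line — two blocks in the same $\mathcal Q_{s(n)+1}$-class but distinct — the $1/(2\mathfrak p_{n+1})$ factor and the $1/R^c_n+2/\sqrt{l_n}$ correction come from specification \ref{item:Q4}: the blocks agree on a fraction $1-\tfrac{1}{\mathfrak p_{n+1}}$ of their $n$-subwords but a best match must, on the disagreeing $n$-subwords, still respect the $\beta^{(t)}_n$-separation, and one argues as in \cite[Lemma 89(2)]{GK3} that a long match forces $\Omega(1/\mathfrak p_{n+1})$ of the substring to sit over mismatched $n$-words.

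I would then carefully check that the error terms produced by the two truncation/deletion operations combine to at most $3/R_{n+1}$ (Case~1) and $3/R_{n+1}$ or $4/R_{n+1}$ (Case~2): the end-truncation removes at most $2$ of the $\geq\sqrt{l_{n-2}}-1$ complete $2$-subsections, contributing $O(1/\sqrt{l_{n-2}})$ which by \eqref{eq:lCondTwist} is absorbed, and the spacer-deletion removes a $1/l_n$-fraction, contributing $2/l_n$, again absorbed; the remaining $O(1/R_{n+1})$ comes from the alignment slack in passing between $2$-subsection boundaries in $w$ versus $\overline w$, exactly as in \cite{GK3}. The main obstacle, and the step deserving the most care, is this bookkeeping of which substrings count as "substantial" inside a single $n$-word after one strips spacers and truncates — i.e.\ verifying that after all deletions the surviving substrings of each $n$-word still have length $\geq q_n/R^c_n$ so that the inductive $\overline f$-hypotheses \eqref{eq:CircularAssump1}--\eqref{eq:CircularAssump2} are genuinely applicable, and that the constants $2/R_n+3/R_{n+1}$ (resp.\ $+4/R_{n+1}$) are honest upper bounds with no hidden dependence on $l_n$ or $q_n$. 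Everything else is a transcription of \cite[section 10.4]{GK3} with $\mathcal C_n\rightsquigarrow\mathcal C^{\text{twist}}_n$, which is legitimate precisely because, as noted, that analysis is insensitive to the internal permutation structure of the operator and sees only the $b^{q_n-i}(\cdot)^{l_n-1}e^i$ skeleton shared by both operators.
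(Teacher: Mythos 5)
Your proposal follows essentially the same route as the paper: the paper offers no independent proof of Lemma \ref{lem:TwistBlock} but justifies it exactly as you do, by observing that the $\overline{f}$-analysis of \cite[section 10.4]{GK3} ignores the spacers and is insensitive to the internal permutation structure of the operator, using only the $b^{q_n-i}(\cdot)^{l_n-1}e^i$ skeleton of the $1$-subsections together with the counting in \eqref{eq:Rcircular}. Your additional bookkeeping via Fact \ref{fact:omit_symbols} is consistent with that analysis (modulo a harmless index slip: at level $n+1$ one gets at least $\sqrt{l_{n-1}}-1$, not $\sqrt{l_{n-2}}-1$, complete $2$-subsections), so the argument is correct.
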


From $R_{n}^{c}=\lfloor\sqrt{l_{n-2}}\cdot k_{n-2}\cdot q_{n-2}^{2}\rfloor$ and the growth condition \eqref{eq:lCondTwist} on the sequence $(l_n)_{n \in \mathbb{N}}$ we obtain
\[
\frac{1}{R_{n}^{c}}+\frac{2}{\sqrt{l_{n}}}\leq\frac{1}{\sqrt{l_{n-2}}}+\frac{1}{\sqrt{l_{n-1}}}\leq\frac{2}{\sqrt{l_{n-2}}}\leq\frac{1}{R_{n}}\leq \frac{\beta^{(t)}_n}{7},
\]
where we used assumption \eqref{eq:RN} in the last estimate. Then we note with the aid of \eqref{eq:RN+1} that 
\[
\beta_{n+1}^{(t)}>\frac{\beta_{n}^{(t)}}{4\mathfrak{p}_{n+1}}-\frac{3}{R_{n+1}}\geq\frac{7\beta_{n}^{(t)}}{40\mathfrak{p}_{n+1}}\geq\frac{7}{R_{n+1}},
\]
that is, $R_{n+1}>\frac{7}{\beta_{n+1}^{(t)}}$ and the induction assumption \eqref{eq:RN} for the next step is satisfied. Furthermore, the condition \eqref{eq:RN+2} on $R_{n+2}$ allows us to estimate
\[
\beta_{n+1}^{(t)}>\frac{7\beta_{n}^{(t)}}{40\mathfrak{p}_{n+1}}\geq\frac{40\mathfrak{p}_{n+2}}{R_{n+2}},
\]
that is, $R_{n+2} \geq 40\mathfrak{p}_{n+2}/\beta_{n+1}^{(t)}$ and, hence, the induction assumption \eqref{eq:RN+1} for the next step is satisfied. The induction assumption \eqref{eq:pN+1} for the next step was already fulfilled by our choice of $\mathfrak{p}_{n+2}$ in \eqref{eq:pN+2}. This accomplishes the induction step. 

Following this inductive construction process, we can prove the following analogue of \cite[Proposition 90]{GK3}.

\begin{prop}\label{prop:fTwist}
	For every $s \in \Z^+$ there is $\alpha^{(t)}_s>0$ such that for every $n\geq M(s)$ we have
	$
		\overline{f}\left(\mathcal{A},\overline{\mathcal{A}}\right)>\alpha_{s}^{(t)}
	$
	on any substrings $\mathcal{A}$, $\overline{\mathcal{A}}$ of at
	least $q_{n}/R_{n}^{c}$ consecutive symbols in $w$ and $\overline{w}$,
	respectively, for $w,\overline{w}\in\mathcal{W}_{n}$ with $[w]_{s}\neq[\overline{w}]_{s}$.
\end{prop}    

\begin{proof}
The estimate in \eqref{eq:TwistBlock2} implies
\[
\alpha_{s,M(s)}^{(t)}=\beta_{M(s)-1}^{(t)}-\frac{2}{R_{M(s)-1}}-\frac{4}{R_{M(s)}}\geq\beta_{M(s)-1}^{(t)}-\frac{2}{7}\beta_{M(s)-1}^{(t)}-\frac{\beta_{M(s)-1}^{(t)}}{10\mathfrak{p}_{M(s)}}>\frac{\beta_{M(s)-1}^{(t)}}{2}
\]
by \eqref{eq:RN+1} and \eqref{eq:RN}. From the estimates in \eqref{eq:TwistBlock1} and \eqref{eq:TwistBlock2} we obtain
\[
\alpha_{s,n}^{(t)}\geq\alpha_{s,M(s)}^{(t)}-\frac{2}{R_{M(s)}}-\sum_{j>M(s)}\frac{6}{R_{j}}\geq\alpha_{s,M(s)}^{(t)}-\frac{\beta_{M(s)-1}^{(t)}}{20\mathfrak{p}_{M(s)}}-\frac{6\beta_{M(s)-1}^{(t)}}{40}>\frac{\alpha_{s,M(s)}^{(t)}}{2}
\]
for any $n>M(s)$ by conditions \eqref{eq:RN+1} and \eqref{eq:RN}. Hereby, we see
that for every $s\in\mathbb{N}$ the decreasing sequence $\left(\alpha_{s,n}^{(t)}\right)_{n\geq M(s)}$
is bounded from below by $\frac{\alpha_{s,M(s)}^{(t)}}{2}$, and we denote
its limit by $\alpha_{s}^{(t)}$. This proves the proposition.
\end{proof}

\section{Proof of Theorem \ref{thm:criterion}} \label{sec:Proof}
	We verify that the map $\Phi:\mathcal{T}\kern-.5mm rees\to \text{Diff}^{\,\infty}_{\,\lambda}(M)$ defined in the previous section satisfies the properties required in Theorem~\ref{thm:criterion}. 
	
	\begin{lem}\label{lem:ContinuityPhi}
		The map $\Phi:\mathcal{T}\kern-.5mm rees\to \text{Diff}^{\,\infty}_{\,\lambda}(M)$ is continuous.
	\end{lem}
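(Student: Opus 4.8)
The plan is to verify continuity of $\Phi$ by appealing to the combinatorial characterization of continuous maps on $\mathcal{T}\kern-.5mm rees$ provided in Fact \ref{fact:contTree}, together with the key structural property \eqref{eq:ContinuityF} asserted during the construction process in Section \ref{subsec:Construction}. Recall that $\Phi(\mathcal{T})$ is obtained as the limit $T^{(\mathfrak{s})}$ of the smooth AbC diffeomorphisms $T^{(\mathfrak{s})}_n$ associated with the twisted construction sequence $\{\mathcal{W}_n(\mathcal{T})\}_{n\in\N}$, which in turn is built from the odometer-based construction sequence $\{\mathtt{W}_n(\mathcal{T})\}_{n\in\N}$ via the bijections $\kappa_n$. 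By design, $\mathcal{W}_n(\mathcal{T})$ depends only on $\mathcal{T}\cap\{\sigma_m:m\leq n\}$.

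First I would fix $\mathcal{T}\in\mathcal{T}\kern-.5mm rees$ and an open neighborhood $O\subseteq\text{Diff}^{\,\infty}_{\,\lambda}(M)$ of $\Phi(\mathcal{T})=T^{(\mathfrak{s})}$. Since the $d_\infty$ metric generates the topology, there is $\varepsilon>0$ with the ball $B_{d_\infty}(T^{(\mathfrak{s})},\varepsilon)\subseteq O$. Using the summable sequence $(\varepsilon_n)_{n\in\N}$ fixed in the construction (satisfying \eqref{eq:EpsDecrease}), I would choose $N\in\N$ large enough that $\varepsilon_N<\varepsilon$. Now suppose $\mathcal{T}'\in\mathcal{T}\kern-.5mm rees$ satisfies $\mathcal{T}\cap\{\sigma_n:n\leq M\}=\mathcal{T}'\cap\{\sigma_n:n\leq M\}$ for a suitably chosen $M\geq N$ (specifically $M$ large enough that all trees agreeing with $\mathcal{T}$ up to $\sigma_M$ also agree on the first $N$ levels in the sense needed — since the enumeration $\{\sigma_n\}$ lists predecessors first, agreement on $\{\sigma_n:n\leq M\}$ for $M$ past the relevant initial segment forces $\mathtt{W}_n(\mathcal{T})=\mathtt{W}_n(\mathcal{T}')$ and $\mathcal{W}_n(\mathcal{T})=\mathcal{W}_n(\mathcal{T}')$ for all $n\leq N$ by \eqref{eq:ContinuityF}). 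Then the two twisted construction sequences $\{\mathcal{W}_n(\mathcal{T})\}$ and $\{\mathcal{W}_n(\mathcal{T}')\}$ agree for all $n\leq N$, so Lemma \ref{lem:closeSmooth} applies directly and yields $d_\infty(\Phi(\mathcal{T}),\Phi(\mathcal{T}'))<\varepsilon_N<\varepsilon$, hence $\Phi(\mathcal{T}')\in O$. By Fact \ref{fact:contTree} this establishes continuity.

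The one subtlety I would need to pin down carefully is the relationship between the enumeration-based neighborhoods $\{\sigma_n:n\leq M\}$ of Fact \ref{fact:contTree} and the level-by-level agreement $\mathcal{W}_n(\mathcal{T})=\mathcal{W}_n(\mathcal{T}')$ for $n\leq N$ needed to invoke Lemma \ref{lem:closeSmooth}. This is exactly what property \eqref{eq:ContinuityF} is for: it states that if $\mathcal{T}$ and $\mathcal{T}'$ agree on $\{\sigma_m:m\leq n\}$ then $\mathcal{W}_m(\mathcal{T})=\mathcal{W}_m(\mathcal{T}')$ for all $m\leq n$. So taking $M=N$ suffices, and the argument closes cleanly. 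I expect this to be the main (and only real) obstacle, and it is already resolved by the construction having been carried out so that \eqref{eq:ContinuityF} holds — a point emphasized in Section \ref{subsec:Construction}. Thus the proof is essentially a bookkeeping assembly of Fact \ref{fact:contTree}, property \eqref{eq:ContinuityF}, and Lemma \ref{lem:closeSmooth}, with no new estimates required.
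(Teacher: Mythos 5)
Your proposal is correct and follows essentially the same route as the paper's proof: both reduce continuity to Fact \ref{fact:contTree}, use property \eqref{eq:ContinuityF} to translate agreement of trees on $\{\sigma_m : m\leq N\}$ into agreement of the construction sequences $\mathcal{W}_n$ for $n\leq N$, and then invoke Lemma \ref{lem:closeSmooth} to conclude that the corresponding smooth realizations are $\varepsilon_N$-close in $d_\infty$. The only cosmetic difference is that you make the $\varepsilon$-ball explicit, whereas the paper phrases the same step as choosing $M$ via Lemma \ref{lem:closeSmooth} for a given neighborhood $U$.
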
 

\begin{proof}
	Let $T^{\mathfrak{s}}=\Phi(\mathcal{T})$ for $\mathcal{T}\in\mathcal{T}\kern-.5mm rees$ and
	$U$ be an open neighborhood of $T^{\mathfrak{s}}$ in $\text{Diff}^{\,\infty}_{\,\lambda}(M)$.
	Suppose $T$ is the twisted system such that $R(T)=T^{\mathfrak{s}}$. Here, $R$ denotes the smooth realization map from Theorem \ref{theo:TwistRealSmooth} that assigns to each twisted symbolic system an isomorphic smooth diffeomorphism. Moreover, let $\left(\mathcal{W}_{n}\right)_{n\in\mathbb{N}}$ denote
	the construction sequences of $T$.	
	By Lemma \ref{lem:closeSmooth} there is $M\in\mathbb{N}$
	sufficiently large such that for all $S^{\mathfrak{s}}\in\text{Diff}^{\,\infty}_{\,\lambda}(M)$
	in the range of $R$ we have the following property: If the construction sequence $\left(\mathcal{U}_{n}\right)_{n\in\mathbb{N}}$ for the twisted system $S$ with $S^{\mathfrak{s}}=R(S)$ satisfies $\left(\mathcal{W}_{n}\right)_{n\leq M}=\left(\mathcal{U}_{n}\right)_{n\leq M}$,
	then $S^{\mathfrak{s}}\in U$. By \eqref{eq:ContinuityF}
	there is a basic open set $V\subseteq\mathcal{T}\kern-.5mm rees$ containing
	$\mathcal{T}$ such that for all $\mathcal{S}\in V$ the first $M+1$
	members of the construction sequences are the same, that is, $\left(\mathcal{W}_{n}(\mathcal{T})\right)_{n\leq M}=\left(\mathcal{W}_{n}(\mathcal{S})\right)_{n\leq M}$.
	Then it follows that $\Phi(\mathcal{S})\in U$ for all $\mathcal{S}\in V$
	which yields the continuity of $\Phi$ by Fact \ref{fact:contTree}.
\end{proof}

	\subsection{Proof of the weak mixing property}
	In order to prove weak mixing of $T=\Phi(\mathcal{T})$ for any $\mathcal{T}\in \trees$, we check that $T$ fulfills the assumptions of Proposition \ref{prop:WM2}. By construction, $T$ is obtained as a limit of AbC transformations with parameters $k_n =2^{n+2}q_nC_n$ and $l_n$ satisfying $\sum_{n\in \N}1/l_n <\infty$. We verified assumptions \ref{item:R1} and \ref{item:R4} in Remark \ref{rem:VerifyWM}. Altogether, we can apply Proposition \ref{prop:WM2} and conclude that $T$ is weakly mixing.

	
	\subsection{\label{subsec:Isom}Infinite branches give isomorphisms}
	Similarly to \cite[section~5]{GK3} the specifications in Section \ref{subsec:spec} and the structure of the twisting operator allow us to build an isomorphism between $\Phi(\mathcal{T})$
	and $\Phi(\mathcal{T})^{-1}$ in case $\mathcal{T}$
	has an infinite branch.
	\begin{lem}
		\label{lem:siso}Let $s\in\mathbb{N}$ and $g\in G_{s}^{m}$ for some
		$m\in\mathbb{N}$. Suppose that $g$ has odd parity. Then there is 
		a shift-equivariant isomorphism $\eta_{g}:\mathbb{K}^{\text{twist}}_{s}\to rev(\mathbb{K}^{\text{twist}}_{s})$ canonically associated
		to $g$ as in \eqref{eq:giso}. 	
		Moreover, if $s'>s$ and $g^{\prime}\in G_{s'}^{n}$ for some $n\geq m$
		with $\rho_{s',s}(g^{\prime})=g$, then $\pi_{s',s}\circ\eta_{g^{\prime}}=\eta_{g}\circ\pi_{s',s}$.
	\end{lem}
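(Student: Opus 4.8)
The plan is to build $\eta_g$ level by level, following the construction of the $\mathbb{K}^{\text{twist}}_s$-words out of $\mathbb{K}^{\text{twist}}_{M(s)}$-words via the operators $\mathcal{C}^{\text{twist}}_{M(s),n}$, and to exploit the two reversal identities we already have: Lemma \ref{lem:RevTwistInd}, which says $rev$ intertwines $\mathcal{C}^{\text{twist}}_{m,n}$ with $\widetilde{\mathcal{C}}^{\text{twist}}_{m,n}$ (read in reverse order of arguments), and Lemma \ref{lem:groupTwistInd}, which says a canonical generator $g$ of $G^m_s$ acts on $\mathcal{C}^{\text{twist}}_{m,n}(\dots)$ by the twisted skew-diagonal action, again reversing the order of arguments. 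The crucial observation is that \emph{composing} these two reversals — the geometric one coming from $rev$ and the combinatorial one coming from the $g$-action — cancels the order-reversal of arguments, so that the resulting map respects the left-to-right structure of twisted words; the price is exactly the swap $b\leftrightarrow e$, which is precisely what distinguishes $\widetilde{\mathcal{C}}^{\text{twist}}$ from $\mathcal{C}^{\text{twist}}$ and hence what takes us into $rev(\mathbb{K}^{\text{twist}}_s)$.

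\textbf{Construction of $\eta_g$.} First I would reduce to the case $g\in G^{M(s)}_s$ is a single canonical generator: by specification \ref{item:A7}--\ref{item:A8} and the propagation rules (P1)--(P2), an element of odd parity in $G^m_s$ for $m>M(s)$ projects under $\rho^{(m)}_{M(s)}$ (or is, after combining with the commuting factor $H$ from \ref{item:A8}) built from a canonical generator at level $M(s)$, and the higher-$n$ pieces act by the plain (non-twisted) relabelling in the last bullet of the skew-diagonal definition, which is handled by ordinary functoriality of $rev$. So fix a canonical generator $g$ of $G^{M(s)}_s$, set $m_0=M(s)$. On the alphabet $\left(\mathcal{W}_{m_0}/\mathcal{Q}^{m_0}_s\right)^{\ast}\cup\{b,e\}$ of $\mathbb{K}^{\text{twist}}_s$, define the letter map $\tilde\eta_g$ to send a $\mathcal{Q}^{m_0}_s$-class $[\kappa_{m_0}(\mathtt{w})]_s$ to $g[\kappa_{m_0}(\mathtt{w})]_s$ (which is again a class by Remark \ref{rem:ClosedUnderTwistSkew}, using oddness of $g$ only to guarantee $\eta_g$ is a genuine involution-type swap and not the identity), and to \emph{interchange} the spacer letters $b$ and $e$. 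Extend $\tilde\eta_g$ to bi-infinite sequences by composing with $rev$: for $x\in\mathbb{K}^{\text{twist}}_s$ set
\begin{equation}\label{eq:giso}
	\eta_g(x)(k) = \tilde\eta_g\!\left(x(-k)\right).
\end{equation}
The content of the lemma is that this maps $\mathbb{K}^{\text{twist}}_s$ into $rev(\mathbb{K}^{\text{twist}}_s)$, i.e.\ sends legal $n$-words to legal $rev$-$n$-words for every $n\geq m_0$. This I would prove by induction on $n$ using Remark \ref{rem:ConstrSeqKs}: an $n$-word of $\mathbb{K}^{\text{twist}}_s$ is $\mathcal{C}^{\text{twist}}_{m_0,n}(v_0,\dots,v_{h_n/h_{m_0}-1})$ with $v_i\in(\mathcal{W}_{m_0})^{\ast}_s$; applying $\eta_g$ means applying $\tilde\eta_g$ letterwise and then reversing, which by Lemma \ref{lem:RevTwistInd} (applied to the underlying $\kappa$-words, then projected) equals $\widetilde{\mathcal{C}}^{\text{twist}}_{m_0,n}$ of the reversed tuple of $\tilde\eta_g$-images, and by Lemma \ref{lem:groupTwistInd} the reversed tuple of $g$-images is exactly $g$ applied to the word — which lands in $\mathcal{W}_n/(\mathcal{Q}^n_s)^{\text{twist}}$ by Remark \ref{rem:ClosedUnderTwistSkew}, hence in $(\mathcal{W}_n)^{\ast}_s$ after projection. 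Shift-equivariance of $\eta_g$ onto $rev(\mathbb{K}^{\text{twist}}_s)$ is immediate from \eqref{eq:giso} since $rev$ conjugates $sh$ to $sh^{-1}$ and then $rev(\mathbb{K}^{\text{twist}}_s)$ carries the forward shift. Bijectivity follows because $\tilde\eta_g$ is an involution (using that $g$ has order $2$ and that the $b\leftrightarrow e$ swap is an involution), so $\eta_g\circ \eta_g = \mathrm{id}$ after accounting for the two applications of $rev$; measure-preservation follows from $g$ preserving the uniform count of classes (specification \ref{item:Q6}) and $b,e$ having equal total measure contributions, so $\eta_g$ pushes $\nu_s$ to $rev(\nu_s)$.

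\textbf{Compatibility with the $\pi_{s',s}$.} For the second assertion, let $s'>s$, $g'\in G^n_{s'}$ with $\rho^{(n)}_{s',s}(g')=g$. The factor map $\pi^{\text{twist}}_{s',s}:\mathbb{K}^{\text{twist}}_{s'}\to\mathbb{K}^{\text{twist}}_s$ is induced by the inclusion of equivalence relations $\mathcal{Q}^{M(s')}_{s'}$ refining $(\mathcal{Q}^{M(s')}_s)^{\text{twist}}$, explicitly by expanding a $(\mathcal{Q}^{M(s')}_{s'})^{\text{twist}}$-class as $\mathcal{C}^{\text{twist}}_{M(s),M(s')}$ of a tuple of $(\mathcal{Q}^{M(s)}_s)^{\text{twist}}$-classes (as recalled just before Lemma \ref{lem:algebra}) and mapping spacers to spacers. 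Both $\pi_{s',s}\circ\eta_{g'}$ and $\eta_g\circ\pi_{s',s}$ are shift-equivariant maps $\mathbb{K}^{\text{twist}}_{s'}\to rev(\mathbb{K}^{\text{twist}}_s)$, so it suffices to check they agree on the clopen generators $\langle[w]_{s'},k\rangle$. Unwinding both sides with Lemma \ref{lem:RevTwistInd} and Lemma \ref{lem:groupTwistInd}, the equality reduces to the statement that the $g'$-action on $(\mathcal{Q}_{s'})^{\text{twist}}$-classes is subordinate to the $g$-action on $(\mathcal{Q}_s)^{\text{twist}}$-classes via $\rho^{(n)}_{s',s}$ — which is exactly the twisted analogue of \ref{item:A7}, propagated by (P2) and the inductive definition of the twisted skew-diagonal action, together with the fact that $\rho^{(n)}_{s',s}(g')=g$. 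So the two compositions agree.

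\textbf{Main obstacle.} I expect the technical heart to be the induction verifying that $\eta_g$ sends legal words to legal reversed words at \emph{every} scale $n$, rather than just at scale $M(s)$; in particular, one must be careful that the reordering of the $k_n$ many arguments forced by the $g$-action (Lemma \ref{lem:groupTwistInd}) is exactly undone by the reordering forced by $rev$ (Lemma \ref{lem:RevTwistInd}), and that the spacer-block lengths $b^{q_n-\psi_n(i)-j_m}$, $e^{\psi_n(i)+j_m}$ transform correctly — this is where the identity \eqref{eq:psiTwistRev} (namely $\psi_n(2^{n+2}q_n-1-i)=j_1-\psi_n(i)\bmod q_n$), already established in the proof of Lemma \ref{lem:ReverseTwist}, is doing the essential bookkeeping and must be tracked through the composite $\mathcal{C}^{\text{twist}}_{m_0,n}$. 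Once the scale-$M(s)$ case is in hand via the propagation specifications, the higher scales are a clean induction using Lemmas \ref{lem:RevTwistInd} and \ref{lem:groupTwistInd} as black boxes.
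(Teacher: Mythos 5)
Your route is the paper's route: the isomorphism is the block code that replaces each class by its $g$-image and interchanges $b$ and $e$, the fact that this sends legal words to reverse-legal words comes from combining Lemma \ref{lem:RevTwistInd} with Lemma \ref{lem:groupTwistInd} and the closure of $(\mathcal{W}_n)^*_s$ under the twisted skew-diagonal action, and the coherence $\pi_{s',s}\circ\eta_{g'}=\eta_g\circ\pi_{s',s}$ comes from subordination \ref{item:A7}. Two points in your write-up are, however, genuinely off. First, your explicit formula $\eta_g(x)(k)=\tilde\eta_g(x(-k))$ contains one reversal too many. In the paper's \eqref{eq:giso} the substitution is made \emph{in place}: the block $\mathcal{C}^{\text{twist}}_{m,n}([w_0]^*_s,\dots,[w_{L-1}]^*_s)$ at a given position is replaced by $\widetilde{\mathcal{C}}^{\text{twist}}_{m,n}(g[w_0]^*_s,\dots,g[w_{L-1}]^*_s)$ at the \emph{same} position, and this already lies in $rev\bigl((\mathcal{W}_n)^*_s\bigr)$ because it equals $rev\bigl(\mathcal{C}^{\text{twist}}_{m,n}(g[w_{L-1}]^*_s,\dots,g[w_0]^*_s)\bigr)=rev\bigl(g[w]^*_s\bigr)$. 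With your extra evaluation at $-k$ the composite sends forward-legal blocks back to the forward-legal blocks $g[w]^*_s$ and satisfies $\eta_g\circ sh=sh^{-1}\circ\eta_g$; that is an isomorphism onto $(\mathbb{K}^{\text{twist}}_s,sh^{-1})$, not a shift-equivariant map onto $(rev(\mathbb{K}^{\text{twist}}_s),sh)$ as the lemma asserts. The two presentations are identified by the canonical $rev$, but as written your map fails the stated equivariance; drop the index reversal.

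Second, the parenthetical claiming that oddness of $g$ is used ``only to guarantee $\eta_g$ is \dots not the identity'' misidentifies the mechanism, and the proposed reduction to a single canonical generator at level $M(s)$ is not available: an odd-parity element may be a sum of three or more generators, and a $g\in G^m_s$ with $m>M(s)$ does not act on $\mathcal{W}_{M(s)}/\mathcal{Q}^{M(s)}_s$ at all, so the block code must be taken at scale $m$ (as in \eqref{eq:giso}), not at scale $M(s)$. Parity enters because each canonical generator reverses the order of the arguments of $\mathcal{C}^{\text{twist}}_{m,n}$ by \eqref{eq:TwistGenerator}, so an odd number of generators composes to a single order-reversal, giving $g\,\mathcal{C}^{\text{twist}}_{m,n}([w_0]_s,\dots,[w_{L-1}]_s)=\mathcal{C}^{\text{twist}}_{m,n}(g[w_{L-1}]_s,\dots,g[w_0]_s)$; this is exactly the reversal cancelled by Lemma \ref{lem:RevTwistInd}, and for even $g$ the order would be preserved and the letterwise image would not be a reverse-legal word. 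Your opening paragraph describes this cancellation correctly, so the repair is only to relocate where parity is invoked: iterate \eqref{eq:TwistGenerator} over the odd number of generators of $g$ rather than reducing to one.
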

	
	\begin{proof}
		We recall from Remark \ref{rem:ConstrSeqKs} that $\left((\mathcal{W}_{m})_{s}^{\ast}\right)_{m\geq M(s)}$
		defines a construction sequence for $\mathbb{K}^{\text{twist}}_{s}$. Similarly,
		$\left(rev\left((\mathcal{W}_{m})_{s}^{\ast}\right)\right)_{m\geq M(s)}$
		is a construction sequence for $rev(\mathbb{K}^{\text{twist}}_{s})$. For
		$n>m$ we can write any element
		$[w]_{s}^{\ast}\in(\mathcal{W}_{n})_{s}^{\ast}$ as 
		\[
		[w]_{s}^{\ast}=\mathcal{C}^{\text{twist}}_{m,n}\left([w_{0}]_{s}^{\ast},[w_{1}]_{s}^{\ast},\dots,[w_{h_n/h_m-1}]_{s}^{\ast}\right)
		\]
		with $[w_{i}]_{s}^{\ast}\in(\mathcal{W}_{m})_{s}^{\ast}$. Lemma \ref{lem:groupTwistInd} based on the twisted skew diagonal action yields
		\begin{align*}
		g[w]_{s}^{\ast}=g\mathcal{C}^{\text{twist}}_{m,n}\left([w_{0}]_{s}^{\ast},\dots,[w_{h_n/h_m-1}]_{s}^{\ast}\right)=\mathcal{C}^{\text{twist}}_{m,n}\left(g[w_{h_n/h_m-1}]_{s}^{\ast},\dots,g[w_{0}]_{s}^{\ast}\right).	
		\end{align*}
		As pointed out in Remark \ref{rem:ClosedUnderTwistSkew}, $(\mathcal{W}_{n})_{s}^{\ast}$
		is closed under the twisted skew diagonal action. Thus 
		$
		\mathcal{C}^{\text{twist}}_{m,n}\left(g[w_{h_n/h_m-1}]_{s}^{\ast},\dots,g[w_{1}]_{s}^{\ast},g[w_{0}]_{s}^{\ast}\right)\in(\mathcal{W}_{n})_{s}^{\ast},
		$
		which implies 
		$
		rev\left(\mathcal{C}^{\text{twist}}_{m,n}\left(g[w_{h_n/h_m-1}]_{s}^{\ast},\dots,g[w_{1}]_{s}^{\ast},g[w_{0}]_{s}^{\ast}\right)\right)\in rev\left((\mathcal{W}_{n})_{s}^{\ast}\right).
		$
		With the aid of Lemma \ref{lem:RevTwistInd} we obtain that
		$
		\widetilde{\mathcal{C}}^{\text{twist}}_{m,n}\left(g[w_{0}]_{s}^{\ast},g[w_{1}]_{s}^{\ast},\dots,g[w_{h_n/h_m-1}]_{s}^{\ast}\right)\in rev\left((\mathcal{W}_{n})_{s}^{\ast}\right).
		$
		Hence, 
		\begin{equation} \label{eq:giso}
			\begin{split}
				\mathcal{C}^{\text{twist}}_{m,n}\left([w_{0}]_{s}^{\ast},\dots,[w_{h_n/h_m-1}]_{s}^{\ast}\right) 
			\mapsto  \widetilde{\mathcal{C}}^{\text{twist}}_{m,n}\left(g[w_{0}]_{s}^{\ast},\dots,g[w_{h_n/h_m-1}]_{s}^{\ast}\right)
			\end{split}  
		\end{equation}
		is an invertible map from the construction sequence for $\mathbb{K}^{\text{twist}}_{s}$ to the construction
		sequence for $rev(\mathbb{K}^{\text{twist}}_{s})$. It can also be interpreted as a shift-equivariant map from cylinder sets in $\mathbb{K}^{\text{twist}}_{s}$ to cylinder sets located in the same position in $rev(\mathbb{K}^{\text{twist}}_{s})$.  This yields the isomorphism in the 
		first assertion.
		
		The second assertion follows from specification \ref{item:A7}, which says that the
		action by $g^{\prime}$ is subordinate to the action by $g$ via the
		homomorphism $\rho_{s',s}$. 
	\end{proof}

In the following we call a sequence of isomorphisms $\zeta_{s}$ between
$\mathbb{K}^{\text{twist}}_{s}$ and $rev(\mathbb{K}^{\text{twist}}_{s})$ \emph{coherent} if $\pi^{\text{twist}}_{s+1,s}\circ\zeta_{s+1}=\zeta_{s}\circ\pi^{\text{twist}}_{s+1,s}$
for every $s\in\mathbb{N}$.
\begin{lem}
	\label{lem:iso}Let $\left(\zeta_{s}\right)_{s\in\mathbb{N}}$ be a
	coherent sequence of isomorphisms between $\mathbb{K}^{\text{twist}}_{s}$ and $rev(\mathbb{K}^{\text{twist}}_{s})$.
	Then there is an isomorphism $\zeta:\mathbb{K}^{\text{twist}}\to rev(\mathbb{K}^{\text{twist}})$
	such that $\pi^{\text{twist}}_{s}\circ\zeta=\zeta_{s}\circ\pi^{\text{twist}}_{s}$ for every $s\in\mathbb{N}$.
\end{lem}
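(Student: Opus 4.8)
The plan is to build $\zeta$ as a limit of the isomorphisms $\zeta_s \circ \pi^{\text{twist}}_s$ using the factor structure established earlier, together with Lemma \ref{lem:algebra} and Lemma \ref{lem:subalgebra}. Recall that $\pi^{\text{twist}}_s : \mathbb{K}^{\text{twist}} \to \mathbb{K}^{\text{twist}}_s$ is the factor map and that $\mathcal{H}^{\text{twist}}_s$ is the associated sub-$\sigma$-algebra, with $\mathcal{H}^{\text{twist}}_s \subseteq \mathcal{H}^{\text{twist}}_{s+1}$ and $\mathcal{B}(\mathbb{K}^{\text{twist}})$ the smallest invariant $\sigma$-algebra containing $\bigcup_s \mathcal{H}^{\text{twist}}_s$. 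The composite maps $\xi_s \coloneqq \zeta_s \circ \pi^{\text{twist}}_s : \mathbb{K}^{\text{twist}} \to rev(\mathbb{K}^{\text{twist}}_s)$ are shift-equivariant factor maps, and by coherence ($\pi^{\text{twist}}_{s+1,s} \circ \zeta_{s+1} = \zeta_s \circ \pi^{\text{twist}}_{s+1,s}$) together with $\pi^{\text{twist}}_{s+1,s} \circ \pi^{\text{twist}}_{s+1} = \pi^{\text{twist}}_s$ we get $\pi^{\text{twist}}_{s+1,s} \circ \xi_{s+1} = \xi_s$; thus the sequence $(\xi_s)_{s}$ is compatible with the inverse system $\{rev(\mathbb{K}^{\text{twist}}_s), \pi^{\text{twist}}_{s+1,s}\}$ whose inverse limit is $rev(\mathbb{K}^{\text{twist}})$.

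First I would make precise the claim that $rev(\mathbb{K}^{\text{twist}})$ is (measure-theoretically, and in fact topologically) the inverse limit of the $rev(\mathbb{K}^{\text{twist}}_s)$: this is the mirror image of Lemma \ref{lem:algebra} applied to $rev(\mathbb{K}^{\text{twist}})$, which holds because the construction sequence $rev(\mathcal{W}^{\text{twist}}_n)$ and its factors $rev(\mathbb{K}^{\text{twist}}_s)$ satisfy the same specifications (the equivalence relations $\mathcal{Q}^n_s$ on $rev(\mathbb{K}^{\text{twist}})$ were defined to mirror those on $\mathbb{K}^{\text{twist}}$). Given this, the universal property of the inverse limit yields a shift-equivariant measurable map $\zeta : \mathbb{K}^{\text{twist}} \to rev(\mathbb{K}^{\text{twist}})$ with $\pi^{\text{twist}}_s \circ \zeta = \zeta_s \circ \pi^{\text{twist}}_s = \xi_s$ for all $s$ --- concretely, $\zeta(x)$ is the unique point whose $s$-th coordinate is $\xi_s(x)$, and this is well defined for $x$ in the full-measure set where all the $\pi^{\text{twist}}_s$ are compatible (the set $G$ from Section \ref{subsec:spec} works, or $\bigcap_s (\pi^{\text{twist}}_s)^{-1}(\text{good points})$). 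Measurability of $\zeta$ follows because $\zeta^{-1}$ of a cylinder in $rev(\mathbb{K}^{\text{twist}})$ pulls back, via some $rev(\mathbb{K}^{\text{twist}}_s)$, to a set in $\mathcal{H}^{\text{twist}}_s$, and these generate $\mathcal{B}(\mathbb{K}^{\text{twist}})$; measure-preservation follows since $\nu_s = (\pi^{\text{twist}}_s)^* \nu$ and each $\zeta_s$ is measure-preserving, so $\zeta^* \nu$ and $\nu$ agree on each $\mathcal{H}^{\text{twist}}_s$, hence on the generated $\sigma$-algebra.

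Then I would show $\zeta$ is an isomorphism by constructing its inverse the same way. Each $\zeta_s^{-1} : rev(\mathbb{K}^{\text{twist}}_s) \to \mathbb{K}^{\text{twist}}_s$ is an isomorphism and the sequence $(\zeta_s^{-1})$ is again coherent (apply $\pi^{\text{twist}}_{s+1,s}$ to the coherence relation for $(\zeta_s)$ and use that the $\zeta_s$ are bijective on full-measure sets), so by the identical argument there is a shift-equivariant measure-preserving map $\theta : rev(\mathbb{K}^{\text{twist}}) \to \mathbb{K}^{\text{twist}}$ with $\pi^{\text{twist}}_s \circ \theta = \zeta_s^{-1} \circ \pi^{\text{twist}}_s$. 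Then $\theta \circ \zeta$ and $\zeta \circ \theta$ agree with the identity after composing with every $\pi^{\text{twist}}_s$, so by injectivity of the joint map $(\pi^{\text{twist}}_s)_s$ on a full-measure set (this is exactly part (1) of Lemma \ref{lem:subalgebra}, which separates points of $G$) they equal the identity almost everywhere. Hence $\zeta$ is a measure-theoretic isomorphism $\mathbb{K}^{\text{twist}} \to rev(\mathbb{K}^{\text{twist}})$ with the asserted intertwining property.

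The main obstacle I anticipate is the careful bookkeeping at the level of full-measure sets: the maps $\zeta_s$ are only isomorphisms mod $\nu_s$, the factor maps $\pi^{\text{twist}}_s$ are only well behaved on the sets $G_n$, and one must exhibit a single shift-invariant full-measure subset of $\mathbb{K}^{\text{twist}}$ on which all coordinates $\xi_s(x)$ are simultaneously defined and mutually compatible, so that $\zeta(x)$ genuinely lands in $rev(\mathbb{K}^{\text{twist}})$ (and not merely in the inverse limit of the factor spaces, which a priori could be larger than $rev(\mathbb{K}^{\text{twist}})$ were it not for the mirrored Lemma \ref{lem:algebra}). Verifying that $\zeta$ is a genuine point map on such a set, rather than only a $\sigma$-algebra isomorphism, is where the specifications \ref{item:Q4}--\ref{item:A8} and Lemma \ref{lem:subalgebra} do the real work; the rest is the formal inverse-limit argument sketched above.
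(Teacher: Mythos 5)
Your proposal is correct and follows essentially the same route as the paper: both arguments take the inverse limit of the coherent maps $\zeta_s\circ\pi^{\text{twist}}_s$, invoke Lemma \ref{lem:algebra} to see that $\bigcup_s\mathcal{H}^{\text{twist}}_s$ generates the full $\sigma$-algebra (so the limit object is defined on all of $\mathcal{B}(\mathbb{K}^{\text{twist}})$ rather than a proper subalgebra), and then use part (1) of Lemma \ref{lem:subalgebra} to realize the result as a genuine shift-equivariant point isomorphism off null sets. The only cosmetic difference is that the paper first builds a measure-algebra isomorphism and then descends to points, whereas you build the point map directly via the universal property of the inverse limit; the ingredients and the logical content are identical.
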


\begin{proof}
	Since the sequence of isomorphisms $(\zeta_s)_{s\in \N}$ is coherent, their inverse limit defines
	a measure-preserving isomorphism between the subalgebra of $\mathcal{B}\left(\mathbb{K}^{\text{twist}}\right)$
	generated by $\bigcup_{s}\mathcal{H}^{\text{twist}}_{s}$ and the subalgebra of $\mathcal{B}\left(rev(\mathbb{K}^{\text{twist}})\right)$
	generated by $\bigcup_{s}rev(\mathcal{H}^{\text{twist}}_{s})$. By Lemma \ref{lem:algebra}
	this extends uniquely to a measure-preserving isomorphism $\tilde{\zeta}$
	between $\mathcal{B}\left(\mathbb{K}^{\text{twist}}\right)$ and $\mathcal{B}\left(rev(\mathbb{K}^{\text{twist}})\right)$.
	Then by part (1) of Lemma \ref{lem:subalgebra} we can find sets $D\subset\mathbb{K}^{\text{twist}}$,
	$D'\subset rev(\mathbb{K}^{\text{twist}})$ of measure zero such that $\tilde{\zeta}$
	determines a shift-equivariant isomorphism $\zeta$ between $\mathbb{K}^{\text{twist}}\setminus D$
	and $rev(\mathbb{K}^{\text{twist}})\setminus D'$.
\end{proof}
\begin{proof}[Proof of part (1) in Theorem \ref{thm:criterion}]
	Suppose that $\mathcal{T}\in\mathcal{T}\kern-.5mm rees$ has an infinite branch.
	Then $G_{\infty}(\mathcal{T})$ has an element $g$ of odd parity
	according to Fact \ref{fact:oddElement}. By Lemma \ref{lem:siso} we obtain
	a coherent sequence of isomorphisms $\zeta_s \coloneqq \eta_{\rho_{s}(g)}$ between
	$\mathbb{K}^{\text{twist}}_{s}(\mathcal{T})$ and $rev(\mathbb{K}^{\text{twist}}_{s}(\mathcal{T}))$. Hence, Lemma \ref{lem:iso}
	yields an isomorphism between $\mathbb{K}^{\text{twist}}(\mathcal{T})$ and $rev(\mathbb{K}^{\text{twist}}(\mathcal{T}))$.
	Since $rev(\mathbb{K}^{\text{twist}}(\mathcal{T}))$ is isomorphic to $(\mathbb{K}^{\text{twist}}(\mathcal{T}))^{-1}$, we conclude
	that $\mathbb{K}^{\text{twist}}(\mathcal{T})\cong(\mathbb{K}^{\text{twist}}(\mathcal{T}))^{-1}$. Since our smooth realization of twisted systems in Theorem \ref{theo:TwistRealSmooth} preserves isomorphism, we conclude that $\Phi(\mathcal{T}) \cong \Phi(\mathcal{T})^{-1}$ in case that the tree $\mathcal{T}$ has an infinite branch.
\end{proof}

    \subsection{\label{subsec:Non-Equiv}Proof of Non-Kakutani Equivalence}
    In \cite[sections 10.4 and 10.5]{GK3} it is shown that for a tree $\mathcal{T} \in \trees$ without an infinite branch the circular system $T_c = \mathcal{F}(\Psi(\mathcal{T}))$ and $T^{-1}_c = \mathcal{F}(\Psi(\mathcal{T}))^{-1}$ are not Kakutani equivalent. Within the required $\overline{f}$ estimates, the newly introduced spacers $b$ and $e$ are always ignored. As already observed in Section \ref{subsec:Construction}, the explicit form of the circular operator does not matter as long as each $0$-subsection $w^{l_n -1}_j$ comes with $q_n$ many newly introduced spacers, that is, we have a string of the form $b^{q_n - i}w^{l_n -1}_je^i$ for some $0\leq i < q_n$ as an $1$-subsection. Since our twisting operator is an operator of such type, we can apply the same analysis to conclude that $\mathbb{K}^{\text{twist}}(\mathcal{T})$ and $(\mathbb{K}^{\text{twist}}(\mathcal{T}))^{-1}$ are not Kakutani equivalent if $\mathcal{T}$ does not have an infinite branch. Since our Theorem \ref{theo:TwistRealSmooth} produces diffeomorphisms isomorphic to the symbolic systems, we conclude that $\Phi(\mathcal{T})$ and $\Phi(\mathcal{T})^{-1}$ are not Kakutani equivalent if $\mathcal{T}$ does not have an infinite branch. This proves part (2) of Theorem \ref{thm:criterion}.  
    Altogether we completed the proof of Theorem \ref{thm:criterion}.
    
    \begin{proof}[Proof of Theorem~\ref{thm:analytic}]
    	We substitute the real-analytic counterparts described in Remark~\ref{rem:analyticCounterpart} for Theorem~\ref{theo:TwistRealSmooth} and Lemma~\ref{lem:closeSmooth}.
    \end{proof}

	\subsection*{Acknowledgements}
	The author would like to thank Shilpak Banerjee, Matthew Foreman, Marlies Gerber, and Jean-Paul Thouvenot for helpful comments and discussion. He thanks the referees for careful advice.


\begin{thebibliography}{ABCDE12}
		\bibitem[AK70]{AK70}D. Anosov and A. Katok: New examples in smooth
		ergodic theory. Ergodic diffeomorphisms. Trudy Moskov. Mat. Obsc.,
		23: 3-36, 1970.
		
		\bibitem[Ba17]{Ba17} S.~Banerjee: Non-standard real-analytic realization of some rotations of the circle. Ergodic Theory \& Dynam. Systems 37 (5), 1369-1386, 2017.
		
		\bibitem[BK19]{Ba-Ku} S.~Banerjee and P.~Kunde: Real-analytic AbC constructions on the torus. Ergodic Theory \& Dynam. Systems, 39 (10): 2643-2688, 2019.
		
		\bibitem[BK22]{BK2}S. Banerjee and P. Kunde: Real-analytic realization
		of uniform circular systems and some applications. Journal d'Analyse Mathématique 148 (2022), 399--455.
		
		
		\bibitem[FS05]{FS} B. Fayad and M. Saprykina: Weak mixing disc and annulus diffeomorphisms with arbitrary Liouville rotation number on the boundary. Ann. Sci. \'{E}cole Norm. Sup. (4) 38 (3): 339-364, 2005.
		
		
		\bibitem[Fe76]{Fe}J. Feldman: New K-automorphisms and a problem of
		Kakutani. Israel J. Math., 24 (1): 16-38, 1976.
		
		
		\bibitem[FK04]{FK} B.~Fayad and A.~Katok: Constructions in elliptic dynamics. Ergodic Theory \& Dynam. Systems 24 (5): 1477--1520, 2004.
		
	
	    \bibitem[Fopp]{Fsurvey} M.~Foreman: The complexity of the Structure and Classification of Dynamical Systems. Preprint, arXiv:2203.10655
		
		\bibitem[FRW06]{FRW06}M. Foreman, D. Rudolph, and B. Weiss: On the
		conjugacy relation in ergodic theory. C. R. Math. Acad. Sci. Paris
		343 (10): 653-656, 2006. 
		
		\bibitem[FRW11]{FRW}M. Foreman, D. Rudolph, and B. Weiss: The conjugacy
		problem in ergodic theory. Ann. of Math. (2), 173(3): 1529-1586, 2011.
		
		\bibitem[FW04]{FW0}M. Foreman and B. Weiss: An anti-classification
		theorem for ergodic measure-preserving transformations. J. Eur. Math.
		Soc. 6, no. 3: 277-292, 2004.
		
		\bibitem[FW19a]{FW1}M. Foreman and B. Weiss: A symbolic representation
		for Anosov-Katok systems. J. Anal. Math., 137(2): 603-661,
		2019.
		
		\bibitem[FW19b]{FW2}M. Foreman and B. Weiss: From odometers to circular Systems. J. Mod. Dyn., 15: 345-423, 2019.
		
		\bibitem[FW22]{FW3}M. Foreman and B. Weiss: Measure preserving diffeomorphisms
		of the torus are unclassifiable. J. Eur. Math. Soc. 24 (8):  2605--2690, 2022.
		
			
		\bibitem[GKpp]{GK3}M. Gerber and P. Kunde: Non-classifiability of Ergodic Flows up to Time Change. Preprint, arXiv:2109.06086.
		
		\bibitem[GKpp2]{GK4}M. Gerber and P. Kunde: Non-Classifiability of Kolmogorov Diffeomorphisms up to Isomorphism. Preprint, arXiv:2307.13823.
		
		
		\bibitem[GK00]{GK} R. Gunesch and A. Katok: Construction of weakly mixing diffeomorphisms preserving measurable Riemannian metric and smooth measure. Discrete Contin. Dynam. Systems 6: 61--88, 2000.
		
		\bibitem[Ha44]{Ha} P.~Halmos: In general a measure preserving transformation is mixing. Ann. of Math. (2), 45: 786--792, 1944. 
		
		
		\bibitem[HN42]{HN42} P. Halmos and J. von Neumann: Operator methods
		in classical mechanics. II. Ann. of Math. (2), 43: 332--350, 1942.
		
		
		\bibitem[Ka43]{Kak} S.~Kakutani: Induced measure preserving transformations. Proc. Imp. Acad. Tokyo 19: 635--641, 1943.
		
		
		\bibitem[Ka03]{Ka03} A.~Katok: Combinatorial constructions in ergodic theory and dynamics. University Lecture Series, 30. American Mathematical Society, Providence, RI, 2003.
		
		\bibitem[Ke95]{Kechris}A.~Kechris: Classical descriptive set theory,
		Graduate Texts in Mathematics, 156. Springer-Verlag, New York, 1995.
		
		\bibitem[Ku24]{Ksurvey} P.~Kunde: On the smooth realization problem and the AbC method. In: A vision for dynamics in the 21st century, 
		Cambridge University Press, Cambridge, 2024.
		
		
		\bibitem[Ne32]{Ne}J. von Neumann: Zur Operatorenmethode in der klassischen
		Mechanik. Ann. of Math. (2), 33(3): 587 - 642, 1932.
		
		\bibitem[Or70]{Or70}D. Ornstein: Bernoulli shifts with the same entropy
		are isomorphic. Advances in Math. 4: 337 - 352, 1970.
		
		
		
		\bibitem[Pe77]{Pe1} Ya. Pesin: Characteristic Lyapunov exponents and smooth ergodic theory. Uspehi Mat. Nauk. 32 (1977), 55--112.
		
	\end{thebibliography}
\end{document}